\documentclass[12pt,oneside,openany,article]{memoir}
\usepackage{mempatch}

\nouppercaseheads
\usepackage{amsthm}
\usepackage{etex}
\usepackage[english]{babel}

\usepackage{caption}

\usepackage[leqno]{mathtools}

\usepackage[scr]{rsfso}
\usepackage{upgreek}
\usepackage[compress,square,comma,numbers]{natbib}
\usepackage{hypernat} 

%

\usepackage{sfmath}

\usepackage{microtype}

\usepackage{array}

\usepackage{graphicx,xcolor}

\setcounter{secnumdepth}{2}

\usepackage{hyperxmp}
\usepackage{xr-hyper}
\usepackage{nameref}
\usepackage[pdftex,bookmarks,pdfnewwindow,plainpages=false,unicode]{hyperref}

\usepackage{bookmark}
\usepackage{enumitem}
\usepackage{amssymb}


\hypersetup{
colorlinks=true,
linkcolor=black,
citecolor=black,
urlcolor=blue!80!cyan,
pdfauthor={Victor Ivrii},
pdftitle={100 years of Weyl's law},
pdfsubject={Sharp Spectral Asymptotics},
pdfkeywords={Microlocal Analysis,  Sharp Spectral Asymptotics},
bookmarksdepth={4}
}

\theoremstyle{plain}
\newtheorem{theorem}{Theorem}[section]

\newtheorem{proposition}[theorem]{Proposition}
\newtheorem{corollary}[theorem]{Corollary}

\theoremstyle{definition}
\newtheorem{definition}[theorem]{Definition}

\theoremstyle{remark}
\newtheorem{remark}[theorem]{Remark}
\newtheorem{example}[theorem]{Example}

\pagestyle{headings}

\numberwithin{equation}{section}

\newenvironment{phantomequation}[1][]{\refstepcounter{equation}}{}

\newenvironment{claim}[1][{\textup{(\theequation)}}]{\refstepcounter{equation}\vglue10pt
\begin{trivlist}
\item[{\hskip\labelsep#1}]}{\vglue10pt\end{trivlist}}

\newcommand{\corr}{\mathsf{corr}}
\newcommand{\const}{\mathsf{const}}
\newcommand{\TF}{{\mathsf{TF}}}
\newcommand{\MW}{{\mathsf{MW}}}
\newcommand{\W}{{\mathsf{W}}}
\newcommand{\E}{{\mathsf{E}}}
\newcommand{\N}{{\mathsf{N}}}
\newcommand{\bound}{\mathsf{bound}}
\newcommand{\Scott}{\mathsf{Scott}}
\newcommand{\Schwinger}{\mathsf{Schwinger}}
\newcommand{\Dirac}{\mathsf{Dirac}}
\newcommand{\R}{\mathsf{R}}
\newcommand{\I}{\mathsf{I}}

\newcommand{\x}{{\mathsf{x}}}
\newcommand{\y}{{\mathsf{y}}}

\newcommand{\bR}{\mathbb{R}}
\newcommand{\bC}{\mathbb{C}}
\newcommand{\bZ}{\mathbb{Z}}
\newcommand{\bH}{\mathbb{H}}

\newcommand{\fN}{\mathfrak{N}}
\newcommand{\fH}{\mathfrak{H}}
\newcommand{\fD}{\mathfrak{D}}

\newcommand{\blangle}{{\boldsymbol{\langle}}}
\newcommand{\brangle}{{\boldsymbol{\rangle}}}

\newcommand{\cA}{\mathcal{A}}
\newcommand{\cB}{\mathcal{B}}
\newcommand{\cE}{\mathcal{E}}
\newcommand{\cK}{\mathcal{K}}
\newcommand{\cN}{\mathcal{N}}
\newcommand{\cF}{\mathcal{F}}
\newcommand{\cL}{\mathcal{L}}

\newcommand{\cY}{\mathcal{Y}}
\newcommand{\cZ}{\mathcal{Z}}

\newcommand{\sC}{\mathscr{C}}
\newcommand{\cQ}{\mathcal{Q}}
\newcommand{\sL}{\mathscr{L}}
\newcommand{\sH}{\mathscr{H}}

\newcommand{\D}{\mathsf{D}}
\newcommand{\sfH}{{\mathsf{H}}}

\newcommand{\dist}{\operatorname{dist}}
\newcommand{\Hess}{\operatorname{Hess}}
\newcommand{\vol}{\operatorname{vol}}
\newcommand{\supp}{\operatorname{supp}}
\newcommand{\rank}{\operatorname{rank}}
\newcommand{\WF}{\operatorname{WF}}
\newcommand{\Char}{\operatorname{Char}}
\newcommand{\Spec}{\operatorname{Spec}}
\newcommand{\Tr}{\operatorname{Tr}}
\newcommand{\tr}{\operatorname{tr}}

\newcommand{\eff}{{\mathsf{eff}}}

\newcommand\1{|\hskip-1.3pt|}

\newcommand{\Def}{\mathrel{\mathop:}=}
\newcommand{\boldrho}{\boldsymbol{\rho}}
\newcommand{\boldgamma}{\boldsymbol{\gamma}}
\newcommand{\boldupsigma}{{\boldsymbol{\upsigma}}}

\renewcommand{\Im}{\operatorname{Im}}
\renewcommand{\Re}{\operatorname{Re}}

\externaldocument[book_new-]{book_new}[http://www.math.toronto.edu/ivrii/monsterbook.pdf]

\addtopsmarks{headings}{}{%
\createmark{chapter}{right}{shownumber}{}{. \ }
}
\pagestyle{headings}

\title{100 years of Weyl's law}
\author{Victor Ivrii}

\setcounter{tocdepth}{1}

\begin{document}

\maketitle

\begin{abstract}
We discuss the asymptotics of the eigenvalue counting function for partial differential operators and related expressions paying the most attention to the sharp asymptotics. We consider Weyl asymptotics,  asymptotics with Weyl principal parts and correction terms and asymptotics with non-Weyl principal parts. Semiclassical microlocal analysis, propagation of singularities and related dynamics play crucial role.

We start from the general theory, then consider Schr\"odinger and Dirac operators with the strong magnetic field and, finally, applications to the asymptotics of the ground state energy of heavy atoms and molecules with or without a magnetic field.

\end{abstract}

\renewcommand\printtoctitle[1]{\section*{}}
\renewcommand\aftertoctitle{}
\tableofcontents

\chapter{Introduction}
\label{sect-1}

\section{A bit of history}
\label{sect-1-1}
In 1911, \href{https://en.wikipedia.org/wiki/Hermann_Weyl}{Hermann Weyl}, who at that time was a young German mathematician specializing in partial differential and integral equations, proved the following remarkable asymptotic formula describing distribution of (large) eigenvalues of the Dirichlet Laplacian in a bounded domain $X\subset \bR^d$:
\begin{equation}
\N(\lambda) = (2\pi)^{-d}\omega_d \vol (X) \lambda^{d/2} (1+o(1))\qquad \text{as\ \ } \lambda\to +\infty,
\label{1-1-1}
\end{equation}
where $\N(\lambda)$ is the number of eigenvalues of the (positive) Laplacian, which are less than $\lambda$\,\footnote{\label{foot-1} $\N(\lambda)$ is called the \emph{eigenvalue counting function}.}, $\omega_d$ is a volume of the unit ball in $\bR^d$, $\vol (X)$ is the volume of $X$. This formula was actually conjectured independently by \href{https://en.wikipedia.org/wiki/Arnold_Sommerfeld}{Arnold~Sommerfeld} \cite{sommerfeld:weyl} and \href{https://en.wikipedia.org/wiki/Hendrik_Lorentz}{Hendrik~Lorentz} \cite{lorentz:weyl} in 1910 who stated the \emph{Weyl's Law\/} as a conjecture based on the book of \href{https://en.wikipedia.org/wiki/John_William_Strutt,_3rd_Baron_Rayleigh}{Lord Rayleigh} ``\href{https://archive.org/details/theoryofsound02raylrich}{The Theory of Sound}'' (1887) (for details, see  \cite{arendt:weyl}).

H.~Weyl  published several papers \cite{weyl:asymp1,weyl:asymp,weyl:membrane, weyl:1913,weyl:1915}(1911--1915) devoted to the eigenvalue asymptotics for the Laplace operator (and also the elasticity operator) in a bounded domain with  regular boundary. In \cite{weyl:1913},  he published what is now known as \emph{Weyl's conjecture\/}
\begin{multline}
\N(\lambda) = (2\pi)^{-d}\omega_d \vol (X) \lambda^{d/2} \mp
\frac{1}{4}(2\pi)^{1-d}\omega_{d-1} \vol' (\partial X) \lambda^{(d-1)/2}\\ \text{as\ \ } \lambda\to +\infty
\label{1-1-2}
\end{multline}
for Dirichlet and Neumann boundary conditions respectively where
$\vol'(\partial X)$ is the $(d-1)$-dimensional volume of
$\partial X\in \sC^\infty$. Both these formulae appear in the toy model of a rectangular box $X=\{0<x_1<a_1,\ldots ,0<x_d<a_d\}$ and then $\N(\lambda)$ is the number of integer lattice points in the part of ellipsoid
$\{z_1^2 /a_1^2 +\ldots +z_d^2/a_d^2< \pi ^2\lambda \}$ with $z_j>0$ and $z_j\ge 0$ for Dirichlet and Neumann boundary conditions respectively\footnote{\label{foot-2} Finding sharp asymptotics of the number of the lattice points in the inflated domain is an important problem of the number theory.}.

After his pioneering  work, a huge number of papers devoted to spectral asymptotics were published.  Аmong the authors  were numerous prominent mathematicians.

After H.~Weyl, the next big step was made by \href{https://en.wikipedia.org/wiki/Richard_Courant}{Richard Courant} \cite{courant:eigen}(1920), who further developed the variational method and recovered the remainder estimate $O(\lambda^{(d-1)/2}\log \lambda)$. The variational method was developed further by many mathematicians, but it lead to generalizations rather than to getting sharp remainder estimates and we postpone its discussion until Section~\ref{sect-3-2}. Here we mention only \href{https://ru.wikipedia.org/wiki/\%D0\%91\%D0\%B8\%D1\%80\%D0\%BC\%D0\%B0\%D0\%BD\%2C_\%D0\%9C\%D0\%B8\%D1\%85\%D0\%B0\%D0\%B8\%D0\%BB_\%D0\%A8\%D0\%BB\%D1\%91\%D0\%BC\%D0\%BE\%D0\%B2\%D0\%B8\%D1\%87}{Mikhail Birman}, \href{https://en.wikipedia.org/wiki/Elliott_H._Lieb}{Elliott Lieb} and \href{https://en.wikipedia.org/wiki/Barry_Simon}{Barry Simon} and  their schools.

The next development was due to \href{https://en.wikipedia.org/wiki/Torsten_Carleman}{Torsten Carleman} \cite{carleman:membrane, carleman:asymp}(1934, 1936) who invented the \emph{Tauberian method\/} and  was probably the first to consider an arbitrary spacial dimension (H.~Weyl and R.~Courant considered only dimensions $2$ and $3$) followed by \href{https://en.wikipedia.org/wiki/Boris_Levitan}{Boris Levitan} \cite{levitan:asymp}(1952) and V.~G.~Avakumovi{\v{c}} \cite{avak:eigen}(1956) who, applied \emph{hyperbolic operator method\/} (see Section~\ref{sect-1-2}) to recover the remainder estimate $O(\lambda^{(d-1)/2})$, but only for closed manifolds and also for $e(x,x,\lambda)$ away from the boundary\footnote{\label{foot-3} Where here and below $e(x,y,\lambda)$ is the Schwartz kernel of the spectral projector.}.

After this,  \href{https://en.wikipedia.org/wiki/Lars_H\%C3\%B6rmander}{Lars~H\"ormander} \cite{hoermander:spectral,hoermander:riesz}(1968, 1969) applied Fourier integral operators in the framework of this method. \href{https://en.wikipedia.org/wiki/Hans_Duistermaat}{Hans~Duistermaat} and Victor~Guillemin \cite{duistermaat:period}(1975) recovered the remainder estimate $o(\lambda^{(d-1)/2})$ under the assumption that
\begin{claim}\label{1-1-3}
The set of all periodic geodesics has measure $0$
\end{claim}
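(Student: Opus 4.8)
The claim as worded cannot hold for every metric---on the round sphere every geodesic is closed, so the set in question is all of the unit cotangent bundle---so I read \eqref{1-1-3} as the assertion, needed for the Duistermaat--Guillemin improvement, that it holds for a \emph{generic} (Baire-residual in the $\sC^\infty$ topology) Riemannian metric $g$ on the closed manifold, ``measure'' being the Liouville measure $\mu$ on the unit cotangent bundle $S^*X$ (equivalently, on a fixed energy shell in $T^*X$), and the ``set of all periodic geodesics'' being the set of periodic points $P=\{v\in S^*X:\ \Phi_t v=v\ \text{for some}\ t>0\}$ of the geodesic flow $\Phi_t$. The plan is to reduce the statement to the \emph{Bumpy Metric Theorem}: for a generic $g$ every closed geodesic is nondegenerate, i.e.\ $1$ is not an eigenvalue of the linearized Poincar\'e return map of any closed orbit or of any of its iterates. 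Granting this, the measure-zero conclusion is soft.

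First the reduction. The geodesic flow preserves $\mu$, and since closed geodesics have length $\ge 2\,\operatorname{inj}(g)>0$ one has $P=\bigcup_{T\in\bZ_{>0}}P_T$ with $P_T=\{v:\ \Phi_t v=v\ \text{for some}\ t\in(0,T]\}$ compact. If $g$ is bumpy, then for each $T$ the set $P_T$ is a \emph{finite} union of closed orbits: nondegeneracy of the reduced Poincar\'e map forbids a positive-dimensional family of period near any given value (such a family would furnish an eigenvector with eigenvalue $1$), hence closed orbits of period $\le T$ are isolated in $S^*X$, and a compactness/limiting argument rules out infinitely many of them. Each closed orbit is an embedded circle, a set of Hausdorff dimension $1$; since $\dim S^*X=2d-1\ge 3$ for $d\ge 2$ and $\mu$ is absolutely continuous with respect to $(2d-1)$-dimensional volume, each has $\mu$-measure $0$, so $\mu(P_T)=0$ and therefore $\mu(P)=0$.

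It remains to prove the Bumpy Metric Theorem, which is the heart of the matter (Abraham; rigorous proofs by Anosov and by Klingenberg--Takens). Fix a large $k$ and work in the Banach manifold $\mathcal{G}^k$ of $\sC^k$ metrics. For a fixed combinatorial type (period window, free-homotopy class, multiplicity bound) one studies the evaluation map sending $(g,v,t)$ to $\Phi^g_t(v)-v$ on the relevant bundle of jets, shows---using that varying $g$ along a bump supported near an arc of the geodesic moves the return map in every direction---that this map is transverse to the diagonal and to the locus where the linearized return map has eigenvalue $1$, and invokes the Sard--Smale theorem to conclude that the set of $g$ admitting a degenerate closed geodesic of that type is meager in $\mathcal{G}^k$. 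Intersecting over the countably many types and over $T\in\bZ_{>0}$, and then passing from $\sC^k$ to $\sC^\infty$ by the usual approximation argument, yields a residual set of bumpy metrics.

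The principal obstacle is precisely this transversality construction. The $S^1$-symmetry of the flow means a closed geodesic is never an isolated fixed point of $\Phi_t$ on the nose, so one must pass to a transverse slice and the reduced Poincar\'e map; more seriously, a geodesic that is closed with primitive period $T$ is also ``closed'' with period $2T,3T,\dots$, and one must arrange that \emph{all} iterates of the return map avoid the eigenvalue $1$ simultaneously---the delicate point that Anosov's and Klingenberg--Takens's arguments are engineered to handle, since a perturbation fixing one iterate must not reintroduce degeneracy at the others. Finally, this argument is intrinsically generic: for individual manifolds one argues separately---on negatively curved $X$ the geodesic flow is Anosov, its closed orbits are countable and hence automatically $\mu$-null, whereas Zoll metrics show that the hypothesis \eqref{1-1-3} genuinely fails and with it the $o(\lambda^{(d-1)/2})$ remainder.
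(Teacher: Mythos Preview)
You have misread the role of \eqref{1-1-3} in the paper. It is not a theorem the paper proves; it is a \emph{hypothesis} that the paper imposes. Look at the surrounding sentence: ``Duistermaat and Guillemin recovered the remainder estimate $o(\lambda^{(d-1)/2})$ \emph{under the assumption that} [the set of all periodic geodesics has measure $0$], observing that for the sphere neither this assumption nor (1.1.2) hold.'' The \texttt{claim} environment here is simply the paper's device for displaying and numbering a condition so that it can be referenced later (compare \eqref{1-1-4}, \eqref{2-1-35}, \eqref{2-1-36}, \eqref{3-1-25}, which are all hypotheses formatted the same way). There is accordingly no ``paper's own proof'' to compare against.

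You noticed the obstruction yourself---the round sphere violates the statement---but drew the wrong conclusion. Rather than recognizing \eqref{1-1-3} as a standing assumption on the particular metric at hand, you reinterpreted it as a genericity theorem and set out to prove the Bumpy Metric Theorem. That material is correct as mathematics (your reduction from bumpy to measure-zero is clean, and your sketch of Anosov/Klingenberg--Takens is accurate in outline), and it is relevant background for knowing that the hypothesis is usually satisfied. But it is not what the paper is doing: the paper simply \emph{assumes} \eqref{1-1-3} for the metric in question and uses it, via the argument sketched in Section~\ref{sect-1-2}, to upgrade the remainder from $O(\lambda^{(d-1)/2})$ to $o(\lambda^{(d-1)/2})$.
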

observing that for the sphere neither this assumption nor (\ref{1-1-2}) hold. Here, we consider the phase space $T^*X$ equipped with the standard measure $dxd\xi$ where $X$ is a manifold\footnote{\label{foot-4} In fact the general scalar pseudodifferential operator  and Hamiltonian trajectories of its principal symbol were considered.}. This was a very important step since it connected the sharp spectral asymptotics with  classical dynamics.

The main obstacle was the impossibility to construct the parametrix of the hyperbolic problem near the boundary\footnote{\label{foot-5} Or even inside for elliptic systems with the eigenvalues of the principal symbol having the variable multiplicity.}. This obstacle was partially circumvented by Robert~Seeley~\cite{seeley:sharp, seeley:boundary}(1978, 1980) who recovered remainder estimate $O(\lambda^{(d-1)/2})$; his approach we will consider in Subsection~\ref{sect-4-1-2}. Finally the Author~\cite{Ivr1}(1980), using very different approach, proved (\ref{1-1-2}) under assumption that
\begin{claim}\label{1-1-4}
The set of all periodic geodesic billiards has measure $0$,
\end{claim}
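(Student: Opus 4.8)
The plan is to pass to the \emph{billiard ball map}. First I would discard the null set of trajectories that graze $\partial X$ and work on $\partial_-S^*X$, the $(2d-2)$-dimensional manifold of unit covectors at boundary points pointing strictly into $X$, equipped with its canonical (Liouville) measure; this is a full-measure Poincar\'e section for the billiard flow on $S^*X$, and on it the smooth map $B$ — free transport along the straight geodesic to the next hit of $\partial X$, followed by reflection — preserves that measure. A billiard trajectory is periodic precisely when its trace on $\partial_-S^*X$ is a periodic point of $B$, so the claim reduces to showing that $\operatorname{Per}(B)=\bigcup_{n\ge 2}\operatorname{Fix}(B^{\,n})$ is null, and by countable additivity it suffices to treat each $\operatorname{Fix}(B^{\,n})$ separately.

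Fix $n\ge 2$. The first point I would use is that if $\operatorname{Fix}(B^{\,n})$ had positive measure, then at every Lebesgue density point $y_0$ of this (closed) set one would necessarily have $dB^{\,n}_{y_0}=\mathrm{id}$: writing $g=B^{\,n}-\mathrm{id}$ in a chart, a nonzero $dg_{y_0}$ would confine the zero set of $g$ near $y_0$ to a thin slab around a proper affine subspace, contradicting density one. Bootstrapping — each set $\{D^{\,k}(dB^{\,n}-\mathrm{id})=0\}$ is again closed in local coordinates, and density points of the previous set force the next derivative to vanish — one obtains a positive-measure set on which $dB^{\,n}-\mathrm{id}$ vanishes to infinite order. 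Next I would use that $dB^{\,n}$ is an explicit product of elementary symplectic maps: free transport between consecutive vertices of the closed billiard polygon $(q_1,\dots,q_n)\in(\partial X)^n$, and reflection at each $q_i$ governed by the classical \emph{mirror equation}, whose factors depend only on the incidence angle and on the second fundamental form of $\partial X$ at $q_i$. So ``$dB^{\,n}=\mathrm{id}$'' translates into a system of polynomial identities relating the edge lengths $|q_{i+1}-q_i|$, the incidence angles and the curvatures of $\partial X$ at the vertices, holding along a positive-measure family of closed polygons. I would then differentiate these identities along the family and use that the lengths and angles vary with enough independence to over-determine the curvature data, forcing either a direct contradiction — this is how M.~Rychlik disposed of $n=3$ in dimension two, and A.~Glutsyuk and Yu.~Kudryashov of $n=4$ in dimension two, by explicit computation — or that $\partial X$ is locally a round sphere, in which case a direct check (trajectories are then planar and periodic only for a null set of incidence angles) finishes the argument.

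The hard part, and the reason this scheme does not yet yield an unconditional proof, is the last step for \emph{all} $n$ in all dimensions with merely $\sC^\infty$ regularity. Infinite-order vanishing of $dB^{\,n}-\mathrm{id}$ on a positive-measure set does \emph{not} force $dB^{\,n}\equiv\mathrm{id}$, because flat non-analytic functions exist, so there is no shortcut through the analytic dichotomy (which, for real-analytic $\partial X$, does reduce the problem to ruling out $B^{\,n}\equiv\mathrm{id}$ on an open piece of the section); moreover the mirror-equation identities become combinatorially intractable as $n$ grows. In full generality — that for \emph{every} bounded $X\subset\bR^d$ with $\partial X\in\sC^\infty$ the periodic billiards form a null set — the statement is a long-standing conjecture of the Author, still open, though known trivially for period $2$, for period $3$ by M.~Rychlik and others, for period $4$ in dimension two by A.~Glutsyuk and Yu.~Kudryashov, for real-analytic boundaries under mild nondegeneracy, and for generic boundaries by perturbation arguments. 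This is exactly why (\ref{1-1-2}) is proved in \cite{Ivr1} only \emph{under} this condition: the two-term asymptotics requires nothing beyond the null-set property, and that property holds in all the classes of domains of interest.
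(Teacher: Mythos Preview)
There is a basic mismatch here: in the paper, (\ref{1-1-4}) is not a theorem at all but a \emph{hypothesis}. The surrounding sentence reads ``the Author~\cite{Ivr1}(1980) \ldots\ proved (\ref{1-1-2}) under assumption that [the set of all periodic geodesic billiards has measure $0$], which obviously generalizes (\ref{1-1-3}).'' The \texttt{claim} environment is being used as a numbered display for a condition, exactly as (\ref{1-1-3}) is. So there is no ``paper's own proof'' to compare your attempt against --- the paper neither proves nor tries to prove (\ref{1-1-4}); it assumes it.

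You in fact recognise this by the end of your proposal, where you correctly state that the assertion ``for every bounded $X\subset\bR^d$ with $\partial X\in\sC^\infty$ the periodic billiards form a null set'' is the Author's conjecture and is still open, with the partial results you list (period $2$ trivially, period $3$ by Rychlik, period $4$ in the plane by Glutsyuk--Kudryashov, generic and real-analytic cases). That summary is accurate, and your sketch of the billiard-ball-map / mirror-equation approach is a fair account of how the known low-period cases are handled. But the exercise of comparing your argument to the paper's proof is vacuous: the paper makes no claim that (\ref{1-1-4}) holds in general, and your own proposal does not purport to give a complete proof either. The correct response to ``prove (\ref{1-1-4})'' in the context of this paper is simply: this is a standing assumption, not a result.
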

which obviously generalizes (\ref{1-1-3}). Using this approach, the Author in~\cite{Ivr2} (1982) proved (\ref{1-1-1}) and (\ref{1-1-2}) for elliptic systems on manifolds without  boundary; (\ref{1-1-2}) was proven under certain assumption similar to (\ref{1-1-3}).

The new approaches were further developed during the 35 years to follow and many new ideas were implemented. The purpose of this article is to provide a brief and rather incomplete survey of the results and techniques. Beforehand, let us mention that the field was drastically transformed.

First, at that time, in addition to the problem that we described above, there were similar but distinct problems which we describe by examples:
\begin{enumerate}[label=(\alph*), wide, labelindent=0pt]
\setcounter{enumi}{1}
\item\label{sect-1-1-b}
Find the asymptotics as $\lambda\to +\infty$ of $\N(\lambda)$  for the Schr\"odinger operator $\Delta +V(x)$  in $\bR^d$ with potential  $V(x)\to +\infty$ at infinity;
\item\label{sect-1-1-c}
Find the asymptotics as $\lambda\to -0$ of $\N(\lambda)$   for the Schr\"odinger operator in $\bR^d$ with potential  $V(x)\to -0$ at infinity (decaying more slowly than $|x|^{-2}$);
\item\label{sect-1-1-d}
Find the asymptotics as $h\to +0$ of $\N^-(h)$ the number of the negative eigenvalues for the Schr\"odinger operator $h^2\Delta +V(x)$.
\end{enumerate}

These four problems  were being studied separately albeit by rather similar methods. However, it turned out that the latter problem \ref{sect-1-1-d} is more fundamental than the others which could be reduced to it by the variational \emph{Birman-Schwinger principle\/}.

Second, we should study the \emph{local semiclassical spectral asymptotics\/}, i.e. the asymptotics of $\int e(x,x,0)\psi (x) \,dx$ where $\psi\in \sC_0^\infty$ supported in the ball of radius $1$ in which\footnote{\label{foot-6} Actually, in the proportionally larger ball.} $V$ is of magnitude $1$\,\footnote{\label{foot-7} Sometimes, however, we consider \emph{pointwise semiclassical spectral asymptotics\/},  i.e. asymptotics of $e(x,x,0)$.}. By means of scaling we generalize these results for $\psi$ supported in the ball of radius $\gamma$ in which\footref{foot-6} $V$ is of magnitude $\rho$ with $\rho\gamma\ge h$ because in scaling $h\mapsto h/\rho \gamma$. Then in the general case we apply partition of unity with \emph{scaling functions\/} $\gamma(x)$ and $\rho(x)$.

Third, in the \emph{singular zone\/}  $\{x:\, \rho(x)\gamma(x)\le h\}$b we can apply variational estimates and combine them with the semiclassical estimates in the \emph{regular zone\/} $\{x:\, \rho(x)\gamma(x)\ge h\}$. It allows us to consider domains and operators with singularities.

Some further developments will be either discussed or mentioned in the next sections. Currently, I am working on the Monster book~\cite{futurebook} which summarizes this development. It is almost ready and is available online and we will often refer to it for details, exact statements and proofs.

Finally, I should mention that in addition to the variational methods and method of hyperbolic operator, other methods were developed: other Tauberian methods (like the method of the heat equation or the method of resolvent) and the almost-spectral projector method \cite{ST}. However, we will neither use nor even discuss them; for survey of different methods, see \cite{RSS}.

\section{Method of the hyperbolic operator}
\label{sect-1-2}

The method of the hyperbolic operator is one of the Tauberian methods proposed by T.~Carleman. Applied to the Laplace operator, it was designed as follows: let $e(x,y,\lambda)$ be the Schwartz kernel of a spectral projector and let
\begin{equation}
u(x,y,t)= \int_0^\infty  \cos (\lambda t)\, d_\lambda e(x,y,\lambda^2);
\label{1-2-1}
\end{equation}
observe, that now $\lambda^2$ is the spectral parameter. Then, $u(x,y,t)$ is a \emph{propagator\/} of the corresponding wave equation and satisfies
\begin{align}
&u_{tt}+\Delta u=0,
\label{1-2-2}\\
&u|_{t=0}=\delta (x-y), \quad u|_{t=0}=0
\label{1-2-3}
\end{align}
(recall that $\Delta$ is a positive Laplacian).

Now we need to construct the solution of (\ref{1-2-2})--(\ref{1-2-3}) and recover $e(x,y,t)$ from (\ref{1-2-1}). However, excluding some special cases, we can construct the solution $u(x,y,t)$ only modulo smooth functions and only for $t:|t|\le T$, where usually $T$ is a small constant. It leads to
\begin{multline}
F_{t\to \tau} \bigl(\bar{\chi}_T(t)u(x,x,t)\bigr) =
T \int \widehat{\bar{\chi}} ((\lambda - \tau)T)\, d_\lambda e(x,x,\lambda^2) =\\
c_0(x) \lambda^{d-1} + c_1(x)\lambda^{d-2}+ O(\lambda^{d-3})
\label{1-2-4}
\end{multline}
where $F$ denotes the Fourier transform, $\bar{\chi}\in \sC_0^\infty (-1,1)$, $\bar{\chi}(0)=1$, $\bar{\chi}'(0)=0$ and $\bar{\chi}_T(t)=\bar{\chi}(t/T)$\,\footnote{\label{foot-8} In fact, there is a complete decomposition.}.

Then using H\"ormander's Tauberian theorem\footnote{\label{foot-9} Which was already known  to Boris~Levitan.}, we can recover
\begin{equation}
e(x,x,\lambda^2)= c_0(x) d^{-1} \lambda^{d} +O(\lambda^{d-1}T^{-1}).
\label{1-2-5}
\end{equation}

To get the remainder estimate $o(\lambda^{d-1})$ instead, we need some extra arguments. First, the asymptotics (\ref{1-2-4}) holds with a cut-off:
\begin{multline}
F_{t\to \tau} \bigl(\bar{\chi}_T(t) (Q_x u)(x,x,t)\bigr) =
T \int \widehat{\bar{\chi}} ((\lambda - \tau)T)
\, d_\lambda (Q_x e)(x,x,\lambda^2) =\\
c_{0Q}(x) \lambda^{d-1} + c_{1Q}(x)\lambda^{d-2}+ O_T(\lambda^{d-3})
\label{1-2-6}
\end{multline}
where $Q_x=Q(x,D_x)$ is a $0$-order pseudo-differewntial operator (acting with respect to $x$ only, before we set $x=y$; and  $T=T_0$ is a small enough constant. Then the Tauberian theory implies that
\begin{multline}
(Q_xe)(x,x,\lambda^2)= c_{0Q}(x) d^{-1} \lambda^{d} +
c_{1Q}(x) (d-1)^{-1} \lambda^{d-1}+\\
O\bigl(\lambda^{d-1} T^{-1}\upmu (\supp (Q))\bigr)+
o_{Q,T} \bigl(\lambda^{d-1}\bigr)
\label{1-2-7}
\end{multline}
where $\upmu=\frac{dxd\xi }{ dg}$ is a natural measure on the \emph{energy level surface\/} $\Sigma=\{(x,\xi):\, g(x,\xi)=1\}$ and we denote by $\supp(Q)$ the support of the symbol $Q(x,\xi)$.

On the other hand, propagation of singularities (which we discuss in more details later) implies that if for any point $(x,\xi)\in \supp(Q)$ geodesics starting there are not periodic with periods $\le T$ then asymptotics (\ref{1-2-6}) and (\ref{1-2-7}) hold with $T$.

Now, under the assumption (\ref{1-1-4}), for any $T\ge T_0$ and $\varepsilon>0$, we can select $Q_1$ and $Q_2$, such that $Q_1+Q_2=I$,
$\upmu(\supp (Q_1))\le \varepsilon$ and for $(x,\xi)\in \supp(Q_2)$ geodesics starting from it are not periodic with periods $\le T$. Then, combining (\ref{1-2-7}) with $Q_1,T_0$ and with $Q_2,T$, we arrive to
\begin{multline}
e (x,x,\lambda^2)= c_{0}(x) d^{-1} \lambda^{d} +
c_{1}(x) (d-1)^{-1} \lambda^{d-1}+\\
O\bigl(\lambda^{d-1} (T^{-1}+\varepsilon)\bigr)+o_{\varepsilon,T} (\lambda^{d-1})
\label{1-2-8}
\end{multline}
with arbitrarily large $T$ and arbitrarily small $\varepsilon>0$ and therefore
\begin{equation}
e(x,x,\lambda^2)= c_0(x) d^{-1} \lambda^{d} +
c_1 (d-1)^{-1}\lambda^{d-1}+O(\lambda^{d-1}T^{-1}).
\label{1-2-9}
\end{equation}
holds. In these settings, $c_1=0$.

More delicate analysis of the propagation of singularities allows under certain very restrictive assumptions to the geodesic flow to boost the remainder estimate to $O(\lambda^{d-1}/\log \lambda)$ and even to $O(\lambda^{d-1-\delta})$ with a sufficiently small exponent $\delta>0$.

\chapter{Local semiclassical spectral asymptotics}
\label{sect-2}

\section{Asymptotics inside the domain}
\label{sect-2-1}

As we mentioned, the approach described above was based on the representation of the solution $u(x,y,t)$ by an oscillatory integral and does not fare well in (i)  domains with  boundaries because of the trajectories tangent to the boundary and (ii) for matrix operators whose principal symbols have eigenvalues of variable multiplicity. Let us describe our main method. We start by discussing  matrix operators  on closed manifolds.

So, let us consider a self-adjoint elliptic matrix operator $A(x,D)$ of order $m$. For simplicity, let us assume that this operator is semibounded from below and we are interested in $N(\lambda)$, the number of eigenvalues not exceeding $\lambda$, as $\lambda\to+\infty$. In other words, we are looking for the number $\N^-(h)$ of negative eigenvalues of the operator
$\lambda^{-1}A(x,D)-I= H(x,hD,h)$ with $h=\lambda^{-1/m}$\,\footnote{\label{foot-10} If operator is not semi-bounded we consider the number of eigenvalues in the interval $(0,\lambda)$ (or $(-\lambda,0)$) which could be reduced to the asymptotics of the number of eigenvalues in the interval $(-1,0)$ (or $(0,1)$) of $H(x,hD,h)$.}.

\subsection{Propagation of singularities}
\label{sect-2-1-1}

Thus, we are now dealing with the semiclassical asymptotics. Therefore, instead of individual functions, we should consider families of functions depending on the \emph{semiclassical parameter\/} $h$\,\footnote{\label{foot-11} Which in quantum mechanics is called Planck constant and usually is denoted by $\hbar$.} and we need a \emph{semiclassical microlocal analysis\/}. We call such family \emph{temperate\/} if $\|u_h\|\le Ch^{-M}$ where $\|\cdot \|$ denotes usual $\sL^2$-norm.

We say that $u\Def u_h$ is \emph{$s$-negligible\/} at
$(\bar{x},\bar{\xi})\in T^*\bR^d$ if there exists a symbol $\phi(x,\xi)$, $\phi(\bar{x},\bar{\xi})=1$ such that $\|\phi(x,hD)u_h\|=O(h^s)$. We call the \emph{wave front set of $u_h$\/} the set of points at which $u_h$ is not negligible  and denote by $\WF^s(u_h)$; this is a closed set. Here, $-\infty<s\le \infty$.

Our first result is rather trivial: if $P=P(x,hD,h)$,
\begin{equation}
WF^s(u) \subset WF^s(Pu) \cup \Char(P)
\label{2-1-1}
\end{equation}
where $\Char(P)=\{(x,\xi), \det P^0 (x,\xi)=0\}$; we call
$P^0 (x,\xi)\Def P(x,\xi,0)$ the \emph{principal symbol\/} of $P$ and $\Char(P)$ the \emph{characteristic set\/} of $L$.

We need to study the propagation of singularities (wave front sets). To do this, we need the following definition:

\begin{definition}\label{def-2-1-1}
Let $P^0$ be a Hermitian matrix. Then $P$ is \emph{microhyperbolic at $(x,\xi)$ in the direction $\ell\in T(T^*\bR^d)$\/}, $|\ell|\asymp 1$  if
\begin{equation}
\langle (\ell P^0)(x,\xi)v,v\rangle \ge \epsilon |v|^2- C|P^0(x,\xi)v|^2
\qquad \forall v
\label{2-1-2}
\end{equation}
with constants $\epsilon, C > 0$\,\footnote{\label{foot-12} Here and below $\ell P^0$ in  is  the action of the vector field $\ell$ upon $P^0$.}.
\end{definition}

Then we have the following statement which can be proven by the \emph{method of the positive commutator\/}:

\begin{theorem}\label{thm-2-1-2}
Let $P=P(x,hD,h)$ be an $h$-pseudodifferential operator with a Hermitian principal symbol. Let $\Omega \Subset T^*\bR^d$ and let $\phi_j\in \sC^\infty $ be real-valued functions such that $P$ is microhyperbolic in $\Omega$ in the directions $\nabla^\# \phi_j$,  $j=1,\ldots, J$ where
$\nabla^\# \phi = \langle (\nabla _\xi \phi),
\nabla _x\rangle -\langle (\nabla _x \phi),\nabla _\xi\rangle $
is the Hamiltonian field generated by $\phi$.

Let $u$ be tempered and suppose that
\begin{align}
\WF^{s+1}(Pu) \cap \Omega \cap \{\phi _1 \le 0 \} \cap \dots \cap
\{\phi _J \le 0\}&=\emptyset,
\label{2-1-3}\\[2pt]
\WF^s(u) \cap \partial \Omega \cap \{\phi _1 \le 0 \} \cap \dots \cap
\{\phi _J \le 0\}&=\emptyset.
\label{2-1-4}\\[2pt]
\shortintertext{Then,}
\WF^s(u) \cap \Omega \cap \{\phi _1 \le 0 \} \cap \dots \cap
\{\phi _J \le 0\}&=\emptyset.
\label{2-1-5}
\end{align}
\end{theorem}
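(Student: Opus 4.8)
The plan is to prove Theorem~\ref{thm-2-1-2} by the \emph{positive commutator method}, constructing a single pseudodifferential multiplier whose symbol interpolates between the region where $u$ is known to be negligible (near $\partial\Omega$ and in $\{\phi_j>0\}$) and the region we wish to conclude about, and whose commutator with $P$ is positive modulo controllable errors precisely because of the microhyperbolicity hypothesis. First I would reduce to the case $J=1$: if $P$ is microhyperbolic at each point of $\Omega$ in the directions $\nabla^\#\phi_1,\dots,\nabla^\#\phi_J$, then a convex-combination / smooth-interpolation argument (or an induction on $J$, peeling off one constraint at a time by applying the $J=1$ result on a thin slab $\{0<\phi_J<\delta\}$ and iterating) lets one replace the corner $\{\phi_1\le0\}\cap\dots\cap\{\phi_J\le0\}$ by a single defining function. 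So the heart of the matter is: $P$ microhyperbolic in $\Omega$ in direction $\ell=\nabla^\#\phi$, with $\WF^{s+1}(Pu)$ and $\WF^s(u)\cap\partial\Omega$ disjoint from $\{\phi\le0\}$, implies $\WF^s(u)\cap\Omega\cap\{\phi\le0\}=\emptyset$.

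For that, fix a point $(\bar x,\bar\xi)\in\Omega\cap\{\phi\le0\}$ and argue by a continuity/connectedness sweep: let $\Lambda$ be the supremum of those $\sigma\le s$ for which $(\bar x,\bar\xi)\notin\WF^\sigma(u)$; suppose for contradiction $\Lambda<s$, so $u$ is $\Lambda$-negligible but not $(\Lambda+\epsilon_0)$-negligible near $(\bar x,\bar\xi)$ for any $\epsilon_0>0$. Introduce a weight $\chi(x,\xi)=\psi_0(\phi(x,\xi)+\epsilon t)$ with $\psi_0$ a cutoff supported where the value is $\le0$ and localized inside $\Omega$, and consider the ``energy'' $\|\chi^w(x,hD)u\|^2$ (with $\chi^w$ a Weyl quantization). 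Differentiating formally in the parameter $t$, one gets a commutator term $\tfrac{i}{h}\langle[P,\,(\chi^2)^w]u,u\rangle$; the symbol of this commutator is, to leading order, $\{p,\chi^2\}+\dots$, and on the support of $\chi$ the microhyperbolicity bound \eqref{2-1-2} — applied to $v$ in the range of the near-characteristic part of $P^0$ — forces $\langle(\ell P^0)v,v\rangle\ge\epsilon|v|^2-C|P^0 v|^2$, so the relevant piece of the commutator is bounded below by $\epsilon\|\chi^w u\|^2$ minus terms controlled either by $\|P u\|$ (hence by $\WF^{s+1}(Pu)=\emptyset$ on the region, absorbing $h^{s+1}$ errors) or by the elliptic part of $P$ on $\Omega\setminus\Char(P)$ (harmless) or by $O(h)$ times lower-order energies. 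The sign $\epsilon>0$ together with the boundary hypothesis \eqref{2-1-4} (which kills the boundary flux term arising from integrating out to $\partial\Omega$) yields a Gr\"onwall-type differential inequality for $\|\chi_t^w u\|^2$ along $t$, propagating the $O(h^{\Lambda+1})$-smallness from $t=0$ (where $\{\phi\le-\text{const}\}$ meets the good set) into $\{\phi\le0\}$, i.e.\ $(\bar x,\bar\xi)$ is actually $(\Lambda+1)$-negligible, contradicting the choice of $\Lambda$ unless $\Lambda\ge s$.

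The main obstacle is the matrix (non-scalar) nature of $P^0$: unlike the scalar case one cannot simply divide by $p$, and the commutator $[P,(\chi^2)^w]$ does not reduce to a clean Poisson bracket. The device that handles this is exactly the form of Definition~\ref{def-2-1-1}: the term $-C|P^0(x,\xi)v|^2$ on the right of \eqref{2-1-2} allows the ``bad'' directions of $v$ (those not near $\ker P^0$) to be absorbed into $C\|P^w u\|^2$, which is $O(h^{\infty})$-small on the relevant microlocal region by the $\WF^{s+1}(Pu)$ hypothesis; only the near-kernel part of $v$ needs the genuine positivity $\epsilon|v|^2$, and there the microhyperbolic direction does its job. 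Making this decomposition of $v$ into a bona fide operator estimate — i.e.\ producing a self-adjoint $h$-pseudodifferential ``sharpened G\"arding'' inequality from the pointwise matrix inequality \eqref{2-1-2}, with errors $O(h)$ in operator norm times the natural energies — is the technical core; once that is in place the continuity argument is routine. I would also need to check that the quantization errors in $\tfrac{i}{h}[P,(\chi^2)^w]=\{p,\chi^2\}^w+O(h)$ are genuinely $O(h)$ relative to $\|\chi^w u\|^2$ and not worse, which is where the smoothness of $\phi$ (hence of $\chi$) and the compact localization in $\Omega$ are used.
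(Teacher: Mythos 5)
Your proposal follows the positive-commutator (energy-estimates) method, which is exactly the method the paper names for this theorem (the paper itself supplies no argument, deferring to the corresponding theorem of \cite{futurebook}), and you identify the core ingredients correctly: a monotone pseudodifferential weight built from the $\phi_j$ and a cutoff to $\Omega$; the leading symbol of $ih^{-1}[P,(\chi^2)^w]$ being the Poisson bracket, whose sign is supplied by (\ref{2-1-2}) through a sharp G{\aa}rding-type inequality; absorption of the $-C|P^0v|^2$ correction into $\|\chi P u\|^2$, which (\ref{2-1-3}) makes negligible; control of the terms produced by differentiating the cutoff to $\Omega$ via (\ref{2-1-4}); and a bootstrap on the order of negligibility starting from temperedness.

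The one step that would fail as written is the reduction to $J=1$. Convexity of the set of microhyperbolic directions is genuine (the condition (\ref{2-1-2}) is affine in $\ell$, so convex combinations preserve it with the same $\epsilon$, $C$), but replacing the corner $\{\phi_1\le0\}\cap\dots\cap\{\phi_J\le0\}$ by a single half-space $\{\lambda_1\phi_1+\dots+\lambda_J\phi_J\le0\}$ loses information in the wrong direction: the hypotheses (\ref{2-1-3})--(\ref{2-1-4}) are assumed only on the corner, not on any containing half-space, so the $J=1$ statement cannot be invoked for the combined function. Likewise the slab induction on $\{0<\phi_J<\delta\}$ produces a region whose boundary contains $\{\phi_J=\delta\}$, where nothing is known about $\WF^s(u)$. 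The correct treatment is not a reduction but a direct generalization of the $J=1$ argument: take the multiplier to be a product $\chi=\prod_{j} f\bigl((\phi_j+\varepsilon-\epsilon' t)/\varepsilon\bigr)$ times the cutoff to $\Omega$, with $f$ monotone; then $\{P^0,\chi^2\}$ splits into $J$ terms, each equal to a nonnegative scalar factor times $(\nabla^\#\phi_j)P^0$, and microhyperbolicity in each direction $\nabla^\#\phi_j$ makes each term separately favorable. Relatedly, there is no interior ``initial surface'' $\{\phi\le-c\}$ from which to launch the Gr\"onwall sweep: the only a priori smallness is on $\partial\Omega$ via (\ref{2-1-4}), so the estimate must be arranged so that every term lacking the favorable sign is supported either near $\partial\Omega$ or where $Pu$ is negligible. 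With these corrections your sketch is the standard proof.
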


\begin{proof}
This is Theorem~\ref{book_new-thm-2-1-2} from~\cite{futurebook}. See the proof and discussion there.
\end{proof}

The above  theorem immediately implies:

\begin{corollary}\label{cor-2-1-3}
Let $H=H(x,hD,h)$ be an $h$-pseudodifferential operator with a Hermitian principal symbol and let $P=hD_t-H$. Let us assume that
\begin{equation}
|\partial _{x,\xi} H^0 v|\le C_0 |v| + C|(H^0-\bar{\tau})v| \qquad \forall v.
\label{2-1-6}
\end{equation}
Let $u(x,y,t)$ be the Schwartz kernel of $e^{ih^{-1}tH}$.

\begin{enumerate}[label=(\roman*), wide, labelindent=0pt]
\item\label{cor-2-1-3-i}
For a small constant $T^*>0$,
\begin{equation}
WF (u  )\cap \{|t|\le T^*,\ \tau=\bar{\tau}\}\subset
\{|x-y|^2+|\xi+\eta|^2\le (C_0t)^2\}.
\label{2-1-7}
\end{equation}

\item\label{cor-2-1-3-ii}
Assume that $H$ is microhyperbolic in some direction $\ell=\ell(x,\xi)$ at the point $(x,\xi)$ at the energy level $\bar{\tau}$\,\footnote{\label{foot-13} Which means that $H-\bar{\tau}$ is microhyperbolic in the sense of Definition~\ref{def-2-1-1}.}. Then for a small constant $T^*>0$,
\begin{multline}
WF (u  )\cap \{0\le \pm t \le T^*,\ \tau=\bar{\tau}\}\subset\\
\{\pm (\langle \ell_x, x-y\rangle  + \langle \ell_\xi, \xi+\eta\rangle )\ge
\pm \epsilon_0 t\}.
\label{2-1-8}
\end{multline}
\end{enumerate}
\end{corollary}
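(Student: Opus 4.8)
The plan is to deduce both statements from Theorem~\ref{thm-2-1-2} applied to the operator $P = hD_t - H$ on the extended phase space $T^*(\bR^d_x\times \bR_t)$, by choosing, for each target point outside the claimed wave front set, a suitable family of functions $\phi_j$ relative to which $P$ is microhyperbolic. First I would record the structure of the principal symbol of $P$: it is $P^0 = \tau - H^0(x,\xi)$, a Hermitian matrix-valued symbol on $T^*(\bR^d\times\bR)$, and $\Char(P) = \{\tau \in \Spec H^0(x,\xi)\}$. Since $u(x,y,t)$ is the Schwartz kernel of $e^{ih^{-1}tH}$, it solves $Pu = 0$ (in $x,t$, with $y$ a parameter) with $u|_{t=0} = \delta(x-y)$, so $\WF(Pu) = \emptyset$ and the only data entering Theorem~\ref{thm-2-1-2} are the initial wave front set at $t=0$, which is $\{x=y,\ \xi+\eta=0\}$ (using the standard sign convention for the kernel of a pseudodifferential operator / Fourier transform), and the geometry of the microhyperbolicity cones.

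For part~\ref{cor-2-1-3-i}, fix a target point $(\bar x,\bar\xi,\bar t,\bar\tau)$ with $\bar\tau = \bar\tau$ (the energy level), $|\bar t|\le T^*$, and $|\bar x - y|^2 + |\bar\xi+\eta|^2 > (C_0\bar t)^2$. I would take a single function of the form
\begin{equation}
\phi(x,\xi,t,\tau) = |x-y|^2 + |\xi+\eta|^2 - (C_0 t - \beta)^2 + \text{(small correction)},
\label{prop-phi-i}
\end{equation}
choosing the constant $\beta$ so that the target point lies in $\{\phi \le 0\}$ while the slice $\{t=0\}$ intersected with $\{\phi\le 0\}$ is contained in the initial wave front set's negligible complement — more precisely so that on $\{\phi \le 0, t=0\}$ we already know negligibility. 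The key computation is that $\nabla^\#\phi$ has $t$-component $\partial_\tau\phi = 0$ but, crucially, I should instead build $\phi$ to depend on $\tau$ or add $\pm(\tau-\bar\tau)t$ so that the Hamiltonian flow of $\phi$ advances in $t$; then microhyperbolicity of $P$ in the direction $\nabla^\#\phi$ reduces, via the ellipticity-type bound \eqref{2-1-6} on $\partial_{x,\xi}H^0$, to the elementary inequality that the convex function $|x-y|^2+|\xi+\eta|^2$ cannot grow faster than quadratically along the Hamiltonian flow of $H^0$ (finite propagation speed $C_0$ in the symplectic metric). The hypothesis \eqref{2-1-6} is exactly what makes $\langle(\nabla^\#\phi\, P^0)v,v\rangle \ge \epsilon|v|^2 - C|P^0 v|^2$ hold on the relevant compact set $\Omega$, after absorbing the term $C|(H^0-\bar\tau)v|^2 = C|\tau v - (\tau - (H^0-\bar\tau+\bar\tau))v|^2$ into the $|P^0 v|^2$ term. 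Then \eqref{2-1-5} gives negligibility at the target point, i.e. \eqref{2-1-7}.

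For part~\ref{cor-2-1-3-ii}, I would exploit the single given microhyperbolicity direction $\ell = \ell(x,\xi)$ at $(\bar x,\bar\xi)$ on the energy level $\bar\tau$. Lift $\ell$ to a vector field on the extended space and use a function whose Hamiltonian field is (a perturbation of) $\ell$ together with $\partial_t$; concretely take
\begin{equation}
\phi(x,\xi,t,\tau) = \mp\bigl(\langle \ell_x, x-y\rangle + \langle\ell_\xi,\xi+\eta\rangle\bigr) \pm \epsilon_0 t + \delta\bigl(|x-y|^2+|\xi+\eta|^2\bigr),
\label{prop-phi-ii}
\end{equation}
with $\delta>0$ small, the quadratic term ensuring $P$ stays microhyperbolic on a full neighborhood (not just at the central point) by continuity, and the linear-in-$t$ term with the right sign so that the region $\{0\le\pm t\le T^*,\ \phi\le 0\}$ touches $\{t=0\}$ only inside the initial wave front set. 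Microhyperbolicity of $P$ in the direction $\nabla^\#\phi$ follows because $\nabla^\#\phi \approx \mp(\text{lift of }\ell) \pm \epsilon_0\partial_t + O(\delta)$ and by assumption $\ell(H^0-\bar\tau)$ is positive-definite modulo $|P^0 v|^2$, so Theorem~\ref{thm-2-1-2} applies and yields \eqref{2-1-8}. The main obstacle, and the step I would spend the most care on, is the bookkeeping at $t=0$: one must verify that $\WF^s(u)\cap\{t=0\}\cap\{\phi\le 0\}$ lands inside the already-known initial singular support $\{x=y,\ \xi+\eta=0\}$ and conversely that this point is excluded from $\{\phi\le 0\}$ at $t=0$ unless $|t|$ (resp. the $\ell$-projection) is large enough — this is a matter of choosing the additive constants in \eqref{prop-phi-i}–\eqref{prop-phi-ii} correctly and shrinking $T^*$. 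The microhyperbolicity estimates themselves are then direct consequences of \eqref{2-1-6} and of Definition~\ref{def-2-1-1} for $H-\bar\tau$.
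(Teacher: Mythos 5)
Your part~\ref{cor-2-1-3-ii} is essentially the paper's argument: the paper applies Theorem~\ref{thm-2-1-2} with $\phi_1=t$ and $\phi_2=\langle \ell_x,x-y\rangle+\langle \ell_\xi,\xi+\eta\rangle-\epsilon_0 t+\varepsilon$, and your choice differs only by the harmless localizing term $\delta(|x-y|^2+|\xi+\eta|^2)$. The mechanism you should make explicit (and which removes your hesitation about ``adding $\tau$-dependence'') is that for $P^0=\tau-H^0$ the relevant quantity is $(\nabla^\#\phi)P^0=\partial_t\phi\cdot I-(\nabla^\#_{x,\xi}\phi)H^0$, so microhyperbolicity of $P$ in the direction $\nabla^\#\phi$ follows from (\ref{2-1-6}) exactly when $\partial_t\phi>C_0\,|\nabla_{x,\xi}\phi|$ pointwise, the error term $C|(H^0-\bar\tau)v|^2$ being absorbed into $C|P^0v|^2$ on $\{\tau=\bar\tau\}$. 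No $\tau$-dependence of $\phi$ is needed or wanted.

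This criterion is precisely where your part~\ref{cor-2-1-3-i} has a genuine gap. For the quadratic function $\phi=|x-y|^2+|\xi+\eta|^2-(C_0t-\beta)^2$ one has $\partial_t\phi=2C_0(\beta-C_0t)$ while $C_0|\nabla_{x,\xi}\phi|=2C_0\,|(x-y,\xi+\eta)|$, so the inequality $\partial_t\phi>C_0|\nabla_{x,\xi}\phi|$ holds only \emph{inside} the cone $|(x-y,\xi+\eta)|<\beta-C_0t$, whereas the set you need to clear of singularities is the \emph{exterior} of the cone; moreover no choice of $\beta$ makes the target point lie in $\{\phi\le 0\}$ while keeping $\{\phi\le 0\}\cap\{t=0\}$ inside the initial wave front set $\{x=y,\ \xi=-\eta\}$. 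The fix is the paper's choice: take $\phi_1=t$ and the \emph{smoothed cone}
\begin{equation*}
\phi_2=t-C_0^{-1}\bigl(|x-\bar x|^2+\epsilon^2\bigr)^{\frac12}+\varepsilon ,
\end{equation*}
centered at an arbitrary $\bar x$ (and analogously in $\xi$), for which $\partial_t\phi_2=1$ and $|\nabla_{x,\xi}\phi_2|<C_0^{-1}$ \emph{uniformly}, so microhyperbolicity holds throughout $\Omega$; letting $\epsilon,\varepsilon\to 0$ and varying $\bar x$ then sweeps out the whole exterior of the cone and yields (\ref{2-1-7}). The essential point your proposal misses is that the comparison function must be Lipschitz in $(x,\xi)$ with constant strictly below $C_0^{-1}$ relative to its $t$-slope everywhere, not just near the vertex.
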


\begin{proof}
It is sufficient to prove the above statements  for $t\ge 0$. We apply Theorem~\ref{thm-2-1-2} with
\begin{enumerate}[label=(\roman*), wide, labelindent=0pt]
\item\label{pf-2-1-3-i}
$\phi_1= t$ and
$\phi_2= t- C_0^{-1}(|x-\bar{x}|^2+\epsilon^2)^{\frac{1}{2}}+\varepsilon$,
\item\label{pf-2-1-3-ii}
$\phi_1= t$ and
$\phi_2= (\langle \ell_x, x-y\rangle  + \langle \ell_\xi, \xi+\eta\rangle ) -\epsilon_0t +\varepsilon$,
\end{enumerate}
where  $\varepsilon>0$ is arbitrarily small.
\end{proof}

\begin{corollary}\label{cor-2-1-4}
\begin{enumerate}[label=(\roman*), wide, labelindent=0pt]
\item\label{cor-2-1-4-i}
In the framework of Corollary~\ref{cor-2-1-3}\ref{cor-2-1-3-ii} with $\ell=(\ell_x,0)$, the inequality
\begin{equation}
|F_{t\to h^{-1}\tau} \chi_T(t)(Q_{1x} u\,^t\!Q_{2y})(x,x,t)|\le
C_s h^{-d}(h/|t|)^s
\label{2-1-9}
\end{equation}
holds for all $s$,  $\tau:\,|\tau-\bar{\tau}|\le \epsilon$,
$h\le |t|\lesssim T \le T^*$  where $Q_{1x}=Q_1(x,hD_x)$, $Q_{2y}=Q_2(y,hD_y)$ are operators with  compact supports, $^t\!Q_2$ is the dual rather than the adjoint operator and we write it to the right of the function,
$\chi\in \sC_0^\infty ([-1,-\frac{1}{2}]\cup[\frac{1}{2},1])$,
$\chi_T(t)=\chi (t/T)$, and $\epsilon$, $T^*$ are small positive constants.

\item\label{cor-2-1-4-ii}
In particular, we get the estimate  $O(h^s)$ as
$T_*\Def h^{1-\delta}\le |t|\le T\le T^*$.

\item\label{cor-2-1-4-iii}
More generally, when $\ell=(\ell_x,\ell_\xi)$, the same estimates hold for the distribution
$\sigma_{Q_1,Q_2}(t)= \int (Q_{1x} u\,^t\!Q_{2y})(x,x,t)\,dx$.
\end{enumerate}
\end{corollary}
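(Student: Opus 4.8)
The plan is to deduce all three statements from the propagation estimate of Corollary~\ref{cor-2-1-3}. The mechanism is that the spatial diagonal $\{x=y\}$ (for~\ref{cor-2-1-4-i}) and the phase space diagonal $\{x=y,\ \xi+\eta=0\}$ (for~\ref{cor-2-1-4-iii}) are missed by the wave front set that survives microhyperbolicity, and missed by a margin of size $\asymp|t|$; a restriction to a submanifold transversal to the wave front set, with such a margin, is a non-stationary phase estimate whose effective small parameter is $h/|t|$, and each integration by parts gains $h/|t|$, which is the factor in~(\ref{2-1-9}).

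I would first reduce to $t\ge0$ as in the proof of Corollary~\ref{cor-2-1-3}, and insert a partition of unity subordinate to a fine covering of $\supp(Q_1)\cup\supp(Q_2)$, so that on each piece the microhyperbolicity direction $\ell=\ell(x,\xi)$ may be treated as a fixed vector; this is harmless, since microhyperbolicity in the sense of Definition~\ref{def-2-1-1} is stable under small perturbations of the point and of the direction. On a piece where $H$ is microhyperbolic at energy level $\bar\tau$ in a direction with $\ell_\xi=0$, Corollary~\ref{cor-2-1-3}\ref{cor-2-1-3-i} confines the wave front set of $w\Def(Q_{1x}u\,^t\!Q_{2y})(x,y,t)$ (for $|\tau-\bar\tau|\le\epsilon$, $0\le t\le T^*$) to $\{|x-y|\le C_0 t\}$, and Corollary~\ref{cor-2-1-3}\ref{cor-2-1-3-ii} further confines it to $\{\langle\ell_x,x-y\rangle\ge\epsilon_0 t\}$; since $|\ell_x|\asymp1$ this region misses $\{x=y\}$, staying at distance $\gtrsim t$ from it, and $t\asymp T$ on $\supp\chi_T$.

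The quantitative step is a rescaling. For $|t|\le T^*$ the cutoffs make the composition proper and reduce $w$ to an $h$-oscillatory integral in $(x,y,\xi)$ (the parametrix of $e^{ih^{-1}tH}$ being available in this interior setting; alternatively one re-runs the positive commutator argument of Theorem~\ref{thm-2-1-2} in variables rescaled by $|t|$ from the outset). Rescaling $x-y$ by $t$ turns this into a semiclassical object with parameter $h'=h/t$, and the inequality $\langle\ell_x,x-y\rangle\ge\epsilon_0 t$ becomes $\langle\ell_x,(x-y)/t\rangle\ge\epsilon_0$, a lower bound on the phase gradient in the direction transversal to the diagonal; repeated integration by parts then gains $h'=h/t\asymp h/T$ at each step, and together with the normalization $(2\pi h)^{-d}$ this is precisely the bound $C_s h^{-d}(h/|t|)^s$, with $h\le|t|$ being exactly the requirement $h'\le1$. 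I expect this to be the main obstacle: the integration by parts must be performed in variables scaled by $|t|$, one must check that amplitudes acquire only $h'$-bounded factors under the scaling, and the estimate must be made uniform over $h\le|t|\le T^*$ and over the pieces of the partition, so that the whole argument is a single application of non-stationary phase with parameter $h/|t|$.

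Part~\ref{cor-2-1-4-ii} is then immediate: on $h^{1-\delta}\le|t|$ one has $h^{-d}(h/|t|)^s\le h^{\delta s-d}$, which is $O(h^{s'})$ for any prescribed $s'$ upon taking $s\ge(s'+d)/\delta$ in~\ref{cor-2-1-4-i}. For part~\ref{cor-2-1-4-iii} one runs the same argument with a general direction $\ell=(\ell_x,\ell_\xi)$, but integrating over $x$ — i.e.\ passing to the trace $\sigma_{Q_1,Q_2}(t)=\Tr\bigl({}^t\!Q_2\,Q_1\,e^{ih^{-1}tH}\bigr)$. The relevant submanifold is now the full phase space diagonal $\{x=y,\ \xi+\eta=0\}$, which is where the wave front of $u$ sits at $t=0$ by~(\ref{2-1-7}); on it $\langle\ell_x,x-y\rangle+\langle\ell_\xi,\xi+\eta\rangle=0$, so the surviving wave front set $\{\langle\ell_x,x-y\rangle+\langle\ell_\xi,\xi+\eta\rangle\ge\epsilon_0 t\}$ stays at distance $\gtrsim t$ from it, and the same rescaling and non-stationary phase estimate — applied this time to the integral over $x$ — gives the claim.
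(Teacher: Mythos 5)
Your parts \ref{cor-2-1-4-i} and \ref{cor-2-1-4-ii} follow the paper's route exactly: the case $t\asymp 1$ is read off from the wave-front confinement of Corollary~\ref{cor-2-1-3}\ref{cor-2-1-3-ii} (with $\ell_\xi=0$ the wave front stays at distance $\gtrsim t$ from $\{x=y\}$), and the case $t\asymp T$ is reduced to it by the rescaling $t\mapsto t/T$, $x\mapsto (x-y)/T$, $h\mapsto h/T$, the condition $h\le|t|$ being precisely $h/T\le 1$; your bookkeeping of the factor $h^{-d}(h/|t|)^s$ (the paper attributes the extra $T^{-d}$ to $u$ being a density in $y$) is consistent. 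The one place you genuinely diverge is part \ref{cor-2-1-4-iii}. The paper does not rerun the argument on the phase-space diagonal: it conjugates by the metaplectic operator quantizing the shear $(x,\xi)\mapsto(x-B\xi,\xi)$ with symmetric $B$, which leaves the trace $\sigma_{Q_1,Q_2}(t)$ invariant while rotating the microhyperbolicity direction so that part \ref{cor-2-1-4-i} applies verbatim. Your direct argument is workable but hides a real issue that the metaplectic trick is designed to sidestep: under the rescaling $x\mapsto(x-y)/T$, $h\mapsto h/T$, the variable $\xi+\eta$ does \emph{not} rescale the same way as $x-y$, so the separation $\langle\ell_x,x-y\rangle+\langle\ell_\xi,\xi+\eta\rangle\ge\epsilon_0 t$ becomes anisotropic and ``the same rescaling and non-stationary phase estimate'' cannot be applied literally; one must either perform a symplectic (anisotropic) rescaling or first convert the $\xi$-separation into an $x$-separation, which is exactly what the shear achieves. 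This is a fixable gap in exposition rather than a wrong idea, but you should flag it: as written, part \ref{cor-2-1-4-iii} asserts more than your rescaling argument delivers.
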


\begin{proof}
\begin{enumerate}[label=(\roman*), wide, labelindent=0pt]
\item\label{pf-2-1-4-i}
If $t\asymp 1$, (\ref{2-1-9}) immediately follows from Corollary~\ref{cor-2-1-3}\ref{cor-2-1-3-ii}. Consider $t\asymp T$ with
$h\le T\le T^*$ and make the rescaling $t\mapsto t/T$, $x\mapsto (x-y)/T$, $h\mapsto h/T$. We arrive to the same estimate (with $T^{-d}(h/T)^s$ in the right-hand expression where the factor $T^{-d}$ is due to the fact that $u(x,y,t)$ is a density with respect to $y$). The transition from $|t|\asymp T$ to
$|t|\lesssim T$ is trivial.
\item\label{pf-2-1-4-ii}
Statement \ref{cor-2-1-4-ii} follows immediately from Statement~\ref{cor-2-1-4-i}.

\item\label{pf-2-1-4-iii}
Statement \ref{cor-2-1-4-iii} follows immediately from Statements~\ref{cor-2-1-4-i} and~\ref{cor-2-1-4-ii} if we apply the metaplectic transformation $(x,\xi)\mapsto (x-B\xi, \xi)$ with a symmetric real matrix $B$.
\end{enumerate}
\vskip-\baselineskip\end{proof}

Therefore under the corresponding microhyperbolicity condition, we can construct
$(Q_{1x} u\,^t\!Q_{2y})(x,x,t)$ or $\sigma_{Q_1,Q_2}(t)$ for $|t|\le T_*$ and then we automatically get it for
$|t|\le T^*$. Since the time interval $|t|\le T_*$ is very short, we are able to apply the successive approximation method.

\subsection{Successive approximation method}
\label{sect-2-1-2}

Let us consider the propagator $u(x,y,t)$. Recall that it satisfies the equations
\begin{align}
&(hD_t - H)u=0,\label{2-1-10}\\
&u|_{t=0}=\updelta (x-y)I
\label{2-1-11}\\
\shortintertext{and therefore,}
&(hD_t - H)u^\pm \,^t\!Q_{2y}= \mp ih \updelta (t)\updelta (x-y)\,^t\!Q_{2y},
\label{2-1-12}
\end{align}
where $u^\pm =u \uptheta (\pm t)$, $\uptheta $ is the Heaviside function, $I$ is the unit matrix, $Q_{1x}=Q_1 (x,hD_x)$, $Q_{2y}=Q_2(y,hD_y)$ have  compact supports, $^t\!Q$ is the \emph{dual operator\/}\footnote{\label{foot-14} I.e. $^t\!Q v= (Q^*v^\dag)^\dag$ where $v^\dag$ is the complex conjugate to $v$.} and we write operators with respect to $y$ on the right from $u$ in accordance with the  notations of  matrix theory.

Then,
\begin{equation}
(hD_t - \bar{H})u^\pm \,^t\!Q_{2y}=H'u \mp ih \updelta(t)\updelta (x-y)\,^t\!Q_{2y} I
\label{2-1-13}
\end{equation}
with $\bar{H}=H(y,hD_x,0)$ obtained from $H$ by freezing $x=y$ and skipping lower order terms and $H'=H'(x,y,hD_x,h)= H-\bar{H}$. Therefore,
\begin{align}
&u^\pm \,^t\!Q_{2y}= \bar{G}^\pm ih H'u^\pm \,^t\!Q_{2y} \pm
ih \bar{G}^\mp \updelta(t)\updelta (x-y)\,^t\!Q_{2y}I.\label{2-1-14}\\
\intertext{Iterating, we conclude that}
&u^\pm \,^t\!Q_{2y}=
\sum_{0\le n \le N-1} (\bar{G}^\pm ih H')^n \bar{u}^\pm \,^t\!Q_{2y}
+ (\bar{G}^\pm ih H')^N  u^\pm \,^t\!Q_{2y},
\label{2-1-15}\\
&\bar{u}^\pm = \mp ih \bar{G}^\pm \updelta(t)\updelta (x-y)\,^t\!Q_{2y}I
\label{2-1-16}
\end{align}
where $\bar{G}^\pm$ is a parametrix of the problem
\begin{equation}
(ihD_t-\bar{H})v =f,\qquad \supp (v)\subset \{\pm t\ge 0\}
\label{2-1-17}
\end{equation}
and $G^\pm$ is a parametrix of the same problem problem albeit for $H$.

Observe that
\begin{multline}
H'= \sum _{1\le |\alpha|+m \le N-1} (x-y)^\alpha h^m R_{\alpha,m} (y,hD_x) + \\
\sum _{|\alpha|+m = N} (x-y)^\alpha h^m R_{\alpha,m} (x,y,hD_x);
\label{2-1-18}
\end{multline}
therefore due to the finite speed of propagation, its norm does not exceed $CT$ as long as we only consider  strips $\Pi^\pm_T\Def \{0\le \pm t\le T\}$. Meanwhile, due to the Duhamel's integral, the operator norms of $G^\pm $ and $\bar{G}^\pm$ from $\sL^2(\Pi^\pm_T)$ to $\sL^2(\Pi^\pm_T)$ do not exceed $Ch^{-1}T$ and therefore each next term in the successive approximations (\ref{2-1-15}) acquires an extra factor $Ch^{-1}T^2= O(h^\delta)$ as long as
$T\le h^{\frac{1}{2}(1+\delta)}$ and the remainder term is $O(h^s)$ if $N$ is large enough.

To calculate the terms of the successive approximations, let us apply $h$-Fourier transform $F_{(x,t)\to h^{-1}(\xi, \tau)}$ with $\xi\in \bR^d$,
$\tau \in \bC_\mp\Def\{\tau: \mp \Im \tau>0\}$ and observe that
$\updelta(t)\updelta (x-y)\mapsto
(2\pi)^{-d-1} e^{-ih^{-1}\langle y,\eta\rangle}$,
$^t\!Q_{2y}$ and $R_{\alpha ,m}$ become multiplication by $Q_2(y,\eta)$ and
$R_{\alpha ,m}(y,\xi)$ respectively, and $\bar{G}^\pm $ becomes  multiplication by $(\tau -H^0(y,\xi))^{-1}$. Meanwhile, $(x_j-y_j)$ becomes
$-ih \partial_{\xi_j} $.

Therefore the right-hand expression of (\ref{2-1-15}) without the remainder term becomes  a sum of terms
$\mp  i \cF_m(y,\xi,\tau)h^{m+1} e^{-ih^{-1}\langle y,\eta\rangle}$ with
$m\ge 0$ and $\cF_m(y,\xi,\tau)$ the sum of terms of the type
\begin{multline}
(\tau -H^0(y,\xi))^{-1} b _*(y,\xi) (\tau -H^0(y,\xi))^{-1} b_* (y,\xi) \cdots
b_* (y,\xi) \times \\(\tau -H^0(y,\xi))^{-1} Q_{2}(y,\eta)
\label{2-1-19}
\end{multline}
with no more than $2m+1$ factors $(\tau -H^0(y,\xi))^{-1}$. Here, the $b_*$ are regular symbols. In particular,
\begin{equation}
\cF_0 (y,\xi,\tau) = (2\pi)^{-d-1} (\tau -H^0(y,\xi))^{-1} Q_{2}(y,\eta).
\label{2-1-20}
\end{equation}
If we add the expressions for $u^+$ and $u^-$ instead of $\cF_m(y,\xi,\tau)$ with $\tau \in \bC_\mp$, we get the distributions
$\bigl( \cF_m(y,\xi,\tau+i0)-\cF_m(y,\xi,\tau-i0)\bigr)$ with $\tau \in \bR$.

Applying the inverse $h$-Fourier transform with respect to $x$, operator $Q_{1x}$, and setting $x=y$, we cancel the factor $e^{-ih^{-1}\langle y,\eta\rangle}$ and gain a factor of $h^{-d}$. Thus we arrive to the Proposition~\ref{prop-2-1-5}\ref{prop-2-1-5-i} below; applying Corollary~\ref{cor-2-1-4}\ref{cor-2-1-4-ii} and \ref{cor-2-1-4-iii}, we arrive to its Statements~\ref{prop-2-1-5-ii} and~\ref{prop-2-1-5-iii}. We also need to use
\begin{equation}
u(x,y,t)= \int e^{ih^{-1}t\tau }\,d_\tau e(x,y,\tau).
\label{2-1-21}
\end{equation}

\begin{proposition}\label{prop-2-1-5}
\begin{enumerate}[label=(\roman*), wide, labelindent=0pt]
\item\label{prop-2-1-5-i}
As $T_*=h^{1-\delta}\le T\le h^{\frac{1}{2}+\delta}$ and
$\bar{\chi} \in \sC_0^\infty ([-1,1])$
\begin{multline}
T\int \widehat{\bar{\chi}}\bigl((\lambda -\tau)Th^{-1}\bigr)\,
d_\tau (Q_{1x}e \,^t\!Q_{2y} )(y,y,\tau)
\sim\\
\sum _{m\ge 0}  h^{-d+m} T\int \widehat{\bar{\chi}}
\bigl((\lambda -\tau)Th^{-1}\bigr)
\kappa'_m(y,\tau) d\tau,
\label{2-1-22}
\end{multline}
where $\widehat{\bar{\chi}}$ is the Fourier transform of $\bar{\chi}$ and
\begin{equation}
\kappa'_m(y)=\int \bigl( \cF_m(y,\xi,\tau+i0)-\cF_m(y,\xi,\tau-i0)\bigr)\,d\eta.
\label{2-1-23}
\end{equation}
\item\label{prop-2-1-5-ii}
If $H$ is microhyperbolic on the energy level $\bar{\tau}$ on $\supp (Q_2)$ in some direction $\ell$ with $\ell_x=0$ then \textup{(\ref{2-1-21})} holds with
$T_*\le T\le T^*$, $|\lambda-\bar{\tau}|\le \epsilon$, where $T^*$ is a small constant.

\item\label{prop-2-1-5-iii}
On the other hand, if $\ell_x\ne 0$, then  \textup{(\ref{2-1-21})} still holds with $T\le T^*$, albeit only after integration with respect to $y$:
\begin{multline}
T\int \widehat{\bar{\chi}}\bigl((\lambda -\tau)Th^{-1}\bigr)\,
d_\tau \bigl(\int(Q_{1x}e \,^t\!Q_{2y} )(y,y,\tau)\,dy\bigr) \sim\\
\sum _{m\ge 0}  h^{-d+m} T\int \widehat{\bar{\chi}}
\bigl((\lambda -\tau)Th^{-1}\bigr)
\varkappa'_m(\tau)\,d\tau
\label{2-1-24}
\end{multline}
with
\begin{equation}
\varkappa'_m(\tau)=\iint \bigl( \cF_m(y,\xi,\tau+i0)-\cF_m(y,\xi,\tau-i0)\bigr)\,dyd\eta.
\label{2-1-25}
\end{equation}
\end{enumerate}
\end{proposition}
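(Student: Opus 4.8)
The plan is to prove part~\ref{prop-2-1-5-i} directly from the successive approximation construction set up above, and then to derive parts~\ref{prop-2-1-5-ii} and~\ref{prop-2-1-5-iii} from it by a gluing argument based on Corollary~\ref{cor-2-1-4}.

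For part~\ref{prop-2-1-5-i} I would start from the iterated Duhamel identity (\ref{2-1-15})--(\ref{2-1-16}) on the strip $\Pi^\pm_T$. As already observed, on $\Pi^\pm_T$ the operator $H'$ of (\ref{2-1-18}) has norm $O(T)$ by finite speed of propagation, while $G^\pm$ and $\bar G^\pm$ have norm $O(h^{-1}T)$ from Duhamel's formula; hence each successive term carries an extra factor $O(h^{-1}T^2)=O(h^\delta)$ in the range $T_*\le T\le h^{\frac{1}{2}+\delta}$, and the remainder $(\bar G^\pm ihH')^N u^\pm\,{}^t\!Q_{2y}$ is $O(h^s)$ once $N=N(s,\delta)$ is large. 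I would then evaluate the finitely many explicit terms by applying $F_{(x,t)\to h^{-1}(\xi,\tau)}$ with $\tau\in\bC_\mp$, which turns $\bar G^\pm$ into multiplication by $(\tau-H^0(y,\xi))^{-1}$, turns $(x_j-y_j)$ into $-ih\partial_{\xi_j}$, and turns $\updelta(t)\updelta(x-y)\,{}^t\!Q_{2y}$ into $(2\pi)^{-d-1}e^{-ih^{-1}\langle y,\eta\rangle}Q_2(y,\eta)$; this reproduces exactly the symbols $\cF_m$ of (\ref{2-1-19})--(\ref{2-1-20}) with the stated bound on the number of resolvent factors. Adding the $u^+$ and $u^-$ contributions replaces $\cF_m(y,\xi,\tau)$, $\tau\in\bC_\mp$, by the jump $\cF_m(y,\xi,\tau+i0)-\cF_m(y,\xi,\tau-i0)$, $\tau\in\bR$; performing the inverse $h$-Fourier transform in $x$, applying $Q_{1x}$, setting $x=y$ and integrating out $\eta$ cancels $e^{-ih^{-1}\langle y,\eta\rangle}$ and produces the gain $h^{-d}$, yielding the densities $\kappa'_m$ of (\ref{2-1-23}). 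Finally, inserting the spectral representation (\ref{2-1-21}) and applying $F_{t\to h^{-1}\lambda}$ against $\bar\chi_T(t)$ rewrites the left side of (\ref{2-1-22}) as this sum modulo $O(h^s)$.

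For parts~\ref{prop-2-1-5-ii} and~\ref{prop-2-1-5-iii} the idea is that part~\ref{prop-2-1-5-i} already controls the short interval $|t|\le T_*$, so it suffices to check that enlarging the cut-off to $|t|\le T^*$ changes nothing modulo $O(h^s)$. Using (\ref{2-1-21}), the left side of (\ref{2-1-22}) equals $F_{t\to h^{-1}\lambda}\bigl(\bar\chi_T(t)(Q_{1x}u\,{}^t\!Q_{2y})(y,y,t)\bigr)$, and we split $\bar\chi_T=\bar\chi_{T_*}+(\bar\chi_T-\bar\chi_{T_*})$ where the second bump is supported in $\{T_*\lesssim|t|\le T\}$. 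When $\ell_x=0$, Corollary~\ref{cor-2-1-4}\ref{cor-2-1-4-ii} gives $(Q_{1x}u\,{}^t\!Q_{2y})(x,x,t)=O(h^s)$ uniformly for $T_*\le|t|\le T^*$, so the second term is $O(h^s)$ and (\ref{2-1-22}) for $T_*\le T\le T^*$, $|\lambda-\bar\tau|\le\epsilon$, follows from part~\ref{prop-2-1-5-i}; this is \ref{prop-2-1-5-ii}. When $\ell_x\ne0$ this pointwise bound fails, but Corollary~\ref{cor-2-1-4}\ref{cor-2-1-4-iii} (via its metaplectic reduction) gives the same negligibility for $\sigma_{Q_1,Q_2}(t)=\int(Q_{1x}u\,{}^t\!Q_{2y})(x,x,t)\,dx$, which is precisely the quantity appearing in (\ref{2-1-24})--(\ref{2-1-25}); the same splitting then yields \ref{prop-2-1-5-iii}.

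The step where genuine care is needed is the passage from the resolvent-valued symbols $\cF_m(y,\xi,\tau)$ with $\tau\in\bC_\mp$ to their boundary values on $\bR$: one must check that the $\eta$-integral in (\ref{2-1-23}) (respectively the $(y,\eta)$-integral in (\ref{2-1-25})) of the jump defines a genuine distribution in $\tau$ near $\bar\tau$, and that truncating the successive-approximation expansion at order $N$ — which is controlled in $\sL^2(\Pi^\pm_T)$ only for $T\le h^{\frac{1}{2}+\delta}$ — commutes with taking $\Im\tau\to0$ up to negligible errors. A secondary technical point, routine but to be tracked for the uniformity in $T$ claimed in the stated ranges, is the justification via finite speed of propagation (Corollary~\ref{cor-2-1-3}\ref{cor-2-1-3-i}) and the rescaling $t\mapsto t/T$, $x\mapsto(x-y)/T$, $h\mapsto h/T$ that restricting to $\Pi^\pm_T$ and then to the diagonal $x=y$ introduces no spurious contributions.
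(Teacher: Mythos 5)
Your proposal is correct and follows essentially the same route as the paper: part~\ref{prop-2-1-5-i} is exactly the successive-approximation computation of Subsection~\ref{sect-2-1-2} (Duhamel iteration, gain of $Ch^{-1}T^2=O(h^\delta)$ per term, $h$-Fourier transform producing the resolvent symbols $\cF_m$ and their jumps), and parts~\ref{prop-2-1-5-ii}--\ref{prop-2-1-5-iii} are obtained, as in the paper, by invoking Corollary~\ref{cor-2-1-4}\ref{cor-2-1-4-ii} and~\ref{cor-2-1-4-iii} to show the contribution of $T_*\le|t|\le T^*$ is negligible (pointwise when $\ell_x=0$, after integration in $y$ otherwise). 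Your explicit splitting of the cut-off and your flagged technical points about boundary values of $\cF_m$ are consistent with the details the paper defers to Section~4.3 of \cite{futurebook}.
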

For details, proofs and generalizations, see Section~\ref{book_new-sect-4-3} of \cite{futurebook}.

\subsection{Recovering spectral asymptotics}
\label{sect-2-1-3}

Let $\alpha (\tau)$ denote  $(Q_{1x}e \,^t\!Q_{2y} )(y,y,\tau)$ (which may be integrated with respect to $y$) and $\beta(\tau)$ denote the convolution of its derivative $\alpha'(\tau)$ with  $T\widehat{\bar{\chi}}(\tau T/h)$. To recover $\alpha (\tau)$ from $\beta(\tau)$, we apply \emph{Tauberian methods\/}. First of all, we observe that under the corresponding microhyperbolicity condition the distribution $\kappa'_m(y,\tau)$ or $\varkappa_m'(\tau)$ is smooth and the right-hand side expression  of (\ref{2-1-22}) or (\ref{2-1-24}) does not exceed $Ch^{-d+1}$.

Let us take $Q_1=Q_2$; then $\alpha (y,\tau)$ or $\alpha(\tau)$ is a monotone non-decreasing matrix  function of $\tau$. We choose  a H\"ormander function\footnote{\label{foot-15} I.e. a compactly supported function with positive Fourier transform.} $\bar{\chi}(t)$ and estimate the left-hand expressions of (\ref{2-1-22}) or (\ref{2-1-24}) from below by
\begin{equation*}
\epsilon_0 T\bigl(\alpha (\lambda +hT^{-1})-\alpha (\lambda -hT^{-1})\bigr),
\end{equation*}
which implies that
$\bigl(\alpha (\lambda +hT^{-1})-\alpha (\lambda -hT^{-1})\bigr)\le CT^{-1}h^{-d+1}$ and therefore
\begin{equation}
|\alpha (\lambda)-\alpha (\mu)| \le Ch^{-d+1}|\lambda-\mu| +
CT^{-1}h^{-d+1}
\label{2-1-26}
\end{equation}
as $\lambda,\mu\in (\bar{\tau}-\epsilon,\bar{\tau}+\epsilon)$. Then (\ref{2-1-26}) automatically  holds, even if $Q_1$ and $Q_2$ are not necessarily equal.

Further, (\ref{2-1-26}) implies that
\begin{gather}
|\alpha (\lambda)-\alpha (\mu)- h^{-1}\int_\mu^\lambda \beta(\tau)\,d\tau|\le
CT^{-1}h^{-d+1}
\label{2-1-27}\\
\shortintertext{and therefore}
|\int \Bigl(\alpha (\lambda)-\alpha (\mu)-
h^{-1}\int_\mu^\lambda \beta(\tau)\,d\tau\Bigr)\phi(\mu)\,d\mu|\le
CT^{-1}h^{-d+1}
\label{2-1-28}
\end{gather}
if $\bar{\chi}=1$ on $[-\frac{1}{2},\frac{1}{2}]$,
$\lambda,\mu\in (\bar{\tau}-\epsilon,\bar{\tau}+\epsilon)$ and
$\phi\in \sC_0^\infty((\bar{\tau}-\epsilon,\bar{\tau}+\epsilon))$ with $\int\phi(\tau)\,d\tau=1$.

On the other hand, even without the microhyperbolicity condition, our successive approximation construction is not entirely useless. Let us apply $\varphi_L(hD_t-\lambda)$ with $\varphi\in \sC_0^\infty ([-1,1])$ and
$L\ge h^{\frac{1}{2}-\delta}$, and then set $t=0$. We arrive to
\begin{equation}
\int  \varphi ((\tau-\lambda)L^{-1}) \bigl(\alpha '(\tau)-\beta (\tau)\bigr)\,d\tau=O(h^\infty).
\label{2-1-29}
\end{equation}
This allows us to extend (\ref{2-1-28}) to $\phi\in \sC_0^\infty(bR))$ with $\int\phi(\tau)\,d\tau=1$. For full details and generalizations, see Section~\ref{book_new-sect-4-4} of \cite{futurebook}.

Thus, we have proved:

\begin{theorem}\label{thm-2-1-6}
Let $H=H(x,hD,h)$ be a self-adjoint operator. Then,
\begin{enumerate}[label=(\roman*), wide, labelindent=0pt]
\item\label{thm-2-1-6-i}
The following asymptotics holds for $L\ge h^{\frac{1}{2}-\delta}$:
\begin{multline}
\int  \phi ((\tau-\lambda)L^{-1}) \Bigl( d_\tau (Q_{1x}e\,^t\!Q_{2y})(y,y,\tau)-
\sum _{m\ge 0} h^{-d+m}\kappa '(y,\tau)\,d\tau \Bigr)\\ = O(h^\infty).
\label{2-1-30}
\end{multline}
\item\label{thm-2-1-6-ii}
Let $H$ be microhyperbolic on the energy level $\bar{\tau}$ in some direction $\ell$ with $\ell_x=0$. Then for $|\lambda-\bar{\tau}|\le \epsilon$,
\begin{equation}
(Q_{1x}e\,^t\!Q_{2y})(y,y,\lambda)= h^{-d} \kappa_0 (y,\lambda)+O(h^{-d+1})
\label{2-1-31}
\end{equation}
with $\kappa_m(y,\lambda)\Def\int _{-\infty}^\lambda \kappa'_m (y,\tau)\,d\tau$.
\item\label{thm-2-1-6-iii}
Let $H$ be microhyperbolic on the energy level $\bar{\tau}$ in some direction $\ell$. Then for $|\lambda-\bar{\tau}|\le \epsilon$,
\begin{equation}
\int(Q_{1x}e\,^t\!Q_{2y})(y,y,\lambda)\,dy=
h^{-d} \varkappa_0 (\lambda)+ O(h^{-d+1})
\label{2-1-32}
\end{equation}
with $\varkappa_m(y)\Def \int _{-\infty}^\lambda \varkappa'_m (\tau)\,d\tau$.
\item\label{thm-2-1-6-iv}
In particular, it follows from \textup{(\ref{2-1-20})} that
\begin{align}
\kappa _0(\lambda,x) &=
(2\pi)^{-d}\int q_1^0 (x,\xi)\uptheta (\lambda- H^0(x,\xi))q_2^0 (x,\xi)\,d\xi
\label{2-1-33}\\
 \shortintertext{and}
 \varkappa _0(\lambda)&=
(2\pi)^{-d}\int q_1^0 (x,\xi)\uptheta (\lambda- H^0(x,\xi))q_2^0 (x,\xi)\,dxd\xi
\label{2-1-34}
\end{align}
\end{enumerate}
\end{theorem}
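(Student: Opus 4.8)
The plan is to assemble the theorem from the ingredients already prepared: the successive-approximation expansion of Proposition~\ref{prop-2-1-5}, the microhyperbolic propagation estimates of Corollary~\ref{cor-2-1-4}, and the Tauberian bookkeeping carried out in the paragraphs immediately preceding the statement. I would organize the argument into four steps, matching the four parts of the theorem.

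\emph{Step 1 (the unconditional part~\ref{thm-2-1-6-i}).} Start from identity~(\ref{2-1-15})--(\ref{2-1-16}), apply the $h$-Fourier transform and pass to~(\ref{2-1-19})--(\ref{2-1-20}) to represent $\beta(\tau)$ (the mollified derivative of $\alpha$) as $\sum_{m\ge0}h^{-d+m}\kappa'_m(y,\tau)$ modulo $O(h^s)$, provided one works on the short time interval $|t|\le T_*=h^{1-\delta}$ where, as noted after~(\ref{2-1-18}), each iteration gains a factor $O(h^\delta)$ and the remainder is $O(h^\infty)$ for $N$ large. The key observation is that the time-cutoff $\varphi_L(hD_t-\lambda)$ with $L\ge h^{\frac12-\delta}$ localizes to exactly such short times: this is formula~(\ref{2-1-29}), whose proof is simply that the symbol of $\varphi_L(hD_t-\lambda)$ applied to the successive-approximation series forces $|t|\lesssim L^{-1}h \le h^{1/2+\delta}$. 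Integrating~(\ref{2-1-29}) against $\phi$ yields~(\ref{2-1-30}). No microhyperbolicity is used here.

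\emph{Step 2 (the microhyperbolic parts~\ref{thm-2-1-6-ii}--\ref{thm-2-1-6-iii}).} Now invoke Corollary~\ref{cor-2-1-4}: under microhyperbolicity with $\ell_x=0$ (resp.\ general $\ell$, after integration in $y$), the quantity $F_{t\to h^{-1}\tau}\chi_T(t)(Q_{1x}u\,{}^t\!Q_{2y})$ is $O(h^s(h/|t|)^s)$ for $h^{1-\delta}\le|t|\le T^*$, so the Tauberian input~(\ref{2-1-22})/(\ref{2-1-24}) is valid all the way up to a fixed $T=T^*$. Feeding $T=T^*$ into the Tauberian chain~(\ref{2-1-26})--(\ref{2-1-28}) — which uses the monotonicity obtained by taking $Q_1=Q_2$ and a H\"ormander function $\bar\chi$ — turns the remainder $CT^{-1}h^{-d+1}$ into $O(h^{-d+1})$. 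Combining this with Step~1, which supplies the smooth principal term $\sum_m h^{-d+m}\kappa_m$ (only $\kappa_0$ surviving to the stated order since each $\kappa'_m$ is smooth and bounded, contributing $O(h^{-d+m})$), gives~(\ref{2-1-31}) and, after integrating in $y$ and using part~\ref{cor-2-1-4-iii}, gives~(\ref{2-1-32}). The polarization remark after~(\ref{2-1-26}) removes the restriction $Q_1=Q_2$.

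\emph{Step 3 (the explicit leading coefficients~\ref{thm-2-1-6-iv}).} This is a direct computation: from~(\ref{2-1-20}), $\cF_0(y,\xi,\tau)=(2\pi)^{-d-1}(\tau-H^0)^{-1}Q_2(y,\eta)$, so by~(\ref{2-1-23}) the jump across the real axis is $\kappa'_0(y,\tau)=(2\pi)^{-d}\int\bigl[(\tau+i0-H^0)^{-1}-(\tau-i0-H^0)^{-1}\bigr]q_2^0\,d\xi$; the Sokhotski--Plemelj formula identifies the bracket with $-2\pi i\,\updelta(\tau-H^0)$ acting spectrally, hence (after also inserting $Q_{1x}$ and setting $x=y$, which produces the factor $q_1^0$) integrating in $\tau$ up to $\lambda$ yields the Heaviside-function expression~(\ref{2-1-33}); integrating additionally in $x$ gives~(\ref{2-1-34}).

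\emph{Main obstacle.} The routine parts are Steps~1 and~3; the genuine content is the claim in Step~2 that the short-time construction can be \emph{propagated} to times of order $1$, i.e.\ that the error from replacing $T_*$ by $T^*$ is negligible. This is exactly where microhyperbolicity enters, and it rests entirely on Corollary~\ref{cor-2-1-4}, which in turn rests on Theorem~\ref{thm-2-1-2} (the positive-commutator propagation theorem). So the hard part is not re-proved here but imported; the only delicate point left is matching the two regimes — ensuring the Tauberian estimate~(\ref{2-1-26}) with $T=T^*$ is compatible with the expansion~(\ref{2-1-30}) valid only for $L\ge h^{1/2-\delta}$ — which is handled precisely by the passage~(\ref{2-1-27})--(\ref{2-1-29}) and the extension of~(\ref{2-1-28}) to all $\phi\in\sC_0^\infty(\bR)$ noted just before the theorem. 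Full details are in Sections~\ref{book_new-sect-4-3}--\ref{book_new-sect-4-4} of~\cite{futurebook}.
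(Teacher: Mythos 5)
Your proposal is correct and follows essentially the same route as the paper: the theorem is exactly the summary of the Tauberian argument of Subsection~\ref{sect-2-1-3} (monotonicity via $Q_1=Q_2$ and a H\"ormander function giving (\ref{2-1-26})--(\ref{2-1-28}), the short-time localization (\ref{2-1-29}) for the unconditional part, and Corollary~\ref{cor-2-1-4} to push $T$ from $T_*$ to $T^*$ under microhyperbolicity), with part~\ref{thm-2-1-6-iv} read off from (\ref{2-1-20}) exactly as in your Step~3. The only blemish is a misquotation of the bound in (\ref{2-1-9}), which is $C_sh^{-d}(h/|t|)^s$ rather than $O(h^s(h/|t|)^s)$; this does not affect the argument since for $|t|\ge h^{1-\delta}$ it still yields $O(h^s)$ for every $s$.
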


\begin{remark}\label{rem-2-1-7}
\begin{enumerate}[label=(\roman*), wide, labelindent=0pt]
\item\label{rem-2-1-7-i}
So far we have assumed that $Q_1,Q_2$ had compactly supported symbols in $(x,\xi)$. Assuming that these symbols are compactly supported with respect to $x$ only, in particular when $Q_1=\psi(x)$, $Q_2=1$, with $\psi\in \sC_0^\infty (X)$, we need to assume that \begin{claim}\label{2-1-35}
$\{\xi:\,\exists x\in X:\, \Spec H^0(x,\xi)\cap
(-\infty,  \lambda+\epsilon_0]\ne \emptyset\}$ is a compact set.
\end{claim}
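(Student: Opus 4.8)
Condition~(\ref{2-1-35}) is stated as a hypothesis because it need not hold for a completely general self-adjoint $H=H(x,hD,h)$; what I would show is that it is automatic in the model situation of this section, where $H=\lambda^{-1}A(x,D)-I=h^mA(x,D)-I$ with $A$ elliptic, self-adjoint, of order $m>0$ and semibounded from below on a manifold or domain $X$ that is compact in the basic case (and otherwise obeys the usual uniformity at infinity, the same one that makes a non-compactly-supported $Q_2$, such as $Q_2=1$, admissible). The plan is to write $\cL_\lambda\Def\{(x,\xi):\ \min\Spec H^0(x,\xi)\le\lambda+\epsilon_0\}$ and to prove that its $\xi$-projection $S$, which is exactly the set in (\ref{2-1-35}), is both bounded and closed.

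\emph{Boundedness.} The $h$-principal symbol of $h^mA(x,D)$ is the principal symbol $a^0(x,\xi)$ of $A$, homogeneous of degree $m$ in $\xi$ and Hermitian (since $A$ is self-adjoint), so $H^0(x,\xi)=a^0(x,\xi)-I$. Ellipticity says $a^0(x,\xi)$ is invertible for $\xi\ne0$, hence no eigenvalue of $a^0$ can vanish off the zero section; homogeneity then keeps each eigenvalue of a single sign along rays, and semiboundedness from below of $A$ forces that sign to be positive (if some eigenvalue of $a^0$ were negative along a ray, $\Spec A$ would run off to $-\infty$). Together with compactness of $X$ (or the uniform bounds on the symbol at infinity), this gives $a^0(x,\xi)\ge c_0|\xi|^m\,I$ with $c_0>0$, and hence $\min\Spec H^0(x,\xi)\ge c_0|\xi|^m-C_0$ for all $(x,\xi)$. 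Therefore $(x,\xi)\in\cL_\lambda$ implies $|\xi|^m\le c_0^{-1}(\lambda+\epsilon_0+C_0)$, i.e.\ $|\xi|\le R(\lambda)$, so $S\subset\{|\xi|\le R(\lambda)\}$ is bounded.

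\emph{Closedness and conclusion.} For each fixed $x$, the map $\xi\mapsto\min\Spec H^0(x,\xi)$ is continuous (the smallest eigenvalue of a continuously varying Hermitian matrix depends continuously on the matrix), so $\cL_\lambda$ is closed; when $X$ has compact closure, $\cL_\lambda\cap\{|\xi|\le R(\lambda)\}$ is compact and $S$, its image under the continuous projection to $\xi$, is compact, which is exactly (\ref{2-1-35}). For unbounded $X$ one additionally uses the uniform bounds on the symbol as $|x|\to\infty$ to keep $\cL_\lambda\cap\{|\xi|\le R(\lambda)\}$ with compact $\xi$-projection. The one step asking for genuine care is the coercivity bound $H^0(x,\xi)\ge(c_0|\xi|^m-C_0)I$ in the matrix case: one must exclude eigenvalues of $H^0(x,\xi)$ escaping to $-\infty$ or accumulating at a finite value as $|\xi|\to\infty$, and this uses all of ellipticity (no eigenvalue reaching $0$ off the zero section), semiboundedness from below (fixing the sign of the growing eigenvalues), and compactness of $X$ together with the uniformity at infinity.
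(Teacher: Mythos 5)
You have correctly diagnosed that (\ref{2-1-35}) is not a theorem but a numbered \emph{hypothesis}: it is the compactness condition that Remark~\ref{rem-2-1-7}\ref{rem-2-1-7-i} must impose once $Q_2$ is no longer compactly supported in $\xi$, so the paper offers no proof of it. The closest the paper comes is Remark~\ref{rem-2-1-7}\ref{rem-2-1-7-iii}, which asserts without argument that (\ref{2-1-35}) holds whenever $H^0$ is elliptic, positively homogeneous of degree $m>0$ in $\xi$, and positive-definite. Your write-up is a correct verification of exactly that assertion in the model reduction $H=\lambda^{-1}A-I$ of Section~\ref{sect-2-1}, and it adds one genuinely useful step the remark omits: you derive the positive-definiteness of the principal symbol $a^0$ from the standing assumption that $A$ is semibounded from below, via the standard quasimode argument that a negative eigenvalue of $a^0$ along a ray would drive $\Spec A$ to $-\infty$. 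The coercivity bound $H^0\ge (c_0|\xi|^m-C_0)I$ and the resulting bound $|\xi|\le R(\lambda)$ on the sub-level set are the substance of the matter, and you handle them correctly. Two small cautions: the projection of a closed set onto the $\xi$-factor is closed only because you first intersect with the compact set $\overline{X}\times\{|\xi|\le R(\lambda)\}$, so for noncompact $X$ your appeal to ``uniform bounds at infinity'' should be read as giving boundedness (which is all the application needs, since one then chooses $Q_2$ equal to $1$ on a neighbourhood of a compact set containing the set in question); and the constancy of the signature of $a^0$ off the zero section uses connectedness of the cosphere bundle, which is harmless but worth stating.
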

\item\label{rem-2-1-7-ii}
If we assume only that
\begin{claim}\label{2-1-36}
$\{\xi:\,\exists x\in X:\, \Spec H^0(x,\xi)\cap
(\mu -\epsilon_0,  \lambda+\epsilon_0]\ne \emptyset\}$ is a compact set,
\end{claim}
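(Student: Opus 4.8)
The plan is to show that the set $\cZ\Def\{\xi:\ \exists x\in X,\ \Spec H^0(x,\xi)\cap(\mu-\epsilon_0,\lambda+\epsilon_0]\neq\emptyset\}$ is bounded — the only substantive point — and then that it is (essentially) closed; compactness then follows, $\cZ$ living in $\bR^d_\xi$.

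Boundedness is the content of ellipticity of $H^0$ at infinity. In the setting at hand $H$ is the reduced operator $\lambda^{-1}A(x,D)-I$ with $h=\lambda^{-1/m}$, so $H^0(x,\xi)=a_m(x,\xi)-I$, where $a_m$ is the classical principal symbol of $A$, Hermitian-matrix valued and homogeneous of degree $m$ in $\xi$; thus it suffices to control $\Spec a_m$. Self-adjointness together with semiboundedness of $A$ forces $a_m(x,\xi)$ to be positive definite for $\xi\neq 0$: a negative eigenvalue $\mu_j(x_0,\xi_0)<0$ would give, by homogeneity, $\mu_j(x_0,t\xi_0)=t^m\mu_j(x_0,\xi_0)\to-\infty$, contradicting semiboundedness, while ellipticity rules out a vanishing eigenvalue for $\xi\neq 0$. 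Since $X$ is compact (a closed manifold, or the support of $\psi$ in part~\ref{rem-2-1-7-i}), the unit cosphere bundle is compact and $c_0\Def\min_{x\in X,\,|\xi|=1}\min\Spec a_m(x,\xi)>0$, so $\min\Spec H^0(x,\xi)\ge c_0|\xi|^m-C$ uniformly in $x$. Hence, as soon as $c_0|\xi|^m-C>\lambda+\epsilon_0$, the entire spectrum of $H^0(x,\xi)$ lies above $\lambda+\epsilon_0$ and misses the window, so $\cZ\subset\{\xi:\ c_0|\xi|^m\le\lambda+\epsilon_0+C\}$. (In the non-semibounded case one works with a window such as $(-1,0)$ or $(0,1)$; there classical ellipticity by itself yields $|\mu_j(x,\xi)|\ge c|\xi|^m$ for every eigenvalue, so $\Spec H^0(x,\xi)$ still leaves each fixed bounded interval as $|\xi|\to\infty$ uniformly in $x$, and boundedness follows in the same way.)

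For closedness I would write $\cZ$ as the image, under the projection $(x,\xi)\mapsto\xi$, of the set $\{(x,\xi)\in X\times\bR^d:\ \Spec H^0(x,\xi)\cap(\mu-\epsilon_0,\lambda+\epsilon_0]\neq\emptyset\}$. The projection is proper because $X$ is compact, and this set is closed by continuity of $H^0$ and of the eigenvalues of a Hermitian matrix in its entries — with the single caveat that a limiting eigenvalue may land exactly on the half-open endpoint $\mu-\epsilon_0$, so strictly speaking $\cZ$ need only be precompact. This is harmless: replacing the window by the closed interval $[\mu-\epsilon_0,\lambda+\epsilon_0]$ makes the set genuinely closed, its image genuinely compact, and this enlarged set still contains $\cZ$; since the $\epsilon_0$-collars serve only as safety margins — e.g. when extending the construction of Theorem~\ref{thm-2-1-6} to cut-offs $Q_1,Q_2$ that are not compactly supported in $\xi$ — the substitution costs nothing downstream.

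I expect the only step that genuinely needs care to be the uniform-in-$x$ escape estimate, namely $\Spec H^0(x,\xi)\cap[-R,R]=\emptyset$ for $|\xi|$ large and all $x\in X$; this is where the pointwise ellipticity (and semiboundedness) hypothesis must be upgraded to a bound uniform over $X$, and as indicated it comes down to homogeneity of $a_m$ together with compactness of the cosphere bundle over the closed manifold $X$, so it should present no real obstruction.
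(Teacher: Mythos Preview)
The statement (\ref{2-1-36}) is not a theorem in the paper: it is a \emph{hypothesis}. The surrounding text reads ``If we assume only that (\ref{2-1-36}) \ldots'', and the claim environment here is just a numbered display, not a theorem-like block. So there is no ``paper's own proof'' to compare against; the paper simply posits this compactness condition and draws the conclusions (\ref{2-1-37})--(\ref{2-1-38}) from it.

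What you have actually written is a verification of Remark~\ref{rem-2-1-7}\ref{rem-2-1-7-iii}: the paper asserts there, without argument, that if $H^0(x,\xi)$ is an elliptic symbol positively homogeneous of degree $m>0$ in $\xi$ then the compactness condition of~\ref{rem-2-1-7-ii} holds (and, if in addition $H^0$ is positive-definite, so does the stronger condition of~\ref{rem-2-1-7-i}). Your argument --- homogeneity plus compactness of the cosphere bundle over the closed manifold gives a uniform lower bound $\min\Spec a_m(x,\xi)\ge c_0|\xi|^m$, hence the spectrum escapes any fixed window for $|\xi|$ large --- is exactly the intended one-line justification, and it is correct. Your observation about the half-open endpoint and precompactness is a fair technical remark, and your fix (enlarge to the closed interval, which costs nothing since $\epsilon_0$ is a safety margin) is the right attitude.

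In short: you have not proved a theorem the paper proves, because the paper does not prove it; you have supplied the routine check behind the unproved assertion in Remark~\ref{rem-2-1-7}\ref{rem-2-1-7-iii}, and that check is fine.
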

 instead  of (\ref{2-1-31}) and (\ref{2-1-32}),  we get
\begin{gather}
| (Q_{1x}e\,^t\!Q_{2y})(y,y,\lambda,\mu)- h^{-1} \kappa_0 (y,\lambda,\mu)|\le Ch^{-d+1}
\label{2-1-37}\\
\shortintertext{and}
|  \iint(Q_{1x}e\,^t\!Q_{2y})(y,y,\lambda,\mu)\,dy-
h^{-1} \varkappa_0 (\lambda,\mu)|\le Ch^{-d+1},
\label{2-1-38}
\end{gather}
where $\mu \le \lambda$, $e(x,y,\lambda,\mu)\Def e(x,y,\lambda)-e(x,y,\mu)$,
$\kappa_m(y,\lambda)\Def \int _{\mu}^\lambda \kappa'_m (y,\tau)\,d\tau$,
$\varkappa_m(y,\lambda)\Def \int _{\mu}^\lambda \varkappa'_m (y,\tau)\,d\tau$
and we assume that the corresponding microhyperbolicity assumption is fulfilled on both energy levels $\mu$ and $\lambda$.

\item\label{rem-2-1-7-iii}
If $H^0(x,\xi)$ is an elliptic symbol which is positively homogeneous of degree $m>0$ with respect to $\xi$, then the microhyperbolicity condition is fulfilled with $\ell=(0,\pm \xi)$ on energy levels $\tau\ne 0$. Furthermore, the compactness condition of \ref{rem-2-1-7-ii} is fulfilled, and if $H^0(x,\xi)$ is also positive-definite, then the compactness condition of \ref{rem-2-1-7-i} is also fulfilled.
\end{enumerate}
\end{remark}

\subsection{Second term and dynamics}
\label{sect-2-1-4}

\subsubsection{Propagation of singularities}
\label{sect-2-1-4-1}
To derive two-term asymptotics, one can use the scheme described in Section~\ref{sect-1-2}, albeit one needs to describe the propagation of singularities. For matrix operators, this may be slightly tricky.

Let us introduce the \emph{characteristic symbol\/}
$g(\x,\upxi)\Def \det (\tau- H^0(\x,\xi))$ where $\x=(x_0,x)$, $\upxi=(\xi_0,\xi)$ etc; then $\Char(\xi_0-H(x,\xi))=\{(x,\upxi):\, g(x,\upxi)=0\}$. Let $\xi_0$ be a root of multiplicity $r$ of $g(x,\xi_0,\xi)$; then $g^{(\alpha)}_{(\beta)}(x,\upxi)=0$ for all $\alpha,\beta:|\alpha|+|\beta|<r$. Let us consider the $r$-jet of $g$ at such a point:
\begin{equation}
g_{(x,\upxi)} (y,\upeta)\Def \sum_{\alpha,\beta:|\alpha|+|\beta|<r}
\frac{1}{\alpha!\beta!} g^{(\alpha)}_{(\beta)}(x,\upxi) y^\beta\upeta^\alpha;
\label{2-1-39}
\end{equation}
it is a hyperbolic polynomial with respect to $\eta_0$. Consider its \emph{hyperbolicity cone\/} $K(x,\upxi)$, which is the connected component of $\{(\y;\upeta)\in \bR^{2d+2}:\, g_{(x,\upxi)} (\y,\upeta)\ne 0\}$ containing
$\{(y,\eta):\,\eta_0=1, y=\eta=0\}$ and the \emph{dual hyperbolicity cone\/}
\begin{equation}
K^\#(x,\upxi)=\{(\y',\upeta'): \langle \y',\upeta\rangle - \langle \y,\upeta'\rangle >0\}\subset \{y_0=0\}.
\label{2-1-40}
\end{equation}

\begin{definition}\label{def-2-1-8}
\begin{enumerate}[label=(\roman*), wide, labelindent=0pt]
\item\label{def-2-1-8-i}
An absolutely continuous curve $(\x(t),\upxi(t))$ (with $x_0=t$) is called a \emph{generalized Hamiltonian trajectory\/} if a.e.
\begin{equation}
(1,\frac{dx}{dt}; \frac{d\upxi}{dt})\in K^\#(x,\xi_0,\xi).
\label{2-1-41}
\end{equation}
Note that $\xi_0=\tau$ remains constant  along the trajectory.
\item\label{def-2-1-8-ii}
Let $\cK^\pm (x,\upxi)$ denote the union of all generalized Hamiltonian trajectories issued from $(x,\upxi)$ in the direction of increasing/decreasing $t$.
\end{enumerate}
\end{definition}

If $g=\alpha g_1^r$ where $\alpha\ne 0$ and  $g_1=0\implies \nabla g_1\ne 0$, the generalized Hamiltonian trajectories are just (ordinary) Hamiltonian trajectories of $g_1$ and $\cK^\pm (x,\upxi)$ are just half-trajectories\footnote{\label{foot-16} Since $e^{ih^{-1}tH}$ describes evolution with revert time, time is also reverted along (generalized) Hamiltonian trajectories.}.

The following theorem follows from Theorem~\ref{thm-2-1-2}:

\begin{theorem}\label{thm-2-1-9}
If $u(x,y,t)$ is the Schwartz kernel of $e^{ih^{-1}tH}$, then
\begin{equation}
\WF(u) \subset \{(x,\xi;y,-\eta;t,\tau): \pm t>0, (t,x;\tau,\xi)\in
K^\pm (0, y;\tau,\eta)\}.
\label{2-1-42}
\end{equation}
\end{theorem}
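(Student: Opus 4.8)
The plan is to deduce Theorem~\ref{thm-2-1-9} from the microhyperbolicity/propagation result Theorem~\ref{thm-2-1-2} by a covering argument: cover the complement of the right-hand side of (\ref{2-1-42}) by neighborhoods on each of which one can find finitely many functions $\phi_j$ making $P=hD_t-H$ microhyperbolic in the directions $\nabla^\#\phi_j$, with the wave front set of $Pu$ (which is empty away from $t=0$, since $(hD_t-H)u=0$ there) and the initial data controlled. The starting point is that $u|_{t=0}=\updelta(x-y)I$, so $\WF(u)\cap\{t=0\}\subset\{x=y,\ \xi=-\eta\}$ with $\tau$ arbitrary; the claim is then that any $(x_0,x;y,-\eta;\tau,\xi)$ with, say, $t=x_0>0$ and $(t,x;\tau,\xi)\notin K^+(0,y;\tau,\eta)$ can be excluded.

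The key geometric input is the duality between the hyperbolicity cone $K(x,\upxi)$ of the localized characteristic polynomial $g_{(x,\upxi)}$ and its dual cone $K^\#(x,\upxi)$, together with the elementary fact that a point lies outside the forward solid cone $K^\pm$ of generalized Hamiltonian trajectories iff it can be separated from it by a hyperplane whose conormal lies in the (open) dual hyperbolicity cone. Concretely, first I would reduce to a \emph{microlocal} statement near a fixed characteristic point: if $\xi_0=\tau$ is a root of multiplicity $r$ of $g(x,\xi_0,\xi)$, then near that point $H^0$ is microhyperbolic (Definition~\ref{def-2-1-1}) in every direction $\ell$ whose spatial-covariable part, viewed as an element of $T(T^*\bR^d)$, lies in $K^\#(x,\upxi)$ — this is exactly the statement that the symmetrized derivative $\ell P^0$ is positive on the generalized kernel of $P^0$ modulo $|P^0 v|^2$, which is the analytic content of hyperbolicity of the jet polynomial. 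Having this, for a point $(\bar x,\bar\upxi)$ in the complement of $K^+(0,\bar y;\bar\tau,\bar\eta)$ one picks a single linear function $\phi$ (a linearized separating hyperplane, perturbed by $t$ to handle the time direction, just as in the proof of Corollary~\ref{cor-2-1-3}) with $\phi\le 0$ on a neighborhood of the ``allowed'' set $\{t=0,\ x=y,\ \xi=-\eta\}$ intersected with the region of interest, $\phi(\bar x,\bar\upxi)>0$, and $P$ microhyperbolic in direction $\nabla^\#\phi$ throughout; then (\ref{2-1-3})--(\ref{2-1-5}) of Theorem~\ref{thm-2-1-2} give $(\bar x,\bar\upxi)\notin\WF(u)$.

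The one genuinely nontrivial point — and the step I expect to be the main obstacle — is globalizing the local microhyperbolicity: a single $\phi$ will not work because the hyperbolicity cone $K^\#$ varies with the base point and may degenerate (the multiplicity $r$ can jump), so one must use the cone property of $K^\pm$ — namely that it is generated by concatenating short generalized Hamiltonian trajectories — to propagate the exclusion step by step along a curve, at each step invoking Theorem~\ref{thm-2-1-2} with a fresh finite family $\{\phi_j\}$ adapted to the local cone, and then using compactness to pass from ``small $t$'' to all $t$ of the given sign. This is the matrix analogue of the standard ``propagation along bicharacteristics'' connectedness argument, complicated by the fact that $\cK^\pm$ is a solid cone of trajectories rather than a single curve; one has to check that the intermediate sets $\{\phi_1\le 0\}\cap\dots\cap\{\phi_J\le 0\}$ tile a neighborhood of $K^+(0,\bar y;\bar\tau,\bar\eta)$ correctly so that the hypotheses (\ref{2-1-4}) on $\partial\Omega$ are met at each stage. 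Since the excerpt explicitly says this theorem ``follows from Theorem~\ref{thm-2-1-2}'', I would ultimately cite \cite{futurebook} for the bookkeeping and present here only the reduction and the cone-duality computation; the finite-speed-of-propagation bound of Corollary~\ref{cor-2-1-3}\ref{cor-2-1-3-i} handles the remaining region $\{t=\bar\tau,\ |x-y|$ large$\}$ and confines everything to a compact set where the covering is finite.
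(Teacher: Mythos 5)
Your strategy is exactly the paper's: the survey gives no independent argument here but derives Theorem~\ref{thm-2-1-9} from the positive-commutator propagation result Theorem~\ref{thm-2-1-2} via separating functions adapted to the hyperbolicity cones, with the step-by-step globalization and bookkeeping deferred to \cite{futurebook}; your outline of the initial-data localization, the separating-hyperplane reduction, and the concatenation/compactness argument is the intended proof.

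One correction: you have the two cones interchanged at the key analytic step. Microhyperbolicity of $P=hD_t-H$ holds in directions $\ell$ lying in the \emph{hyperbolicity cone} $K(x,\upxi)$ of the localized polynomial $g_{(x,\upxi)}$ (in the scalar case $K$ is the half-space $\{\ell:\,\ell g>0\}$), whereas $K^\#(x,\upxi)$ is the \emph{propagation} cone along which the generalized Hamiltonian trajectories of Definition~\ref{def-2-1-8} run. The separating function $\phi$ must therefore satisfy $\nabla^\#\phi\in K$; under the symplectic pairing used in (\ref{2-1-40}) this is precisely the condition that $d\phi$ is nonnegative on $K^\#$, which is what makes $\{\phi\le 0\}$ absorb the forward cone $\cK^+$. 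With that substitution your reduction and the cone-duality computation are correct.
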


Then, we obtain:

\begin{corollary}\label{cor-2-1-10}
In the framework of Theorem~\ref{thm-2-1-9},
\begin{align}
&\WF (\sigma_{Q_1,Q_2}(t))\subset
\{(t,\tau):\, \exists (x,\xi): \, (t,x;\tau,\xi)\in \cK^\pm (0, x;\tau,\xi)\},
\label{2-1-43}\\
\intertext{and for any $x$,}
&\WF(Q_{1x}u\,^t\!Q_{2y})\subset
\{(t,\tau):\, \exists \xi,\eta: \, (t,x;\tau,\xi)\in \cK^\pm (0, x;\tau,\eta)\}.
\label{2-1-44}
\end{align}
\end{corollary}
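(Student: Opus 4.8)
The plan is to derive both inclusions from Theorem~\ref{thm-2-1-9} by tracking how the wave front set of $u(x,y,t)$ transforms under the operations defining $\sigma_{Q_1,Q_2}(t)$ and $Q_{1x}u\,^t\!Q_{2y}$. Recall that $\sigma_{Q_1,Q_2}(t)=\int (Q_{1x}u\,^t\!Q_{2y})(x,x,t)\,dx$, so we must (a) apply the pseudodifferential operators $Q_{1x}$ and $^t\!Q_{2y}$, (b) restrict the kernel to the diagonal $x=y$, and (c) in the first case integrate over $x$. Each of these is a standard wave front set operation, and the content is purely bookkeeping on the phase variables once the microlocal inclusion of Theorem~\ref{thm-2-1-9} is in hand.

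First I would handle the action of $Q_{1x}$ and $^t\!Q_{2y}$: since these are $h$-pseudodifferential operators with compactly supported symbols, composition with them does not enlarge the wave front set, so $\WF(Q_{1x}u\,^t\!Q_{2y})$ is contained in the set in \textup{(\ref{2-1-42})}, intersected of course with the essential supports of $q_1^0$ and $q_2^0$ in the $(x,\xi)$ and $(y,\eta)$ variables respectively. Next, the restriction to the diagonal: a distribution in $(x,y;t)$ may be restricted to $\{x=y\}$ provided its wave front set contains no covector conormal to the diagonal of the form $(x,\xi;x,-\xi;t,0)$ killed by the pullback; the pulled-back wave front set then consists of $(x;t,\tau;\zeta)$ with $(x,\xi;x,-\eta;t,\tau)\in\WF(Q_{1x}u\,^t\!Q_{2y})$ and $\zeta=\xi+\eta$ (after the sign conventions of the excerpt, where the kernel carries $-\eta$). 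Discarding the spatial covector $\zeta$ — which is what setting $x=y$ and not integrating leaves — and feeding in the constraint from \textup{(\ref{2-1-42})} that $(t,x;\tau,\xi)\in K^\pm(0,x;\tau,\eta)$ with $\pm t>0$, one reads off exactly \textup{(\ref{2-1-44})}: there must exist $\xi,\eta$ with $(t,x;\tau,\xi)\in\cK^\pm(0,x;\tau,\eta)$, recalling that $\cK^\pm$ is the union of generalized Hamiltonian trajectories and is obtained from $K^\pm$ by projecting out the base point. For \textup{(\ref{2-1-43})} I would additionally integrate over $x$: integration is pushforward along the projection $(x;t,\tau)\mapsto(t,\tau)$, which can only shrink the wave front set and replaces the pointwise existential ``for this $x$'' by ``there exists $x$ (and $\xi$)'', yielding \textup{(\ref{2-1-43})}.

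The main obstacle — really the only non-formal point — is justifying that the diagonal restriction is legitimate, i.e. that no bad covectors obstruct the pullback to $\{x=y\}$. This is precisely where the finite-speed-of-propagation content of Corollary~\ref{cor-2-1-3}\ref{cor-2-1-3-i} enters: for $|t|\le T^*$ the wave front set of $u$ near $\tau=\bar\tau$ lies in $\{|x-y|^2+|\xi+\eta|^2\le (C_0t)^2\}$, so for $t\neq 0$ the covectors do not degenerate, and at $t=0$ one simply uses $u|_{t=0}=\updelta(x-y)I$ separately. Equivalently, the microhyperbolicity-type bound \textup{(\ref{2-1-6})} guarantees that along any point of $\WF(u)$ with small $|t|$ one has $|\xi+\eta|\lesssim |t|$, so the only diagonal covector in $\WF(u)$ with $t=0$ is the conormal one coming from $\updelta(x-y)$, which contributes only the smooth-in-$t$ jump at the origin and is harmless for the statements about $t\neq 0$. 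Once this transversality is secured, the remaining steps are the routine wave front calculus for pseudodifferential operators, pullbacks, and pushforwards, and the two inclusions drop out.
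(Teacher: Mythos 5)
Your overall strategy is the standard one and is, in effect, all the paper offers: Corollary~\ref{cor-2-1-10} is presented as an immediate consequence of Theorem~\ref{thm-2-1-9} via the functorial behaviour of wave front sets under composition with $Q_{1x}$, $^t\!Q_{2y}$, restriction to the diagonal, and integration. Your treatment of (\ref{2-1-44}) is essentially correct, and your concern about the legitimacy of the diagonal restriction, resolved via the finite-speed estimate (\ref{2-1-7}), is the right thing to worry about.

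However, your derivation of (\ref{2-1-43}) has a genuine gap. You describe the integration over $x$ as a pushforward that ``can only shrink the wave front set and replaces the pointwise existential by `there exists $x$'\,''. As written, that yields only the inclusion of $\WF(\sigma_{Q_1,Q_2})$ into the set of \emph{loop} times $\{(t,\tau):\exists x,\xi,\eta:\ (t,x;\tau,\xi)\in\cK^\pm(0,x;\tau,\eta)\}$, which is strictly larger than the set appearing in (\ref{2-1-43}), where the \emph{same} covector $\xi$ occurs at both endpoints, i.e.\ the set of \emph{periodic} times. The missing ingredient is the precise pushforward rule: only those points of the wave front set whose covector in the integrated variable \emph{vanishes} survive, $\WF\bigl(\int v(x,t)\,dx\bigr)\subset\{(t,\tau):\ \exists x,\ (x,0;t,\tau)\in\WF(v)\}$. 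Since the diagonal restriction produces the spatial covector $\xi-\eta$ (with the convention of (\ref{2-1-42}), where the $y$-slot carries $-\eta$; your formula $\zeta=\xi+\eta$ has the sign wrong, or at best is stated ambiguously), the vanishing condition is exactly $\xi=\eta$, which is what upgrades ``loop'' to ``periodic''. The distinction is not cosmetic: Theorem~\ref{thm-2-1-12}\ref{thm-2-1-12-iii} requires only that the periodic points have measure zero, a strictly weaker hypothesis than measure zero of loop points, and your version of (\ref{2-1-43}) would not deliver that conclusion.
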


\begin{definition}\label{def-2-1-11}
\begin{enumerate}[label=(\roman*), wide, labelindent=0pt]
\item\label{def-2-1-11-i}
A \emph{periodic point\/} is a point $(x,\xi)$ which satisfies $(t,x;\tau,\xi)\in \cK^\pm (0, x;\tau,\xi)$ for some $t\ne 0$.
\item\label{def-2-1-11-ii}
A \emph{loop point\/} is a point $x$ which satisfies $(t,x;\tau,\xi)\in \cK^\pm (0, x;\tau,\eta)$ for some $t\ne 0$, $\xi,\eta$; we call $\eta$  a \emph{loop direction\/}.
\end{enumerate}
\end{definition}

\subsubsection{Application to spectral asymptotics}
\label{sect-2-1-4-2}
Combining Corollary~\ref{cor-2-1-10} with the arguments of Section~\ref{sect-1-2}, we arrive to

\begin{theorem}\label{thm-2-1-12}
\begin{enumerate}[label=(\roman*), wide, labelindent=0pt]
\item\label{thm-2-1-12-ii}
In the framework of Theorem~\ref{thm-2-1-6}\ref{thm-2-1-6-ii} let for some $x$ the set of all loop directions at point $x$ on energy level $\lambda$ have measure $0$\,\footnote{\label{foot-17} There exists a natural measure $\upmu_{\lambda,x}$ on $\{\xi: \det (\lambda-H^0(x,\xi))=0\}$.}. Then,
\begin{equation}
(Q_{1x}e\,^t\!Q_{2y})(y,y,\lambda)= h^{-d} \kappa_0 (y,\lambda)+
h^{1-d} \kappa_1 (y,\lambda)+o(h^{-d+1}).
\label{2-1-45}
\end{equation}
\item\label{thm-2-1-12-iii}
In the framework of Theorem~\ref{thm-2-1-6}\ref{thm-2-1-6-iii}, suppose that the set of all periodic points on energy level $\lambda$ has measure $0$\,\footnote{\label{foot-18} There exists a natural measure $\upmu_\lambda$ on $\{(x,\xi): \det (\lambda-H^0(x,\xi))=0\}$.}. Then,
\begin{equation}
\int(Q_{1x}e\,^t\!Q_{2y})(y,y,\lambda)\,dy=
h^{-d} \varkappa_0 (\lambda)+ h^{1-d} \varkappa_1 (\lambda) + o(h^{-d+1}).
\label{2-1-46}
\end{equation}
\end{enumerate}
\end{theorem}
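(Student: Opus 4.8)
The plan is to follow the two-term Tauberian scheme sketched in Section~\ref{sect-1-2}, adapted to the matrix setting using the propagation results of Section~\ref{sect-2-1-4-1}. Recall that from Theorem~\ref{thm-2-1-6} we already have the one-term asymptotics together with the estimate (\ref{2-1-26}) for $\alpha(\tau)=(Q_{1x}e\,{}^t\!Q_{2y})(y,y,\tau)$ (or its integral over $y$), and we have the successive-approximation representation of $\beta(\tau)$, the mollification of $\alpha'(\tau)$ at scale $T$, through the functions $\kappa'_m$, $\varkappa'_m$. The point of a second term is to push the Tauberian remainder from $O(h^{-d+1}T^{-1})$ with $T=T_*=h^{1-\delta}$ down to $o(h^{-d+1})$ by taking $T$ to be a large constant $T^*$, which requires that the mollified quantity be \emph{smooth} in $\tau$ up to time $T^*$ — and that is exactly where the dynamical hypothesis enters.

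First I would recall that by Corollary~\ref{cor-2-1-4}\ref{cor-2-1-4-ii} the contribution of the time interval $T_*\le|t|\le T^*$ to $F_{t\to h^{-1}\tau}\bar\chi_T u$ is $O(h^\infty)$ \emph{provided} no singularities of $\sigma_{Q_1,Q_2}$ (resp.\ $Q_{1x}u\,{}^t\!Q_{2y}$) lie in that range, i.e.\ provided the microhyperbolicity/non-periodicity condition holds there. By Corollary~\ref{cor-2-1-10}, the wave front set of $\sigma_{Q_1,Q_2}(t)$ in $\{t\ne 0\}$ sits over the periodic points, and that of $Q_{1x}u\,{}^t\!Q_{2y}$ over the loop points with loop direction. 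So the strategy is a partition of unity on the energy surface $\{\det(\lambda-H^0)=0\}$: since the relevant set (periodic points, resp.\ loop directions) has $\upmu_\lambda$-measure (resp.\ $\upmu_{\lambda,x}$-measure) zero, for every $\varepsilon>0$ and every $T\ge T_0$ we can split $Q_2=Q_2'+Q_2''$ (and correspondingly $Q_1$) with $\upmu$-measure of $\supp Q_2'$ below $\varepsilon$, while on $\supp Q_2''$ no periodic trajectory (resp.\ no loop) of length $\le T$ passes. For the $Q''$ piece, Corollary~\ref{cor-2-1-4} upgrades the construction to $|t|\le T$, so the Tauberian theorem gives remainder $O(h^{-d+1}T^{-1})$ there; for the $Q'$ piece, (\ref{2-1-26})–(\ref{2-1-28}) with $T=T_*$ give remainder $O(h^{-d+1})\cdot\upmu(\supp Q')=O(\varepsilon h^{-d+1})$. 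Adding, one gets remainder $O\bigl(h^{-d+1}(T^{-1}+\varepsilon)\bigr)+o_{\varepsilon,T}(h^{-d+1})$, and letting $T\to\infty$, $\varepsilon\to 0$ yields the $o(h^{-d+1})$ bound; the second term $h^{1-d}\kappa_1$ (resp.\ $h^{1-d}\varkappa_1$) comes from the $m=1$ term of the successive approximations, $\kappa_1(y,\lambda)=\int_{-\infty}^\lambda\kappa'_1(y,\tau)\,d\tau$ with $\kappa'_1$ read off from (\ref{2-1-19}), exactly as $c_1$ arose in (\ref{1-2-8}).

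I would carry out the steps in this order: (1) record that $\kappa'_0,\kappa'_1$ (resp.\ $\varkappa'_0,\varkappa'_1$) are smooth in $\tau$ near $\lambda$ under microhyperbolicity, so the two-term mollified asymptotics (\ref{2-1-22}) (resp.\ (\ref{2-1-24})) holds with any $T\le T^*$ once the dynamical obstruction is removed; (2) invoke Corollary~\ref{cor-2-1-10} to locate $\WF$ of the trace over the periodic/loop set; (3) build the $\varepsilon$-$T$ partition of unity on the (compact, by the microhyperbolicity-induced compactness in Remark~\ref{rem-2-1-7}\ref{rem-2-1-7-iii}) energy surface using the measure-zero hypothesis; (4) apply the Tauberian estimate (\ref{2-1-26})–(\ref{2-1-29}) piecewise; (5) sum and pass to the limit. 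The main obstacle is step (3) together with the matrix subtlety flagged before Definition~\ref{def-2-1-8}: the ``trajectories'' here are the \emph{generalized} Hamiltonian trajectories in the dual hyperbolicity cone $K^\#$, which at points of variable multiplicity are not single curves but a whole family, so ``the set of loop directions / periodic points has measure zero'' must be shown to be the right hypothesis controlling $\WF(\sigma_{Q_1,Q_2})$ — this is precisely the content of Corollary~\ref{cor-2-1-10}, which I am allowed to assume — and one must check that the non-periodicity of these generalized trajectories on $\supp Q_2''$ really translates, via Corollary~\ref{cor-2-1-4}, into the $O(h^\infty)$ bound on $T_*\le|t|\le T$. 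Everything else is the routine repetition of the argument of Section~\ref{sect-1-2} with $\lambda^{d-1}$ replaced by $h^{-d+1}$.
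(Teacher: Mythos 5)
Your proposal is correct and follows exactly the route the paper takes: it proves the theorem by combining Corollary~\ref{cor-2-1-10} (localization of $\WF$ of $\sigma_{Q_1,Q_2}$ and of $(Q_{1x}u\,^t\!Q_{2y})(x,x,t)$ over periodic points, resp.\ loop directions) with the $\varepsilon$--$T$ partition-of-unity Tauberian argument of Section~\ref{sect-1-2}, which is precisely what the text indicates. One small slip: for the small-measure piece $Q'$ the Tauberian estimate should be applied with a fixed small constant time $T_0$, picking up the factor $\upmu(\supp Q')\le\varepsilon$ as in (\ref{1-2-7}), rather than with $T=T_*=h^{1-\delta}$, which would give the worse bound $O(\varepsilon h^{-d+\delta})$; the combined remainder you state at the end, $O\bigl(h^{-d+1}(T^{-1}+\varepsilon)\bigr)+o_{\varepsilon,T}(h^{-d+1})$, is nonetheless the correct one.
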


\begin{remark}\label{rem-2-1-13}
\begin{enumerate}[label=(\roman*), wide, labelindent=0pt]
\item\label{rem-2-1-13-i}
When studying propagation, we can allow $H$ to also depend  on $x_0=t$; for all details and proofs, see Sections~\ref{book_new-sect-2-1} and~\ref{book_new-sect-2-2} of \cite{futurebook}.

\item\label{rem-2-1-13-ii}
Recall that $e(x,y,\lambda)$ is the Schwartz kernel of $\uptheta (\lambda-H)$. We can also consider $e_\nu (x,y,\tau)$ which is the Schwartz kernel of
$(\lambda-H)^\nu _+\Def (\lambda-H)^\nu\uptheta (\lambda-H)$ with $\nu\ge 0$. Then in the Tauberian arguments, $h^{-d}\times (h/T)$ is replaced by
$h^{-d}\times (h/T)^{1+\nu}$ and then in the framework of Theorem~\ref{thm-2-1-6}\ref{thm-2-1-6-ii} and \ref{thm-2-1-6-iii} remainder estimates are $O(h^{-d+1+\nu})$ and in the framework of Theorem~\ref{thm-2-1-6}\ref{thm-2-1-12-ii} and \ref{thm-2-1-12-iii}, the remainder estimates are $o(h^{-d+1+\nu})$; sure, in the asymptotics one should include  all the necessary terms $\kappa_m h^{-d+m}$ or $\varkappa_m h^{-d+m}$\,\footnote{\label{foot-19} Here, we need to assume that $H$ is semi-bounded from below; otherwise some modifications are required.}.

\item\label{rem-2-1-13-iii}
Under more restrictive conditions on Hamiltonian trajectories instead of $T$ an arbitrarily large constant, we can take $T$ depending on $h$\,\footnote{\label{foot-20} Usually these restrictions are $T\le h^{-\delta}$ and $|D\Psi _t(z)|\le h^{-\delta}$ with sufficiently small $\delta>0$.}; see Section~\ref{book_new-sect-2-4} of \cite{futurebook}. Usually, we can take $T=\epsilon |\log h|$ or even $T=h^{-\delta}$.

Then in the remainder estimate, the main term is
\begin{equation*}
C\bigl(\upmu (\Pi_{T,\gamma})h^{-d+1} +  h^{-d+1+\nu}T^{-1-\nu}\bigr),
\end{equation*}
where $\Pi_{T,\gamma}$ is the set of all points $z=(x,\xi)$ (on the given energy level) such that $\dist (\Psi_t(z), z)\le \gamma$ for some
$t\in (\epsilon, T)$  and $\gamma=h^{1/2-\delta'}$. Here, however, we assume that either $H^0$ is scalar or its eigenvalues have constant multiplicities and apply the Heisenberg approach to the long-term evolution.

Then the remainder estimates could be improved to $O(h^{-d+1+\nu}|\log h|^{-1-\nu})$ or even to $O(h^{-d+1+\nu+\delta})$ respectively. As examples, we can consider the geodesic flow on a Riemannian manifold with negative sectional curvature (log case) and the completely integrable non-periodic Hamiltonian flow (power case). For all details and proofs, see Section~\ref{book_new-sect-4-5} of \cite{futurebook}.
\end{enumerate}
\end{remark}

\subsection{Rescaling technique}
\label{sect-2-1-5}

The results we proved are very uniform: as long as we know that operator in question is self-adjoint and that the smoothness and non-degeneracy conditions are fulfilled uniformly in $B(\bar{x},1)$, then all asymptotics are also uniform (as $x\in B(\bar{x},\frac{1}{2})$ or  $\supp(\psi)\subset B(\bar{x},\frac{1}{2})$). Then these results could self-improve.

Here we consider only the Schr\"odinger operator  away from the boundary; but the approach could be generalized for a wider class of operators. For generalizations, details and proofs, see Chapter~\ref{book_new-sect-5} of \cite{futurebook}.

\begin{proposition}\label{prop-2-1-14}
Consider the Schr\"odinger operator. Assume that
$\rho\gamma\ge h$ and in $B(\bar{x},\gamma)\subset X$,
\begin{phantomequation}\label{2-1-47}\end{phantomequation}
\begin{equation}
|\partial ^\alpha g^{jk}|\le c_\alpha \gamma^{-|\alpha|},\qquad
|\partial ^\alpha V|\le c_\alpha \rho^2\gamma^{-|\alpha|}.
\tag*{$\textup{(\ref{2-1-47})}_{1,2}$}\label{2-1-47-*}
\end{equation}
Then,
\begin{enumerate}[label=(\roman*), wide, labelindent=0pt]
\item\label{prop-2-1-14-i}
In $B(\bar{x},\frac{1}{2}\gamma)$,
\begin{equation}
e(x,x,0)\le C\rho^d h^{-d}.
\label{2-1-48}
\end{equation}
\item\label{prop-2-1-14-ii}
If in addition $|V|+|\nabla V|\gamma  \ge \epsilon \rho^2$, then for $\supp(\psi)\subset B(\bar{x},\frac{1}{2}\gamma)$
such that  $|\partial^\alpha \psi |\le c_\alpha \gamma^{-|\alpha|}$,
\begin{equation}
\biggl |\int \bigl(e(x,x,0)-\kappa_0 V_-^{d/2}\bigr)\,dx \biggr | \le C\rho^{d-1}\gamma^{d-1} h^{1-d};
\label{2-1-49}
\end{equation}
\item\label{prop-2-1-14-iii}
If in addition $|V| \ge \epsilon \rho^2$ in $B(\bar{x},\gamma)$ then
\begin{equation}
|e(x,x,0)-\kappa_0 V_-^{d/2}| \le C\rho^{d-1} \gamma^{-1}h^{1-d};
\label{2-1-50}
\end{equation}
\item\label{prop-2-1-14-iv}
If in addition $V \ge \epsilon \rho^2$ in $B(\bar{x},\gamma)$, then for any $s$,
\begin{equation}
|e(x,x,0)| \le C\rho^{d-s} \gamma^{-s}h^{s-d}.
\label{2-1-51}
\end{equation}
\end{enumerate}
\end{proposition}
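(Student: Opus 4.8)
The plan is to reduce all four statements to the local theory of Subsection~\ref{sect-2-1-3} (Theorem~\ref{thm-2-1-6}) by the \emph{rescaling} announced in Section~\ref{sect-1-1} and at the head of this subsection. Put $y\Def(x-\bar x)/\gamma$ and replace $H$ by $\rho^{-2}H$. Since the (positive) Laplace--Beltrami operator scales as $\Delta_{g,x}=\gamma^{-2}\Delta_{\tilde g,y}$, where $\Delta_{\tilde g}$ is the Laplace--Beltrami operator of the rescaled metric $(\tilde g_{jk}(y))=(g_{jk}(\bar x+\gamma y))$, this turns $H$ into a genuine semiclassical Schr\"odinger operator
\begin{gather*}
\mathcal{H}\Def h_1^2\Delta_{\tilde g}+\tilde V,\qquad h_1\Def \frac{h}{\rho\gamma}\le 1,\\
\tilde g^{jk}(y)\Def g^{jk}(\bar x+\gamma y),\qquad \tilde V(y)\Def \rho^{-2}V(\bar x+\gamma y)
\end{gather*}
on $B(0,1)$, whose coefficients by $(\ref{2-1-47})_{1,2}$ satisfy $|\partial^\alpha\tilde g^{jk}|+|\partial^\alpha\tilde V|\le C_\alpha$ \emph{uniformly}. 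Under this change of variable the spectral kernel transforms by $e_H(x,x,0)=\gamma^{-d}e_{\mathcal{H}}(y,y,0)$ (the Jacobian factor $\gamma^{-d}$), and, by $(\ref{2-1-33})$, the Weyl density goes over into $\kappa_{0,\mathcal{H}}(y,0)=(2\pi)^{-d}\omega_d\sqrt{\det\tilde g_{jk}}\,\tilde V_-^{d/2}$, which scales back exactly to $\kappa_0V_-^{d/2}$; keeping track of the powers of $\rho,\gamma,h$, the claims $(\ref{2-1-48})$--$(\ref{2-1-51})$ become the unit-scale statements $e_{\mathcal{H}}(y,y,0)\le Ch_1^{-d}$, $|e_{\mathcal{H}}(y,y,0)-h_1^{-d}\kappa_{0,\mathcal{H}}(y,0)|\le Ch_1^{1-d}$ (pointwise, resp.\ integrated in $y$), and $|e_{\mathcal{H}}(y,y,0)|\le C_sh_1^{s}$ for all $s$, all on $B(0,\frac12)$. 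It therefore suffices to check that $\mathcal{H}$ meets the hypotheses of the relevant parts of Theorem~\ref{thm-2-1-6}; since those results are local --- built on finite speed of propagation --- it is harmless that $H$ (hence $\mathcal{H}$) is controlled only on $B(\bar x,\gamma)$ (on $B(0,1)$).

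For $(\ref{2-1-48})$ no non-degeneracy is needed: since $(\tilde g^{jk})$ is uniformly positive-definite and $\tilde V$ is bounded on $B(0,1)$, the compactness condition $(\ref{2-1-35})$ of Remark~\ref{rem-2-1-7}\ref{rem-2-1-7-i} holds on every bounded energy window, and for $\mu$ below the minimum of $\tilde V$ over a neighbourhood of $B(0,\frac12)$ the symbol $H^0_{\mathcal{H}}(y,\eta)-\mu\ge\tilde V(y)-\mu>0$ is elliptic, so --- all densities $\kappa'_m(y,\tau)$ vanishing for $\tau\le\mu$ --- Theorem~\ref{thm-2-1-6}\ref{thm-2-1-6-i} together with the monotonicity of $\tau\mapsto e_{\mathcal{H}}(y,y,\tau)$ gives $e_{\mathcal{H}}(y,y,\mu)=O(h_1^\infty)$ near $y=0$; then the rough Tauberian estimate $(\ref{2-1-26})$ on the bounded interval $[\mu,0]$ yields $e_{\mathcal{H}}(y,y,0)=e_{\mathcal{H}}(y,y,0)-e_{\mathcal{H}}(y,y,\mu)+O(h_1^\infty)\le Ch_1^{-d}$ on $B(0,\frac12)$, and rescaling back gives $(\ref{2-1-48})$. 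For $(\ref{2-1-51})$: if $V\ge\epsilon\rho^2$ then $\tilde V\ge\epsilon$, so $H^0_{\mathcal{H}}=\tilde g^{jk}\eta_j\eta_k+\tilde V\ge\epsilon>0$ on $B(0,1)$ --- i.e.\ $\mathcal{H}$ is elliptic on the energy level $0$ there --- whence, all $\kappa'_m$ vanishing near $\tau=0$, Theorem~\ref{thm-2-1-6}\ref{thm-2-1-6-i} gives $|e_{\mathcal{H}}(y,y,0)|\le C_sh_1^{s}$ for every $s$ on $B(0,\frac12)$, which rescales to $(\ref{2-1-51})$.

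The heart of the matter is $(\ref{2-1-49})$--$(\ref{2-1-50})$, where the only real work is to identify the microhyperbolic directions of $H^0_{\mathcal{H}}$ on the zero-energy surface $\{H^0_{\mathcal{H}}=0\}=\{\tilde g^{jk}(y)\eta_j\eta_k=\tilde V_-(y)\}$ and to see why one case forces a merely integrated statement. If $\tilde V(y)<0$, a zero-energy point over $y$ has $\eta\ne0$ and $\nabla_\eta H^0_{\mathcal{H}}=2\tilde g^{jk}\eta_k\ne0$, so $\mathcal{H}$ is microhyperbolic there in the direction $\ell=(0,\nabla_\eta H^0_{\mathcal{H}})$, whose $x$-component \emph{vanishes}. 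If $\tilde V(y)>0$ there is no zero-energy point over $y$. If $\tilde V(y)=0$ the only zero-energy point over $y$ is $\eta=0$, and there $\nabla_{(y,\eta)}H^0_{\mathcal{H}}=(\nabla_y\tilde V(y),0)$. Under the hypothesis of~\ref{prop-2-1-14-iii} ($|V|\ge\epsilon\rho^2$, i.e.\ $|\tilde V|\ge\epsilon$, so $\tilde V$ --- continuous and nowhere zero on the connected ball --- has a definite sign) the last case is excluded, so either on the whole zero-energy surface $\mathcal{H}$ is microhyperbolic in a direction with $\ell_x=0$, whence Theorem~\ref{thm-2-1-6}\ref{thm-2-1-6-ii} applies and gives the pointwise asymptotics $(\ref{2-1-50})$, or $V\ge\epsilon\rho^2$ and $(\ref{2-1-50})$ is just the case $s=1$ of~\ref{prop-2-1-14-iv}. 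Under the weaker hypothesis of~\ref{prop-2-1-14-ii} ($|V|+|\nabla V|\gamma\ge\epsilon\rho^2$, i.e.\ $|\tilde V|+|\nabla_y\tilde V|\ge\epsilon$) the last case may occur, but then $|\nabla_y\tilde V(y)|\ge\epsilon$ and $\ell=(\nabla_y\tilde V(y),0)$ is microhyperbolic, since $\ell\cdot\nabla_{(y,\eta)}H^0_{\mathcal{H}}=|\nabla_y\tilde V(y)|^2>0$ at $\eta=0$ --- but now $\ell_x\ne0$; hence only Theorem~\ref{thm-2-1-6}\ref{thm-2-1-6-iii} is available and the asymptotics holds only after integration in $y$, giving $(\ref{2-1-49})$. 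In both cases the main term is the rescaled $h_1^{-d}\kappa_{0,\mathcal{H}}(y,0)$, which scales back to $\kappa_0V_-^{d/2}$, and the remainder $Ch_1^{1-d}$ rescales to $C\rho^{d-1}\gamma^{-1}h^{1-d}$, resp.\ $C\rho^{d-1}\gamma^{d-1}h^{1-d}$.

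The one genuinely delicate point is the phase-space geometry just used: where $V$ vanishes only to first order, the zero-energy ``surface'' is pinched to the single point $\eta=0$ over the zero set of $V$, and the generalized Hamiltonian flow there is transversal to it only after projection to the base --- precisely the difference between a microhyperbolic direction with $\ell_x=0$ and one with $\ell_x\ne0$ --- which is exactly why~\ref{prop-2-1-14-ii} can only be an integrated statement while~\ref{prop-2-1-14-iii} is pointwise. Everything else --- that $(\ref{2-1-47})_{1,2}$ make the smoothness and non-degeneracy hypotheses of Theorem~\ref{thm-2-1-6} uniform at unit scale, the locality of those results, and the bookkeeping of the powers of $\rho$, $\gamma$, $h$ --- is routine; for full details and generalizations see Chapter~\ref{book_new-sect-5} of~\cite{futurebook}.
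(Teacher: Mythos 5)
Your proposal is correct and follows exactly the paper's route: the paper's own proof is just the observation that the case $\rho=\gamma=1$ is already established and that the general case follows by the rescaling $x\mapsto (x-\bar x)/\gamma$, multiplication of the operator by $\rho^{-2}$ (so $h\mapsto \hbar=h/\rho\gamma$), and the Jacobian factor $\gamma^{-d}$ for the density $e(x,y,\tau)$. Your additional verification of the unit-scale hypotheses — ellipticity for \ref{prop-2-1-14-i} and \ref{prop-2-1-14-iv}, and the distinction between microhyperbolic directions with $\ell_x=0$ (pointwise, Theorem~\ref{thm-2-1-6}\ref{thm-2-1-6-ii}) versus $\ell_x\ne0$ (integrated only, Theorem~\ref{thm-2-1-6}\ref{thm-2-1-6-iii}) — correctly identifies why \ref{prop-2-1-14-ii} is an averaged statement while \ref{prop-2-1-14-iii} is pointwise, which the paper leaves implicit.
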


\begin{proof}
Indeed, we have already proved this in the special case $\rho=\gamma=1$, $h\le 1$. In the general case, we can reduce the problem to the special case by rescaling $x\mapsto x\gamma^{-1}$, $\tau\mapsto \tau\rho^{-2}$ (so we multiply operator by $\rho^{-2}$) and then automatically $h\mapsto \hbar= h\rho^{-1}\gamma^{-1}$. Recall that $e(x,y,\tau)$ is a function with respect to $x$ but a density with respect to $y$ so an extra factor $\gamma^{-d}$ appears in the right-hand expressions.
\end{proof}

Let us assume that the conditions \ref{2-1-47-*} are fulfilled with $\rho=\gamma=1$. We want to get rid of the non-degeneracy assumption $|V|\asymp 1$ in the pointwise asymptotics. Let us introduce the \emph{scaling function\/} $\gamma(x)$ and also $\rho(x)$
\begin{gather}
\gamma (x)= \epsilon |V(x)| + \bar{\gamma}\quad\text{with\ \ } \bar{\gamma}=h^{\frac{2}{3}}, \qquad \rho(x)=\gamma(x)^{\frac{1}{2}}.
\label{2-1-52}\\
\intertext{One can easily see  that}
|\nabla \gamma|\le \frac{1}{2},\qquad \rho\gamma\ge h,
\label{2-1-53}
\end{gather}
\ref{2-1-47-*} are fulfilled and either $|V| \ge \epsilon \rho^2$ or
$\rho\gamma \asymp h$ and therefore (\ref{2-1-50}) holds ($\hbar\asymp 1$ as  $\rho\gamma \asymp h$ and no non-degeneracy condition is needed). Note that for $d\ge 3$, the right-hand expression of (\ref{2-1-50})  is $O(h^{1-d})$ and for $d=1,2$, it is $O(h^{-\frac{2}{3}d})$. So,  we got rid of the non-degeneracy assumption $|V|\asymp 1$, and the remainder estimate deteriorated only for $d=1,2$.

\begin{remark}\label{rem-2-1-15}
\begin{enumerate}[label=(\roman*), wide, labelindent=0pt]
\item\label{rem-2-1-15-i}
We can improve the estimates for $d=1,2$ to $O(h^{\frac{1}{3}-\frac{2}{3}d})$,  but then we will need to add some correction terms first under the assumption
$|V|+|\nabla V|\asymp 1$ and then get rid of it by rescaling; these correction  terms are of  boundary-layer type (near $V=0$) and are $O(h^{-\frac{2}{3}d})$  and are due to short loops. For details, see Theorems~\ref{book_new-thm-5-3-11} and~\ref{book_new-thm-5-3-16} of \cite{futurebook}.

\item\label{rem-2-1-15-ii}
If $d=2$, then under the assumption $|V|+|\nabla V|\asymp 1$, the weight $\rho^{-1}\gamma^{-1}$ is integrable, and we arrive to the local asymptotics with the remainder estimate $O(h^{1-d})$.

\item\label{rem-2-1-15-iii}
We want to get rid of the non-degeneracy assumption $|V|+|\nabla V|\asymp 1$ in the local asymptotics. We can do it with the scaling function
\begin{equation}
\gamma (x)= \epsilon \bigl(|V(x)|+|\nabla V|^2)^{\frac{1}{2}} + \bar{\gamma}\quad\text{with\ \ } \bar{\gamma}=h^{\frac{1}{2}}, \qquad \rho(x)=\gamma(x).
\label{2-1-54}
\end{equation}
Then for $d=2$, we recover remainder the estimate $O(h^{-1})$; while for $d=1$, the remainder estimate $O(h^{-\frac{1}{2}})$ which could be improved further up to $O(1)$ under some extremely weak non-degeneracy assumption or to $O(h^{-\delta})$ with an arbitrarily small exponent $\delta>0$ without it.

\item\label{rem-2-1-15-iv}
If $d\ge 2$, then in the framework of Theorem~\ref{thm-2-1-12}\ref{thm-2-1-12-ii}, we can get rid of  the non-degeneracy assumption as well. This is true for the magnetic Schr\"odinger operator as well if $d\ge 2$; when $d=2$, some modification of the statement is required; see Remark~\ref{book_new-rem-5-3-4}  of \cite{futurebook}.

\item\label{rem-2-1-15-v}
Furthermore, if we consider asymptotics for $\Tr ((\lambda-H)_+^\nu\psi )$ (see Remark~\ref{rem-2-1-13}\ref{rem-2-1-13-ii}) with $\nu>0$ then in the local asymptotics, we get the remainder  estimate $O(h^{1-d+s})$ without any non-degeneracy assumptions. For details,  see Theorem~\ref{book_new-thm-5-3-5}  of \cite{futurebook}.
\end{enumerate}
\end{remark}

\subsection{Operators with periodic trajectories}
\label{sect-2-1-6}

\subsubsection{Preliminary analysis}
\label{sect-2-1-6-1}
Consider a scalar operator $H$. For simplicity, assume that $X$ is a compact closed manifold. Assume that all the Hamiltonian trajectories are periodic (with periods not exceeding $C(\mu)$ on the energy levels $\lambda \le \mu$). Then the period depends only on the energy level and let $T(\lambda)$ be the minimal period such that all trajectories on the energy level $\lambda$ are $T(\lambda)$ periodic\footnote{\label{foot-21} However, there could be \emph{subperiodic trajectories\/}, i.e. trajectories periodic with period $T(\lambda)/p$ with $p=2, 3,\ldots$. It is known that the set $\Lambda_p$ of subperiodic trajectories with subperiod $T(\lambda)/p$ is a union of symplectic submanifolds $\Lambda_{p,r}$ of codimension $2r$.}.

Without any loss of the generality, one can assume that $T(\lambda)=1$. Indeed, we can replace $H$ by $f(H)$ with $f'(\lambda)= 1/T(\lambda)$. Then,
\begin{gather}
e^{\nabla ^\#H^0}=I
\label{2-1-55}\\
\shortintertext{and therefore,}
e^{ih^{-1}H}= e^{i\varepsilon h^{-1}B},
\label{2-1-56}
\end{gather}
where $B=B(x,hD,h)$ is an $h$-pseudo-differential operator which could be selected to commute with $H$, at this point, $\varepsilon=h$. Then,
$H_0 = H-\varepsilon B$ satisfies
\begin{equation}
e^{ih^{-1}H_0}=I \implies \Spec (H_0)\subset 2\pi h \bZ;
\label{2-1-57}
\end{equation}
we call this \emph{quantum periodicity\/} in contrast to the \emph{classical periodicity\/} (\ref{2-1-55}).

We can calculate the multiplicity $\N_{k, h}=O(h^{1-d})$ of the eigenvalue $2\pi hk$ with $k\in \bZ$ modulo $O(h^\infty)$. The formula is rather complicated especially since subperiodic trajectories\footref{foot-21} cause the redistribution of multiplicities between eigenvalues (however, this causes no more than $O(h^{1-d+r})$ error).

We consider $H\Def H_\varepsilon = H_0+\varepsilon B$ as a perturbation of $H_0$ and we assume only that $\varepsilon \ll 1$. If $\varepsilon \le \epsilon_0 h$, the spectrum of $H$ consists of \emph{eigenvalue clusters\/} of the width $C_0\varepsilon$ separated by \emph{spectral gaps\/} of the width $\asymp h$, but if $\varepsilon \ge \epsilon_0 h$, these clusters may overlap.

\subsubsection{Long range evolution}
\label{sect-2-1-6-2}

Consider
\begin{equation}
e^{ih^{-1}t H}= e^{i h^{-1} tH_0} e^{-ih^{-1}t\varepsilon B}=
e^{ih^{-1} t' H_0} e^{ih^{-1}t'' B}
\label{2-1-58}
\end{equation}
with $t''=\varepsilon t$, $t'=t-\lfloor t\rfloor $. We now have a \emph{fast evolution\/} $e^{ih^{-1} t' H_0}$ and a \emph{slow evolution\/}
$e^{ih^{-1}t'' B}$ and both $t',t''$ are bounded as
$|t|\le T^*\Def \varepsilon ^{-1}$. Therefore, we can trace the evolution up to time $T^*$.

Let the following non-degeneracy assumption be fulfilled:
\begin{equation}
|\nabla_{\Sigma (\lambda)} b|\ge \epsilon_0,
\label{2-1-59}
\end{equation}
where $b$ is the principal symbol of $B$,
$\Sigma(\lambda)\Def \{(x,\xi):\, H^0(x,\xi)=\lambda\}$ and
$\nabla_{\Sigma (\lambda)}$ is the gradient along $\Sigma (\lambda)$. Then using our methods, we can prove that
\begin{gather}
|F_{t\to h^{-1}\tau} \chi_T(t)\int u(x,x,t)\psi(x)\,dx|\le
CTh^{1-d} (h/\varepsilon T)^s,
\label{2-1-60}\\
\shortintertext{and therefore}
F_{t\to h^{-1}\tau} \bar{\chi}_T(t)\int u(x,x,t)\psi(x)\,dx|\le
Ch^{1-d}(\varepsilon^{-1}h +1)
\label{2-1-61}
\end{gather}
for $\epsilon_0(\varepsilon ^{-1}h +1) \le T \le \epsilon_0 \varepsilon^{-1}$; recall that $\chi\in \sC_0^\infty([-1,-\frac{1}{2}]\cup [\frac{1}{2},1]$ and
$\bar{\chi}\in \sC_0^\infty([-1,1]$, $\bar{\chi}=1$ on $[-\frac{1}{2},\frac{1}{2}]$.

Then the Tauberian error does not exceed the right-hand expression of (\ref{2-1-61}) multiplied by $T^{*\,-1}\asymp\varepsilon$, i.e. $Ch^{1-d}(\varepsilon+h)$. In the Tauberian expression, we need to take
$T=\epsilon_0(\varepsilon ^{-1}h^{1-\delta} +1)$.

\subsubsection{Calculations}
\label{sect-2-1-6-3}

We can pass from Tauberian expression to a more explicit one. Observe that the contribution to the former are produced only by time intervals
$t\in [n-h^{1-\delta}, n +h^{1-\delta}]$ with $|n|\le T_*$; contribution of the remaining interval will be either negligible (if there are no subperiodic trajectories) or $O(h^{2-d})$ (if such trajectories exist). Such an interval with $n=0$ produces the standard Weyl expression.

Consider $n\ne 0$. Then the contribution of such intervals  lead to a correction term
\begin{equation}
\N_{\corr,Q_1,Q_2}(\lambda) \Def (2\pi)^{-d}h^{1-d}
\int_{\Sigma_\tau} q_1^0 \Upsilon_1 \bigl(h^{-1}(H^0-\varepsilon b) \bigr)
\,d\upmu_\tau q_2^0,
\label{2-1-62}
\end{equation}
where $\Upsilon_1(t)= 2\pi \lceil \frac{t}{2\pi}\rceil -t+\frac{1}{2}$.

\begin{theorem}\label{thm-2-1-16}
Under assumptions \textup{(\ref{2-1-55})}, \textup{(\ref{2-1-56})},  \textup{(\ref{2-1-59})} and  ${\varepsilon\ge h^M}$,
\begin{multline}
\int(Q_{1x}e\,^t\!Q_{2y})(y,y,\lambda)\,dy=
h^{-d} \varkappa_{0,Q_1,Q_2} (\lambda)\\
+ h^{1-d} \varkappa_{1,Q_1,Q_2} (\lambda) +
\N_{\corr,Q_1,Q_2}(\lambda) + O\bigl(h^{1-d}(\varepsilon +h)).
\label{2-1-63}
\end{multline}
\end{theorem}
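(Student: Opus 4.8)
The plan is to run the Tauberian scheme of Section~\ref{sect-1-2} over the long window $|t|\le T\asymp\varepsilon^{-1}h^{1-\delta}$ opened up by the factorisation (\ref{2-1-58}), and then to evaluate the Tauberian principal part term by term over the integer lattice in $t$. First I would use the analysis of Section~\ref{sect-2-1-6-2}: the non-degeneracy assumption (\ref{2-1-59}) feeds the propagation methods of Section~\ref{sect-2-1-1} to give the decay estimates (\ref{2-1-60})--(\ref{2-1-61}) for the $y$-integrated quantity $\sigma_{Q_1,Q_2}(t)=\int(Q_{1x}u\,^t\!Q_{2y})(y,y,t)\,dy$ (the $y$-integration being forced because the relevant microhyperbolic direction is the Hamiltonian field $\nabla^\#b$, with nonzero $x$-component, cf.\ Proposition~\ref{prop-2-1-5}\ref{prop-2-1-5-iii}). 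Combining (\ref{2-1-60})--(\ref{2-1-61}) with the a~priori two-sided bound (obtained by first taking $Q_1=Q_2$, as in (\ref{2-1-26})) and H\"ormander's Tauberian theorem reduces the claim to evaluating
\[
h^{-1}\int^{\lambda}\Bigl(F_{t\to h^{-1}\tau}\bigl(\bar{\chi}_T(t)\,\sigma_{Q_1,Q_2}(t)\bigr)\Bigr)\,d\tau,\qquad T=\epsilon_0(\varepsilon^{-1}h^{1-\delta}+1),
\]
with an error already of the committed size $O(h^{1-d}(\varepsilon+h))$.

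Next I would localise in time. Since $T(\lambda)=1$ and all trajectories are periodic, Theorem~\ref{thm-2-1-9} and Corollary~\ref{cor-2-1-10} confine $\WF(\sigma_{Q_1,Q_2})$ to $t\in\bZ$ on the relevant energy levels, apart from lower-dimensional contributions of the subperiodic manifolds $\Lambda_{p,r}$ (codimension $2r\ge2$) at the subperiods $t\in\frac1p\bZ$. Inserting a partition of unity $1=\sum_{n\in\bZ}\chi(t-n)$ into $\bar{\chi}_T$, only the windows $\{|t-n|\le h^{1-\delta}\}$ with $|n|\lesssim\varepsilon^{-1}h^{1-\delta}$ contribute modulo $O(h^\infty)$, and the subperiodic contributions are $O(h^{2-d})\subset O(h^{1-d}(\varepsilon+h))$, hence negligible.

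Then I would evaluate the windows. On the window $n=0$ the slow factor $e^{ih^{-1}t''B}$, now with $|t''|=|\varepsilon t|\le\varepsilon h^{1-\delta}\ll h^{1-\delta}$, equals $I$ up to a negligible error, so this window reduces to the short-time propagator of Section~\ref{sect-2-1-2}; the successive approximation construction of Proposition~\ref{prop-2-1-5} and the Tauberian recovery of Theorems~\ref{thm-2-1-6} and~\ref{thm-2-1-12} turn it into $h^{-d}\varkappa_{0,Q_1,Q_2}(\lambda)+h^{1-d}\varkappa_{1,Q_1,Q_2}(\lambda)+O(h^{2-d})$. For $n\ne0$ I would write $t=n+t'$ and use the quantum periodicity $e^{ih^{-1}nH_0}=I$ of (\ref{2-1-57}) to get $e^{ih^{-1}tH}=e^{ih^{-1}t'H_0}\,e^{ih^{-1}(n+t')\varepsilon B}$: relative to the $n=0$ calculation the only new ingredient is the moderate-time evolution $e^{ih^{-1}(n+t')\varepsilon B}$ (here $|h^{-1}(n+t')\varepsilon|\lesssim h^{-\delta}$), an $h$-FIO whose principal part multiplies the amplitude by $e^{ih^{-1}(n+t')\varepsilon b(x,\xi)}$; this uses $\{b,H^0\}=0$, so that the flow of $b$ is tangent to $\Sigma(\lambda)$ and commutes with the periodic $H^0$-flow, keeping the composition a controllable FIO. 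Running the construction of Section~\ref{sect-2-1-2} for the fast factor, applying $F_{t'\to h^{-1}\tau}$ and stationary phase in $t'$ to localise the fibre integral to $\{H^0=\tau\}$, and performing the outer $h^{-1}\int^{\lambda}\,d\tau$ (which picks out the boundary value at $\tau=\lambda$ of this expression, now oscillatory in $\tau$ at scale $h/|n|$), I expect the $n$-th window to contribute
\[
(2\pi)^{-d}h^{1-d}\,\frac{1}{in}\int_{\Sigma_\lambda}q_1^0\,e^{inh^{-1}(H^0-\varepsilon b)}\,d\upmu_\lambda\,q_2^0+O(h^{2-d}).
\]
Summing over $0<|n|\lesssim\varepsilon^{-1}h^{1-\delta}$ reconstructs the Fourier expansion of the $2\pi$-periodic function $\Upsilon_1$, so the $n\ne0$ windows add up to $\N_{\corr,Q_1,Q_2}(\lambda)$ of (\ref{2-1-62}); the tail $|n|\gtrsim\varepsilon^{-1}h^{1-\delta}$ is negligible by non-stationary phase in $b$ along $\Sigma_\lambda$ (the second use of (\ref{2-1-59})), and summing the three error sources gives $O(h^{1-d}(\varepsilon+h))$. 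The lower bound $\varepsilon\ge h^M$ serves only to keep the symbolic errors in the construction of $B$, and in $e^{ih^{-1}H_0}=I$, below this threshold.

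I expect the genuinely hard part to be this last step: justifying, over the moderate slow-time interval, the replacement of $e^{ih^{-1}(n+t')\varepsilon B}$ by the scalar amplitude $e^{ih^{-1}(n+t')\varepsilon b}$ with an error still summable against $\sum_n\frac1{|n|}$, and simultaneously tracking the subprincipal contributions so that the reconstructed sawtooth carries exactly the argument $h^{-1}(H^0-\varepsilon b)$ with the correct normalisation (in particular the non-oscillatory part of $\Upsilon_1$ must be accounted for consistently between the $n=0$ window and $\N_{\corr}$). A secondary difficulty is the degeneration of the $t'$-stationary phase along the subperiodic manifolds $\Lambda_{p,r}$, handled by the codimension count $2r\ge2$. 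For a complete treatment, see \cite{futurebook}.
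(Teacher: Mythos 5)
Your proposal follows the paper's own route essentially verbatim: the fast/slow factorisation (\ref{2-1-58}) plus the non-degeneracy condition (\ref{2-1-59}) yield the long-time estimates (\ref{2-1-60})--(\ref{2-1-61}) and hence the Tauberian remainder $O(h^{1-d}(\varepsilon+h))$, after which the Tauberian expression is localised to integer time windows, with $n=0$ producing the Weyl terms and the $n\ne 0$ windows summing, via the Fourier series of the sawtooth, to $\N_{\corr,Q_1,Q_2}$. The paper's exposition is only a sketch with details deferred to \cite{futurebook}, and your identification of the delicate point (summability of the per-window errors against $\sum_n|n|^{-1}$) is consistent with where those details live.
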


For a more general statement with (\ref{2-1-59}) replaced by a weaker non-degeneration assumption, see Theorem~\ref{book_new-thm-6-2-24} of \cite{futurebook}. Further, we can skip a correction term (\ref{2-1-62}) if $\varepsilon \ge h^{1-\delta}$; while if $h^M\le \varepsilon \le h^{1-\delta}$, this term  is $O(h^{1-d}(h/\varepsilon)^s)$ for $\varepsilon \ge h$ and of magnitude $h^{1-d}$ for $h^M\le \varepsilon \le h$.

For further generalizations, details and proofs, see Sections~\ref{book_new-sect-6-2} and \ref{book_new-sect-6-3} of \cite{futurebook}. For related spectral asymptotics for a family of commuting operators, see Section~\ref{book_new-sect-6-1}  of \cite{futurebook}.

One can also consider  the case when there is a massive set of periodic trajectories, yet non-periodic trajectories exist. For details, see \cite{safarov:diff} and Subsection~\ref{book_new-sect-6-3-7} of \cite{futurebook}.

\section{Boundary value problems}
\label{sect-2-2}

\subsection{Preliminary analysis}
\label{sect-2-2-1}

Let  $X$ be a domain in $\bR^d$ with boundary $\partial X$  and $H$ an $h$-differential matrix operator which is self-adjoint in $\sL^2(X)$ under the $h$-differential boundary conditions. Again, we are interested in the local and  pointwise spectral asymptotics, i.e. those of $\int e(x,x,0)\psi(x)\,dx$ with $\psi \in \sC_0^\infty (B(0,\frac{1}{2}))$ and of $e(x,x,0)$ with $x\in B(0,\frac{1}{2})$.

Assume that in $B(0,1)$, everything is good: $\partial X$ and coefficients of $H$ are smooth, $H$ is $\xi$-microhyperbolic on the energy levels $\lambda_{1,2}$ ($\lambda_1<\lambda_2$) and also $H$ is elliptic as a differential operator, i.e.
\begin{gather}
\1 H(x,\xi) v \1\ge (\epsilon_0 |\xi|^m -C_0) \1 v \1\qquad \forall v\quad\forall x\in B(0,1)\;\forall\xi.
\label{2-2-1}\\
\shortintertext{Then,}
e(x,x,\lambda_1,\lambda_2)= \kappa_0 (x,\lambda_1,\lambda_2) h^{-d} +
O(h^{1-d}\gamma(x)^{-1})
\label{2-2-2}\\
\intertext{for $x\in B(0,\frac{1}{2})$ and $\gamma(x)\ge h$,}
\kappa_0 (x,\lambda_1,\lambda_2)= (2\pi)^{-d} \int
\bigl(\uptheta (\lambda_2-H^0(x,\xi))-
\uptheta (\lambda_1-H^0(x,\xi))\bigr)\,d\xi
\label{2-2-3}
\end{gather}
and $\gamma(x)=\frac{1}{2}\dist (x,\partial X)$.

Indeed, the scaling $x\mapsto (x-y)/\gamma$ and $h\mapsto \hbar /\gamma$ brings us into the framework of Theorem~\ref{thm-2-1-6}\ref{thm-2-1-6-ii} because $\xi$-microhyperbolicity (in  contrast to the $(x,\xi)$-microhyperbolicity) survives such rescaling. Then,
\begin{multline}
\int_{\{x:\,\gamma(x)\ge h\}}  \bigl(e(x,x,\lambda_1,\lambda_2) -h^{-d}\kappa_0 (x,\lambda_1,\lambda_2) \bigr)\psi(x)\,dx\\=
O(h^{1-d}\log h),
\label{2-2-4}
\end{multline}
since $\int_{\{x:\,\gamma(x)\ge h\}} \gamma(x)^{-1}\,dx \asymp |\log h|$.

One can easily show that if the boundary value problem for $H$ is elliptic then
\begin{gather}
e(x,x,\lambda_1,\lambda_2)=O(h^{-d})
\label{2-2-5}
\intertext{and therefore,}
\int   \bigl(e(x,x,\lambda_1,\lambda_2) -h^{-d}\kappa_0 (x,\lambda_1,\lambda_2) \bigr)\psi(x)\,dx=
O(h^{1-d}\log h).
\label{2-2-6}
\end{gather}
To improve this remainder estimate, one needs to improve (\ref{2-2-4}) rather than (\ref{2-2-2}) but to get sharper asymptotics, we need to improve both. We will implement the same scheme as inside  the domain.\enlargethispage{\baselineskip}

\subsection{Propagation of singularities}
\label{sect-2-2-2}

\subsubsection{Toy model: Schr\"odinger operator}
\label{sect-2-2-2-1}
Let us consider the Schr\"odinger operator
\begin{gather}
H\Def h^2\Delta + V(x)
\label{2-2-7}\\
\shortintertext{with the boundary condition}
\bigl(\alpha(x)h\partial_\nu +\beta (x)\bigr)v|_{\partial X}= 0,
\label{2-2-8}
\shortintertext{where}
\Delta = \sum_{j,k} D_j g^{jk}D_k, \qquad \partial_\nu=
\sum_j  g^{j1}\partial_j
\label{2-2-9}
\end{gather}
is derivative in the direction of the inner normal $\nu$ (we assume that $X={\{x:\, x_1>0\}}$ locally\footnote{\label{foot-22} I.e. in intersection with $B(0,1)$.}),
$\alpha$ and $\beta$ are real-valued and do not vanish simultaneously. Without any loss of the generality, we can assume that locally
\begin{equation}
g^{j1}=\updelta_{j1}.
\label{2-2-10}
\end{equation}

First of all, near the boundary, we can study the propagation of singularities using the same scheme as in Subsection~\ref{sect-2-1-1} as long as $\phi_j(x,\xi)=\phi_j(x,\xi')$ do not depend on the component of $\xi$ which is ``normal to the boundary''.  The intuitive way to explain why one needs this is that at reflections, $\xi_1$ changes by a jump.

For the Schr\"odinger operator, it is sufficient for our needs: near \emph{glancing points\/} $(x,\xi')$ (which are points such that $x_1=0$ and the set
$\{\xi_1:\, \, H^0(x',\xi',\xi_1)=\tau\}$ consists of exactly one point), we can apply this method. On the other hand, near other points, we can construct the solution by  traditional methods of oscillatory integrals.

It is convenient to decompose $u(x,y,t)$ into the sum
\begin{equation}
u=u^0(x,y,t)+u^1(x,y,t),
\label{2-2-11}
\end{equation}
where $u^0(x,y,t)$ is a \emph{free space solution\/} (without boundary) which we studied in Subsection~\ref{sect-2-1-1} and $u^1\Def u-u^0$ is a \emph{reflected wave\/}.

Observe that even for the Schr\"odinger operator, we cannot claim that the singularity of $u(x,x,t)$ at $t=0$ is isolated. The reason are \emph{short loops\/} made by trajectories which reflect from the boundary in the normal direction and follow the same path in the opposite direction. However, these short loops affect neither $u(x,x,t)$ at the points of the boundary nor $u(x,x,t)$ integrated in any direction transversal to the boundary (and thus do not affect $\sigma _\psi (t)$ defined below).

Furthermore, they do not affect
$(Q_{1x}u\,^t\!Q_{2y})(x,x,t)$ as long as at least one of operators $Q_j=Q_j(x,hD',hD_t)$ cuts them off. Then we get the estimate (\ref{2-1-9}).  Consider $Q_1=Q_2=1$. Then,  if $V(x)-\lambda > 0$, we get the same estimate  at the point $x\in \partial X$.  On the other hand,  if either
$V(x)-\lambda<0$ or $V(x)=\lambda,\ \nabla_{\partial X} V(x)\ne 0$ (where $\nabla_{\partial X}$ means ``along $\partial X$'') at each point of $\supp(\psi)$, we get the same estimate for $\sigma_\psi (x)=\int u(x,x,t)\,dx$. As usual, $\lambda$ is an energy level.

Moreover, $\sigma^1_\psi(t)= \int u^1(x,x,t)\psi(x)\,dx$ satisfies
\begin{equation}
|F_{t\to h^{-1}\tau} \chi_T(t)\sigma^1_{\psi}(t)|\le C_s h^{1-d}(h/|t|)^s.
\label{2-2-12}
\end{equation}

In contrast to the Dirichlet ($\alpha=0$, $\beta=1$) or Neumann ($\alpha=1$, $\beta=0$) conditions, under the more general boundary condition (\ref{2-2-8}), the classically forbidden level $\lambda$ (i.e. with $\lambda < \inf_{B(0,1)} V$) may be not forbidden after all. Namely, in this zone, the operator $hD_t-H$ is elliptic and we can construct the \emph{Dirichlet-to-Neumann operator\/}
$L: v|_{\partial X}\to h\partial_1 v|_{\partial X}$ as $(hD_t-H)v\equiv 0$. This is an $h$-pseudo-differential operator on $\partial X$ with  principal symbol
\begin{gather}
L^0(x',\xi',\tau) =
-\bigl(V+\sum_{j,k\ge 2} g^{jk}\xi_j\xi_k-\tau\bigr)^{\frac{1}{2}}.
\label{2-2-13}\\
\intertext{Then the boundary condition (\ref{2-2-8}) becomes}
M w \Def (\alpha L+\beta) w\equiv 0,\qquad w=v|_{\partial X}.
\label{2-2-14}
\end{gather}
The energy level $\lambda<V(x)$ is indeed forbidden if the operator $M$ is elliptic as $\tau=\lambda$, i.e. if
$M^0(x',\xi',\lambda)=\alpha L^0(x',\xi',\lambda)+\beta\ne 0$ for all $\xi'$; it happens as either $\alpha^{-1}\beta<0$ or $W\Def V-\alpha^{-2}\beta^2 >\lambda$.
Otherwise, to recover (\ref{2-2-12})\footnote{\label{foot-23} With $d$ replaced by $d-1$.}, we assume that $M$ is either $\xi'$-microhyperbolic  or $(x',\xi')$-microhyperbolic ($W>\lambda$ and $W=\lambda \implies \nabla_{\partial X}W\ne 0$ respectively).

\subsubsection{General operators}
\label{sect-2-2-2-2}

For more general operators and boundary value problems, we use similar arguments albeit not relying upon the representation of $u(x,y,t)$ via oscillatory integrals. It follows from (\ref{2-2-11}) that
\begin{align}
&(hD_t -H)u^{1\,\pm} =0,
\label{2-2-15}\\
&Bu^{1\,\pm}|_{x_1=0}=-Bu^{0\,\pm}|_{x_1=0},
\label{2-2-16}
\end{align}
where as before, $u^{k\,\pm}=u^k\uptheta (\pm t)$, $k=0,1$. Assuming that $H$ satisfies (\ref{2-2-1}), we reduce (\ref{2-2-15})--(\ref{2-2-16})  to the problem
\begin{align}
&\cA U^{1\,\pm}\Def (\cA_0 hD_1 + \cA_1)U^{1\,\pm}\equiv 0,
\label{2-2-17}\\
&\cB U^{1\,\pm}|_{x_1=0}=-\cB U^{0\,\pm}
\label{2-2-18}
\end{align}
with $\cA_k= \cA_k(x,hD',hD_t)$, $\cB= \cB(x,hD',hD_t)$ and $U= S_xu \,^t\!S_y$ with $S=S(x,hD_x,hD_t)$ etc\footnote{\label{foot-24} If $H$ is a $\D\times\D$ matrix operator of order $m$ then $\cA$ and $S$ are $m\D\times m\D$ and $m\D\times\D$ matrix operators, $B$ and $\cB$ are  $\frac{1}{2}m\D\times \D$ and $\frac{1}{2}m\D\times m\D$ matrix operators  respectively.}.

In a neighbourhood of any point $(\bar{x}',\bar{\xi}',\lambda)$,  the operator $\cA$ could be reduced to the block-diagonal form with blocks $\cA_{kj}$ ($k=0,1$, $j=1,\ldots, N$) such that
\begin{enumerate}[label=(\alph*), wide, labelindent=0pt]
\item\label{sect-2-2-2-2-a}
For each $j=1,\ldots, N-1$, the equation
$\det (\cA^0_{0j} \eta + \cA^0_{1j})=0$ has a single real root $\eta_j$ (at the point $(\bar{x}',\bar{\xi}',\lambda)$ only), $\eta_j$ are distinct,  and
\item\label{sect-2-2-2-2-b}
$\cA_{kN}= \left(\begin{smallmatrix} 0 & \cA'_{kN}\\
\cA''_{kN} &0 \end{smallmatrix}\right)$
with $\det (\cA^{\prime\,0}_{0N} \eta + \cA^{\prime\,0}_{1N})=0$ and
$\det (\cA^{\prime\prime\,0}_{0N} \eta + \cA^{\prime\prime\,0}_{1N})=0$
has only roots with $\Im \eta <0$ and with $\Im \eta >0$ respectively.
\end{enumerate}

We can prove a statement similar to Theorem~\ref{thm-2-1-2}, but instead of  functions $\phi_*(x,\xi)$, we now have arrays of functions $\phi_{*j}(x,t,\xi',\tau)$ ($j=1,\ldots, N-1$) coinciding  with $\phi_{*N}(x',t,\xi',\tau)$ as $x_1=0$. Respectively, instead of  microhyperbolicity of the operator in the direction $\ell\in T (T^*(X\times \bR))$, we now have  the \emph{microhyperbolicty of the boundary value problem in the multidirection\/} $(\ell', \nu_1,\ldots ,\nu_{N-1})\in
T (T^*(\partial X\times \bR))\times \bR^{N-1}$; see Definition~\ref{book_new-def-3-1-4} of \cite{futurebook}. It includes the microhyperbolicity of $\cA_j$ in the direction
$(\ell',0, \nu_j)$ for $j=1,\ldots, N-1$ and a condition invoking $\cA_N$ and $\cB$ and generalizing the microhyperbolicity of operator $M$ for the Schr\"odinger operator. Respectively, instead of the microhyperbolicity of an operator in the direction $\nabla^\#\phi_*$, we want the microhyperbolicity in the multidirection
$(\nabla^{'\#}\phi_*,\partial_1 \phi_{*1},\ldots, \partial_1 \phi_{*(N-1)})$.

As a corollary,  under the microhyperbolicity assumption on the energy level $\lambda$, we prove estimates (\ref{2-1-9}) for $\sigma^0_\psi(t)$, $\sigma_\psi(t)$ and (\ref{2-2-12}) for $\sigma^1_\psi(t)$ as $\tau$ is close to $\lambda$. Furthermore,  if the operator $H$ is  elliptic on this energy level then $\sigma^0_\psi(t)$ is negligible and (\ref{2-2-12}) holds for $\sigma^1_\psi(t)$ and $\sigma_\psi(t)$.

For details, proofs and generalizations, see Chapter~\ref{book_new-sect-3} of \cite{futurebook}.

\subsection{Successive approximations method}
\label{sect-2-2-3}

After the (\ref{2-1-9}) and (\ref{2-2-12})-type estimates are established, we can apply the successive approximations method like in Subsection~\ref{sect-2-1-2} but with some modifications: to construct $B u^{0\,\pm}|_{x_1=0}$ and from it to construct $u^{1\,\pm}$, we freeze coefficients in $(y',0)$ rather than in $y$. As a result, we can calculate all terms in the asymptotics and under microhyperbolicity in the multidirection condition, we arrive to the formulae (\ref{2-1-24}) for $e^0 (.,.,\tau)$, $e^1(.,.,\tau)$ and $e(.,.,\tau)$\,\footnote{\label{foot-25} With the obvious definitions of $e^0(.,.,\tau)$ and $e^1(.,.,\tau)$.} with $m\ge 1$ for $e^1(.,.,\tau)$.

The formulae for $\varkappa^1_m(\tau)$ (and thus for $\varkappa_m(\tau)=\varkappa^0_m(\tau)+\varkappa^1_m(\tau)$ are  however  rather complicated and we do not write  them here. For the Schr\"odinger operator with $V=0$ and boundary condition (\ref{2-2-8}), the  calculation of $\varkappa^1_1(\tau)$ is done in Subsection~\ref{book_new-sect-11-9-4} of \cite{futurebook}.

Similar formulae  also hold if we take $x_1=y_1=0$ and integrate over $\partial X$ (but in this case $m\ge 0$ even for $e^1(.,.,\tau)$).

Furthermore, if $\ell'_x=0$ and $\nu_1=\ldots=\nu_{N-1}$ in the condition of microhyperbolicity, we are able to get formulae for $e^0(x,x,\tau)$, $e^1(x,x,\tau)$ and $e(x,x,\tau)$ without setting $x_1=0$ and without integrating but $e^1(x,x,\tau)$ is a boundary-layer type term.
\enlargethispage{\baselineskip}

For details and proofs, see Section~\ref{book_new-sect-7-2} of \cite{futurebook}.

\subsection{Recovering spectral asymptotics}
\label{sect-2-2-4}

Repeating the arguments of Subsection~\ref{sect-2-1-3}, we can recover the local spectral asymptotics:

\begin{theorem}\label{thm-2-2-1}
\begin{enumerate}[label=(\roman*), wide, labelindent=0pt]
\item\label{thm-2-2-1-i}
Let  an operator $H$ be microhyperbolic on $\supp(\psi)$ on the energy levels $\lambda_1$ and $\lambda_2$ ($\lambda_1<\lambda_2$) and the boundary value $(H,B)$ problem be microhyperbolic  on $\supp(\psi)\cap \partial X$ on these energy levels. Then,
\begin{multline}
\int_X e(y,y,\lambda_1,\lambda_2)\psi(y)\,dy \\
= h^{-d} \int_X \kappa_0 (y,\lambda_1,\lambda_2)\psi(y)\,dy+ O(h^{-d+1}).
\label{2-2-19}
\end{multline}
\item\label{thm-2-2-1-ii}
Suppose that an operator $H$ is elliptic on $\supp(\psi)$ on the energy levels $\lambda_1$ and $\lambda_2$ ($\lambda_1<\lambda_2$)\footnote{\label{foot-26} Then, it is elliptic on all energy levels $\tau\in [\lambda_1, \lambda_2]$.} and the boundary value $(H,B)$ problem is microhyperbolic  on
$\supp(\psi)\cap \partial X$ on these energy levels. Then,
\begin{multline}
\int_X e(y,y,\lambda_1,\lambda_2)\psi(y)\,dy\\
= h^{1-d} \int_X \kappa_1 (y,\lambda_1,\lambda_2)\psi(y)\,dy+ O(h^{-d+2}).
\label{2-2-20}
\end{multline}
\end{enumerate}
\end{theorem}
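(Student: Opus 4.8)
The plan is to repeat the Tauberian scheme of Subsection~\ref{sect-2-1-3}, now fed by the boundary propagation estimates of Subsection~\ref{sect-2-2-2} and the successive approximation formulae of Subsection~\ref{sect-2-2-3}, and to use in part~\ref{thm-2-2-1-ii} that ellipticity of $H$ suppresses the interior contribution. Decompose $u=u^0+u^1$ and accordingly $e=e^0+e^1$, and set $\alpha(\tau)\Def\int_X e(y,y,\tau)\psi(y)\,dy$, with $\alpha^0,\alpha^1$ the parts coming from $e^0,e^1$; the monotonicity of $\tau\mapsto e(y,y,\tau)$ is used as in Subsection~\ref{sect-2-1-3}. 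Two inputs are needed. First, from Subsection~\ref{sect-2-2-2}: under the microhyperbolicity of $H$ on $\supp(\psi)$ and of $(H,B)$ on $\supp(\psi)\cap\partial X$ at the levels $\lambda_1,\lambda_2$, one has $|F_{t\to h^{-1}\tau}\chi_T(t)\sigma^0_\psi(t)|\le C_sh^{-d}(h/|t|)^s$ and $|F_{t\to h^{-1}\tau}\chi_T(t)\sigma^1_\psi(t)|\le C_sh^{1-d}(h/|t|)^s$ for $h\le|t|\lesssim T\le T^*$ and $\tau$ near $\lambda_1$ or $\lambda_2$, the reflected part gaining an extra power of $h$. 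Second, from Subsection~\ref{sect-2-2-3}: running the successive approximations on $|t|\le T_*=h^{1-\delta}$ produces the smooth expansion (the boundary analogue of~(\ref{2-1-24})) whose leading coefficient reproduces the Weyl density~(\ref{2-2-3}), together with boundary-layer coefficients which, after integration over $X$, start at order $h^{1-d}$.

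Granting these, part~\ref{thm-2-2-1-i} follows by the argument of~(\ref{2-1-26})--(\ref{2-1-29}): the monotonicity and a H\"ormander function $\bar\chi$ give $|\alpha(\lambda)-\alpha(\mu)|\le Ch^{-d+1}|\lambda-\mu|+CT^{-1}h^{-d+1}$ and then $|\alpha(\lambda)-\alpha(\mu)-h^{-1}\int_\mu^\lambda\beta(\tau)\,d\tau|\le CT^{-1}h^{-d+1}$, where $\beta$ is the mollified $\alpha'$; the construction~(\ref{2-1-29}), which uses only the near-$t=0$ parametrix and no microhyperbolicity, upgrades the latter to a bound against arbitrary $\phi\in\sC_0^\infty(\bR)$. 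Taking $T=T^*$ a constant and integrating $h^{-1}\beta$, whose leading contribution is the interior term $h^{-d}\int_X\kappa_0(y,\lambda_1,\lambda_2)\psi(y)\,dy$, yields~(\ref{2-2-19}) with remainder $O(h^{1-d})$; the reflected first term $h^{1-d}\int_X\kappa_1\psi\,dy$ is of exactly this order and is absorbed. One uses here, as in Subsection~\ref{sect-2-2-2}, that the short loops made by normally reflected trajectories do not affect $u$ integrated transversally to $\partial X$, so that on $|t|\le T^*$ the only singularity of $\sigma_\psi(t)$ sits at $t=0$.

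For part~\ref{thm-2-2-1-ii}, ellipticity of $H$ on $[\lambda_1,\lambda_2]$ makes $\sigma^0_\psi(t)$ negligible (Subsection~\ref{sect-2-2-2}), hence $\alpha^0(\lambda_2)-\alpha^0(\lambda_1)=O(h^\infty)$ and $\kappa_0\equiv0$; thus $\alpha=\alpha^1+O(h^\infty)$ is still non-decreasing up to $O(h^\infty)$. Feeding the sharper, $h^{1-d}$-size estimate for $\sigma^1_\psi$ into the same scheme, every step gains one power of $h$: $|\alpha^1(\lambda)-\alpha^1(\mu)|\le Ch^{2-d}|\lambda-\mu|+CT^{-1}h^{2-d}$, the main Tauberian term becomes the boundary-layer term $h^{1-d}\int_X\kappa_1(y,\lambda_1,\lambda_2)\psi(y)\,dy$ (the interior $\kappa_1$ having vanished), and with $T=T^*$ the remainder is $O(h^{-d+2})$, which is~(\ref{2-2-20}).

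The main obstacle is the first input: establishing the reflected-wave estimate $|F_{t\to h^{-1}\tau}\chi_T\sigma^1_\psi|\le C_sh^{1-d}(h/|t|)^s$ requires the block-diagonal reduction~(\ref{2-2-17})--(\ref{2-2-18}) of the boundary value problem, the notion of microhyperbolicity in the multidirection, a boundary analogue of the positive-commutator Theorem~\ref{thm-2-1-2}, and the verification that the short-loop contributions genuinely cancel after integration over $X$; this is the content of Subsection~\ref{sect-2-2-2}. Once it, and the explicit successive approximation coefficients of Subsection~\ref{sect-2-2-3}, are in hand, the Tauberian passage above is routine.
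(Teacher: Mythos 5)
Your proposal is correct and follows essentially the same route as the paper: the paper derives Theorem~\ref{thm-2-2-1} precisely by combining the propagation estimates (\ref{2-1-9}) for $\sigma^0_\psi,\sigma_\psi$ and (\ref{2-2-12}) for $\sigma^1_\psi$ from Subsection~\ref{sect-2-2-2} (with $\sigma^0_\psi$ negligible in the elliptic case), the successive approximation expansion of Subsection~\ref{sect-2-2-3} (with $m\ge1$ for the reflected part), and the Tauberian argument of Subsection~\ref{sect-2-1-3}. You have also correctly identified where the real work lies, namely in the multidirection microhyperbolicity and the reflected-wave estimate, which is exactly what the paper defers to its Subsection~\ref{sect-2-2-2} and to \cite{futurebook}.
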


On the other hand, for the Schr\"odinger operator, we can calculate the contributions of near normal trajectories explicitly and then we arrive to:

\begin{theorem}\label{thm-2-2-2}
Let $(H,B)$ be the Schr\"odinger operator \textup{(\ref{2-2-7})}--\textup{(\ref{2-2-8})} and let $|V|\ne \lambda$ on $\supp(\psi)$. Then,
\begin{equation}
e(y,y,\lambda)=
h^{-d}\bigl(\kappa_0(x,\lambda) +
\cQ(x',\lambda; h^{-1}x_1) \bigr) + O(h^{-d+1})
\label{2-2-21}
\end{equation}
where $\cQ$ depends on the ``normal variables'' $(x',\lambda)$ and a ``fast variable'' $s=h^{-1}x_1$ and decays as $O(s^{-{d+1}/2})$ as $s\to +\infty$. Here, $x_1=\dist(x,\partial X)$.
\end{theorem}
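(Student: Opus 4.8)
The plan is to reduce, via the boundary propagation estimates of Subsection~\ref{sect-2-2-2} and the successive approximation scheme of Subsection~\ref{sect-2-2-3}, to an explicit computation on a frozen-coefficient half-space model, where the reflected wave produces the boundary-layer profile $\cQ$. Since $V\ne\lambda$ near the observation point ($\lambda$ is a non-critical energy level there), the operator $H$ is elliptic wherever $V>\lambda$ and $\xi$-microhyperbolic on the energy level $\lambda$ wherever $V<\lambda$ (in the direction $\ell=(0,\xi)$, since $\ell H^0=2(\lambda-V)>0$ on the energy surface), and the Dirichlet-to-Neumann analysis (\ref{2-2-13})--(\ref{2-2-14}) gives the corresponding tangential microhyperbolicity of the boundary value problem $(H,B)$ at energy $\lambda$, in the special form ($\ell'_x=0$, all $\nu_j$ equal) under which the arguments of Subsection~\ref{sect-2-2-3} produce an un-integrated, pointwise asymptotics with $e^1(x,x,\tau)$ of boundary-layer type. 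Consequently the estimates of type (\ref{2-1-9}) and (\ref{2-2-12}) hold near $\lambda$ --- so the relevant singularity of $u(x,x,t)$ is microlocalized to $|t|\lesssim h^{1-\delta}$, the short normal loops off $\partial X$ being harmless in this special form of the microhyperbolicity --- and, after freezing the metric, the potential and the boundary coefficients at the boundary point $(x',0)$ below $x$, the successive approximation series of Subsection~\ref{sect-2-2-3} converges modulo $O(h^{-d+1})$ after finitely many terms, with the freezing error also $O(h^{-d+1})$. Write $e=e^0+e^1$ as in (\ref{2-2-11}); where $V>\lambda$ both $e^0$ and $e^1$ are negligible and (\ref{2-2-21}) is trivial, while where $V<\lambda$ the free term obeys $e^0(x,x,\lambda)=h^{-d}\kappa_0(x,\lambda)+O(h^{-d+1})$ by the interior result Theorem~\ref{thm-2-1-6}\ref{thm-2-1-6-ii}.

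It remains to identify the reflected contribution $e^1(x,x,\lambda)$. For the frozen model --- the operator $h^2\sum_{j,k}g^{jk}(x',0)D_jD_k+V(x',0)$ on $\{x_1>0\}$ with boundary condition $\bigl(\alpha(x',0)h\partial_1+\beta(x',0)\bigr)v|_{x_1=0}=0$ --- one obtains the reflected propagator by Fourier transform in the tangential and time variables and an ODE in $x_1$ (for the Dirichlet and Neumann conditions this is the method of images), computes $u^1(x,x,t)$, and runs the Tauberian recovery of Subsection~\ref{sect-2-1-3}. The only $x_1$-dependence of the reflected wave is carried by the phase factor $e^{2ix_1\xi_1/h}$ of the outgoing ray, so with $H^0_0(\xi)\Def\sum_{j,k}g^{jk}(x',0)\xi_j\xi_k+V(x',0)$ one gets $e^1(x,x,\lambda)=h^{-d}\cQ(x',\lambda;h^{-1}x_1)+O(h^{-d+1})$, where the $m=0$ term of the successive approximation yields
\begin{equation*}
\cQ(x',\lambda;s)=2\Re\biggl[(2\pi)^{-d}\int_{\{H^0_0(\xi)<\lambda,\ \xi_1>0\}}\mathcal R(\xi)\,e^{2is\xi_1}\,d\xi\biggr]
\end{equation*}
and $\mathcal R$ is the reflection coefficient determined by the boundary condition ($\mathcal R\equiv-1$ for Dirichlet, $\equiv1$ for Neumann, $\mathcal R=-(\beta-i\alpha\xi_1)/(\beta+i\alpha\xi_1)$ --- bounded and smooth in $\xi$ on the hyperbolic cone --- in general); the remaining successive-approximation terms are $O(h^{-d+1})$. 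Thus $\cQ$ depends on $x_1$ only through the fast variable $s=h^{-1}x_1$, as required.

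For the decay of $\cQ$ as $s\to+\infty$: modulo the smooth bounded factor $\mathcal R$, $\cQ(x',\lambda;s)$ is twice the real part of the value at $2s\,e_1$ of the Fourier transform of the indicator of the strictly convex body $\{H^0_0(\xi)<\lambda\}$ --- a solid ellipsoid, since $V<\lambda$ and $g$ is positive definite --- whose boundary has everywhere non-vanishing Gauss curvature; the classical van der Corput / Herz estimate for Fourier transforms of such bodies then gives $\cQ=O(|2s\,e_1|^{-(d+1)/2})=O(s^{-(d+1)/2})$. Equivalently, integrating in $\xi_1$ first turns $\cQ$ into a $(d-1)$-dimensional oscillatory integral over $\{\sum_{j,k\ge2}g^{jk}\xi_j\xi_k<\lambda-V\}$ with amplitude of order $s^{-1}$ and phase $2s\bigl(\lambda-V-\sum_{j,k\ge2}g^{jk}\xi_j\xi_k\bigr)^{1/2}$, whose only critical point --- the normal-incidence point $\xi'=0$ --- is non-degenerate: stationary phase contributes $s^{-(d-1)/2}$, which together with the $s^{-1}$ prefactor gives $s^{-(d+1)/2}$, while the glancing endpoint $\sum_{j,k\ge2}g^{jk}\xi_j\xi_k=\lambda-V$ contributes no more. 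Combining $e^0$ and $e^1$ yields (\ref{2-2-21}).

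The main obstacle is the behaviour near the glancing set, where $\xi_1\to0$ and the reflected wave passes from oscillatory to exponentially damped in $x_1/h$: one must verify that the frozen-coefficient reduction, the successive approximation series and the Tauberian step all remain valid \emph{uniformly up to} the glancing momenta, and that the tangential microhyperbolicity of Subsection~\ref{sect-2-2-2} --- necessarily formulated in the variables $(x',\xi',\tau)$ alone, precisely because $\xi_1$ jumps at each reflection --- survives the rescalings used in these reductions and still delivers the estimates of type (\ref{2-1-9}), (\ref{2-2-12}) there. A secondary complication, for the general boundary condition (\ref{2-2-8}) rather than just Dirichlet or Neumann, is keeping track of $\mathcal R$ through the Dirichlet-to-Neumann operator; but for the decay in $s$ only the values of $\mathcal R$ near $\xi'=0$ and near the glancing set matter, and there it is smooth.
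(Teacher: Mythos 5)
Your proposal follows essentially the route the paper itself sketches (and defers, for the full proof, to Section~8.1 of \cite{futurebook}): decompose $u=u^0+u^1$ as in (\ref{2-2-11}), establish the propagation estimates of Subsection~\ref{sect-2-2-2}, run the successive approximations of Subsection~\ref{sect-2-2-3} with coefficients frozen at the boundary point $(x',0)$, and identify $\cQ$ as the explicitly computable contribution of the short near-normal loops in the half-space model, with the $s^{-(d+1)/2}$ decay coming from the non-degenerate stationary point at normal incidence, i.e.\ from the curvature of the energy ellipsoid. Your reflection coefficient and the Dirichlet/Neumann specializations are correct, and the two derivations of the decay rate agree.

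One step to tighten: the claim that in the zone $V>\lambda$ ``both $e^0$ and $e^1$ are negligible and (\ref{2-2-21}) is trivial'' is not justified for the general boundary condition (\ref{2-2-8}). As emphasized in Subsubsection~\ref{sect-2-2-2-1}, when $\alpha\ne 0$ the classically forbidden level need not be forbidden: the operator $M=\alpha L+\beta$ of (\ref{2-2-14}) can fail to be elliptic, the one-dimensional model on the half-line then has bound states, and these surface states contribute a genuine (exponentially decaying in $s$, hence admissible) piece of $\cQ$; moreover, the Tauberian step in that zone requires microhyperbolicity of $M$, which is an assumption beyond $|V|\ne\lambda$ and is part of the ``exact statement'' the paper defers to \cite{futurebook}. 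This does not invalidate the theorem, but in that zone your argument must go through the Dirichlet-to-Neumann analysis rather than plain ellipticity of $H$.
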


For details, exact statement and proofs, see Section~\ref{book_new-sect-8-1} of \cite{futurebook}.

\subsection{Second term and dynamics}
\label{sect-2-2-5}

As in Subsection~\ref{sect-2-1-4}, we can improve our asymptotics under certain conditions to the dynamics of propagation of singularities. However, in the case that the manifold has a non-empty boundary, propagation becomes really complicated. For Schr\"odinger operators, we can prove that singularities propagate along Hamiltonian billiards unless they ``behave badly'' that is become tangent to $\partial X$ at some point or make an infinite number of reflections in finite time. However, the measure of \emph{dead-end points\/}\footnote{\label{foot-27} I.e. points $z\in \Sigma(\lambda)$ the billiard passing through which behaves badly.}  is $0$.

Thus, applying the arguments of Section~\ref{sect-1-2} we arrive to

\begin{theorem}\label{thm-2-2-3}
Let $d\ge 2$, $|V-\lambda|+|\nabla V|\ne 0$ on $\supp(\psi)$ and
$|V-\lambda|+|\nabla_{\partial X}  V|\ne 0$ on $\supp(\psi)\cap \partial X$. Further, assume that the measure of periodic Hamiltonian billiards passing through points of $\{H^{0}(x,\xi)=0\}\cap\supp(\psi)$ is $0$\,\footnote{\label{foot-28} There is a natural measure $dxd\xi:dH^0$.}. Then,
\begin{equation}
\int e(y,y,\lambda)\psi(y)\,dy=
h^{-d} \int \kappa_0 (y,\lambda)\psi(y)\,dy+ o(h^{-d+1}).
\label{2-2-22}
\end{equation}
\end{theorem}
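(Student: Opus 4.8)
The plan is to follow the scheme of Section~\ref{sect-1-2}, now for semiclassical billiards in place of geodesics, and use it to upgrade the remainder in Theorem~\ref{thm-2-2-1}\ref{thm-2-2-1-i} from $O(h^{1-d})$ to $o(h^{1-d})$ by controlling the propagator not merely for $|t|\le T_0$ but for $|t|\le T$ with $T$ an arbitrarily large constant, at the cost of discarding an $\varepsilon$-fraction of the energy surface. The non-degeneracy hypotheses $|V-\lambda|+|\nabla V|\ne 0$ on $\supp(\psi)$ and $|V-\lambda|+|\nabla_{\partial X}V|\ne 0$ on $\supp(\psi)\cap\partial X$ are precisely the microhyperbolicity of $h^2\Delta+V-\lambda$ and of the boundary value problem $(H,B)$ on the energy level $\lambda$; hence the machinery of Subsections~\ref{sect-2-2-2}--\ref{sect-2-2-4} applies and, with $T=T_0$, yields the short-time estimates of type~\textup{(\ref{2-1-9})} for $\sigma_\psi$ and~\textup{(\ref{2-2-12})} for $\sigma^1_\psi$, hence the asymptotics~\textup{(\ref{2-2-19})} with remainder $O(h^{1-d})$.

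First I would set up a microlocal partition near the energy surface. Lift $\Sigma(\lambda)=\{H^0(x,\xi)=\lambda\}$ over $\supp(\psi)$ and write the multiplication operator $\psi$ as $\psi=\psi Q_1+\psi Q_2$ with $0$-order $h$-pseudodifferential operators $Q_1,Q_2$, $Q_1+Q_2=I$ microlocally in a neighbourhood of $\Sigma(\lambda)$ (away from it the contribution to $u|_{\tau=\lambda}$ is negligible). By the propagation analysis sketched in Subsection~\ref{sect-2-2-5}, singularities of $u(x,y,t)$ travel along Hamiltonian billiards, and the set of dead-end points --- those whose billiard becomes tangent to $\partial X$ or suffers infinitely many reflections in finite time --- has $\upmu$-measure $0$. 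Given the hypothesis that the set of periodic billiards through $\supp(\psi)$ on the level $\lambda$ is also $\upmu$-null, for every $T\ge T_0$ and every $\varepsilon>0$ I can choose $Q_1,Q_2$ (of the form $Q_j(x,hD',hD_t)$ near $\partial X$, $Q_j(x,hD)$ inside) so that $\upmu(\supp Q_1)<\varepsilon$, $\supp Q_1$ contains a neighbourhood of all dead-end and all periodic points, and every point of $\supp Q_2$ has a well-behaved billiard that does not return to itself within time $T$.

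Next I would combine the two contributions as in the passage from~\textup{(\ref{1-2-7})} to~\textup{(\ref{1-2-8})}. On $\supp Q_2$ the billiard does not loop back within $T$, so the propagation of singularities for the boundary value problem gives the estimates~\textup{(\ref{2-1-9})},~\textup{(\ref{2-2-12})} with this large $T$, and the Tauberian argument with cut-off $\bar\chi_T$ applied to $\int_X(Q_{1x}e\,^t\!Q_{2y})(y,y,\lambda)\,dy$ produces a remainder $O(h^{1-d}T^{-1})$. On $\supp Q_1$ I only use the constant-time version with $T_0$, whose Tauberian remainder carries the factor $\upmu(\supp Q_1)$ and is thus $O(\varepsilon h^{1-d})$. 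Adding them, the error is $O\bigl(h^{1-d}(\varepsilon+T^{-1})\bigr)+o_{\varepsilon,T}(h^{1-d})$ with $\varepsilon$ arbitrarily small and $T$ arbitrarily large, hence $o(h^{1-d})$; the principal part is identified exactly as in Theorems~\ref{thm-2-2-1} and~\ref{thm-2-2-2}. Here one also uses, as recorded in Subsection~\ref{sect-2-2-2}, that the short loops made by rays reflecting normally off $\partial X$ contribute nothing to $\sigma_\psi(t)$ after integration transversally to the boundary, so they do not spoil the isolation of the singularity at $t=0$ in the relevant quantity.

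The main obstacle is the propagation of singularities near the boundary away from the dead-end set: one must prove both that this set is $\upmu$-null and that microlocally near any good, non-periodic point the propagator is controlled up to the large time $T$. Since near glancing and multiply reflected rays the free-space oscillatory-integral representation is unavailable, this rests on the block-diagonal reduction of $\cA$ and the microhyperbolicity-in-a-multidirection calculus of Subsection~\ref{sect-2-2-2}, together with a geometric and measure-theoretic analysis showing that the billiard flow is a measurable flow whose singular locus is null. These are the technically heaviest ingredients, and they are carried out in detail in Chapter~\ref{book_new-sect-3} and Sections~\ref{book_new-sect-7-2}--\ref{book_new-sect-8-1} of~\cite{futurebook}.
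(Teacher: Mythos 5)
Your proposal follows exactly the route the paper takes: the non-degeneracy hypotheses give microhyperbolicity so that Theorem~\ref{thm-2-2-1} applies, propagation along Hamiltonian billiards plus the $\upmu$-nullity of the dead-end set allow long-time control off a set of measure $\varepsilon$, and the $Q_1,Q_2$ splitting with the two Tauberian estimates is precisely the Section~\ref{sect-1-2} argument the paper invokes. This is correct and matches the paper's (very brief) proof, with the heavy lifting deferred to \cite{futurebook} in both cases.
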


\begin{remark}\label{rem-2-2-4}
If we are interested in the propagation of singularities without applications to spectral asymptotics, the answer is ``singularities propagate along the generalized Hamiltonian billiards'' (see Definition~\ref{book_new-def-3-2-2} in \cite{futurebook}).
\end{remark}

One can easily show:

\begin{theorem}\label{thm-2-2-5}
Let $d\ge 3$. Assume that we are in the framework of Theorem~\ref{thm-2-2-1}\ref{thm-2-2-1-ii}. Further  assume that the set of periodic trajectories of the Schr\"odinger operator on $\partial X$ with potential $W$ introduced after \textup{(\ref{2-2-14})} has measure $0$. Then,
\begin{equation}
\int e(y,y,\lambda)\psi(y)\,dy=
h^{1-d}  \varkappa_{1,\psi} (\lambda)+
h^{2-d} \varkappa_{2,\psi} (\lambda)+o(h^{-d+2}).
\label{2-2-23}
\end{equation}
\end{theorem}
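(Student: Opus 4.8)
The plan is to reduce the problem, as in Subsections~\ref{sect-2-2-2}--\ref{sect-2-2-4}, to a two-term spectral asymptotics on the closed $(d-1)$-dimensional manifold $\partial X$, and then to run the dynamical argument of Section~\ref{sect-1-2} in dimension $d-1$. Since $H$ is elliptic on the relevant energy levels over $\supp(\psi)$, the free-space part $e^0(x,x,\tau)$ is negligible there and the whole contribution to $\int e(y,y,\lambda)\psi(y)\,dy$ comes from the \emph{reflected wave} $u^1$, i.e.\ from an $h$-neighbourhood of $\partial X$. In the zone where $hD_t-H$ is elliptic one forms the Dirichlet-to-Neumann operator $L$ and the reduced operator $M=\alpha L+\beta$ of (\ref{2-2-13})--(\ref{2-2-14}); at the energy level $\lambda$ its principal symbol vanishes exactly on the set $\{(x',\xi'):\ \sum_{j,k\ge 2}g^{jk}\xi_j\xi_k=\lambda-W(x')\}$, which is the energy surface of the Schr\"odinger operator on $\partial X$ with potential $W$. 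Hence the bicharacteristic flow that governs the singularities of $u^1$ along $\partial X$ is, microlocally away from the turning set $\{W=\lambda\}$, the Hamiltonian flow of that boundary Schr\"odinger operator, and the hypothesis of Theorem~\ref{thm-2-2-1}\ref{thm-2-2-1-ii} is precisely the statement that $M$ is elliptic or microhyperbolic, i.e.\ $W>\lambda$, or $W=\lambda$ and $\nabla_{\partial X}W\ne 0$.

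First I would push the successive-approximation construction of Subsection~\ref{sect-2-2-3} one order further, so as to produce the coefficient $\varkappa_{2,\psi}(\lambda)$ (the $m=1$ term of the $e^1$-expansion, computed exactly as indicated there) together with the Tauberian remainder $O(h^{2-d}T^{-1})$ valid for a small constant $T=T_0$. To upgrade this remainder to $o(h^{2-d})$ one must take $T\to\infty$, which is legitimate as long as the cut-off $F_{t\to h^{-1}\tau}\bar{\chi}_T(t)\sigma^1_\psi(t)$ still obeys the estimate (\ref{2-2-12}) for that $T$. By the propagation-of-singularities statement for the boundary value problem (the Corollary~\ref{cor-2-1-10}-analogue established in Subsection~\ref{sect-2-2-2}), this holds provided no periodic billiard --- equivalently, since $H$ is elliptic in the interior, no periodic trajectory of the boundary Schr\"odinger operator with potential $W$ --- of period $\le T$ passes through the relevant part of the energy surface over $\supp(\psi)$.

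Next I would invoke the measure-zero hypothesis exactly as in Section~\ref{sect-1-2}: for arbitrarily large $T$ and arbitrarily small $\varepsilon>0$, split $1=Q_1+Q_2$ on $T^*\partial X$ with $\upmu_\lambda(\supp Q_1)\le\varepsilon$ (here $\upmu_\lambda$ is the natural measure on $\{\sum_{j,k\ge 2}g^{jk}\xi_j\xi_k=\lambda-W\}$) and with no $W$-trajectory of period $\le T$ issued from $\supp Q_2$; combining the Tauberian bound for $\sigma^1_\psi$ with $Q_1,T_0$ and with $Q_2,T$ gives a remainder $O\bigl(h^{2-d}(T^{-1}+\varepsilon)\bigr)$, hence $o(h^{2-d})$. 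Assembling the $h^{1-d}\varkappa_{1,\psi}(\lambda)$ term of Theorem~\ref{thm-2-2-1}\ref{thm-2-2-1-ii}, the new $h^{2-d}\varkappa_{2,\psi}(\lambda)$ term and the $o(h^{2-d})$ remainder yields (\ref{2-2-23}). The restriction $d\ge 3$ enters only to guarantee that the contribution of the turning zone $\{|W-\lambda|+|\nabla W|\lesssim\bar{\gamma}\}$ --- handled by the rescaling technique of Subsection~\ref{sect-2-1-5} adapted to $\partial X$ --- is $o(h^{2-d})$; for $d=2$ that zone is already of the size of the second term and no clean $\varkappa_2$ survives.

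The hard part is the long-time propagation analysis near $\partial X$: one has to show that, for $t\ne 0$ and $\tau$ near $\lambda$, the singularities of $\sigma^1_\psi(t)$ are carried only by periodic $W$-trajectories. This requires the microlocal reduction to the block form (\ref{2-2-17})--(\ref{2-2-18}), the microhyperbolicity-in-multidirection estimates held uniformly for $|t|\le T$ with $T\to\infty$, and a careful treatment of the glancing/turning set where $M$ is no longer microhyperbolic in the naive sense --- which is exactly where the two hypotheses of the theorem (microhyperbolicity of $(H,B)$ and the measure-zero dynamical condition) are used. This bookkeeping is the content of the sections of \cite{futurebook} cited in Subsections~\ref{sect-2-2-2}--\ref{sect-2-2-5}.
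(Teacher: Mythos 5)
Your proposal is correct and follows essentially the route the paper intends (the paper itself only says ``one can easily show'' and defers to \cite{futurebook}): since $H$ is elliptic inside, everything reduces to the boundary operator $M=\alpha L+\beta$, whose characteristic variety is the energy surface of the Schr\"odinger operator on $\partial X$ with potential $W$, and one then runs the successive-approximation/Tauberian/non-periodicity scheme of Sections~\ref{sect-1-2} and~\ref{sect-2-1} in dimension $d-1$, with the rescaling treatment of the turning set $\{W=\lambda\}$ accounting for $d\ge 3$. The only cosmetic slip is in your parenthetical characterization of the microhyperbolicity of $M$ (the $\xi'$-microhyperbolic regime is $W<\lambda$, the case that actually produces surface states; $W>\lambda$ is the elliptic case), which does not affect the argument.
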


\begin{remark}\label{rem-2-2-6}
Analysis becomes much more complicated for more general operators even if we assume that the inner propagation is simple. For example, if the operator in question is essentially a collection of $m$ Schr\"odinger operators intertwined through boundary conditions then every incidence ray after reflection generates up to $m$ reflected rays and we have \emph{branching Hamiltonian billiards\/}. Here, a \emph{dead-end point\/} is a point $z\in \Sigma(\lambda)$ such that \emph{some of the branches\/} behave badly and a \emph{periodic point\/}  is a point
$z\in \Sigma(\lambda)$ such that \emph{some of the branches\/} return to it.

Assume that the sets of all periodic points and all dead-end points on the energy level $\Sigma(\lambda)$ have measure $0$ (as shown in \cite{safarov:pdo}, the set of all dead-end points may have positive measure). Then, the two-term asymptotics could be recovered. However, the   investigation of branching Hamiltonian billiards is a rather daunting task.
\end{remark}

\subsection{Rescaling technique}
\label{sect-2-2-6}

The rescaling technique could be applied near $\partial X$ as well. Assume that $\lambda=0$. Then to get rid of the non-degeneracy assumption $V(x)\le -\epsilon$, we use scaling functions $\gamma(x)$ and $\rho(x)$ as in Subsection~\ref{sect-2-1-5}. It may happen that $B(x,\gamma(x))\subset X$  or it may  happen that $B(x,\gamma(x))$ intersects $\partial X$. In the former case, we are obviously done and in the latter case we are done as well because in the condition (\ref{2-2-8}) we scale $\alpha\mapsto \alpha \rho\nu$, $\beta \mapsto \beta\nu$ where $\nu>0$ is a parameter of our choice. Thus, in the pointwise asymptotics, we can get rid of this assumption for $d\ge 3$, and in the local asymptotics for $d\ge 2$ assuming that $|V|+|\nabla V|\asymp 1$ because the total measure of the balls of radii $\le \gamma$ which intersect $\partial X$ is $O(\gamma)$.
For details, exact statements and proofs, see Section~\ref{book_new-sect-8-2} of \cite{futurebook}.

\subsection{Operators with periodic billiards}
\label{sect-2-2-7}

\subsubsection{Simple billiards}
\label{sect-2-2-7-1}
Consider an operator on a manifold with boundary. Assume first that all the billiard trajectories (on energy levels close to $\lambda$) are \emph{simple\/} (i.e. without branching) and  periodic with a period bounded from above; then the period depends only on the energy level. Example: the Laplace-Beltrami operator on the semisphere. Under some non-degeneracy assumptions similar to (\ref{2-1-59}), we can derive asymptotics similar to (\ref{2-1-63}) but with two major differences:

\begin{enumerate}[label=(\roman*), wide, labelindent=0pt]
\item\label{sect-2-2-7-1-i}
We assume that $\varepsilon \asymp  h$ and recover remainder estimate only $O(h^{1-d+\delta})$;  it is still good enough to have the second term of the non-standard type.

\item\label{sect-2-2-7-1-ii}
We can consider $b(x,\xi)$ (which is invariant with respect to the  Hamiltonian billard flow) as a phase shift for one period. Now, however, it could be a result not only of the quantum drift as in Subsection~\ref{sect-2-1-6}, but also of an instant change of phase at the moment of the reflection.
\end{enumerate}

For exact statements, details and proofs, see Subsection~\ref{book_new-sect-8-3-2} of \cite{futurebook}.

\subsubsection{Branching billiards with ``scattering''}
\label{sect-2-2-7-2}

We now assume  that the billiard branches but only one (``main'') branch is typically periodic. For example, consider two Laplace-Beltrami operators intertwined through boundary conditions: one of them is an operator on the semisphere $X_1$ and another on the disk $X_2$ with $\partial X_1$ and $\partial X_2$ glued together. Then all billiards on $X_1$ are periodic but there exist nowhere dense sets $\Lambda_j(\lambda)$ of measure $0$, such  that the billiards passing through $\Sigma_j(\lambda)\setminus \Lambda_j(\lambda)$  and containing at least one segment in $X_2$  are not periodic. Assume also that the boundary conditions guarantee that at reflection, the ``observable'' part of energy escapes into $X_2$. Then to recover the sharp remainder estimates, we do not need a phase shift because for time $T\gg 1$, we have
\begin{equation}
T |F_{t\to h^{-1}\tau}\bar{\chi}_T(t)\, d_\tau e (y,y,\tau)\,dy|\le
C_0h^{1-d} \sum_{|n|\le T} q^n + o_{T} (h^{1-d}),
\label{2-2-24}
\end{equation}
where $q\le 1$ estimates from above the ``portion of energy'' which goes back to $X_1$ at each reflection; if $q<1$, as we have assumed the right-hand expression does not exceed $C_1h^{1-d}+ o_T(h^{1-d})$ and we recover asymptotics similar to (\ref{2-1-63})  with the remainder estimate $o(h^{1-d})$.

For exact statements, details and proofs, see Subsection~\ref{book_new-sect-8-3-3} of \cite{futurebook}.

\subsubsection{Two periodic billiards}
\label{sect-2-2-7-3}

We can also consider  the case when the billiards flows in $X_1$ and $X_2$ are both periodic but ``magic'' happens at reflections. For exact statements, details and proofs, see Subsection~\ref{book_new-sect-8-3-4} of \cite{futurebook}.

\chapter{Global asymptotics}
\label{sect-3}

In this section, we consider global spectral asymptotics. Here we are mainly interested in the asymptotics with respect to the spectral parameter $\lambda$. We consider mainly examples.\enlargethispage{2\baselineskip}

\section{Weyl asymptotics}
\label{sect-3-1}

\subsection{Regular theory}
\label{sect-3-1-1}

We start from examples in which we apply only the results of the previous Chapter~\ref{sect-2} which may be combined with  \emph{Birman-Schwinger principle\/} and the rescaling technique.

\subsubsection{Simple results}
\label{sect-3-1-1-1}

\begin{example}\label{example-3-1-1}
Consider a self-adjoint  operator $A$ with domain $\fD(A)=\{u: \, Bu|_{\partial X}=0\}$.  We assume that $A$ is elliptic and the boundary value problem $(A,B)$ is elliptic as well.
\begin{enumerate}[label=(\roman*), wide, labelindent=0pt]
\item\label{example-3-1-1-i}
 We are interested in $\N(0,\lambda)$, the number of eigenvalues of $A$ in $[0,\lambda)$. Instead we consider $\N(\lambda/2,\lambda)$, which is obviously equal to $\N_h(\frac{1}{2},1)$,  the number of eigenvalues of
$A_h=\lambda ^{-1}A$ that lie in $[\frac{1}{2},1)$, with $h=\lambda^{-1/m}$ where $m$ is the order of $A$. In fact, more is true: the principal symbols of semiclassical operators $A_h$ and $B_h$  coincide with the senior symbols of $A$ and $B$. Then the microhyperbolicity conditions are satisfied and the semiclassical asymptotics with the remainder estimate $O(h^{1-d})$ hold which could be improved to two-term asymptotics under our standard non-periodicity condition. As a result, we obtain
\begin{align}
&\N(0,\lambda)=\varkappa_0 \lambda^{\frac{d}{m}} + O(\lambda^{\frac{d-1}{m}})
\label{3-1-1}\\
\shortintertext{and}
&\N(0,\lambda)=\varkappa_0 \lambda^{\frac{d}{m}} +
\varkappa_1 \lambda^{\frac{d-1}{m}}+ o(\lambda^{\frac{d-1}{m}}),
\label{3-1-2}
\end{align}
as $\lambda\to +\infty$ in the general case and under the standard non-periodicity condition respectively. Here,
\begin{equation}
\varkappa_0 = (2\pi)^{-d} \iint \mathbf{n}(x,\xi)\,dx d\xi
\label{3-1-3}
\end{equation}
where $\mathbf{n}(x,\xi)$ is the number of eigenvalues of $A^0(x,\xi)$ in $(0,1)$ and $m=m_A$ is the order of $A$.

\item\label{example-3-1-1-ii}
Suppose that $A_B$ is positive definite (then $m_A\ge 2$) and $V$  is an operator of the order $m_B<m_A$, symmetric under the same boundary conditions. We are interested in $\N(0,\lambda)$, the number of eigenvalues of $VA^{-1}_B$ in $(\lambda^{-1},\infty)$. Using the Birman-Schwinger principle, we can again reduce the problem to the semiclassical one with $H= h^{m_A}A - h^{m_V}V$, $h=\lambda^{-1/m}$, $m=m_A-m_V$. The microhyperbolicity condition is fulfilled automatically unless $\xi =0$ and $V^0(x,\xi)$ is degenerate. Still under certain appropriate assumptions about $V^0$, we can ensure microhyperbolicity (for $m_B=0,1$ only). Then (\ref{3-1-1}) and (\ref{3-1-2}) (the latter under standard non-periodicity condition) hold with $\mathbf{n}(x,\xi)$  the number of eigenvalues of $V^0(x,\xi)(A^0(x,\xi)^{-1})$ in $(1,\infty)$.

\item\label{example-3-1-1-iii}
Alternatively, we can consider the case when $V$ is positively defined (and $A_B$ may be not).

\item\label{example-3-1-1-iv}
For scalar operators, one can replace microhyperbolicity by a weaker non-degeneracy assumption. Furthermore, without any non-degeneracy assumption we arrive to one-term asymptotics with the remainder estimate $O(\lambda^{(d-1+\delta)/m})$.

\item\label{example-3-1-1-v}
Also one can consider operators whose  all Hamiltonian trajectories are periodic; in this case the oscillatory correction term appears.

\item\label{example-3-1-1-vi}
Suppose the operator $A_B$ has negative definite principal symbol but $A_B$ is not semi-bounded from above and $V$ is positive definite. Then  instead of (\ref{3-1-1}) or (\ref{3-1-2}), we arrive to
\begin{align}
&\N(0,\lambda)=\varkappa_1 \lambda^{\frac{d-1}{m}} + O(\lambda^{\frac{d-2}{m}})
\label{3-1-4}\\
\shortintertext{and}
&\N(0,\lambda)=\varkappa_1 \lambda^{\frac{d-1}{m}} +
\varkappa_2 \lambda^{\frac{d-2}{m}}+ o(\lambda^{\frac{d-2}{m}}),
\label{3-1-5}
\end{align}
(the latter under an appropriate non-periodicity assumption).
\end{enumerate}
\end{example}

\subsubsection{Fractional Laplacians}
\label{sect-3-1-1-2}

The fractional Laplacian $\Lambda_{m,X}$ appears in the theory of stochastic processes. For $m>0$, it is defined first on $\bR^d$ as $\Delta^{m/2}$, and in a domain $X\subset \bR^d$, it is defined as
$\Lambda_{m,X} u=R_X\Delta^{m/2}(\uptheta_X u)$ where $R_X$ is the restriction to $X$ and $\uptheta_X$ is the characteristic function of $X$. It differs from the $m/2$-th power of the Dirichlet Laplacian in $X$ and for $m\notin 2\bZ$, it does not belong to the Boutet de Monvel's algebra. In particular, even if $X$ is a bounded domain with $\partial X\in \sC^\infty$ and $u\in \sC^\infty(\bar{X})$, $\Lambda_{m,X} u $ does not necessarily belong to $\sC^\infty(\bar{X})$ (smoothness is violated in the direction normal to $\partial X$).

Then the standard Weyl asymptotics (\ref{3-1-1}) and (\ref{3-1-2}) hold
(the latter under standard non-periodicity condition) with the standard coefficient $\varkappa_0=(2\pi)^{-d}\omega_{d-1} \vol_{d}(X)$ and with
\begin{equation}
\varkappa_{1,m}= (2\pi)^{1-d}\omega_{d-1} \sigma_{m}\vol_{d-1}(\partial X),
\label{3-1-6}
\end{equation}
\begin{multline}
\sigma_m  =\\
=\frac{d-1}{m}
\iint_1^\infty \tau  ^{-(d-1)/m-1}
\Bigl(\mathbf{e}_m (x_1,x_1,\tau) - \pi ^{-1}(\tau-1)^{1/m}\Bigr)\,dx_1 d\tau
\label{3-1-7}
\end{multline}
where $\mathbf{e}_{m}(x_1,y_1,\tau)$ is the Schwartz kernel of the spectral projector of operator
\begin{equation}
\mathbf{a}_m = ((  D_x^2+1)^{m/2})_\D
\label{3-1-8}
\end{equation}
on $\bR^+$. To prove this, we need to redo some analysis of Chapter~\ref{sect-2}. While tangent rays are treated exactly as for the ordinary Laplacian, normal rays require some extra work. However, we can show that the singularities coming along transversal rays do not stall at the boundary but reflect according the standard law. For  exact statements, details and proofs, see Section~\ref{book_new-sect-8-5} of \cite{futurebook}.

\subsubsection{Semiclassical Dirichlet-to-Neumann operator}
\label{sect-3-1-1-3}

Consider the Laplacian $\Delta$ in $X$. Assuming that $\lambda$ is not an eigenvalue of $\Delta_\D$, we can introduce the Dirichlet-to-Neumann operator $L_\lambda:v\mapsto \lambda^{-\frac{1}{2}}\partial_\nu u|_{\partial X}$ where $u$ is defined as $(\Delta -\lambda)u=0$, $u|_{\partial X}=v$ and $\nu$ is the  inner unit normal. Here, $L_\lambda$ is a self-adjoint operator and we are interested in $\N_\lambda (a_1,a_2)$, the number of its eigenvalues in the interval $[a_1,a_2)$. Due to the Birman-Schwinger principle, it is equal to $N^-_h(a_1)-N^-_h(a_2)$ where $N^-_h(a)$ is the number of the negative eigenvalues of $h^2\Delta-1$ under the boundary condition $(h\partial_\nu -a)u|_{\partial X}=0$ and then we arrive to
\begin{align}
&\N_\lambda (a_1,a_2)=O(\lambda^{\frac{d-1}{m}})
\label{3-1-9}\\
\shortintertext{and}
&\N_\lambda (a_1,a_2)=\varkappa_1 (a_1,a_2)\lambda^{\frac{d-1}{2}}
+ o(\lambda^{\frac{d-1}{2}})
\label{3-1-10}
\end{align}
(the latter under a standard non-periodicity condition). For  exact statements, details and proofs, see Section~\ref{book_new-sect-11-9} of \cite{futurebook}.

\subsubsection{Rescaling technique}
\label{sect-3-1-1-4}

We are interested in the asymptotics of either
\begin{align}
&\N ^- (\lambda)= \int  e(x,x,\lambda)\,dx
\label{3-1-11}\\
\shortintertext{or}
&\N (\lambda_1,\lambda_2)= \int  e(x,x,\lambda_1,\lambda_2)\,dx
&&\text{with\ \ } \lambda_1<\lambda_2:
\label{3-1-12}
\end{align}
with respect to either the spectral parameter(s), or semiclassical parameter(s), or some other parameter(s). We assume that there exist \emph{scaling functions\/} $\gamma(x)$ and $\rho(x)$ satisfying
\begin{phantomequation}\label{3-1-13}\end{phantomequation}
\begin{equation}
|\nabla \gamma|\le \frac{1}{2},\qquad
|x-y|\le \gamma(y)\implies c^{-1}\le \rho(x)/\rho(y)\le c,
\tag*{$\textup{(\ref*{3-1-13})}_{1,2}$}\label{3-1-13-*}
\end{equation}
such that after rescaling $x\mapsto x/\gamma(y)$ and
$\xi\mapsto \xi/\rho(y)$ in $B(y,\gamma(y))$, we find ourselves in the framework of the previous chapter with an \emph{effective semiclassical parameter\/} $\hbar \le 1$\,\footnote{\label{foot-29} In purely semiclassical settings, $\hbar=h/\rho\gamma$ and we assume $\rho\gamma \ge h$.}.

To avoid non-degeneracy assumptions, we consider only the Schr\"odinger operator (\ref{2-2-7}) in $\bR^d$, assuming that $g^{jk}=g^{kj}$,
\begin{gather}
|\nabla ^\alpha g^{jk}|\le c_\alpha \gamma ^{-|\alpha|},\qquad
|\nabla^\alpha V|\le c_\alpha \rho^2\gamma^{-|\alpha|}\label{3-1-14}\\
\shortintertext{and}
\sum _{j,k} g^{jk}\xi_j\xi_k \ge \epsilon_0 |\xi|^2 \qquad \forall x,\xi.
\label{3-1-15}
\end{gather}

In the examples below, $h\asymp 1$.

\begin{example}\label{example-3-1-2}
\begin{enumerate}[label=(\roman*), wide, labelindent=0pt]
\item\label{example-3-1-2-i}
Suppose the conditions (\ref{3-1-14}), (\ref{3-1-15}) are fulfilled with
$\gamma (x)= \frac{1}{2}(|x|+1)$  and $\rho(x)=|x|^m$, $m>0$. Further, assume that the \emph{coercivity condition}
\begin{equation}
V(x)\ge \epsilon_0 \rho^2
\label{3-1-16}
\end{equation}
holds for $|x|\ge c_0$. Then if $|x|\le C\lambda^{1/2m}$, for the operator $H-\lambda$, we can use $\rho_\lambda (x)=\lambda^{1/2}$ and then the contribution of the ball $B(x,\gamma(x))$ to the remainder does not exceed
$C\lambda ^{(d-1)/2}\gamma^{d-1}(x)$; summation over these balls results in $O\bigl(\lambda^{(d-1)(m+1)/2m}\bigr)$.

On the other hand, if $|x|\le C\lambda^{\frac{1}{2m}}$, for the operator $H-\lambda$ we can use $\rho(x)=\gamma^{m}(x)$ but  there the ellipticity condition is fulfilled and  then the contribution of the ball $B(x,\gamma(x))$ to the remainder does not exceed $C\gamma^{-s}$;  summation over these balls results in $o\bigl(\lambda^{(d-1)(m+1)/2m}\bigr)$.  Then we arrive to
\begin{equation}
\N(\lambda)= c_0h^{-d}\int (\lambda-V(x))_+^{\frac{d}{2}} +
O\bigl(\lambda^{(d-1)(m+1)/2m}\bigr)
\label{3-1-17}
\end{equation}
as $\lambda\to +\infty$. Obviously the main part of the asymptotics is $\asymp \lambda^{d(m+1)/2m}$.

\item\label{example-3-1-2-ii}
Suppose instead $0>m >-1$. We are interested in its eigenvalues tending to the bottom of the continuous spectrum (which is $0$) from below. We no longer require the assumption (\ref{3-1-16}).

We use the same $\gamma(x)$ but now $\rho_\lambda (x)= \gamma(x)^{m}$ for $|x|\le C|\lambda|^{1/2m}$. Then the contribution of the ball $B(x,\gamma(x))$ to the remainder does not exceed $C\gamma (x)^{(d-1)(m+1)}$; summation over these balls results in $O\bigl(|\lambda|^{(d-1)(m+1)/2m}\bigr)$.

On the other hand, if $|x|\ge C|\lambda|^{1/2m}$, for the operator $H-\lambda$ we can use $\rho_\lambda(x)=|\lambda|^{\frac{1}{2}}$, but there the ellipticity condition is fulfilled and  then the contribution of the ball $B(x,\gamma(x))$ to the remainder does not exceed $C|\lambda|^{-s}\gamma^{d- s}$; summation over these balls results in $o\bigl(|\lambda|^{(d-1)(m+1)/2m}\bigr)$. Then we arrive to asymptotics (\ref{3-1-17}) again as $\lambda\to -0$.

Obviously the main part of the asymptotics is $O( |\lambda|^{d(m+1)/2m})$ and under the assumption $V(x)\le -\epsilon \rho(x)^2$, in some cone it is
$\asymp |\lambda|^{d(m+1)/2m}$.

\item\label{example-3-1-2-iii}
In both cases \ref{example-3-1-2-i} and \ref{example-3-1-2-ii}, the main contribution to the remainder is delivered by the zone
$\{\varepsilon < |x| |\lambda|^{-1/2m} < \varepsilon^{-1}\}$ and assuming that $g^{jk}(x)$ and $V(x)$ stabilize as $|x|\to +\infty$ to $g^{jk0}(x)$  and $V^0(x)$, positively homogeneous functions of degrees $0$ and $2m$
respectively, and that the set of periodic trajectories of the Hamiltonian $\sum_{j,k}g^{jk}(x)\xi_j\xi_k+V^0(x)$ on  energy level $1$ in \ref{example-3-1-2-i} or $-1$ in \ref{example-3-1-2-ii} has measure $0$, we can improve the remainder estimates to $o\bigl(|\lambda|^{(d-1)(m+1)/2m}\bigr)$.
\end{enumerate}
\end{example}

\begin{example}\label{example-3-1-3}
Consider the Dirac operator
\begin{equation}
H=\sum_{1\le j \le d} \upsigma_j D_j + M\sigma_0 + V(x),
\label{3-1-18}
\end{equation}
where $\upsigma_j$ ($j=0,\ldots,d$) are Pauli matrices in the corresponding dimension and $M>0$. Let $V(x)\to 0$ as $|x|\to \infty$. Then the essential spectrum of $H$ is $(-\infty,-M]\cup[M,\infty)$ and for $V$ as in Example~\ref{example-3-1-2}\ref{example-3-1-2-ii}, we can get  similar results for the  asymptotics of eigenvalues tending to $M-0$ or $-M+0$: so instead of $\N(\lambda)$, we consider $\N(0,M-\eta)$ or $\N(M+\eta,0)$ with $\eta\to +0$.
\end{example}

\begin{example}\label{example-3-1-4}
Consider the Schr\"odinger operator, either in a bounded domain $X\ni 0$ or in $\bR^d$ like in Example~\ref{example-3-1-2}\ref{example-3-1-2-i} and assume that $g^{jk}(x)$ and $V(x)$ have a singularity at $0$ satisfying there (\ref{3-1-14})--(\ref{3-1-16}) with $\gamma(x)=|x|$ and $\rho(x)=|x|^m$ with $m<-1$.

Consider the asymptotics of eigenvalues tending to $+\infty$. As in Example~\ref{example-3-1-2}\ref{example-3-1-2-i}, we take $\gamma(x)=\frac{1}{2}|x|$ and $\rho_\lambda(x)=\lambda^{1/2}$ for
$|x|\ge \epsilon_0 \lambda^{1/2m}$ (then the contribution of $B(x,\gamma(x))$ to the remainder does not exceed $\lambda^{(d-1)/2}|x|^{d-1}$) and $\rho_\lambda(x)=|x|^{m}$ as $|x|\le \epsilon_0 \lambda^{1/2m}$ (then due to the ellipticity the contribution of $B(x,\gamma(x))$ to the remainder does not exceed $\rho^{-s}\gamma^{-s-d}$). We conclude that the contribution of $B(0,\epsilon)$ to the remainder does not exceed $C\lambda^{(d-1)/2}\epsilon^{d-1}$ which means that this singularity does not prevent remainder estimate as good as $o(\lambda^{(d-1)/2})$. However, this singularity affects the principal part which should be defined as in (\ref{3-1-17}).
\end{example}

\begin{example}\label{example-3-1-5}
\begin{enumerate}[label=(\roman*), wide, labelindent=0pt]
\item\label{example-3-1-5-i}
When analyzing the asymptotics of the large eigenvalues, we can consider a potential that is either rapidly increasing
(with $\rho = \exp(|x|^m)$, $\gamma(x)= |x|^{1-m}$, $m>0$), or slowly increasing
(with $\rho =(|\log x|)^m$, $\gamma(x)= |x|$, $m>0$) which affects both the magnitude of the main part and the remainder estimate.

\item\label{example-3-1-5-ii}
When analyzing the asymptotics of the  eigenvalues tending to the bottom of the essential spectrum, we can consider a potential that is either rapidly decreasing (with $\rho=|x|^{-1}(\log |x|)^m$ with $m>0$, $\gamma(x)=|x|$, $m>0$)
or slowly decreasing (with $\rho =(|\log x|)^m$, $\gamma(x)= |x|$, $m<0$) which affects both the magnitude of the main part as well as the remainder estimate.
\end{enumerate}
\end{example}

\begin{remark}\label{rem-3-1-6}
We can consider the same examples albeit assuming only that $h\in (0,1)$; then the remainder estimate acquires the factor $h^{-d+1}$.
\end{remark}

\subsection{Singularities}
\label{sect-3-1-2}

Let us consider other types of singularities when there is  \emph{a singular zone\/} where after rescaling $\hbar \le  1$\,\footref{foot-29}. Still, it does not prevent us from using the approach described above to get an estimate from below for (\ref{3-1-11}) or (\ref{3-1-12}): we only need to decrease these expressions by inserting $\psi$ ($0\le \psi \le 1$) that is supported in \emph{the regular zone\/} (aka \emph{the semiclassical zone\/})  defined by
$\hbar \le 2\hbar_0$ after rescaling and equal to $1$ for $\hbar \le \hbar_0$ and applying the rescaling technique there.

Let us discuss an estimate from above. If there was no regular zone at all, we would have no estimate from below at all but there could be some estimate from above of variational nature. The best known is the LCR (Lieb-Cwikel-Rosenblum) estimate
\begin{equation}
\N^-(0)\le Ch^{-d}\int V_-^{\frac{d}{2}}\,dx
\label{3-1-19}
\end{equation}
for the Schr\"odinger operator with Dirichlet boundary conditions as $d\ge 3$. For $d=2$, the estimate is marginally worse  (see \cite{rozenblioum:lcr} for the most general statement for arbitrary orders of operators and dimensions and \cite{shar} for the most general results for the Schr\"odinger operator in dimension $2$).

It occurs that we can split our domain into an overlapping \emph{regular zone\/} $\{x:\,\rho(x)\gamma(x) \ge h\}$ and a \emph{singular zone\/} $\{x:\,\rho(x)\gamma(x) \le 3h\}$, then evaluate the contribution of the regular zone using the rescaling technique and the contribution of the singular zone by the variational estimate as if on the inner boundary of this zone (i.e. a part of its boundary which is not contained in $\partial X$) the Dirichlet boundary condition was imposed, and we add these two estimates:
\begin{multline}
-Ch^{1-d}\int_{\{\rho\gamma \ge h, V\le \epsilon \rho^2\}} \rho^{d-1}\gamma^{-1}\sqrt{g}\,dx \\
 \le \N^-(0)-(2\pi)^{-d}\omega_d h^{-d} \int_{\{\rho\gamma \ge h\}} V_-^{d/2}\,dx\\
\le
Ch^{1-d}\int_{\{\rho\gamma \ge h, V\le \epsilon \rho^2\}} \rho^{d-1}\gamma^{-1}\,dx+
Ch^{-d}\int_{\{\rho\gamma \le h, V\le \epsilon \rho^2\}}\rho^{d}\,dx.
\label{3-1-21}
\end{multline}
See Theorems~\ref{book_new-thm-9-1-7} and \ref{book_new-thm-9-1-13} of \cite{futurebook} for more general statements. Further, similar statements could be proven for operators which are not semi-bounded (see Theorems~\ref{book_new-thm-10-1-7} and \ref{book_new-thm-10-1-8} of \cite{futurebook}).

In particular, we have:

\begin{example}\label{example-3-1-7}
\begin{enumerate}[label=(\roman*), wide, labelindent=0pt]
\item\label{example-3-1-7-i}
Let
\begin{gather}
\int \rho^{d-1}\gamma^{-1}\,dx<\infty.
\label{3-1-22}\\
\shortintertext{Then,}
\N^-(0)=(2\pi)^{-d}\omega_d h^{-d} \int V_-^{d/2}\sqrt{g}\,dx +O(h^{1-d}).
\label{3-1-23}
\end{gather}
\item\label{example-3-1-7-ii}
If in addition the standard non-periodicity condition is satisfied then
\begin{multline}
\N^-(0)=(2\pi)^{-d}\omega_d h^{-d} \int V_-^{d/2}\sqrt{g}\,dx -\\
\frac{1}{4}(2\pi)^{1-d}\omega_{d-1} h^{1-d} \int V_-^{(d-1)/2}\,dS+
o(h^{1-d}),
\label{3-1-24}
\end{multline}
where $dS$ is a corresponding density on $\partial X$.
\end{enumerate}
\end{example}

\begin{example}\label{example-3-1-8}
Consider the Dirichlet Laplacian  in a domain $X$ assuming that  there exists scaling function $\gamma(x)$ such that (\ref{3-1-14}) holds and
\begin{claim}\label{3-1-25}
For each $y\in X$, the connected component of $B(y,\gamma(x))\cap X$ containing $y$   coincides with $\{x\in B(0,1), x_1\le f(x'\}$, where $x'=(x_2,\ldots,x_d)$ and
\begin{equation}
|\nabla ^\alpha f|\le C_\alpha\gamma^{1-|\alpha|}\qquad \forall \alpha,
\label{3-1-26}
\end{equation}
where we rotate the coordinate system  if necessary\footnote{\label{foot-30} It is precisely the condition that we need to impose on the boundary for the rescaling technique to work.}.
\end{claim}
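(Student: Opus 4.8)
The statement is local in $y$ and invariant under the dilation $x\mapsto(x-y)/\gamma(y)$, so I would fix $y\in X$, put $\gamma_0=\gamma(y)$, and work in the dilated coordinates in which $\gamma_0=1$. Since $|\nabla\gamma|\le\frac{1}{2}$, on $B(y,\gamma_0)$ one has $\frac{1}{2}\gamma_0\le\gamma(x)\le\frac{3}{2}\gamma_0$, so after dilation (\ref{3-1-14}) becomes $|\nabla^\alpha g^{jk}|\le C_\alpha$, and the second fundamental form $\mathrm{II}$ of $\partial X$ and its covariant derivatives obey $|\nabla^k\mathrm{II}|\le C_k$ on $B(0,2)\cap\partial X$; these are the rescaled form of the bounded-geometry hypothesis that an admissible domain carries at scale $\gamma$ (for the flat Laplacian $g^{jk}=\updelta^{jk}$ and only the curvature bounds remain). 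In dilated coordinates the goal is: if $\mathcal{C}$ is the connected component of the origin in $B(0,1)\cap X$, then in suitable rotated coordinates $\mathcal{C}=\{x\in B(0,1):x_1\le f(x')\}$ with $|\nabla^\alpha f|\le C_\alpha$. If $\dist(0,\partial X)\ge 1$ the ball is interior, $\mathcal{C}=B(0,1)$, and $f\equiv 2$ works; so assume $d=\dist(0,\partial X)<1$ and pick $p\in\partial X$ with $|p|=d$.

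\textbf{Step 1: a local graph at $p$ with controlled derivatives.} Rotate coordinates so that near $p$ the boundary is a graph $\{x_1=f(x')\}$ with $X$ on the side $\{x_1\le f(x')\}$; this fixes $T_p\partial X=p+\{x_1=0\}$, $f(p')=p_1$, and $\nabla f(p')=0$ (here and below $\nabla$ acts on $x'$). To make the graph representation valid on a ball of \emph{fixed} radius I would use the quantitative implicit function theorem: parametrise the sheet of $\partial X$ through $p$ by the exponential map of its induced metric and integrate $|\mathrm{II}|\le C_2$ to confine that sheet to a thin slab over $T_p\partial X$ and to make its orthogonal projection onto $\{x_1=0\}$ a diffeomorphism onto a ball $B'(p',c_0)$ of fixed radius $c_0$; hence $f$ is single-valued and smooth there. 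The bounds $|\nabla^\alpha f|\le C_\alpha$ are then Gauss--Codazzi bookkeeping: up to the factor $(1+|\nabla f|^2)^{1/2}$ and Christoffel terms bounded via (\ref{3-1-14}), $\nabla^2 f$ equals $\mathrm{II}$ and is bounded; by induction $\nabla^k f$ is a universal polynomial in $\nabla^{\le k-2}\mathrm{II}$, the ambient-metric jets up to order $k$, and lower derivatives of $f$, divided by a power of $(1+|\nabla f|^2)^{1/2}$, and $\nabla f(p')=0$ lets a short bootstrap keep $|\nabla f|$ bounded on $B'(p',c_0)$. With $|f(p')|=|p_1|\le d<1$ this also yields $|f|\le C$.

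\textbf{Step 2: globalisation and identification of $\mathcal{C}$.} It remains to pass from the fixed-radius graph to a graph over all of $B'(0,1)$ and to check it really is $\mathcal{C}$. The local graphs from Step 1, centred at various boundary points in $B(0,1)$, patch to a single graph $x_1=f(x')$ over $B'(0,1)$ exactly when $\partial X\cap B(0,1)$ is a single sheet, i.e. when the reach of $\partial X$ is bounded below by a fixed multiple of $\gamma$. This non-self-approach is the true geometric content of (\ref{3-1-25}); it fails for domains with thin fingers or nearly touching walls (e.g. a very thin annulus), so it must be read into the hypothesis — it is guaranteed, for instance, by a uniform two-sided ball condition at scale $\gamma$, which the curvature bound alone does not give. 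Granting it, $\partial X\cap B(0,1)$ is a connected embedded graph hypersurface that separates $B(0,1)$ into two pieces; the one on which $X$ accumulates at the origin, namely $\{x_1\le f(x')\}\cap B(0,1)$, is connected and contains $0$, hence equals $\mathcal{C}$. Undoing the dilation $x\mapsto\gamma_0 x$ turns $|\nabla^\alpha f|\le C_\alpha$ into (\ref{3-1-26}).

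\textbf{Main obstacle.} The one genuinely non-routine point is the globalisation in Step 2: the local curvature bounds describe $\partial X$ only one sheet at a time and do not by themselves keep a second sheet of $\partial X$ out of the ball $B(y,\gamma(y))$. The lower bound on the reach relative to $\gamma$ (equivalently, a two-sided interior/exterior ball condition at scale $\gamma$) that rules this out is the extra geometric input, and it is precisely what (\ref{3-1-25}) bundles with local smoothness so that the rescaling machinery of Chapter~\ref{sect-2} applies uniformly over $X$.
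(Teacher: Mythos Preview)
You have misread the role of (\ref{3-1-25}) in the paper. It is not a proposition being asserted and proved; it is a \emph{hypothesis} imposed on the domain $X$ and the scaling function $\gamma$. In the paper's environment scheme, \texttt{claim} is used for numbered displayed conditions (cf.\ (\ref{1-1-3}), (\ref{1-1-4}), (\ref{2-1-35}), (\ref{2-1-36})), not for theorems. The sentence in Example~\ref{example-3-1-8} reads: ``Consider the Dirichlet Laplacian in a domain $X$ assuming that there exists a scaling function $\gamma(x)$ such that (\ref{3-1-14}) holds and (\ref{3-1-25}).'' The footnote makes this explicit: ``It is precisely the condition that we need to impose on the boundary for the rescaling technique to work.'' There is no proof in the paper because there is nothing to prove.

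You actually discovered this yourself in Step~2: you note that the single-sheet property ``fails for domains with thin fingers or nearly touching walls \ldots\ so it must be read into the hypothesis,'' and that curvature bounds alone do not give it. That observation is correct and is the whole point---(\ref{3-1-25}) \emph{is} the packaging of local $\sC^\infty$-smoothness together with a reach bound at scale $\gamma$, stated directly as a graph condition so that after rescaling $x\mapsto(x-y)/\gamma(y)$ one lands uniformly in the framework of Chapter~\ref{sect-2}. Your Steps~1--2 are a reasonable sketch of why such a condition is equivalent to ``bounded geometry plus a two-sided ball condition at scale $\gamma$,'' but that equivalence is not what the paper is claiming, and no derivation from weaker hypotheses is possible (your own counterexample of a thin annulus shows this).
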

\begin{enumerate}[label=(\roman*), wide, labelindent=0pt]
\item\label{example-3-1-8-i}
Then the principal part of asymptotics is
\begin{gather}
c_0 \lambda^{\frac{d}{2}}h^{-d}
\int _{\{x:\gamma(x)\ge \lambda^{-\frac{1}{2}}\}}\,dx
\label{3-1-27}\\
\intertext{and the remainder does not exceed}
C \lambda^{\frac{d-1}{2}}h^{1-d}\int _{\{x:\gamma(x)\ge \lambda^{-\frac{1}{2}}\}}\gamma(x)^{-1}\,dx +
C \lambda^{\frac{d}{2}}h^{-d}\int _{\{x:\gamma(x)\le \lambda^{-\frac{1}{2}}\}}\,dx.
\label{3-1-28}
\end{gather}
\item\label{example-3-1-8-ii}
In particular, if
\begin{equation}
\int _X\gamma(x)^{-1}\,dx <\infty,
\label{3-1-29}
\end{equation}
then the standard asymptotics with the remainder estimate $O(\lambda^{(d-1)/2}h^{1-d})$ hold. Moreover, under the standard condition (\ref{1-1-3}), we arrive to the two-term asymptotics (\ref{1-1-2}).

These conditions are satisfied for domains with vertices, edges and conical points. In fact, we may allow other singularities including outer and inner spikes and cuts.

Furthermore, these conditions are satisfied for unbounded domains with cusps (exits to infinity) provided these cusps are thin enough (basically having finite volume and area of the boundary).

\item\label{example-3-1-8-iii}
These results hold under the Neumann  or mixed Dirichlet-Neumann boundary condition, but then we need to assume that the domain satisfies the cone condition; for the two-term asymptotics, we also  need to assume that the border between the parts of $\partial X$ with the Dirichlet and Neumann boundary conditions has $(d-1)$-dimensional measure $0$.
\end{enumerate}
\end{example}

\begin{example}\label{example-3-1-9}
\begin{enumerate}[label=(\roman*), wide, labelindent=0pt]
\item\label{example-3-1-9-i}
Suppose that the potential is singular at $0\in X$ like $|x|^{2m}$ with $m \in (-1,0)$. Then this singularity does not affect the asymptotics of large eigenvalues.

\item\label{example-3-1-9-ii}
Let us consider Example~\ref{example-3-1-2}\ref{example-3-1-2-i} albeit allow $V\ge 0$ to vanish along certain directions. Then we have \emph{canyons\/} and $\{x:\,V(x)\le \lambda\}$ are cusps. If the canyons are narrow and steep enough then the same asymptotics (\ref{3-1-17}) hold. Moreover, under the non-periodicity condition, the remainder estimate is ``$o$''.

\item\label{example-3-1-9-iii}
Let us consider Example~\ref{example-3-1-2}\ref{example-3-1-2-ii} albeit allow $V\ge 0$ to be singular along certain directions. Then we have \emph{gorges\/} and $\{X:\,V(x)\le \lambda\}$ are cusps. If the gorges are narrow and shallow enough then the same asymptotic (\ref{3-1-17}) hold. Moreover, under the non-periodicity condition, the remainder estimate is ``$o$''.
\end{enumerate}
\end{example}

\begin{example}\label{example-3-1-10}
We can consider also multiparameter asymptotics, for example with respect to $h\to +0$ and $\lambda$. In addition to what we considered above, the following interesting possibility appears: $\lambda\searrow \lambda_*\Def \inf V(x)$ which is either finite or $-\infty$. Then if $\lambda$ tends to $\lambda_*$  slowly enough so that $\N^-_h(\lambda)\to +\infty$, we get interesting asymptotics.

In particular, as \underline{either} $V(x)\asymp |x|^{2m}$ with $m>0$ and
$\lambda\to +0$ \underline{or} $V(x)\asymp |x|^{2m}$ with $0>m>-1$ and
$\lambda\to -\infty$, then  this condition is $h=o(|\lambda|^{(m+1)/2m})$.
\end{example}

\begin{remark}\label{rem-3-1-11}
We can also consider  $\Tr ((-H)^\nu\uptheta(-H))$ with $\nu>0$. Then in the estimates above, $\rho^d\mapsto \rho^{d+2\nu}$ and
$\rho^{d-1}\mapsto \rho^{d+\nu-1}\gamma^{-\nu-1}$.
\end{remark}

For full details, proofs  and generalizations, see Chapter~\ref{book_new-sect-11} of \cite{futurebook} which covers also non-semibounded operators.

\section{Non-Weyl asymptotics}
\label{sect-3-2}

\subsection{Partially Weyl theory}
\label{sect-3-2-1}

Analyzing the examples of the previous section, one can observe that for some values of the exponents, the condition (\ref{3-1-22}) (or it special case (\ref{3-1-29})) fails but the main term of the asymptotics is still finite and has the same rate of the growth as it had before, while for other values of the exponent, it is infinite. In the former case, we get Weyl asymptotics but with a worse remainder estimate, in the latter case, all we can get is an estimate rather than the asymptotics. Can one save the day?

In many cases, the answer is positive and we can derive either the Weyl asymptotics but with a non-Weyl correction term or completely non-Weyl asymptotics. The main but not the only tool is the spectral asymptotics for operators with operator-valued symbols. Namely, in some zone of the phase space, we  separate the variables\footnote{\label{foot-31} After some transformation, the transformations and separations  in the different zones  are not necessarily the same.} $x=(x';x'')$ and $(\xi';\xi'')$ respectively and consider the variables $(x',\xi')$ as \emph{semiclassical variables\/} (or \emph{Weyl variables\/}), similar to $(x,\xi)$ in the previous scheme. So we get an operator $\mathsf{H}(x',D')$ with an operator-valued symbol which we can study in the same way as the operator $H$ before.

One can say that we have a matrix operator but with a twist: first, instead of finite-dimensional matrices, we have unbounded self-adjoint operators in the auxilary infinite-dimensional Hilbert space $\bH$ (usually $\mathscr{L}^2$ in the variables $x''$); second, we are interested in the asymptotics
\begin{equation}
\int \tr_{\bH} \bigl(\hat{e}(x',x',\lambda)\bigr)\,dx',
\label{3-2-1}
\end{equation}
rather than in the asymptotics without trace where $\hat{e}(x',y';\lambda)$ is an operator in $\bH$ (with  Schwartz kernel $e(x',y'; x'',y'';\lambda)$);  and, finally, the main term in asymptotics is
\begin{equation}
\int \tr_{\bH} \bigl(\mathsf{e}(x',\xi';\lambda)\bigr)\,d\xi'dx',
\label{3-2-2}
\end{equation}
where $\mathsf{e}(x',\xi',\lambda)$ is a spectral projector (in $\bH$) of $\mathsf{H}(x',\xi')$. Here, we need to assume that $\mathsf{H}(x',\xi')$ is microhyperbolic with respect to $(x',\xi')$. Since the operator  $\tr_\bH$ is now  unbounded, both the main term of the asymptotics of (\ref{3-2-1}) and the remainder estimate \emph{may\/} have magnitudes different from what they would be without $\tr_\bH$.

Since the operator $\mathsf{H}(x',\xi')$ is rather complicated, we want to replace it by some simpler operator and add some easy to calculate correction terms.

We consider multiple examples below. Magnetic Schr\"odinger, Schr\"odinger-Pauli and Dirac operators studied in Sections~\ref{sect-5} and~\ref{sect-6} are also of this type.

\subsection{Domains with thick cusps}
\label{sect-3-2-2}

This was done in Section~\ref{book_new-sect-12-1}  of \cite{futurebook} for operators in domains with thick cusps of the form $\{x:\,x''\in f(x')\Omega\}$ where $\Omega$ is a bounded domain in $\bR^{d''}$ with  smooth boundary, defining the cusp crossection. Here again we consider for simplicity the Dirichlet Laplacian. Assume first that the metric is Euclidean and the domain $X=\{x=(x',x''):\,x'\in X'\Def \bR^{d'},\, x''\in f(x')\Omega\}$. Then, the change of variables $x''\mapsto x''/f(x')$ transforms $\Delta$ to the operator
\begin{equation}
P=\sum_{1\le j\le d'}
\Bigl(D_j+g_{x_j}L + \frac{id''}{2}g_{x_j}\Bigr)
\Bigl(D_j+g_{x_j}L - \frac{id''}{2}g_{x_j}\Bigr)
+ \frac{1}{f^2}\Delta''
\label{3-2-3}
\end{equation}
in $\sL^2(X'\times \Omega)=\sL^2(\bR^{d'}, \bH)$ where
$L=\langle x'',D''\rangle$, $g=-\log f$, $\bH=\sL^2(\Omega)$, $\Delta'$ is a Laplacian in $X'$, and $\Delta''=\Delta''_\D$ is a Dirichlet Laplacians in  $\Omega$, and we simultaneously multiply $u$ by $f^{-d''/2}$ to have the standard Euclidean measure rather than the weighted one $f^{d''}dx$.
We consider the operator (\ref{3-2-3}) as a perturbation of the operator
\begin{equation}
\bar{P}\Def \Delta' + \frac{1}{f^2}\Delta'',
\label{3-2-4}
\end{equation}
which is a direct sum of $d'$-dimensional Schr\"odinger operators $P_n=\Delta'+\mu_n f^{-2}$ in $X'$ where $\mu_n>0$ are the eigenvalues of $\Delta''_\D$. Assuming that
\begin{equation}
f\asymp |x|^{-m},\qquad |\nabla f|\asymp |x|^{-m-1}\qquad\text{for\ \ }|x'|\ge c,
\label{3-2-5}
\end{equation}
we can ensure that the microhyperbolicity condition (with respect to $(x';\xi')$) is fulfilled for $P_n$, $\bar{P}$, as well as for $P$.

Then according to the previous section, for $P_n$ the eigenvalue counting function is
\begin{equation}
\N_n (\lambda) =
c_{d'}\int \bigl(\lambda - \mu _n f^{-2}(x')\bigr)_+^{\frac{d'}{2}}\,dx'+
O\bigl(\lambda^{(d'-1)(m+1)/2m}\mu_n^{-(d'-1)/2m}\bigr),
\label{3-2-6}
\end{equation}
where the remainder estimate is uniform  with respect to $n$. Observe that  for $\bar{P}$ the eigenvalue counting function  is
$\bar{\N} (\lambda) =\sum_n \N_n(\lambda)$. Using  $\mu_n \asymp n^{2/d''}$, we arrive to
\begin{gather}
\N_n (\lambda) =
c_{d'}\iint (\lambda - \mu  f^{-2}(x'))_+^{\frac{d'}{2}}\,dx'd_\mu \mathbf{n}(\mu) + O(R(\lambda))
\label{3-2-7}\\
\shortintertext{with}
R(\lambda)=\lambda^{\frac{1}{2}(d-1)}+ \lambda^{\frac{m+1}{2m}(d'-1)} +
\updelta_{(d'-1), md''}\lambda^{\frac{1}{2}(d-1)} \log \lambda,
\label{3-2-8}
\end{gather}
where $\mathbf{n}(\mu)$ is the eigenvalue counting function for $\Delta''_\D$.

We show, moreover, that the same asymptotics holds for our original operator (\ref{3-2-3}). Furthermore, if the first term in (\ref{3-2-7}) is dominant, then under the standard non-periodicity assumption  we can replace $O(\lambda^{(d-1)/2})$ by $o(\lambda^{(d-1)/2})$; we need to add  the standard boundary term to the right-hand expression in (\ref{3-2-7}).

On the other hand, if the second term in (\ref{3-2-7}) dominates, then assuming that $f$ stabilizes as $|x'|\to\infty$ to a positively homogeneous function $f_0$,  under the corresponding non-periodicity assumption (now in $T^*\bR^{d'}$) for $|\xi'|^2 + f_0^{-2}(x')$, we can replace $O(\lambda^{(m+1)(d'-1)/2m})$ by $o(\lambda^{(m+1)(d'-1)/2m})$. Finally, if both powers coincide then under the stabilization condition, the remainder estimate is
$o(\lambda^{(d-1)/2} \log \lambda)$ but we need to add  the modified boundary term to the right-hand expression in (\ref{3-2-7}).

Obviously, the principal part in (\ref{3-2-7}) is of the magnitude
\begin{equation}
S(\lambda)=\lambda^{\frac{1}{2}d}+ \lambda^{\frac{m+1}{2m}d'} +
\updelta_{d', md''}\lambda^{\frac{1}{2}d} \log \lambda.
\label{3-2-9}
\end{equation}

If $X$ is not exactly of the same form and the metric only stabilizes (fast enough) at infinity to $g^{jk0}\Def \updelta_{jk}$, then we can recover the same remainder estimate and reduce the principal part to
\begin{multline}
 c_{d'} \iint (\lambda - \mu f^{-2}(x'))_+^{\frac{d'}{2}}\phi(x')\,
 dx'd_\mu \mathbf{n}(\mu) \\+
c_d \lambda^{d/2}\int_X (\sqrt{\mathstrut{g}}- \sqrt{g^0}\phi (x'))\,dx,
\label{3-2-10}
\end{multline}
where $\supp(\phi)\subset \{|x'|\ge c\}$, $\phi =1$ in $\{|x'|\ge c\}$.
Here, the first part is exactly as above and the second term is actually the sum of two terms; one of them  $c_d \lambda^{d/2}\int \sqrt{g}(1- \phi (x'))\,dx$ is the contribution of the ``finite part of the domain'' (without the cusp) and the second $c_d \lambda^{d/2}\int (\sqrt{g\vphantom{g^0}}-\sqrt{g^0})\phi (x')\,dx$ is a contribution of the cusp in the correction.

Note that now to get the remainder estimate $o(\lambda^{(d-1)/2})$, one needs to include the standard boundary term in the second part of (\ref{3-2-10}).

The crucial part of our arguments is a \emph{multiscale analysis\/}. As long as 
${r\le c\lambda^{1/2m-\delta}}$, we can scale  $x\mapsto xr^m$ and consider
$\sigma_0 (t)=\Tr \bigl(e^{ih^{-1}t H}\phi(x'/r)\bigr)$; here $H=\lambda^{-1}P$, $h=\lambda^{-1/2}r^{m}$. From the propagation with respect to $(x,\xi)$, we know that on energy level $1$, the time interval $(h^{1-\delta}, \epsilon)$ contains no singularities of $\sigma_0 (t)$.

On the other hand, for $r\ge c$, we can scale  $x\mapsto x/r$ and consider
$\sigma_1 (t)=\Tr \bigl(e^{i\hbar^{-1}t H}\phi(x'/r)\bigr)$; here
$\hbar= \lambda^{-1/2}r^{-1}$. From the propagation with respect to $(x',\xi')$, we know that on energy level $1$, the time interval $(\hbar^{1-\delta}, \epsilon)$ contains no singularities of $\sigma_1 (t)$.

Observe first that $\sigma_1(t)=\sigma_0(r^{-1-m}t)$ and therefore the time interval $(h^{1-\delta}, \epsilon r^{m+1})$ contains no singularities of $\sigma_0 (t)$. This allows us to improve the remainder estimate in the full Weyl asymptotics but we need to include many terms which are difficult to calculate.

On the other hand, for $\lambda^\delta\le r\le c\lambda^{1/2m}$, we can consider $H$ as a perturbation of $\bar{H}=\lambda^{-1}\bar{P}$. We do it first in the framework of the theory of operators with operator-valued symbols. Then we consider all perturbation terms and apply to them ``full Weyl theory'' and due to the stabilization assumption, the error is less than (\ref{3-2-8}).  This gives us another asymptotics, also with many terms which are difficult to calculate.

Comparing these two asymptotics in their common domain $\lambda^\delta \le r\le \lambda^{1/2m-\delta}$, we conclude that all terms but those present in both must be $0$; it allows us to eliminate almost all the terms and sew these asymptotics resulting in (\ref{3-2-10}).

Using the same approach, we can consider higher order operators, the case when $X'$ is a conical set and there are several cusps $X_k$ which may have  different dimensions $d'_k$ and rates of decay (then both the principal part and the remainder estimate should be modified accordingly).

\subsection{Neumann Laplacian in domains with ultra-thin cusps}
\label{sect-3-2-3}

Consider the Neumann Laplacian in domains with cusps. Recall that since these domains do not satisfy the cone condition, we so far have no results even if the cusp is thin. Applying the same arguments as before, we hit two obstacles. The first (minor) obstacle is that the Neumann boundary condition for the operator (\ref{3-2-3}) coincides with the same condition for $\Delta''$ only asymptotically. The second (major) obstacle is that $\mu_1=0$ and $P_1=\Delta'$ has a continuous spectrum. In fact, we should not reduce $P$ to $\bar{P}$;  from (\ref{3-2-3}) we conclude that
\begin{gather}
P_1= \sum_{1\le j\le d'} \Bigl(D_j+\frac{id''}{2}g_{x_j}\Bigr)\Bigl(D_j-\frac{id''}{2}g_{x_j}\Bigr)=
\Delta' + W
\label{3-2-11}\\
\shortintertext{with}
W=\frac{d''^2}{4}|\nabla g|^2 +\frac{d''}{2}\Delta'g .
\label{3-2-12}
\end{gather}
Still this operator may have a continuous spectrum unless $|\nabla g|\to \infty$ as $|x|\to \infty$. We need to assume that $f$ has  superexponential decay: $f=e^{-g}$ with
\begin{align}
&|\nabla^\alpha g| \le c_\alpha |x|^{1+m-|\alpha|} &&\forall \alpha,
\label{3-2-13}\\[3pt]
&g\asymp |x'|^{m+1}, \quad|\nabla g| \asymp |x'|^{m}
&&\text{for\ \ } |x'|\ge c,
\label{3-2-14}\\[3pt]
&|\nabla |\nabla g|^2 |\asymp |x|^{2m-1} &&\text{for\ \ } |x'|\ge c,
\label{3-2-15}
\end{align}
where $m>0$ and (\ref{3-2-15}) is a microhyperbolicity condition for $P_1$. Then one can prove easily that when $d''\ge 2$,
\begin{gather}
\N(\lambda)= c_d \lambda^{d/2} \int _X \sqrt{g}\,dx +
c_{d'} \int (\lambda -W)_+^{d'/2}\,dx' + O(R(\lambda))
\label{3-2-16}
\shortintertext{with}
R(\lambda)=\lambda^{\frac{1}{2}(d-1)}+ \lambda^{\frac{m+1}{2m}(d'-1)}.
\label{3-2-17}
\end{gather}
Moreover, if the first term in (\ref{3-2-7}) dominates, then under the standard non-periodicity assumption, we can replace $O(\lambda^{(d-1)/2})$ by $o(\lambda^{(d-1)/2})$ (simultaneously including the standard boundary term); if the second term dominates, then assuming that $W$ stabilizes as $|x'|\to\infty$ to a positively homogeneous function $W_0$, under the corresponding assumption for $|\xi'|^2 + W_0(x')$ we can replace
$O(\lambda^{(m+1)(d'-1)/2m})$ by $o(\lambda^{(m+1)(d'-1)/2m})$.

One can see easily that $\N(\lambda)\asymp
S(\lambda)=\lambda^{\frac{1}{2}d}+ \lambda^{\frac{m+1}{2m}d'}$. Observe that in contrast to (\ref{3-2-8}) and (\ref{3-2-9}), even if the exponents coincide, a logarithmic factor does not appear.

The case $d''=1$ is special since even an ultra-thin cusp is also thick (according to the classification of the previous Subsection~\ref{sect-3-2-1}) and the corresponding formulae should include a modified boundary term containing the double logarithm of $\lambda$. For this and other generalizations, see Section~\ref{book_new-sect-12-6} of \cite{futurebook}. Also one can consider \emph{spikes\/} with $\supp (f)=\{|x'|\le L\}$, in which case the standard Weyl asymptotics holds.

\subsection{Operators in $\bR^d$}
\label{sect-3-2-4}

The scheme of Subsection~\ref{sect-3-2-2} is repeated in many similar cases.

First, consider eigenvalues tending to $+\infty$ for the Schr\"odinger operator with potential $V$ which generically is $\asymp |x|^{2m}$ but vanishes along some directions.

For example, consider the toy model $V= |x|^{2m-2m'}|x''|^{2m'}$ with $m>m'>0$. Let $X'=\bR^{d'}\ni x'$ and $X''=\bR^{d''}\ni x''$. Consider only the conical vicinity of $X'$ and here we instead consider  the potential $V= |x'|^{2m-2m'}|x''|^{2m'}$. Consider only the part of operator which is related to $x''$: $\Delta'' + |x'|^{2m-2m'}|x''|^{2m'}$ and after the change of variables $x''\mapsto x'' |x'|^{k}$ with $k=(m-m')/(m'+1)$, it becomes $|x'|^{2k}L$ with $L= \Delta'' + U(x'')$, $U=|x''|^{2m''}$. The condition $m''>0$ ensures that the spectrum of $L$ is discrete  and accummulates to $+\infty$.

So basically we have a mixture of the Schr\"odinger operator on $\bR^d$  with a potential growing as $|x|^{2m}$ and the Schr\"odinger operator with the operator-valued symbol on $\bR^{d''}$ with a potential growing as $|x|^{2k}$ and we recover the asymptotics with the remainder estimate $O(R(\lambda))$, where
\begin{gather}
R(\lambda)= \lambda^{\frac{m(d-1)}{(m+1)}} + \lambda^{\frac{k(d'-1)}{(k+1)}} +
\updelta_{\frac{m(d-1)}{(m+1)}, \frac{k(d'-1)}{(k+1)}} \lambda^{\frac{m(d-1)}{(m+1)}}\log \lambda
\label{3-2-18}
\intertext{and the principal part is $\asymp S(\lambda)$, where}
S(\lambda)= \lambda^{\frac{md}{(m+1)}} + \lambda^{\frac{kd'}{(k+1)}} +
\updelta_{\frac{md/(m+1)},\frac{kd'}{(k+1)}}
\lambda^{\frac{md}{(m+1)}}\log \lambda.
\label{3-2-19}
\end{gather}
In a rather general situation, this principal part is similar to the one in  (\ref{3-2-10}) where $\mathbf{n}(\mu)$ is the eigenvalue counting function for $L$. Further, under similar non-periodicity assumptions,  we can replace ``$O$'' by ``$o$''. For generalizations, details and proofs, see Section~\ref{book_new-sect-12-2} of \cite{futurebook}.

Second,  consider eigenvalues tending to $-0$ for the Schr\"odinger operator with a potential $V$ which generically is $\asymp |x|^{2m}$ with $m\in (-1,0)$ but is singular in some directions. Again, consider a toy model
$V= -|x|^{2m-2m'}|x''|^{2m'}$ with $-1<m<m'<0$. Again, $L= \Delta'' + U(x'')$, $U=-|x''|^{2m''}$ and its negative spectrum is discrete and accummulates to $-0$. The formulae (\ref{3-2-18}) and (\ref{3-2-19}) remain valid (albeit $\lambda\to -0$). For generalizations, details and proofs, see  Section~\ref{book_new-sect-12-3} of \cite{futurebook}.

\subsection{Maximally hypoelliptic operators}
\label{sect-3-2-5}

Third, consider the eigenvalues tending to $+\infty$ for \emph{maximally hypoelliptic operators with a symplectic manifold of degeneration\/}. Consider the toy model $P= \Delta ''+|x''|^{2m}\Delta '$. In this case, after the partial Fourier transform, we get $\Delta ''+|x''|^{2m}|\xi'|^2$ and after the change of variables $x''\mapsto |\xi'|^k x''$, we get $|\xi'|^{2k}L$, $L= \Delta''+ |x''|^{2m}$ and $k=1/(m+1)$.

This toy model is maximally hypoelliptic as the spectrum of $L$ is discrete  and accummulates to $+\infty$. So basically we have a blend of operator of order $2$ on $\bR^d$ and of order $2k$ on $\bR^{d'}$ and we recover the asymptotics with remainder estimate $O(R(\lambda))$ with
\begin{gather}
R(\lambda)= \lambda^{\frac{(d-1)}{2}} + \lambda^{\frac{(d'-1)}{2k}} +
\updelta_{d-1, (d'-1)/k} \lambda^{\frac{(d-1)}{2}}\log \lambda
\label{3-2-20}
\intertext{and principal part $\asymp S(\lambda)$ with}
S(\lambda)= \lambda^{\frac{d}{2}} + \lambda^{\frac{d'}{2k}} +
\updelta_{d,d'/k} \lambda^{\frac{d}{2}}\log \lambda.
\label{3-2-21}
\end{gather}
Further, under similar non-periodicity assumptions,  we can replace ``$O$'' by ``$o$''. For generalizations, details and proofs, see Section~\ref{book_new-sect-12-4} of \cite{futurebook}.

\subsection{Trace asymptotics for operators with singularities}
\label{sect-3-2-6}

Here, we also consider only one example (albeit the most interesting one) of a Schr\"odinger operator $H\Def h^2\Delta -V(x)$ in $\bR^3$ with potential $V(x)$ at $0$  stabilizing to a positive homogeneous  function $V_0$ of degree $-1$:
\begin{equation}
|\nabla^\alpha (V-V_0)|\le c_\alpha|x|^{-|\alpha|}\qquad \forall \alpha.
\label{3-2-22}
\end{equation}
We assume that $V(x)$ decays fast enough at infinity and we are interested in the asymptotics of $\Tr (H_-)$, which is the sum of the negative eigenvalues of $H$. While generalizations are considered in Section~\ref{book_new-sect-12-5} of \cite{futurebook}, exactly this problem with $V_0=|x|^{-1}$ arises in the asymptotics of the ground state energy of heavy atoms and molecules.

It follows from Section~\ref{sect-3-1} that $\N^-_h $ has  purely Weyl asymptotics with the remainder estimate $O(h^{-2})$ and\footnote{\label{foot-32}
Under the standard non-periodicity condition.\/} it could be improved to $o(h^{-2})$ but we have a different object and if the potential had no singularities, the remainder estimate would be $O(h^{-1})$ or even $o(h^{-1})$\,\footref{foot-32}\footnote{\label{foot-33} But then the principal part of asymptotics should include  the third term $c h^{-1}$ while the second  term vanishes.}.

Therefore considering the contribution of the ball $B(x,\gamma(x))$ with $\gamma(x)=\frac{1}{2}|x|$, we have a contribution to the Weyl expression
\begin{equation}
-c h^{-3}\int V_+^{\frac{5}{2}}\,dx
\label{3-2-23}
\end{equation}
of magnitude $C\rho^2 (h/\rho\gamma)^{-3} = Ch^{-3}\rho^5 \gamma^3$, while the contribution to the remainder does not exceed
$C\rho^2 (h/\rho\gamma)^{-1}=Ch^{-1}\rho \gamma$ with $\rho=|x|^{-\frac{1}{2}}$.
We see that the former converges at $0$ and the latter diverges. This analysis could be done for $\rho\gamma\ge h$ i.e.~if $|x|\ge h^2$. Then we conclude that the contribution of the zone $\{x:\,|x|\ge a\}$ to the remainder does not exceed $Ch^{-1}a^{-\frac{1}{2}}$ which as $a=h^2$ is $O(h^{-2})$. On the other hand, one can easily prove that the contribution of $B(0,h^2)$ to the asymptotics is also $O(h^{-2})$.

To improve this estimate, we analyze $B(0,a)$ in more detail. In virtue of (\ref{3-2-22}), we can  easily prove that the contribution of $B(x,\gamma)$ to
$\Tr (H_- - H_{0-})$ (with $H_0=h^2\Delta-V_0$) does not exceed $C(h/\rho\gamma)^{-2}=Ch^{-2}\rho^2\gamma^2$ and therefore the contribution of $B(0,a)$ to the remainder is  $O(h^{-2}a)$. Minimizing the total error
$h^{-2}a + h^{-1}a^{-\frac{1}{2}}$ in $a$, we get $a=h^{\frac{2}{3}}$ and the remainder $O(h^{-\frac{4}{3}})$, which is better than $O(h^{-2})$ but not as good as $O(h^{-1})$.

But then we need to include in the asymptotics  the extra term
\begin{equation}
\int \bigl(e_0 ^1 (x,x,0) - c V_{+}^{\frac{5}{2}}(x)\bigr)\psi(a^{-1}x)\,dx,
\label{3-2-24}
\end{equation}
where $e(\cdot,\cdot,\lambda)$ is the Schwartz kernel of the spectral projectors for $H$,
$e^1(\cdot,\cdot,0)= \int_{-\infty}^0 \lambda\,d_\lambda e(\cdot,\cdot,\lambda)$ and the subscript $0$ means that it is for $H_0$ and $\psi\in \sC_0^\infty (B(0,2))$ and equals $1$ in $B(0,1)$.

Basically, all that we achieved so far was to replace $H$ by $H_0$ in (\ref{3-2-24}). The same arguments allow us to replace $\psi$ by $1$ in this expression with the same error $O(h^{-1}a^{-\frac{1}{2}})$. This time, we cannot decompose it as the difference of two integrals because each of them is diverging at infinity (since $V_0$ decays there not fast enough). Further, due to the homogeneity of $V_0$, one can prove that this remodelled expression (\ref{3-2-24}) is homogeneous of degree $-2$ with respect to $h$ and thus is equal to $\kappa h^{-2}$. Here, $\kappa$ is some unknown constant, but for $V_0=|x|^{-1}$, it could be calculated explicitly.

Therefore, we conclude that with the remainder estimate $O(h^{-\frac{4}{3}})$, $\Tr(H_-)$ is given by the Weyl expression plus the \emph{Scott corretion term\/} $\kappa h^{-2}$.

To improve this remainder estimate, we should carefully study the propagation of singularities. We can prove that if $h^{2-\delta}\le \gamma \le 1$, then  the singularities do not come back ``in real time'' $\asymp 1$, which is a vast improvement over $\asymp \gamma\rho^{-1}\asymp \gamma^{\frac{3}{2}}$. Then the contribution of $B(x,\gamma)$ to the ``trace remainder'' does not exceed $Ch^{-1}\rho^2 \gamma^3$ but then the principal part of asymptotics should have a lot of terms;  the $n$-th term is of the magnitude
$h^{-3+2n} \rho^{2-2n}\gamma^{3-2n}$; however, using (\ref{3-2-22}) we conclude that the difference between such terms for $H$ and $H_0$ is
$O(h^{-3+2n} \rho^{-2n}\gamma^{3-2n})$ which leads to the estimate
\begin{equation}
\biggl |\int \bigl(e^1(x,x,0)- e_0 ^1 (x,x,0) -
c V_{+}^{\frac{5}{2}}(x)+c V_{0\,+}^{\frac{5}{2}}(x) \bigr)\,dx \biggr |\le Ch^{-1}.
\label{3-2-25}
\end{equation}
This estimate implies that with the remainder estimate $O(h^{-1})$, $\Tr(H_-)$ is given by the Weyl expression plus $\kappa h^{-2}$. Moreover, this estimate could be further improved to $o(h^{-1})$\,\footref{foot-32}\footref{foot-33}.

Similar results hold for other singularities (including singularities of the boundary), dimensions and $\Tr (H_-^\nu)$ with $\nu>0$. However, note that there could be  more than one such correction term.

\subsection{Periodic operators}
\label{sect-3-2-7}

Finally, consider an operator $H_0= H_0(x, D)$ with periodic coefficients (with the lattice of periods $\Gamma$). Then its spectrum is  usually absolutely continuous and consists of \emph{spectral bands\/}
$\{ \lambda_k(\xi):\, \xi \in \cQ'\}$ separated by \emph{spectral gaps\/}. Here, $\lambda_k$ are the eigenvalues of operator $H_0$ with \emph{quasiperiodic boundary conditions\/}
\begin{equation}
u(x+n)= T^n e^{i\langle n,\xi\rangle }(x)\qquad \forall n\in \Gamma,
\label{3-2-26}
\end{equation}
$\Gamma^*$ is the dual lattice\footnote{\label{foot-34} I.e. if
$\Gamma= \bZ e_1\oplus \bZ e\_2 \oplus  \ldots \oplus \bZ e_d$ then
$\Gamma^*= \bZ e'_1\oplus \bZ e'_2 \oplus  \ldots \oplus \bZ e'_d$ with
$\langle e_j,e'_k\rangle =\updelta_{jk}$.}, $\cQ$ and $\cQ'$ are corresponding \emph{elementary cells\/}\footnote{\label{foot-35} I.e.
$\cQ =\{x_1e_1+\ldots +x_de_d:\ x\in [0,1]^d\}$ and
$\cQ'* =\{\xi_1e'_1+\ldots +\xi_de'_d:\ \xi\in [0,1]^d\}$.};  $\xi$ is called the \emph{quasimomentum\/}. Here, $T=(T_1,\dots, T_d)$ is a family of commuting unitary matrices.

Let us consider an operator $H_t=H_0 -t W(x)$ with $W(x)>0$ decaying at infinity. Then, while the essential spectra of $H$ and $H_t$ are the same, $H_t$ can have discrete eigenvalues in the spectral gaps and these eigenvalues decrease as $t$ increases.

Let us fix an \emph{observation point\/} $E$ belonging  to either the spectral gap or its boundary and introduce $\N_E (\tau)$, the number of eigenvalues of $H_t$ crossing $E$ as $t$  changes between $0$ and $\tau$. We are interested in the asymptotics of $\N_E (\tau)$ as $t\to  \infty$.

Then using Gelfand's transform,
\begin{equation}
\cF u(\xi,x) =(2\pi)^{\frac{d}{2}} (\vol(\cQ'))^{-1} \sum_{n\in \Gamma} T^n e^{-i\langle n-x,\xi\rangle}u(x-n)
\label{3-2-27}
\end{equation}
with $(x,\xi)\in \cQ\times \cQ'$, this problem is reduced to the problem for operators with operator-valued symbols on $\sL^2 (\cQ', \bH_{\xi, \{T\}})$ where $\bH_{\xi, \{T\}}$ is the space of functions satisfying (\ref{3-2-26}).

After that, different results are obtained in three essentially different cases: when $E$ belongs to the spectral gap, $E$ belongs to the bottom of the spectral gap, and $E$ belongs to the top of the spectral gap. For exact results, proofs and generalizations, see Section~\ref{book_new-sect-12-7} of \cite{futurebook}.

\chapter{Non-smooth theory}
\label{sect-4}

So far we have considered operators with  smooth symbols in domains with smooth boundaries. Singularities were possible but only on  ``lean'' sets. However, it turns out that many results remain true under very modest smoothness assumptions.

\setcounter{section}{1}

\subsection{Non-smooth symbols and rough microlocal analysis}
\label{sect-4-1-1}

To deal with non-smooth symbols, we approximate them by \emph{rough\/} symbols $p\sim \sum_m p_m$,  depending on a small \emph{mollification parameter\/} $\varepsilon$ and satisfying
\begin{gather}
|\nabla ^\alpha _\xi\nabla ^\beta_x p_m(x,\xi)|\le
C _{m\alpha\beta}\boldrho^{-\alpha}\boldgamma^{-\beta}\varepsilon^{-m}
\label{4-1-1}\\
\shortintertext{with}
\min_j\rho_j\gamma_j\ge \varepsilon  \ge Ch^{1-\delta}
\label{4-1-2}
\end{gather}
(\emph{microlocal uncertainty principle}), which could be weakened to
\begin{multline}
|\nabla ^\alpha _\xi\nabla ^\beta_x p_m(x,\xi)|\le
C ^{|\alpha|+|\beta|+m+1} \alpha!\beta!m! \boldrho^{-\alpha}\boldgamma^{-\beta}\varepsilon^{-m} \\
 \forall \alpha,\beta, m:
|\alpha|+|\beta|+2m\le N=C|\log h|^{-1}
\label{4-1-3}
\end{multline}
with
\begin{equation}
\min_j\rho_j\gamma_j\ge \varepsilon  \ge Ch|\log h|
\label{4-1-4}
\end{equation}
(\emph{logarithmic uncertainty principle\/}). At this point, microlocal analysis ends: the assumptions cannot be weakened any further.

Assuming that
\begin{align}
&|\nabla ^\alpha _\xi\nabla ^\beta_x \nabla p_0(x,\xi)|\le
C ^{|\alpha|+|\beta|+1} \alpha!\beta! \boldrho^{-\alpha}\boldgamma^{-\beta}
\label{4-1-5}
\shortintertext{and}
&|\nabla ^\alpha _\xi\nabla ^\beta_x \nabla p_m(x,\xi)|\le
C ^{|\alpha|+|\beta|+m+1} \alpha!\beta!m! \boldrho^{-\alpha}\boldgamma^{-\beta}\varepsilon^{1-m} \quad (m\ge 1),
\label{4-1-6}
\end{align}
we can restore Theorem~\ref{thm-2-1-2} (see Theorem~\ref{book_new-thm-2-3-2} of~\cite{futurebook}) and therefore also the Corollaries~\ref{cor-2-1-3} and~\ref{cor-2-1-4}, assuming $\xi$-microhyperbolicity instead of the usual microhyperbolicity. For proofs and details, see Section~\ref{book_new-sect-2-3} of \cite{futurebook}.

After this, we can than use the successive approximation method like in Subsection~\ref{sect-2-1-2} (definitely some extra twisting required) and then recover the spectral asymptotics -- originally only for operators which are $\xi$-microhyperbolic.

To consider non-smooth symbols, we can bracket them between rough symbols: for example, for the Schr\"odinger operator
$p^- (x,\xi, h)\le p(x,\xi,h)\le p^+ (x,\xi, h)$ where
$p^\pm =p_\varepsilon \pm C\nu(\varepsilon)$ and $p_\varepsilon$ is the symbol $p$, $\varepsilon$-mollified and $\nu(\varepsilon)$ is the modulus of continuity of the metric and potential; $\varepsilon= Ch|\log h|$.

Then for $\nu(\varepsilon)=
O(\varepsilon |\log \varepsilon|^{-1})$\,\footnote{\label{foot-36} Which means that the first partial derivatives are continuous with modulus of continuity $\nu_1(\varepsilon)=\nu(\varepsilon)\varepsilon^{-1}$.}, we can recover the remainder estimate $O(h^{1-d})$; under even weaker regularity conditions by rescaling, we can recover weaker remainder estimates. On the other hand, if $\nu(\varepsilon)=o(\varepsilon |\log \varepsilon|^{-1})$, we can recover the remainder estimate $o(h^{1-d})$ under the standard non-periodicity condition\footnote{\label{foot-37} However, even for the Schr\"odinger operator without boundary, the dynamic equations do not satisfy the Lipschitz condition and thus the flow could be multivalued.}. For proofs and details, see Section~\ref{book_new-sect-4-6} of \cite{futurebook}. There is an alternative to the bracketing construction based on perturbation theory, which works better for the trace asymptotics and also covers the pointwise asymptotics. For an exposition, see Section~\ref{book_new-sect-4-6} of \cite{futurebook}.

Further, for scalar and similar operators, the rescaling technique allows us to replace $\xi$-microhyperbolicity by microhyperbolicity under really weak smoothness assumptions; here we also use  $\varepsilon$ depending on the point so that we can consider scalar symbols under weaker and weaker non-degeneracy assumptions albeit stronger and stronger smoothness assumptions. See Section~\ref{book_new-sect-5-4} of \cite{futurebook}.

\subsection{Non-smooth boundaries}
\label{sect-4-1-2}

Let us  consider a domain with non-smooth boundary (with the Dirichlet boundary condition). Here, the standard trick to flatten out the boundary by the change of variables $x_1\mapsto x_1-\phi(x')$ works very poorly: the operator principal symbol contains the first partial derivatives $\phi$ and therefore we need to require $\phi\in \sC^2$. Fortunately, the method of R.~Seelley \cite{seeley:sharp} can help us. This method was originally developed for the Laplacian with a smooth metric and a smooth boundary.

Here, we consider only the Schr\"odinger operator; assume first  that the metric and potential are smooth. Consider a point $\bar{x}\in X$ and assume that the metric is Euclidean at $\bar{x}$  and nearby, $X$ looks like $\{x:\,x_1\ge \phi(x')\}$ with
$\nabla '\phi(\bar{x'})=0$. Observe that these assumptions do not require any smoothness beyond $\sC^1$.

Consider a trajectory starting from $(\bar{x}, \xi)$.
If $|\xi_1 |< \rho\Def Ch|\log h|/\gamma$, the trajectory starts parallel to $\partial X$ and $\partial X$ can ``catch up'' only at time at least $T=\sigma(\gamma)$ where $\gamma=\frac{1}{2}\dist (x,\partial X)$ and $\sigma$ is the inverse function to $\nu$, which is a modulus of continuity for $\phi$\,\footref{foot-36}.

If $\xi_1 > \rho$ then this trajectory ``runs away from $\partial X$'' and $\partial X$ can ``catch up'' only at time at least $T=\sigma(\gamma)+\sigma_1(\xi_1)$ where  $\sigma_1$ is the inverse function to $\nu_1$\,\footref{foot-36}. On the other hand, if $\xi_1 <-\rho$, then we can revert the trajectory (which works only for local but not pointwise spectral asymptotics).

These arguments allow us to estimate the contribution of $B(x,\gamma(x))$ to the remainder by $Ch^{1-d} \gamma^d h|\log h |\sigma(\gamma)^{-1}$ and then the total remainder by $Ch^{1-d}\int \sigma(\gamma)^{-1}\,dx$. The latter integral converges for $\nu(t)= t |\log t|^{-1-\delta}$.

Sure, this works only when $\gamma \ge \bar{\gamma}=Ch|\log h|$. However, if we smoothen the boundary with a smoothing parameter $C\bar{\gamma}$, for
$\gamma \le \bar{\gamma}$, we will be in the framework of the smooth theory after rescaling and we can take $T=\bar{\gamma}$. The contribution of this strip to the remainder does not exceed $Ch^{1-d} \bar{\gamma}T^{-1}$ as its measure does not exceed $C\bar{\gamma}$. One can easily check  that the variation of $\vol (X)$ due to the smoothing of the boundary is $Ch^{1-d}$ and we can use the bracketing of $X$ as well.

We can even improve the remainder estimate to $o(h^{1-d})$ under the standard non-periodicity condition.

Furthermore, if  the metric and potential are not smooth, we need to mollify them, taking the mollification parameter $\varepsilon$ larger near $\partial X$ and taking $\rho= Ch|\log h|/\varepsilon$, but it works. For systems, we can exploit the fact that most of the cones of dependence are actually trajectories. For  exact statements, proofs and details, see Section~\ref{book_new-sect-7-5} of \cite{futurebook}.

\subsection{Aftermath}
\label{sect-4-1-3}

After the non-smooth local theory is developed, we can use all the arguments of Section~\ref{sect-3-1} and consider ``stronger but more concentrated'' singularities added on the top of the weaker ones.\enlargethispage{\baselineskip}

\chapter{Magnetic Schr\"odinger operator}
\label{sect-5}

\section{Introduction}
\label{sect-5-1}

This Section is entirely devoted to the study of the \emph{magnetic Schr\"{o}dinger operator\/}
\begin{gather}
H= (-ih\nabla - \mu A (x))^2 +V(x)
\label{5-1-1}\\
\intertext{and  of the Schr\"{o}dinger-Pauli operator}
H= ((-ih\nabla - \mu A (x))\cdot \boldupsigma)^2 +V(x)
\label{5-1-2}
\end{gather}
with a small semiclassical parameter $h$ and large \emph{magnetic intensity parameter} (coupling constant) responsible for the interaction of a particle with the magnetic field $\mu$. Here, $\boldupsigma=(\upsigma_1,\ldots,\upsigma_d)$ where
$\upsigma_1,\ldots,\upsigma_d$ are Pauli $\D\times \D$-matrices and $ A $ is \emph{magnetic vector potential}. We are interested in the two-parameter asymptotics (with respect to $h$ and $\mu$) as well as related asymptotics.

\section{Standard theory}
\label{sect-5-2}

\subsection{Preliminaries}
\label{sect-5-2-1}

For a detailed exposition, generalizations  and proofs, see Chapter~\ref{book_new-sect-13} of \cite{futurebook}.

Consider the most interesting cases $d=2,3$ with smooth $V(x)$ and $ A (x)$. If $d=2$, the magnetic field could be described by a single (pseudo)scalar $F_{12}=\partial_1 A_2-\partial_2 A_1$ and by a scalar $F=|F_{12}|$.  If $d=3$, the magnetic field could be described by a (pseudo)vector $\mathbf{F}=\nabla\times  A $ (\emph{vector magnetic  intensity\/}) and by a scalar $F=|\mathbf{F}|$ (\emph{scalar magnetic  intensity\/}). As a toy model, we consider an operator in $\bR^d$ with constant $V$ and $\mathbf{F}$. Then canonical form of the operator (\ref{5-1-1}) is
\begin{equation}
H= h^2D_1^2 + (hD_2-\mu Fx_1)^2 + \underbracket{h^2D_3^2}+V,
\label{5-2-1}
\end{equation}
with the third term omitted when $d=3$. Then we can calculate
\begin{gather}
e(x,x,\tau)= h^{-d} \cN^\MW_d(\tau-V, \mu h F)
\label{5-2-2}\\
\shortintertext{with}
\cN^\MW_d(\tau,  F) =\kappa_d  \sum_{n \ge 0}
\bigl(\tau-(2n+1) F\bigr)_+^{\frac{1}{2}(d-2)} F,
\label{5-2-3}
\end{gather}
where $\kappa_2=1/(2\pi)$, $\kappa_3=1/(2\pi^2)$. In particular, if $d=2$,
$F\ne 0$ this operator has a  pure point spectrum of infinite multiplicity. Eigenvalues $(2m+1)\mu h F$ are called \emph{Landau levels\/}. If $d=3$, this operator has an absolutely continuous spectrum.

In these cases, the operator (\ref{5-1-2}) is a direct sum of $\D/2$ operators $H_-$ and $\D/2$ operators $H_+$ where $H_\mp = H_0\mp \mu h F$, $H_0$ is the operator (\ref{5-1-1}); then
\begin{equation}
\cN^\MW_d(\tau-V, \mu h F)\Def
\kappa_d D  \Bigl( \frac{1}{2}\tau_+^{\frac{1}{2}(d-2)} +
\sum_{n \ge 1}
\bigl(\tau-2n F\bigr)_+^{\frac{1}{2}(d-2)}\Bigr) F
\label{5-2-4}
\end{equation}

Classical dynamics are different as well: when $d=2$, the trajectories are \emph{magnetrons\/}--circles of radii $(\mu F)^{-1}$, while if $d=3$, there is also free movement along \emph{magnetic lines\/}--integral curves of $\mathbf{F}$, so the trajectories are solenoids.

\subsection{Canonical form}
\label{sect-5-2-2}
Using the $\hslash$-Fourier transform, we can  reduce the magnetic Schr\"odinger operator to its microlocal canonical form
\begin{align}
&\mu^2 \sum_{n\ge 0} B_n (x_1,\hslash D_1, \mu^{-2},\hslash )  \cL_0^n &&\text{for\ \ } d=2,
\label{5-2-5}\\
&h^2D_2^2+\mu^2 \sum_{n\ge 0} B_n (x',\hslash D_1, \mu^{-2},\hslash )  \cL_0^n
&&\text{for\ \ } d=3,
\label{5-2-6}
\end{align}
with $\hslash= \mu^{-1}h$,
$\cL_0= x_d^2+ \hslash^2 D_d^2$, $x'=x_1$ and $x'=(x_1,x_2)$ when $d=2,3$ respectively. Further, the principal symbols of the operators $B_0$ and $B_1$ are $\mu^{-2}V\circ \Psi$ and $F\circ \Psi$ respectively, where $\Psi$ is a diffeomorphism $(x',\xi_1)\to x$.

This canonical form allows us to study both the classical trajectories and the propagation of singularities in the general case. When $d=2$, there is still movement along the magnetrons but magnetrons are drifting with the velocity $\mathbf{v}=\mu^{-1}(\nabla ((V-\tau)/F_{12}))^\perp$ where $^\perp$ denotes the   counter-clockwise rotation by $\pi/2$. If $d=3$, trajectories are solenoids winding around magnetic lines and the movement along magnetic lines is described by an $1$-dimensional Hamiltonian but there there is also side-drift as in $d=2$.

We can replace then $\cL_0$ by its eigenvalues which are $(2j+1)\hslash$, thus arriving to a family of $\hslash$-pseudodifferential operators with respect to $x_1$ if $d=2$ and to a family of $\hslash$-pseudodifferential operators with respect to $x_1$ which is also a Schr\"odinger operator with respect to $x_2$.

\subsection{Asymptotics: moderate magnetic field}
\label{sect-5-2-3}

We can always recover the estimate $O(\mu h^{1-d})$ with the standard Weyl principal part simply by using the scaling $x\to \mu x$, $h\mapsto \mu h$, $\mu\mapsto 1$. On the other hand, for $d=2$, we cannot in general improve it as follows from the example with constant $F$ and $V$.

However, under a(THE?) non-degeneracy assumption, the remainder estimate is much better:

\begin{theorem}\label{thm-5-2-1}
Let $d=2$,  $F\asymp 1$, $\mu h\lesssim 1$   and
\begin{gather}
|\nabla VF^{-1}|+|\det \Hess  VF^{-1}|\ge \epsilon.
\label{5-2-7}\\
\intertext{Then}
\int \Bigl(e(x,x,0) -h^{-2}\cN^\MW_2(x,-V,\mu h F)\Bigr)\psi(x)\,dx= O(\mu^{-1}h^{-1}).
\label{5-2-8}
\end{gather}
\end{theorem}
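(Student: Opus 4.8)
The plan is to pass to the microlocal canonical form of Subsection~\ref{sect-5-2-2}. Via the $\hslash$-Fourier transform, $H$ is reduced, modulo negligible errors, to the operator \textup{(\ref{5-2-5})}, i.e. to an $\hslash$-pseudodifferential operator in $x_1$ with operator-valued symbol $\mathsf{H}(x_1,\hslash D_1)=\mu^2\sum_{n\ge0}B_n(x_1,\hslash D_1,\mu^{-2},\hslash)\cL_0^n$, where $\hslash=\mu^{-1}h$ and $\cL_0=x_2^2+\hslash^2D_2^2$. Since $\cL_0$ commutes with all the $B_n$, one decomposes $\sL^2$ into the eigenspaces of $\cL_0$ (eigenvalues $(2n+1)\hslash$, $n\ge0$); on the $n$-th one $H$ becomes a $1$-dimensional $\hslash$-pseudodifferential operator $\mathsf{a}_n(x_1,\hslash D_1)$ whose principal symbol, after the diffeomorphism $\Psi\colon(x_1,\xi_1)\mapsto x$, equals $V+(2n+1)\mu hF$ (the symbols of $B_0,B_1$ being $\mu^{-2}V\circ\Psi$ and $F\circ\Psi$), with corrections of relative size $O(\hslash)+O(\mu^{-2})$ coming from the $B_m$ with $m\ge2$ and the subprincipal parts. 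Thus $\int e(x,x,0)\psi\,dx$ becomes a sum over $n$ of localized counting functions of the $\mathsf{a}_n$, and only $n\lesssim(\mu h)^{-1}\sup_{\supp\psi}(-V/F)$ contribute.

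Next I would apply the one-dimensional semiclassical spectral asymptotics of Theorem~\ref{thm-2-1-6}, Theorem~\ref{thm-2-1-12} and the rescaled Proposition~\ref{prop-2-1-14} to each $\mathsf{a}_n$ with effective parameter $\hslash=\mu^{-1}h\le1$. Whenever $\mathsf{a}_n$ is (uniformly) microhyperbolic --- which happens exactly when $\nabla(V/F)\ne0$ on the level set $\{V/F=-(2n+1)\mu h\}$ --- the Weyl term is $(2\pi\hslash)^{-1}\iint\uptheta\bigl(-V-(2n+1)\mu hF\bigr)\,dx_1\,d\xi_1$ and the remainder is $O(\hslash^{1-1})=O(1)$, the subprincipal and higher-order corrections shifting individual eigenvalues by $O(\hslash)$ and hence affecting each count only by $O(1)$. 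Summing the Weyl terms over $n$, pulling back through $\Psi$ and using $(2\pi\hslash)^{-1}=\mu/(2\pi h)$ reassembles precisely $h^{-d}$ times $\cN^\MW_2(x,-V,\mu hF)$ as in \textup{(\ref{5-2-3})}, integrated against $\psi$ (the $O(\mu^{-\infty})$ canonical-form errors and the $O(\hslash^{-1}\cdot\mu^{-\infty})$ tails in $n$ being harmless). Consequently, away from the exceptional levels the total remainder is $\sum_{n\lesssim(\mu h)^{-1}}O(1)=O\bigl((\mu h)^{-1}\bigr)=O(\mu^{-1}h^{-1})$, which is the asserted bound.

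The role of the non-degeneracy hypothesis \textup{(\ref{5-2-7})}, and the main obstacle, is the uniformity in $n$: the bound $O(1)$ per level degrades as the Landau resonance $-(2n+1)\mu h$ approaches a critical value of $g:=V/F$ on $\supp\psi$, because there $\nabla g$ becomes small on part of the level set and microhyperbolicity of $\mathsf{a}_n$ fails. The condition $|\det\Hess g|\ge\epsilon$ at the zeros of $\nabla g$ forces the critical points of $g$ to be non-degenerate, hence isolated, hence finitely many on $\supp\psi$; so there are only $O(1)$ critical values and only $O(1)$ exceptional values of $n$. For those the one-dimensional splitting is inadequate and one must instead analyze, in a fixed neighbourhood of each non-degenerate critical point $x_0$ of $g$, the genuinely two-dimensional model obtained by freezing $V$ to its quadratic Taylor polynomial $F(x_0)\bigl(g(x_0)+Q(x-x_0)\bigr)$ with $Q$ a non-degenerate quadratic form: this is a magnetic Schr\"odinger operator with non-degenerate quadratic potential, whose spectrum is an explicit superposition of two oscillator ladders, and whose localized eigenvalue count can be shown to agree with the Weyl expression up to $O(1)$. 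The error committed in this replacement is controlled by a rescaling argument based on $|\nabla g|+|\det\Hess g|\ge\epsilon$; the nearby, slightly off-resonance levels $n$ (with mismatch $\delta_n$, $\mu h\le|\delta_n|\le\sqrt{\mu h}$) are handled one at a time by the local rescaling $z\mapsto|\delta_n|^{-1/2}z$, giving $O(1)$ each and $o\bigl((\mu h)^{-1}\bigr)$ in total. This near-critical analysis is the technical heart of the argument and the place where the full strength of \textup{(\ref{5-2-7})} is used.

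Finally I would assemble the pieces with a partition of unity separating small neighbourhoods of the finitely many critical points of $g$ from their complement: on the complement the reduction and summation above give the $O(\mu^{-1}h^{-1})$ remainder; on each neighbourhood the two-dimensional model analysis gives $O(1)$; and the variational (Birman--Schwinger/LCR-type) estimates recalled in Subsection~\ref{sect-3-1-2} serve as a safety net to bound the contribution of the small exceptional zones. Adding these contributions yields $\int\bigl(e(x,x,0)-h^{-2}\cN^\MW_2(x,-V,\mu hF)\bigr)\psi(x)\,dx=O(\mu^{-1}h^{-1})$.
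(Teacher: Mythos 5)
Your proposal follows precisely the first of the two explanations the paper itself offers for Theorem~\ref{thm-5-2-1}: pass to the canonical form (\ref{5-2-5}), replace $\cL_0$ by its eigenvalues $(2n+1)\hslash$, note that each of the $\asymp(\mu h)^{-1}$ resulting non-degenerate one-dimensional $\hslash$-pseudodifferential operators contributes $O(1)$ to the remainder, and sum; the paper also records an equivalent dynamical route (the drift of magnetrons with speed $\asymp\mu^{-1}|\nabla((V-\tau)F^{-1})|$ destroys periodicity and permits $T^*\asymp\mu$, whence $O(T^{*\,-1}h^{-1})$), which you do not use but do not need. One caveat on your treatment of the exceptional levels: condition (\ref{5-2-7}) allows $\det\Hess(VF^{-1})<0$, i.e.\ saddle points, and there the frozen quadratic model is a magnetic Schr\"odinger operator with an \emph{indefinite} quadratic potential, whose spectrum is not a superposition of two oscillator ladders; the $O(1)$ bound per near-critical level must then come from the rescaling argument you sketch (using $|\nabla g|\asymp|\delta|^{1/2}$ on $\{g=c+\delta\}$) rather than from explicit diagonalization, so that step, which you correctly identify as the technical heart, still needs to be carried out in the saddle case.
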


The explanation is simple: each of the non-degenerate $1$-dimensional $\hslash$-pseudodifferential operators contributes $O(1)$ to the remainder estimate and there is $\asymp (\mu h)^{-1}$ of them which should be taken into account. Another explanation is that under the non-degeneracy assumption, the drift of the magnetrons destroys the periodicity but we can follow the evolution for time
$T^*=\epsilon \mu$, so the remainder estimate is $O(T^{*\,-1}h^{-1})$.

If $d=3$, we cannot get the local remainder estimate better than $O(h^{-2})$ without global non-periodicity conditions due to the evolution along magnetic lines. On the other hand, we do not need strong non-degeneracy assumptions:

\begin{theorem}\label{thm-5-2-2}
Let $d=3$, $F\asymp 1$ and $\mu h\lesssim 1$. Then,
\begin{gather}
\int \Bigl(e(x,x,0) -h^{-3}\cN^\MW_3(x,-V,\mu h F)\Bigr)\psi(x)\,dx=
O(h^{-2}+\mu h^{-1-\delta})
\label{5-2-9}\\
\intertext{in the general case and}
\int \Bigl(e(x,x,0) -h^{-3}\cN^\MW_3(x,-V,\mu h F)\Bigr)\psi(x)\,dx= O(h^{-2}),\hphantom{\mu h^{-1-\delta}}
\label{5-2-10}\\
\intertext{provided}
\sum_{\alpha:\,1\le |\alpha|\le K} |\nabla^\alpha VF^{-1}|\ge \epsilon.
\label{5-2-11}
\end{gather}
Further, in the general case, as(FOR?) $\mu \le h^{-\frac{1}{3}}$, we can replace the magnetic Weyl expression $\cN^\MW_3$ by the standard Weyl expression $\cN_3$.
\end{theorem}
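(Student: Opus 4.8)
The plan is to put $H$ into the microlocal canonical form \textup{(\ref{5-2-6})} of Subsection~\ref{sect-5-2-2}, replace the oscillator $\cL_0=x_3^2+\hslash^2D_3^2$ by its eigenvalues $(2j+1)\hslash$ ($j\ge 0$, $\hslash=\mu^{-1}h$), and thereby reduce the problem to a family of operators
\[
\mathsf{H}_j = h^2D_2^2 + \mu^2\sum_{n\ge 0} B_n\bigl(x_1,x_2,\hslash D_1,\mu^{-2},\hslash\bigr)\,(2j+1)^n\hslash^n
\]
on $\sL^2(\bR^2_{x_1,x_2})$, whose leading symbol is $h^2\xi_2^2+\bigl(V+(2j+1)\mu hF\bigr)\circ\Psi+\dots$. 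Since $F\asymp 1$ and $\mu h\lesssim 1$, the operator $\mathsf{H}_j$ is elliptic on the energy level $0$ unless $(2j+1)\mu h\lesssim 1$, so only $N_0\asymp(\mu h)^{-1}$ values of $j$ contribute and the rest give a negligible amount. Each $\mathsf{H}_j$ is treated as an operator with operator-valued symbol in the sense of Subsection~\ref{sect-3-2-1}: $(x_1,\hslash D_1)$ are the Weyl variables ($\hslash$-quantized, hence $\asymp\hslash^{-1}=\mu h^{-1}$ cells over a bounded region), and the fibre operator is the one-dimensional Schr\"odinger operator $h^2D_2^2+w_j$ along the magnetic lines, whose eigenvalue count below $0$ is $\asymp h^{-1}$. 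The bookkeeping is consistent: $N_0\cdot\hslash^{-1}\cdot h^{-1}\asymp(\mu h)^{-1}\cdot\mu h^{-1}\cdot h^{-1}=h^{-3}$, matching the magnitude of $h^{-3}\cN^\MW_3$.

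Next I would run the machinery of Chapter~\ref{sect-2} on each $\mathsf{H}_j$, uniformly in $j$. The one-dimensional fibre operator has eigenvalue counting function $h^{-1}\times(\text{Weyl density integrated in }x_2)+O(1)$, with the $O(1)$ surviving even without non-degeneracy of $w_j$ (degeneracies of $w_j$ are disposed of by the rescaling technique of Subsection~\ref{sect-2-1-5} and contribute lower order); summing these $O(1)$ errors over the $\asymp\hslash^{-1}N_0\asymp h^{-2}$ fibre operators already produces the $O(h^{-2})$ term. The remaining, genuinely dynamical, error comes from the propagation of singularities for $\mathsf{H}_j$. Along the magnetic lines the one-dimensional motion may be periodic, so singularities can return in time $\asymp 1$ and this cannot be improved without a global hypothesis — this is exactly why $d=3$ behaves worse than $d=2$ in Theorem~\ref{thm-5-2-1}. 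In the transverse variables, however, the magnetron side-drift of Subsection~\ref{sect-5-2-2} lets us follow the evolution up to $T^*\asymp h^{-\delta}$ away from the directions where $VF^{-1}$ is critical; on those critical directions one falls back on the crude bound $O(\mu h^{1-d})$ obtained by the rescaling $x\mapsto\mu x$, $h\mapsto\mu h$, $\mu\mapsto 1$, and, controlling the measure of that set, one gets the extra $O(\mu h^{-1-\delta})$. When \textup{(\ref{5-2-11})} holds, the critical set of $VF^{-1}$ has small measure uniformly in $j$ (finite-type non-degeneracy), the $h^{-\delta}$ loss is eliminated, and the remainder collapses to $O(h^{-2})$.

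Finally, to replace $\cN^\MW_3$ by $\cN_3$ when $\mu\le h^{-1/3}$ I would compare the two principal parts directly. By \textup{(\ref{5-2-3})}, for $d=3$ the magnetic Weyl density $\kappa_3\sum_{n\ge 0}\bigl(\tau-(2n+1)\mu hF\bigr)_+^{1/2}\mu hF$ is the midpoint Riemann sum at step $2\mu hF$ of the function $u\mapsto\tfrac12\kappa_3(\tau-u)_+^{1/2}$, whose integral is the Weyl density $\cN_3$; this function is smooth except for a square-root singularity at $u=\tau$, so its midpoint (Euler--Maclaurin) error at step $2\mu hF$ is $O\bigl((\mu h)^{3/2}\bigr)$ pointwise. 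Hence $h^{-3}\bigl(\cN^\MW_3-\cN_3\bigr)=O\bigl(\mu^{3/2}h^{-3/2}\bigr)$, and this is $\le Ch^{-2}$ precisely when $\mu^{3/2}h^{1/2}\le 1$, i.e.\ $\mu\le h^{-1/3}$, so in that range the replacement is absorbed into the remainder. The main obstacle is the second paragraph: carrying out the propagation analysis for \textup{(\ref{5-2-6})} with its three interacting scales — the $O(1)$ oscillation of the Landau variable $(x_3,\hslash D_3)$, the $\hslash$-scale transverse drift, and the $h$-scale motion along magnetic lines — and making every estimate uniform over all $\asymp(\mu h)^{-1}$ Landau levels, in particular over the last, only partially filled one, where the fibre operators develop accumulating turning points.
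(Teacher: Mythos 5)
Your overall route --- the canonical form \textup{(\ref{5-2-6})}, substitution of the Landau eigenvalues $(2j+1)\hslash$ for $\cL_0$, and treatment of each of the $\asymp(\mu h)^{-1}$ surviving operators as an $\hslash$-pseudodifferential operator in $x_1$ coupled to a one-dimensional Schr\"odinger operator along the magnetic lines --- is exactly the strategy the paper sketches in Subsections~\ref{sect-5-2-2}--\ref{sect-5-2-3} (the actual proof is deferred to Chapter~13 of \cite{futurebook}). The bookkeeping for the principal part is right, and your Euler--Maclaurin comparison of $\cN^\MW_3$ with $\cN_3$ --- the square-root singularity of $(\tau-u)_+^{1/2}$ forcing a pointwise midpoint error $O((\mu h)^{3/2})$, hence $h^{-3}(\cN^\MW_3-\cN_3)=O(\mu^{3/2}h^{-3/2})\le Ch^{-2}$ exactly when $\mu\le h^{-1/3}$ --- is a clean and correct way to get the last assertion.

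The genuine gap is your derivation of the term $\mu h^{-1-\delta}$ in \textup{(\ref{5-2-9})}. You propose to use the transverse drift away from the critical set of $VF^{-1}$ and to fall back on the crude bound $O(\mu h^{1-d})=O(\mu h^{-2})$ on that set, ``controlling the measure of that set.'' In the general case there is nothing to control: $VF^{-1}$ may be constant on $\supp(\psi)$, the critical set then has full measure, and your argument returns only the trivial $O(\mu h^{-2})$, not $O(h^{-2}+\mu h^{-1-\delta})$. The drift mechanism is the $d=2$ story of Theorem~\ref{thm-5-2-1}, where the strong condition \textup{(\ref{5-2-7})} is indispensable; the whole point of $d=3$, as the paper stresses, is that no such condition is needed, because a singularity returning transversally after a cyclotron period has generically dispersed along the magnetic line, and it is this longitudinal effect (together with the weak-non-degeneracy rescaling of Remark~\ref{rem-2-1-15}\ref{rem-2-1-15-iii} applied to the fibre operators, which is also why the finite-order condition \textup{(\ref{5-2-11})} suffices to reach $O(h^{-2})$) that has to produce the $\mu h^{-1-\delta}$; that analysis is absent from your sketch. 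Relatedly, your claim that each fibre operator contributes $O(1)$ ``even without non-degeneracy of $w_j$'' is false --- without any non-degeneracy a one-dimensional Schr\"odinger operator only yields $O(h^{-\delta})$, and summing that over your $\asymp h^{-2}$ fibres gives $O(h^{-2-\delta})$, which already exceeds the bound you are trying to prove. So the error accounting in your second paragraph does not close, and this is precisely the part of the theorem (the difference between \textup{(\ref{5-2-9})} and \textup{(\ref{5-2-10})}) that carries the content.
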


\subsection{Asymptotics: strong magnetic field}
\label{sect-5-2-4}
Let us now consider the strong magnetic field case $\mu h \gtrsim 1$. Then the remainder estimates (\ref{5-2-8}), (\ref{5-2-9}) and (\ref{5-2-10}) acquire a factor of $\mu h^{-1}$:

\begin{theorem}\label{thm-5-2-3}
Let $d=2$, $F\asymp 1$ and $\mu h \gtrsim 1$. Then for the operator \textup{(\ref{5-2-1})},
\begin{enumerate}[label=(\roman*), wide, labelindent=0pt]
\item\label{thm-5-2-3-i}
Under the assumption
\begin{gather}
|\tau V-(2j+1)\mu h F|\ge \epsilon_0\qquad \forall j\in \bZ^+,
\label{5-2-12}
\intertext{the following asymptotics holds:}
e(x,x,\tau)-h^{-2}\cN^\MW_2(x,\tau -V, F) = O(\mu^{-s}h^s).
\label{5-2-13}
\end{gather}
\item
Under the assumption
\begin{multline}
|\tau V-(2j+1)\mu h F|+ |\nabla ((V-\tau)F^{-1})| +\\
|\det \Hess ((V-\tau)F^{-1})|\ge \epsilon_0\qquad
 \forall j\in \bZ^+,
 \label{5-2-14}
\end{multline}
the following asymptotics holds:
\begin{equation}
\int \Bigl(e(x,x,\tau)-h^{-2}\cN^\MW_2(x,\tau -V, F) \Bigr)\psi(x)\,dx = O(1).
\label{5-2-15}
\end{equation}
\end{enumerate}
\end{theorem}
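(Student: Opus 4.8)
The plan is to exploit the microlocal canonical form \textup{(\ref{5-2-5})} of the operator in the strong-field regime $\hslash=\mu^{-1}h\gtrsim 1/\mu^2$ (so $\hslash$ need not be small, but $\hslash\asymp h/\mu\lesssim 1$ since $\mu h\gtrsim 1$), and to treat $\cL_0=x_2^2+\hslash^2D_2^2$ by decomposing $\sL^2$ into the Landau (Hermite) eigenspaces of $\cL_0$ with eigenvalues $(2j+1)\hslash$. Thus $H$ becomes a direct orthogonal sum over $j\in\bZ^+$ of one-dimensional $\hslash$-pseudodifferential operators $\cH_j=\mu^2 B(x_1,\hslash D_1,(2j+1)\hslash,\mu^{-2},\hslash)$ on the line, with principal symbol $\mu^2 b_0\bigl((2j+1)\hslash,x_1,\xi_1\bigr)=V(\Psi(x_1,\xi_1))+(2j+1)\mu h\,F(\Psi(x_1,\xi_1))+O(\mu h)$. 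First I would show that $e(x,x,\tau)=\sum_j e_j(x_1,x_1,\tau)\cdot|\phi_j(x_2)|^2$ (roughly), where $e_j$ is the spectral function of $\cH_j$ and $\phi_j$ the $j$-th Hermite function, reducing the problem to summing one-dimensional spectral asymptotics over $j$.

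For part~\ref{thm-5-2-3-i}: under \textup{(\ref{5-2-12})}, for every $j$ the value $\tau$ lies at distance $\ge\epsilon_0$ from the range-relevant part of the symbol of $\cH_j$ \emph{on the support of the Landau weight}, i.e. $\cH_j-\tau$ is elliptic on the relevant region once we use that $|\phi_j(x_2)|^2$ is concentrated where $x_2^2+\hslash^2D_2^2\approx(2j+1)\hslash$. Quantitatively: the contribution of the $j$-th channel to $e(x,x,\tau)-h^{-2}\cN_2^{\MW}$ is $O(\mu^{-s}h^s)$ with constants uniform in $j$ and with an extra decaying factor in $j$ coming from ellipticity away from the Landau level; summing the geometric-type series in $j$ gives the stated $O(\mu^{-s}h^s)$. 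The clean way is to invoke Theorem~\ref{thm-2-1-6}\ref{thm-2-1-6-iv} / Remark~\ref{rem-2-1-7}\ref{rem-2-1-7-iii} applied to each $\cH_j$ after rescaling, noting the symbol $\cH_j-\tau$ is an elliptic symbol there, so $e_j(x_1,x_1,\tau)$ itself is $O(\hslash^\infty)$ and matches the (vanishing) $j$-th Weyl term.

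For part (ii): now some Landau levels may be resonant with $\tau$, but then \textup{(\ref{5-2-14})} forces the drift non-degeneracy $|\nabla((V-\tau)F^{-1})|+|\det\Hess((V-\tau)F^{-1})|\ge\epsilon_0$ precisely on the resonant set. The argument then mirrors Theorem~\ref{thm-5-2-1}: for each resonant $j$, $\cH_j-\tau$ is a one-dimensional $\hslash$-pseudodifferential operator whose principal symbol $b_0^{(j)}-\tau$ has only non-degenerate critical points (by the $\det\Hess$ condition, transported through the diffeomorphism $\Psi$) and non-vanishing gradient elsewhere; applying the one-dimensional local spectral asymptotics gives that $\int(e_j(x_1,x_1,\tau)-\text{Weyl}_j)\psi\,dx_1=O(1)$, uniformly, because in dimension one a non-degenerate symbol yields remainder $O(1)$. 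Summing over the $O((\mu h)^{-1})\cdot O(1)=O(1)$ resonant values of $j$ (only boundedly many $j$ satisfy $(2j+1)\mu hF\le\tau-\inf V+C$ once $\mu h\gtrsim1$) and adding the $O(\mu^{-s}h^s)$ from the non-resonant channels of part~\ref{thm-5-2-3-i}, we get \textup{(\ref{5-2-15})}.

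The main obstacle is making the Landau-channel decomposition rigorous and uniform in $j$: the reduction to \textup{(\ref{5-2-5})} is only microlocal/modulo lower-order and the remainders $B_n\cL_0^n$ with $n\ge2$ mix channels, so one must control the off-diagonal (in $j$) coupling and show it contributes only $O(\hslash)\asymp O(h/\mu)$ per channel, which is absorbed after summation — this is where the condition $\mu h\gtrsim 1$ (equivalently $\hslash\lesssim1$) is essential, since it keeps the number of relevant channels bounded and the spacing $\asymp\mu h$ between Landau levels large compared to the channel width. The rest is the one-dimensional theory already available from Section~\ref{sect-2-1} together with the non-degeneracy bookkeeping, as in the proof sketch of Theorem~\ref{thm-5-2-1}.
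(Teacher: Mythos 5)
Your approach is the same as the paper's: reduce to the canonical form \textup{(\ref{5-2-5})}, replace $\cL_0$ by its eigenvalues $(2j+1)\hslash$ to get a family of one-dimensional $\hslash$-pseudodifferential operators, use ellipticity of each channel under \textup{(\ref{5-2-12})} and the one-dimensional non-degenerate asymptotics ($O(1)$ per channel) under \textup{(\ref{5-2-14})}, and observe that $\mu h\gtrsim 1$ leaves only $O(1)$ resonant channels. This is exactly the mechanism the paper sketches in Subsections~\ref{sect-5-2-2}--\ref{sect-5-2-4} (with details deferred to Chapter~\ref{book_new-sect-13} of \cite{futurebook}), and your bookkeeping of how \textup{(\ref{5-2-14})} transfers to the channel symbols and of the number of resonant $j$ is right.

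Two corrections, neither fatal. First, in part (i) you assert that under \textup{(\ref{5-2-12})} each $e_j(x_1,x_1,\tau)$ is itself $O(\hslash^\infty)$ and that the $j$-th Weyl term vanishes. That holds only for the \emph{empty} channels, $(2j+1)\mu hF>\tau-V+\epsilon_0$. For the \emph{filled} channels, $(2j+1)\mu hF<\tau-V-\epsilon_0$, the channel operator minus $\tau$ is elliptic but negative, so $e_j(x_1,x_1,\tau)$ is the full Landau-level density $\asymp\mu h^{-1}$, and the corresponding magnetic Weyl term $\kappa_2\,\mu hF\cdot h^{-2}$ is equally nonzero; what ellipticity buys is that their \emph{difference} is $O(\mu^{-s}h^s)$ (the spectral function is locally constant in $\tau$ across the gap and agrees with its complete symbolic expansion). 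Your preceding sentence states the correct claim, so the fix is local. Second, the terms $B_n\cL_0^n$ with $n\ge 2$ do \emph{not} mix channels: $\cL_0^n$ is diagonal in the Hermite basis and the $B_n$ act only in $(x_1,\hslash D_1)$. The off-diagonal coupling you need to control comes solely from the remainder of the reduction to canonical form (and, for the pointwise statement (i), from undoing the Fourier integral operator conjugation), not from those terms.
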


\begin{remark}\label{rem-5-2-4}
If $d=2$, we only need that $\mu^{-1}h\ll 1$ rather than $h\ll 1$.
\end{remark}

\begin{theorem}\label{thm-5-2-5}
Let $d=3$, $F\asymp 1$ and $\mu h \gtrsim 1$. Then for the operator \textup{(\ref{5-2-1})},
\begin{align}
&\int \Bigl(e(x,x,0) -h^{-3}\cN^\MW_3(x,-V,\mu h F)\Bigr)\psi(x)\,dx=
O(\mu h^{-1-\delta})
\label{5-2-16}\\
\intertext{in the general case and}
&\int \Bigl(e(x,x,0) -h^{-3}\cN^\MW_3(x,-V,\mu h F)\Bigr)\psi(x)\,dx=O(\mu h^{-1})
\label{5-2-17}
\end{align}
 under the assumption
 \begin{equation}
|V+(2j+1)\mu h F| +\sum_{\alpha:\,1\le |\alpha|\le K} |\nabla^\alpha VF^{-1}|\ge \epsilon \qquad \forall j\in \bZ^+.
\label{5-2-18}
\end{equation}
\end{theorem}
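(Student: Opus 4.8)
The plan is to reduce the three–dimensional problem to a finite family of lower–dimensional ones by passing to the microlocal canonical form \textup{(\ref{5-2-6})} and peeling off the Landau levels, and then to apply the local theory of Chapter~\ref{sect-2} and the rescaling of Subsection~\ref{sect-2-1-5} to each member of the family. First I would use the $\hslash$-Fourier transform to bring $H$, near a given point of $\supp(\psi)$, to the form \textup{(\ref{5-2-6})} with $\hslash=\mu^{-1}h$; since $\mu h\gtrsim 1$ we have $\hslash\le h^2\ll 1$, so $\hslash$ is a genuine semiclassical parameter. Diagonalizing the oscillator $\cL_0=x_3^2+\hslash^2D_3^2$ in its Hermite eigenbasis replaces $\cL_0$ by the Landau levels $(2j+1)\hslash$ and splits the problem into a family of operators $\mathsf H_j$ on $\sL^2(\bR^2_{x_1,x_2})$, each being an $h$-Schr\"odinger operator in $x_2$ whose operator–valued potential is an $\hslash$-pseudodifferential operator in $x_1$ with principal symbol $V\circ\Psi+(2j+1)\mu hF\circ\Psi$, modulo the lower–order corrections coming from $B_2,B_3,\dots$. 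Because $F\asymp 1$ and $V$ is bounded on $\supp(\psi)$, only the $O(1)$ indices $j$ with $(2j+1)\mu hF\le\sup_{\supp\psi}(-V)+C$ contribute non-negligibly to $e(x,x,0)$; for every other $j$ the symbol of $\mathsf H_j$ is elliptic at energy $0$ and its contribution is $O(h^s)$ for all $s$. Summing over the remaining finite set of $j$ and undoing the reductions with the usual partition of unity will produce \textup{(\ref{5-2-16})}--\textup{(\ref{5-2-17})}, the principal term being identified with $h^{-3}\int\cN^\MW_3(x,-V,\mu hF)\psi\,dx$ by the computation of $\kappa_0$ exactly as in Theorem~\ref{thm-2-1-6}\ref{thm-2-1-6-iv}, summed against \textup{(\ref{5-2-3})}.

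For each relevant $j$ I would apply Theorem~\ref{thm-2-1-6} and the rescaling of Subsection~\ref{sect-2-1-5} to $\mathsf H_j$, treating $x_2$ as the semiclassical Schr\"odinger direction and $(x_1,\xi_1)$ as the phase–space variables of the operator–valued symbol, together with the propagation of singularities from Corollary~\ref{cor-2-1-10} along the one–dimensional motion in $x_2$ (controlled for a time $T\asymp 1$). Under assumption \textup{(\ref{5-2-18})} the operator $\mathsf H_j$ is microhyperbolic in $(x_1,\xi_1)$ on the energy level $0$: the summand $|V+(2j+1)\mu hF|\ge\epsilon$ makes it elliptic off its characteristic set, while $\sum_{1\le|\alpha|\le K}|\nabla^\alpha(VF^{-1})|\ge\epsilon$ furnishes the transversal non-degeneracy needed for the rescaling. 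The Tauberian argument then gives, for each such $j$, a contribution to the remainder of order $\mu h^{-1}$ (the factor $\mu=\hslash^{-1}h$ is what the two–scale remainder for $\mathsf H_j$ produces, the transversal $\hslash$-direction contributing only $O(h^{-1})$), and adding the $O(1)$ such estimates yields \textup{(\ref{5-2-17})}. In the general case \textup{(\ref{5-2-18})} may fail — $\mathsf H_j$ need not be microhyperbolic in $(x_1,\xi_1)$, and a Landau level may be nearly resonant with the energy $0$ — and one then inserts a partition of unity with scaling functions in $(x_1,\xi_1)$ and localizes away from the resonant level, which, just as in the passage from \textup{(\ref{5-2-17})} to \textup{(\ref{5-2-16})}, costs a factor $h^{-\delta}$ and produces the weaker $O(\mu h^{-1-\delta})$.

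The principal obstacle is the first step: establishing the canonical form \textup{(\ref{5-2-6})} and justifying the substitution $\cL_0\rightsquigarrow(2j+1)\hslash$ with all error terms controlled \emph{uniformly in $j$}; this is the technical heart of the magnetic theory, carried out in Chapter~\ref{book_new-sect-13} of \cite{futurebook}, and everything downstream is a fairly mechanical application of Chapter~\ref{sect-2}. A secondary but genuine difficulty is the bookkeeping near resonant Landau levels, i.e. on $\{x:\,|V(x)+(2j+1)\mu hF(x)|\text{ small}\}$ — precisely the set excluded by the first term of \textup{(\ref{5-2-18})}, whose absence forces the $h^{-\delta}$ loss — and the fact that in $d=3$ the free motion along magnetic lines is what prevents improving $O(\mu h^{-1})$ to $o(\mu h^{-1})$ without further global dynamical hypotheses.
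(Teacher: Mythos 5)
Your proposal follows the same route that the paper itself sketches in Subsections~\ref{sect-5-2-2}--\ref{sect-5-2-4}: reduction to the canonical form (\ref{5-2-6}), substitution of the Landau levels $(2j+1)\hslash$ for $\cL_0$, the observation that for $\mu h\gtrsim 1$ only $O(1)$ levels are non-elliptic at energy $0$ (versus $\asymp(\mu h)^{-1}$ in the moderate-field case), a per-level remainder $O(\mu h^{-1})$ coming from propagation along the magnetic line, and the $h^{-\delta}$ loss when the non-degeneracy fails. The only caveat is terminological: condition (\ref{5-2-18}) with $K\ge 2$ is not microhyperbolicity of the reduced operator but a weaker non-degeneracy that must be absorbed by the rescaling of Subsection~\ref{sect-2-1-5} (which you do invoke), and the uniform-in-$j$ justification of the canonical form is, as you correctly identify, the technical content deferred to Chapter~\ref{book_new-sect-13} of \cite{futurebook}.
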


\begin{remark}\label{rem-5-2-6}
\begin{enumerate}[label=(\roman*), wide, labelindent=0pt]
\item\label{rem-5-2-6-i}
$\cN^\MW_d =O(\mu h)$ for $\mu h\gtrsim 1$.
\item\label{rem-5-2-6-ii}
For the Schr\"odinger-Pauli operator \textup{(\ref{5-2-2})}, one only needs to replace ``$(2j+1)$'' by ``$2j$'' in the assumptions above.
\end{enumerate}
\end{remark}

\section{$2\D$ case, degenerating magnetic field}
\label{sect-5-3}

\subsection{Preliminaries}
\label{sect-5-3-1}

Since $\mu F$ plays such a  prominent role when $d=2$, one may ask what happens if $F$ vanishes somewhere? Obviously, one needs to make certain assumptions; it turns out that in the generic case
\begin{gather}
|F|+|\nabla F|\asymp 1,
\label{5-3-1}\\
\intertext{the \emph{degeneration manifold\/} $\Sigma\Def \{x:\,F(x)=0\}$ is a smooth manifold and the operator is modelled by}
h^2D_1^2 +(hD_2-\mu  x_1^2/2)^2 +V(x_2),
\label{5-3-2}
\end{gather}
which we are going to study. We consider the local spectral asymptotics for $\psi$ supported in a small enough vicinity of $\Sigma$. Under the assumption (\ref{5-3-1}) (or, rather more a general one), the complete analysis was done in Chapter~\ref{book_new-sect-14} of \cite{futurebook}.

\subsection{Moderate and strong magnetic field}
\label{sect-5-3-2}

We start from the case $\mu h\lesssim 1$. Without any loss of the generality, one can assume that $\Sigma=\{x:\, x_1=0\}$. Then, the scaling
$x\mapsto x/\gamma(\bar{x})$ (with $\gamma(x)=\frac{1}{2}\dist(x,\Sigma)$), brings us to the case of the non-degenerate magnetic field with $h\mapsto h_1=h/\gamma$ and  $\mu\mapsto \mu_1=\mu \gamma^2$ as long as $\gamma \ge \mu^{-\frac{1}{2}}$. Then the contribution of $B(x,\gamma(x))$ to the remainder does not exceed $C\mu_1^{-1}h_1^{-1}=C\mu^{-1}h^{-1}\gamma^{-1}$ and the total contribution of the \emph{regular zone\/}
$\cZ=\{\gamma(x)\ge C_0\mu^{-\frac{1}{2}}\}$ does not exceed
$C\int \mu^{-1}h^{-1}\gamma^{-3}\,dx=Ch^{-1}$.

On the other hand, in the \emph{degeneration zone\/}
$\cZ_0=\{\gamma(x)\le C_0\mu^{-\frac{1}{2}}\}$, we use $\gamma=\mu^{-\frac{1}{2}}$ and the contribution of $B(x,\gamma(x))$ does not exceed $Ch_1^{-1}= Ch^{-1}\mu^{-\frac{1}{2}}$ and the total contribution of this zone also does not exceed $Ch^{-1}$.

Thus we conclude that the left-hand expression of (\ref{5-2-8}) is now $O(h^{-1})$. Can we do any better than this?

Analysis of the evolution and propagation in the zone $\cZ$ shows that there is a drift of magnetic lines along $\Sigma$ with speed $C\mu^{-1}\gamma^{-1}$ which allows us to improve $T^*\asymp \mu \gamma^2$  to $T^*\asymp \mu \gamma$ (both before rescaling) and improve the estimate of the contribution of $B(x,\gamma(x))$ to $C\mu^{-1}h^{-1}$ and the total contribution of this zone to
$C\int \mu^{-1}h^{-1}\gamma^{-2}\,dx=C\mu^{-\frac{1}{2}}h^{-1}$.

Analysis of evolution and propagation in the zone $\cZ_0$ is more tricky. It turns out that there are \emph{short periodic trajectories\/} with period $\asymp \mu^{-\frac{1}{2}}$, but there are not many of them which allows us to improve the remainder estimate in this zone as well.

\begin{theorem}\label{thm-5-3-1}
Let $d=2$ and suppose the condition \textup{(\ref{5-3-1})} is fulfilled.
Let $\Sigma\Def \{F=0\}=\{x_1=0\}$ and $-V\asymp 1$. Let
 $\mu \le h^{-1}$. Then,
\begin{enumerate}[label=(\roman*), wide, labelindent=0pt]
\item\label{thm-5-3-1-i}
The left-hand expression of \textup{(\ref{5-2-15})} is $O(\mu^{-\frac{1}{2}}h^{-1}+h^{-1}(\mu^{\frac{1}{2}}h|\log h|)^{\frac{1}{2}})$. In particular, for $\mu \lesssim (h|\log h|)^{-\frac{2}{3}}$, it is $O(\mu^{-\frac{1}{2}}h^{-1})$.
\item\label{thm-5-3-1-ii}
Further,
\begin{multline}
\int \Bigl(e(x,x,0)-h^{-2}\cN^\MW_2(x,-V, \mu h F) \Bigr)\psi(x)\,dx -\\
h^{-1}\int \cN^\MW_\corr (x_2,0)\psi (x_2,0)= O(\mu^{-\frac{1}{2}}h^{-1}+ h^{-\delta})
\label{5-3-3}
\end{multline}
with
\begin{multline}
h^{-1}\cN^\MW_\corr\Def \\
(2\pi h)^{-1}\int \mathbf{n}_0(\xi_2,-W(x_2),\hbar)\,d\xi_2 -
h^{-2}\int \cN^\MW  (-W(x_2), \mu h F(x_1))\, dx_1
\label{5-3-4}
\end{multline}
where $\mathbf{n}_0(\xi_2,\tau,\hbar)$ is an eigenvalue counting function for the operator
$\mathbf{a}_0(\xi_2,\hbar)= {\hbar^2 D_1^2 + (\xi_2-  x_1^2/2 )^2}$ on
$\bR^1\ni x_1$, $\hbar=\mu^{\frac{1}{2}}h$  and $W(x_2)=V(0,x_2)$.
\item\label{thm-5-3-1-iii}
Furthermore, under the non-degeneracy assumption
\begin{equation}
\sum_{1\le k\le m} |\partial_{x_2}^k W|\asymp 1
\label{5-3-5}
\end{equation}
(in the framework of assumption \textup{(\ref{5-3-2})}), one can take $\delta=0$ in \textup{(\ref{5-3-3})}.
\end{enumerate}
\end{theorem}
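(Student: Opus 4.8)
The plan is to implement the rescaling/partition scheme already described in Subsection~\ref{sect-5-3-1}, but to push the analysis of the two zones $\cZ$ and $\cZ_0$ far enough to extract not only the remainder bound in (i) but the explicit correction term in (ii), and then to remove the $h^{-\delta}$ loss in (iii) under the extra non-degeneracy hypothesis (\ref{5-3-5}). First I would split $\supp(\psi)$ by a scaling function $\gamma(x)=\tfrac12\dist(x,\Sigma)$ with the cut-off $\bar\gamma=C_0\mu^{-1/2}$: in the \emph{regular zone} $\cZ=\{\gamma\ge\bar\gamma\}$ the rescaling $x\mapsto x/\gamma$, $h\mapsto h_1=h/\gamma$, $\mu\mapsto\mu_1=\mu\gamma^2$ lands us in the non-degenerate setting of Theorem~\ref{thm-5-2-1} (since $\mu_1h_1=\mu\gamma h\lesssim 1$ there and $F\asymp\gamma$), so after undoing the scaling the ball $B(x,\gamma)$ contributes the magnetic Weyl term plus a remainder $O(\mu^{-1}h^{-1}\gamma^{-1})$. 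The key dynamical input — already flagged in Subsection~\ref{sect-5-3-1} — is that the drift of magnetic lines \emph{along} $\Sigma$ has speed $\asymp\mu^{-1}\gamma^{-1}$, which lets one propagate for time $T^*\asymp\mu\gamma$ rather than $\mu\gamma^2$; redoing the Tauberian estimate of Subsection~\ref{sect-2-1-3} with this $T^*$ upgrades the ball's contribution to $O(\mu^{-1}h^{-1}\gamma^{-1}\cdot(\gamma)^{-1}\cdot\gamma)$, and summing $\int_{\cZ}\mu^{-1}h^{-1}\gamma^{-2}\,dx\asymp\mu^{-1/2}h^{-1}$ gives the first term in (i). The logarithmic loss $h^{-1}(\mu^{1/2}h|\log h|)^{1/2}$ comes from the boundary layer $\gamma\asymp\bar\gamma$ where the two constructions are matched: there one can only guarantee non-return of singularities for time $\asymp(\log h)^{-1}$-worth of the rescaled evolution, and the measure of that transitional strip is $\asymp\mu^{-1/2}$.

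Next I would treat the \emph{degeneration zone} $\cZ_0=\{\gamma\le\bar\gamma\}$ directly, working with the model operator (\ref{5-3-2}). Here one uses the partial $h$-Fourier transform in $x_2$ to reduce to the one-dimensional family $\mathbf{a}_0(\xi_2,\hbar)=\hbar^2D_1^2+(\xi_2-x_1^2/2)^2$ with $\hbar=\mu^{1/2}h$, plus the slowly varying potential $W(x_2)=V(0,x_2)$. The spectral contribution of $\cZ_0$ is thus $(2\pi h)^{-1}\int\mathbf{n}_0(\xi_2,-W(x_2),\hbar)\,d\xi_2\,dx_2$ up to an error governed by the propagation of singularities for $\mathbf{a}_0$. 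The point (established in Chapter~\ref{book_new-sect-14} of \cite{futurebook}, which I would invoke) is that the quartic-type potential $(\xi_2-x_1^2/2)^2$ produces \emph{short periodic trajectories} of period $\asymp\mu^{-1/2}$ but with a small measure of genuinely periodic points, so the one-dimensional Tauberian argument of Subsection~\ref{sect-2-1-3}, applied uniformly in $\xi_2$, gives a remainder $O(h^{-1}\mu^{-1/2})$ for the integrated asymptotics — no loss in this zone. Subtracting the magnetic Weyl term $h^{-2}\int\cN^\MW(-W(x_2),\mu hF(x_1))\,dx_1$ from the $\mathbf{n}_0$-expression isolates exactly $h^{-1}\cN^\MW_\corr$ as in (\ref{5-3-4}): this is a bookkeeping identity, the difference between the ``true'' count in the model and the naive Weyl count, integrated across the thin strip. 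Assembling the $\cZ$ and $\cZ_0$ contributions yields (\ref{5-3-3}) with the stated error $O(\mu^{-1/2}h^{-1}+h^{-\delta})$, where the $h^{-\delta}$ again tracks the matching strip and the crude control of long-time dynamics for the model $\mathbf{a}_0$.

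For part (iii), under (\ref{5-3-5}) one has a genuine non-degeneracy of $W$ along $\Sigma$: some derivative $\partial_{x_2}^kW$ is bounded below. This plays exactly the role that (\ref{5-2-7}) played in Theorem~\ref{thm-5-2-1} — it destroys periodicity of the drift motion along $\Sigma$ and lets one follow the evolution for the full admissible time rather than a fractional power, so the matching-strip error and the long-time error in the $\mathbf{a}_0$-analysis both become $O(\mu^{-1/2}h^{-1})$, i.e. one may take $\delta=0$. Concretely I would invoke the microhyperbolicity of the reduced one-dimensional operator family in the $x_2$-direction guaranteed by (\ref{5-3-5}), apply Theorem~\ref{thm-2-1-12}-type non-periodicity improvement fibrewise in $\xi_2$, and check that the resulting estimates are uniform in $\xi_2$ over the (compact, by the energy cut $-V\asymp 1$) range that contributes. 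The main obstacle, and the part I expect to be genuinely delicate rather than routine, is the analysis of the dynamics in the degeneration zone $\cZ_0$ and in the transitional strip $\gamma\asymp\mu^{-1/2}$: one must show that the short periodic trajectories of the quartic model do not accumulate on a set of positive measure and that the two parametrix constructions (non-degenerate magnetic rescaling on one side, the $\mathbf{a}_0$-model on the other) can be glued with a controlled error — this is where the $|\log h|$ factor in (i) and the $h^{-\delta}$ in (ii) come from, and where hypothesis (\ref{5-3-5}) is used to kill them in (iii). All the zone-by-zone volume bookkeeping, the scaling of $\alpha,\beta$ in boundary terms (not needed here), and the identity defining $\cN^\MW_\corr$ are straightforward by comparison.
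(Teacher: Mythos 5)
Your proposal follows essentially the same route as the paper's: the decomposition into the regular zone $\cZ$ and the degeneration zone $\cZ_0$ at $\gamma\asymp\mu^{-1/2}$, the rescaling into the non-degenerate setting with the drift-along-$\Sigma$ improvement $T^*\asymp\mu\gamma$ yielding the $\mu^{-1/2}h^{-1}$ term, the reduction to the model operator $\mathbf{a}_0$ with $\cN^\MW_\corr$ identified as the model-versus-magnetic-Weyl discrepancy, and the use of \textup{(\ref{5-3-5})} to remove the periodicity losses. Two cosmetic quibbles: your intermediate per-ball expression in $\cZ$ should simplify to $\mu^{-1}h^{-1}$ (your final integrand $\mu^{-1}h^{-1}\gamma^{-2}$ is the correct one), and the $|\log h|$ and $h^{-\delta}$ losses are attributed in the paper to the short periodic trajectories of period $\asymp\mu^{-1/2}$ inside $\cZ_0$ rather than only to the matching strip.
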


\begin{remark}\label{rem-5-3-2}
\begin{enumerate}[label=(\roman*), wide, labelindent=0pt]
\item\label{rem-5-3-2-i}
Under some non-degeneracy assumptions, Theorem~\ref{thm-5-3-1}\ref{thm-5-3-1-i} could also be improved.
\item\label{rem-5-3-2-ii}
Theorem~\ref{thm-5-3-1} remains valid for $h^{-1}\le \mu\lesssim h^{-2}$ as well but then the zone $\{x:\,\gamma(x) \ge C( \mu h)^{-1}\}$ is forbidden, contribution to the principal part is delivered by the zone
$\{x:\,\gamma(x) \lesssim (\mu h)^{-1}\}$ and it is $\lesssim \mu^{-1}h^{-3}$.
\item\label{rem-5-3-2-iii}
As $\mu \ge Ch^2$, the principal part is $0$ and the remainder is $O(\mu^{-s})$.
\end{enumerate}
\end{remark}

For further details, generalizations and proofs, see Section~\ref{book_new-sect-14-6} of \cite{futurebook}.

\subsection{Strong and superstrong magnetic field}
\label{sect-5-3-3}

Assume now that $\mu \gtrsim h^{-1}$ and replace $V$ by $V-(2j+1)\mu h F_{12}$ with  $j\in \bZ^+$. Then the zone $\{x:\,\gamma(x) \ge C( \mu h)^{-1}\}$ is no longer forbidden, the principal part of asymptotics is of the magnitude
$\mu h^{-1}$ (cf. Remark~\ref{rem-5-3-2}\ref{rem-5-3-2-ii} and the remainder estimate becomes $O(\mu^{-\frac{1}{2}}h^{-1}+h^{-\delta})$  (and under the non-degeneracy assumption one can take $\delta=0$).

Furthermore,  the case $\mu \ge C h^{-2}$ is no longer trivial. First, one needs to change the correction term by replacing $\mathbf{a}_0$ with
$\mathbf{a}_{2j+1}\Def \mathbf{a}_0-(2j+1) \hbar x_1$. Second, the non-degeneracy condition should be relaxed by requiring (\ref{5-3-5}) only if
\begin{equation}
|\hbar^{2/3}\lambda_{j,n} (\eta)+W(x_2)|+ |\partial_\eta \lambda_{j,n} (\eta)
\ge \epsilon\qquad \forall \eta,
\label{5-3-6}
\end{equation}
fails,  where $\lambda_{j,n}$ are the eigenvalues of $\mathbf{a}_{2j+1}$ with $\hbar=1$.

Furthermore, if
\begin{equation}
|\hbar^{2/3}\lambda_{j,n} (\eta)+W(x_2)|\ge \epsilon\qquad \forall \eta,
\label{5-3-7}
\end{equation}
then $0$ belongs to the spectral gap and the remainder estimate is $O(\mu^{-s})$.

For further details, exact statements, generalizations and proofs, see Sections~\ref{book_new-sect-14-7} and~\ref{book_new-sect-14-8} of \cite{futurebook}.

\section{$2\D$ case, near the boundary}
\label{sect-5-4}

\subsection{Moderate magnetic field}
\label{sect-5-4-1}

We now consider the magnetic Schr\"odinger operator with $d=2$, $F\asymp 1$ in a compact domain $X$ with $\sC^\infty$-boundary. While the dynamics inside  the domain do not change, the dynamics in the boundary layer of the width $\asymp \mu^{-1}$ are completely different. When the magnetron hits $\partial X$, it reflects according to the standard ``incidence angle equals reflection angle'' law and thus the ``particle'' propagates along $\partial X$ with speed $O(1)$ rather than $O(\mu^{-1})$. Therefore, physicists distinguish between \emph{bulk\/} and \emph{edge particles\/}.
Note however that in general, this distinction is not as simple as in the case of constant $F$ and $V$. Indeed, a drifting inner trajectory can hit $\partial X$ and become a \emph{hop trajectory\/}, while the latter could leave the boundary and become an inner trajectory.

It follows from Section~\ref{sect-5-2} that the  contribution of $B(x,\gamma(x))$ with $\gamma(x)=\frac{1}{2}\dist (x,\partial X)\ge \bar{\gamma}=C\mu^{-1}$ to the remainder is $O(\mu^{-1}h^{-1}\gamma^2T(x)^{-1})$, where $T(x)$ is the length of the drift trajectory inside  the \emph{bulk zone\/}  $\{x\in X:\,\gamma(x)\ge \bar{\gamma}\}$. Then the total contribution of this zone to the remainder does not exceed
$C\mu^{-1}h^{-1}\int T(x)^{-1}\,dx=O(\mu^{-1}h^{-1})$ since $T(x)\gtrsim \gamma(x)^{\frac{1}{2}}$ (in the proper direction).

On the other hand, due to the rescaling $x\mapsto x/\bar{\gamma}$, the contribution of  $B(x,\bar{\gamma})$ with $\gamma(x)\le \bar{\gamma}$ to the remainder does not exceed $C\mu h^{-1}\bar{\gamma}^2$ and the total contribution of the \emph{edge zone\/}  $\{x\in X:\,\gamma(x)\le \bar{\gamma}\}$  does not exceed
$C\mu h^{-1}\bar{\gamma}= Ch^{-1}$ and the total remainder is $O(h^{-1})$. Thus, if we want a better estimate, we need to study propagation along $\partial X$.

\begin{theorem}\label{thm-5-4-1}
Under the non-degeneracy assumption $|\nabla _{\partial X}(VF^{-1})|\asymp 1$ on $\supp(\psi)$ (contained in the small vicinity of $\partial X$) for $\mu h\lesssim 1$
\begin{multline}
\int_X \Bigl( e(x,x,0)-h^{-2}\cN^\MW (x,0, \mu h)\Bigr)\psi(x)\,dx -\\ h^{-1}\int_{\partial X} \cN^\MW_{*,\bound}(x,0, \mu h )\psi (x)\,ds_g=
O(\mu^{-1}h^{-1}),
\label{5-4-1}
\end{multline}
where $\cN^\MW_{*,\bound}$ is introduced in \ref{5-4-2-D} or  \ref{5-4-2-N} below for the Dirichlet or Neumann boundary conditions respectively with
$\hbar=\mu h F(x)$ and $\tau$ replaced by $-V(x)$.

Here,
\begin{phantomequation}\label{5-4-2}\end{phantomequation}
\begin{multline}
\cN^\MW_{\D,\bound}(\tau, \hbar)\Def\\
(2\pi)^{-1}  \int_0^\infty \sum_{j\ge 0}\Bigl(
\int \uptheta\bigl(\tau -\hbar \lambda_{\D,j}(\eta)\bigr)
\upsilon_{\D ,j}^2 ( x_1,\eta )\,d\eta -
\uptheta \bigl(\tau -(2j+1)\hbar\bigr)\Bigr)\hbar^{\frac{1}{2}}\, dx_1
\tag*{$\textup{(\ref*{5-4-2})}_\D$}\label{5-4-2-D}
\end{multline}
and
\begin{multline}
\cN^\MW_{\N,\bound}(\tau, \hbar)\Def\\
(2\pi)^{-1} \int_0^\infty \sum_{j\ge 0}\Bigl(
\int \uptheta\bigl(\tau -\hbar \lambda_{\N,j}(\eta)\bigr)
\upsilon_{\N,j}^2 ( x_1,\eta )\,d\eta -
\uptheta \bigl(\tau -(2j+1)\hbar+\bigr)\Bigr)\hbar^{\frac{1}{2}}\, dx_1,
\tag*{$\textup{(\ref*{5-4-2})}_\N$}\label{5-4-2-N}
\end{multline}
where $\lambda_{\D,j}(\eta)$ and $\lambda_{\N,j}(\eta)$ are eigenvalues and
$\upsilon_{\D,j}$ and $\upsilon_{\N,j}$ are eigenfunctions of operator \begin{equation}
\mathbf{a}(\eta, x_1,D_1) = D_1^2 +  x_1^2 \qquad\text{as\ \ } x_1<\eta
\label{5-4-3}
\end{equation}
with the Dirichlet or Neumann boundary conditions respectively at $x_1=\eta$.
\end{theorem}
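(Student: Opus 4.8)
The plan is to reduce the boundary asymptotics to a one‑dimensional model problem, exactly as the interior analysis in Section~\ref{sect-2-1} reduces matters to the family of $\hslash$‑pseudodifferential operators obtained after freezing the Landau index. First I would localize near a boundary point and flatten $\partial X$; since $d=2$ and $F\asymp 1$, the change of variables is $\sC^\infty$ and harmless. After the $\hslash$‑Fourier transform that produces the canonical form \textup{(\ref{5-2-5})} with $\hslash=\mu^{-1}h$ and $\cL_0=x_1^2+\hslash^2D_1^2$, the boundary $\{x_1=0\}$ becomes (up to the diffeomorphism $\Psi$) a graph $\{x_1=\eta\}$ in the $(x_1,\xi_1)$‑variables, so the reduced operator near the boundary is, to leading order, the harmonic oscillator $\mathbf{a}(\eta,x_1,D_1)=D_1^2+x_1^2$ on the half‑line $\{x_1<\eta\}$ with the appropriate boundary condition at $x_1=\eta$. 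Its spectral data $\lambda_{\bullet,j}(\eta)$, $\upsilon_{\bullet,j}$ are precisely those appearing in \ref{5-4-2-D}--\ref{5-4-2-N}; the subtracted term $\uptheta(\tau-(2j+1)\hbar)$ is exactly what the free (boundaryless) Landau spectrum $\cN^\MW$ already contributes, so $\cN^\MW_{*,\bound}$ is the genuinely new, boundary‑layer piece.

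Second I would split $u=u^0+u^1$ into the free‑space propagator and the reflected wave as in Subsection~\ref{sect-2-2-2-1}, and establish the $(\ref{2-1-9})$‑ and $(\ref{2-2-12})$‑type estimates for $\sigma^0_\psi,\sigma^1_\psi$ under the stated microhyperbolicity: here the relevant condition is $\xi'$‑microhyperbolicity of the boundary problem, which for the magnetic Schrödinger operator is guaranteed on energy level $0$ by the non‑degeneracy hypothesis $|\nabla_{\partial X}(VF^{-1})|\asymp1$ (this is the analogue of condition \textup{(\ref{2-2-14})} and of the drift non‑degeneracy in Subsection~\ref{sect-5-3-2}). This lets me take $T^*\asymp1$ for the boundary propagation — the drift along $\partial X$ moves ``edge particles'' with speed $O(1)$, so singularities do not return in time $\asymp1$ — and hence, via the successive approximation method of Subsection~\ref{sect-2-2-3}, compute all terms of the reflected contribution. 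Thirdly, I would carry out the Tauberian argument of Subsection~\ref{sect-2-2-4}: the interior part produces $h^{-2}\int\cN^\MW\psi\,dx$ with remainder $O(\mu^{-1}h^{-1})$ (this is Theorem~\ref{thm-5-2-1}), and the boundary‑layer part, after rescaling $x\mapsto x/\bar\gamma$ with $\bar\gamma=C\mu^{-1}$ so that $\hbar\asymp1$, produces $h^{-1}\int_{\partial X}\cN^\MW_{*,\bound}\psi\,ds_g$. Summing the per‑ball contributions over the bulk zone $\{\gamma(x)\ge\bar\gamma\}$ gives $O(\mu^{-1}h^{-1})$ because $T(x)\gtrsim\gamma(x)^{1/2}$, and the edge zone contributes $O(\mu^{-1}h^{-1})$ after the correction term is subtracted.

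Finally one must identify the explicit form of the correction: expanding $B_0$ near the boundary with principal symbol $\mu^{-2}V\circ\Psi$ and $B_1$ with principal symbol $F\circ\Psi$, replacing $\cL_0$ by its Landau eigenvalues $(2j+1)\hslash$, and diagonalizing the resulting half‑line oscillator with boundary at $x_1=\eta$ yields, after integrating the reduced density $\sum_j\int\uptheta(\tau-\hbar\lambda_{\bullet,j}(\eta))\upsilon_{\bullet,j}^2(x_1,\eta)\,d\eta$ over $x_1>0$ and subtracting the free Landau count, exactly \ref{5-4-2-D} or \ref{5-4-2-N} with $\hbar=\mu hF(x)$ and $\tau=-V(x)$; the $\hbar^{1/2}$ prefactor records the normalization of the oscillator length scale $(\mu F)^{-1/2}$. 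The main obstacle, I expect, is precisely the boundary‑layer propagation estimate: one must show that the reflected wave $u^1$ does not manufacture spurious singularities accumulating as magnetrons graze $\partial X$ (the ``glancing'' regime), and that the measure of hop‑trajectories returning in bounded time is negligible under $|\nabla_{\partial X}(VF^{-1})|\asymp1$. This requires a careful version of the microhyperbolicity‑in‑a‑multidirection machinery of Subsection~\ref{sect-2-2-2-2} adapted to the magnetic canonical form; the bookkeeping of the constants $\lambda_{\bullet,j}(\eta)$ and the uniformity of all estimates in the Landau index $j$ (finitely many $j$ contribute when $\mu h\lesssim1$, namely $j\lesssim(\mu h)^{-1}$) is the remaining technical burden. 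For the full argument, see Section~\ref{book_new-sect-15-3} of \cite{futurebook}.
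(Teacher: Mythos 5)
Your outline matches the paper's own (sketched) argument: the paper likewise treats the bulk zone $\{\gamma(x)\ge C\mu^{-1}\}$ by the drift-trajectory bound $T(x)\gtrsim\gamma(x)^{1/2}$, isolates the edge zone of width $\asymp\mu^{-1}$ where propagation along $\partial X$ under $|\nabla_{\partial X}(VF^{-1})|\asymp 1$ must be exploited, and identifies the correction term via the half-line oscillator $D_1^2+x_1^2$ on $\{x_1<\eta\}$, deferring all details to Chapter~15 of \cite{futurebook}. One small caution: the canonical form \textup{(\ref{5-2-5})} is produced by a phase-space transformation mixing $x$ and $\xi$ and hence does not respect $\partial X$, so near the boundary the reduction is carried out with the tangential Fourier transform only (which is how the parameter $\eta$ and the boundary $\{x_1=\eta\}$ actually arise), but this does not affect your identification of the model operator or of $\cN^\MW_{*,\bound}$.
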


\begin{remark}\label{rem-5-4-2}
Under weaker non-degeneracy assumptions $|\nabla VF^{-1}|\asymp 1$ and
$\nabla _{\partial X}VF^{-1}=0\implies
\pm \nabla^2 _{\partial X}VF^{-1}\ge \epsilon$, less sharp remainder estimates are derived. The sign in the latter inequality matters since it affects the dynamics. It also matters whether Dirichlet or Neumann boundary conditions are considered: for the Dirichlet boundary condition we get a better remainder estimate.
\end{remark}

For exact statements, generalizations and proofs, see Sections~\href{http://www.math.toronto.edu/ivrii/monsterbook.pdf#section.15.2}{15.2} and~\href{http://www.math.toronto.edu/ivrii/monsterbook.pdf#section.15.3}{15.3} of \cite{futurebook}.

\subsection{Strong magnetic field}
\label{sect-5-4-2}
We now consider the strong magnetic field $\mu h\gtrsim 1$ and for simplicity assume that $F=1$. In this case, we need to study an auxillary operator
$D_1^2 +  (x_1-\eta)^2$ as $x_1<0$ with either the Dirichlet or Neumann boundary conditions at $x_1=0$ or equivalently the operator (\ref{5-4-3}) and our operator is basically reduced to a perturbed operator
\begin{equation}
\mathbf{a}(\hslash D_2, x_1,D_1)-(2j+1)-(\mu h)^{-1}W(x_2)\qquad \hslash=\mu^{-1}h,
\label{5-4-4}
\end{equation}
with $W(x_2)=V(0,x_2)$. Then we need to analyze either $\lambda_{\D,j}(\eta)$ or $\lambda_{\N,j}(\eta)$ more carefully. It turns out that:

\begin{proposition}\label{prop-5-4-3}
$\lambda_{\D, n}(\eta)$ and $\lambda_{\N, n}(\eta)$, $n=0,1,2,\dots$ are real analytic functions with the following properties:

\begin{enumerate}[label=(\roman*), wide, labelindent=0pt]
\item\label{prop-5-4-3-i}
$\lambda_{\D, k}(\eta)$ are monotone decreasing for $\eta\in \bR$;
$\lambda_{\D, k}(\eta)\nearrow +\infty$ as $\eta\to -\infty$;
$\lambda_{\D, k}(\eta)\searrow (2k+1)$ as $\eta\to +\infty$;
$\lambda_{\D, k}(0)=(4k+3)$.

\item\label{prop-5-4-3-ii}
$\lambda_{\N, k}(\eta)$ are monotone decreasing for $\eta\in \bR^-$; $\lambda_{\N, k}(\eta)\nearrow +\infty$ as $\eta\to -\infty$;
$\lambda_{\N, k}(\eta)< (2n+1)$ as $\eta\ge (2n+1)^{\frac{1}{2}}$;
$\lambda_{\N, k}(0)=(4n+1)$.

\item\label{prop-5-4-3-iii}
$\lambda_{\N, k}(\eta) < \lambda_{\D, k}(\eta) <\lambda_{\N, (k+1)}(\eta)$;
$\lambda_{\D, k}(\eta)> (2k+1)$, $\lambda_{\N, n}(\eta)> (2k-1)_+$.
\end{enumerate}
\end{proposition}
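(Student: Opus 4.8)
The plan is to study the one-dimensional operator $\mathbf{a}(\eta,x_1,D_1)=D_1^2+x_1^2$ on the half-line $\{x_1<\eta\}$ with Dirichlet (resp.\ Neumann) boundary condition at $x_1=\eta$ as a perturbation of the full-line harmonic oscillator, and to extract the monotonicity and asymptotic boundary values of the eigenvalues from comparison arguments together with known facts about the parabolic cylinder (Weber) functions. Real-analyticity of $\lambda_{\D,k}(\eta)$ and $\lambda_{\N,k}(\eta)$ follows from analytic perturbation theory (Kato--Rellich): after the change of variable $y=x_1-\eta$ the operator becomes $D_y^2+(y+\eta)^2$ on the fixed half-line $\{y<0\}$ with a fixed boundary condition at $y=0$, and this is an analytic family of type (A) in the real parameter $\eta$ with compact resolvent, so each eigenvalue branch is real-analytic and the branches stay simple (they cannot cross, since for an ODE Sturm--Liouville problem on an interval the spectrum is simple), which in turn forces the strict ordering once it is known for one value of $\eta$.

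Next I would establish monotonicity. Differentiating the eigenvalue equation $\mathbf{a}(\eta)\upsilon = \lambda(\eta)\upsilon$ in $\eta$ and pairing with $\upsilon$, the Feynman--Hellmann formula gives $\lambda'(\eta)$ as a boundary term at $x_1=\eta$ coming both from the explicit $\eta$-dependence of the domain and (in the Dirichlet case) from $|\upsilon'(\eta)|^2$; a careful sign analysis of that boundary term — using that $\upsilon$ satisfies $-\upsilon''+x_1^2\upsilon=\lambda\upsilon$ and vanishes (resp.\ has vanishing derivative) at the moving endpoint — yields $\lambda_{\D,k}'(\eta)<0$ for all $\eta\in\bR$ and $\lambda_{\N,k}'(\eta)<0$ for $\eta<0$. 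The limiting values as $\eta\to\pm\infty$ come from domain monotonicity of Dirichlet/Neumann problems: as $\eta\to+\infty$ the half-line exhausts $\bR$ and the eigenvalues decrease to the Landau levels $2k+1$ of the full oscillator (Dirichlet from above, giving $\lambda_{\D,k}>2k+1$; for Neumann one gets $\lambda_{\N,k}(\eta)<2k+1$ once $\eta$ is large, with the interlacing $\lambda_{\N,k}<\lambda_{\D,k}<\lambda_{\N,k+1}$ being the standard Dirichlet--Neumann bracketing on nested intervals), while as $\eta\to-\infty$ the interval shrinks to a point and all eigenvalues blow up to $+\infty$.

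Finally I would pin down the explicit values at $\eta=0$. At $\eta=0$ the problem is the harmonic oscillator on $\{x_1<0\}$ with a Dirichlet (resp.\ Neumann) condition at $0$; by the reflection $x_1\mapsto -x_1$ this is exactly the subspace of odd (resp.\ even) eigenfunctions of the full-line oscillator on $\bR$, whose eigenvalues are $2m+1$ with $m$ odd (resp.\ even). Relabelling $m=2k+1$ gives $\lambda_{\D,k}(0)=4k+3$ and $m=2k$ gives $\lambda_{\N,k}(0)=4k+1$, and the inequalities $\lambda_{\D,k}(\eta)>2k+1$, $\lambda_{\N,k}(\eta)>(2k-1)_+$ follow by combining monotonicity with the $\eta\to+\infty$ limits (the $(2k-1)_+$ lower bound being vacuous for $k=0$ and otherwise coming from $\lambda_{\N,k}>\lambda_{\N,k-1}>$ its own limit, or more simply from $\lambda_{\N,k}(\eta)>\lambda_{\D,k-1}(\eta)>2k-1$ via the interlacing). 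The main obstacle I anticipate is the sign analysis of the Feynman--Hellmann boundary term for the Neumann case and the verification that monotonicity can fail only for $\eta>0$ there; the cleanest route is probably to express $\lambda_{\N,k}(\eta)$ through the zeros of $\partial_y U(\,\cdot\,,\sqrt{2}\,\eta)$ of the Weber function $U$ and invoke known interlacing/monotonicity properties of those zeros, rather than to argue purely variationally.
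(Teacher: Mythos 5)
The paper itself states Proposition~\ref{prop-5-4-3} without proof, deferring to \cite{futurebook}; but the companion Proposition~\ref{prop-5-4-4} makes clear that the intended mechanism is exactly the Hadamard/Feynman--Hellmann variational formula for the moving endpoint that you propose, so your route (analytic type~(A) family after the shift $y=x_1-\eta$, simplicity of Sturm--Liouville eigenvalues, Hadamard's formula for monotonicity, reflection symmetry at $\eta=0$, Dirichlet--Neumann interlacing via the codimension-one form-domain inclusion) is the right one and is essentially complete. Two points you leave open deserve to be filled in. First, the Neumann sign analysis you flag as the main obstacle is actually immediate: the boundary term is $\partial_\eta\lambda_{\N,k}=\bigl(\eta^2-\lambda_{\N,k}(\eta)\bigr)|\upsilon(\eta)|^2$ (up to normalization), and for $\eta<0$ one has $x_1^2\ge\eta^2$ on the whole domain $(-\infty,\eta)$, so $\lambda_{\N,k}(\eta)>\eta^2$ by the trivial form bound and hence $\partial_\eta\lambda_{\N,k}<0$ there; no appeal to Weber-function zeros is needed. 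Second, your ``$\lambda_{\N,k}(\eta)<2k+1$ once $\eta$ is large'' is weaker than the stated threshold $\eta\ge(2k+1)^{1/2}$; to get the precise threshold one should test the Neumann Rayleigh quotient with the restricted Hermite functions $h_0,\dots,h_k$ and use that $h_jh_j'<0$ throughout the classically forbidden region $x>\sqrt{2j+1}$ (where $h_j''=(x^2-2j-1)h_j$ forces $h_j$ and $h_j'$ to have opposite fixed signs), so the boundary contributions to the form are strictly negative for $\eta\ge(2k+1)^{1/2}$.
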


\begin{proposition}\label{prop-5-4-4}
\begin{enumerate}[label=(\roman*), wide, labelindent=0pt]
\item\label{prop-5-4-4-i}
$\partial_\eta \lambda_{\D,k} (\eta)<0$.

\item\label{prop-5-4-4-ii}
$\partial_\eta \lambda_{\N,n}(\eta) \gtreqqless 0$ if and only if
\begin{equation}
\lambda_{\N,k} (\eta)\lesseqqgtr\eta^2.
\label{5-4-5}
\end{equation}
\item\label{prop-5-4-4-iii}
$\lambda_{\N,k}(\eta)$ has a single stationary point\footnote{\label{foot-38}
And it must have one due to Proposition~\ref{prop-5-4-3}.} $\eta_k$, it is a non-degenerate minimum, and at this point \textup{(\ref{5-4-5})} holds.

\item\label{prop-5-4-4-iv}
In particular, $(\lambda_{\N,k}(\eta)-\eta^2)$ has the same sign as $(\eta_k-\eta)$.
\end{enumerate}
\end{proposition}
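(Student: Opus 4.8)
The plan is to derive a Hadamard-type variational formula for the eigenvalues of $\mathbf{a}(\eta,x_1,D_1)=D_1^2+x_1^2$ on the moving half-line $\{x_1<\eta\}$, and to read off (i)--(iv) from it. Fix $k$, write $x$ for $x_1$, and let $u=u(x,\eta)$ be the real, $L^2(-\infty,\eta)$-normalized eigenfunction, so that $-u_{xx}+x^2u=\lambda u$ on $(-\infty,\eta)$, $u$ decays at $-\infty$, and $u(\eta,\eta)=0$ in the Dirichlet case or $u_x(\eta,\eta)=0$ in the Neumann case. By Proposition~\ref{prop-5-4-3} the relevant eigenvalues are simple and real analytic, so $u$ and $\lambda=\lambda(\eta)$ depend smoothly on $\eta$, which justifies the differentiations below. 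Differentiating the eigenvalue equation in $\eta$, multiplying by $u$, integrating over $(-\infty,\eta)$ and integrating by parts twice, the interior terms combine into $\lambda\int_{-\infty}^\eta(\partial_\eta u)\,u\,dx$, which cancels the analogous term produced by $\partial_\eta(\lambda u)$, leaving the boundary identity
\[
\partial_\eta\lambda \;=\; \bigl(-(\partial_\eta u)_x\,u + (\partial_\eta u)\,u_x\bigr)\big|_{x=\eta},
\]
the endpoint $-\infty$ contributing nothing by the decay of $u$. I expect the only slightly delicate point to be this bookkeeping of boundary terms together with the verification that the interior terms indeed cancel; everything afterwards is elementary.

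Next I would specialize the identity. In the Dirichlet case, totally differentiating $u(\eta,\eta)\equiv0$ in $\eta$ gives $(\partial_\eta u)(\eta,\eta)=-u_x(\eta,\eta)$ while $u(\eta,\eta)=0$, so the identity collapses to $\partial_\eta\lambda_{\D,k}(\eta)=-u_x(\eta,\eta)^2$; since $u$ solves a second-order linear ODE, $u(\eta,\eta)=u_x(\eta,\eta)=0$ is impossible, whence $\partial_\eta\lambda_{\D,k}<0$, which is (i). In the Neumann case, totally differentiating $u_x(\eta,\eta)\equiv0$ gives $(\partial_\eta u)_x(\eta,\eta)=-u_{xx}(\eta,\eta)$, and evaluating the equation at $x=\eta$ gives $u_{xx}(\eta,\eta)=(\eta^2-\lambda)u(\eta,\eta)$; substituting into the boundary identity (with $u_x(\eta,\eta)=0$) yields $\partial_\eta\lambda_{\N,k}(\eta)=\bigl(\eta^2-\lambda_{\N,k}(\eta)\bigr)u(\eta,\eta)^2$, where $u(\eta,\eta)\ne0$ again by ODE uniqueness. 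Hence the sign of $\partial_\eta\lambda_{\N,k}$ equals that of $\eta^2-\lambda_{\N,k}(\eta)$, which is exactly (ii).

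For (iii)--(iv) I would analyze the sign of $g(\eta)\Def\lambda_{\N,k}(\eta)-\eta^2$, for which the formula above reads $\partial_\eta\lambda_{\N,k}=-g(\eta)\,u(\eta,\eta)^2$. For $\eta\le0$ one has $x^2\ge\eta^2$ on all of $(-\infty,\eta)$, so by the variational (min--max) principle $\lambda_{\N,k}(\eta)\ge\lambda_{\N,0}(\eta)>\eta^2$ (equality would force an eigenfunction supported at the single point $x=\eta$), hence $g>0$ on $(-\infty,0]$. On the other hand $\lambda_{\N,k}(\eta)<2k+1$ for large $\eta$ by Proposition~\ref{prop-5-4-3}, so $g(\eta)\le 2k+1-\eta^2\to-\infty$; thus $g$ vanishes somewhere, every zero $\eta_0$ of $g$ lies in $(0,\infty)$, and there $g'(\eta_0)=\partial_\eta\lambda_{\N,k}(\eta_0)-2\eta_0=-2\eta_0<0$ (the first term vanishing since $g(\eta_0)=0$). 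A continuous function strictly decreasing at each of its zeros has exactly one zero, say $\eta_k>0$. Since $\partial_\eta\lambda_{\N,k}=-g\,u(\eta,\eta)^2$ vanishes precisely where $g$ does, $\eta_k$ is the unique stationary point of $\lambda_{\N,k}$, and $\partial_\eta^2\lambda_{\N,k}(\eta_k)=-g'(\eta_k)\,u(\eta_k,\eta_k)^2=2\eta_k\,u(\eta_k,\eta_k)^2>0$, so it is a non-degenerate minimum, with $\lambda_{\N,k}(\eta_k)=\eta_k^2$ there; this gives (iii). Finally, since $g>0$ on $(-\infty,\eta_k)$ and $g<0$ on $(\eta_k,+\infty)$, the quantity $\lambda_{\N,k}(\eta)-\eta^2$ has the same sign as $\eta_k-\eta$, which is (iv). The genuine obstacle is entirely the derivation of the boundary identity in the first paragraph; the sign analysis is routine.
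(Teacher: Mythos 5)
Your proof is correct, and it follows the standard route: the paper itself states Proposition~\ref{prop-5-4-4} without proof (deferring to \cite{futurebook}), and the argument used there is exactly your Hadamard-type boundary identity, which specializes to $\partial_\eta\lambda_{\D,k}=-u_x(\eta,\eta)^2$ in the Dirichlet case and to the Dauge--Helffer formula $\partial_\eta\lambda_{\N,k}=\bigl(\eta^2-\lambda_{\N,k}(\eta)\bigr)u(\eta,\eta)^2$ in the Neumann case. Your subsequent sign analysis of $g(\eta)=\lambda_{\N,k}(\eta)-\eta^2$ (positivity for $\eta\le 0$ by min--max, $g\to-\infty$ from Proposition~\ref{prop-5-4-3}, and $g'=-2\eta$ at every zero) correctly yields the unique non-degenerate minimum and statement~\ref{prop-5-4-4-iv}.
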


We see the difference between the Dirichlet and Neumann cases because we need non-degeneracy for $\lambda_{*,k}(\hslash D_2)-(2j+1)+W(x_2)$. It also means the difference in the propagation of singularities along $\partial X$: in the Dirichlet case, all singularities move in one direction (constant sign of $\lambda'_{\D,k}$), while in the Neumann case, some move in the opposite direction (variable sign of $\lambda'_{\N,k}$); this effect plays a role also in the  case when $\mu h\lesssim 1$.

Assume here that $\tau$ is in an ``inner'' spectral gap:
\begin{equation}
|(2j+1)\mu h  +V-\tau|\ge \epsilon_0\mu h \qquad \forall x\
\forall j\in \bZ^+.
\label{5-4-6}
\end{equation}

\begin{theorem}\label{thm-5-4-5}
Suppose that $\mu h\gtrsim 1$ and the condition  \textup{(\ref{5-4-6})} is fulfilled.  Then,
\begin{enumerate}[label=(\roman*), wide, labelindent=0pt]
\item\label{thm-5-4-i}
In case of the  Dirichlet boundary condition, the left-hand expression in \textup{(\ref{5-4-1})} is $O(1)$.
\item\label{thm-5-4-5-ii}
In case of the Neumann boundary condition, assume additionally that
\begin{phantomequation}\label{5-4-7}\end{phantomequation}
\begin{multline}
|\bigl(\lambda_{\N,j} (\eta)  \mu h +V -\tau| \le \epsilon_0 \mu h,\\[2pt]   |\lambda'_{\N,j} (\eta)|+ |\partial_{x_2} V|\le \epsilon_0 \implies
\pm \partial_{x_2}^2 V \ge \epsilon_0
\qquad
\forall j=0,1,2,\ldots
\tag*{$\textup{(\ref*{5-4-7})}_\pm$}\label{5-4-7-*}
\end{multline}
Then, the left-hand expression in \textup{(\ref{5-4-1})} is $O(1)$ under the assumption $\textup{(\ref{5-4-7})}_+$ and $O(\log h)$ under the
assumption $\textup{(\ref{5-4-7})}_-$.
\end{enumerate}
\end{theorem}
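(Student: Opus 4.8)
I would split $X$ into a \emph{bulk zone} $\mathcal Z_{\mathrm b}=\{\dist(x,\partial X)\ge C_0(\mu h)^{-1}\}$ and the complementary \emph{edge zone} $\mathcal Z_{\mathrm e}$ of width $\asymp(\mu h)^{-1}$. On $\mathcal Z_{\mathrm b}$ the inner--gap hypothesis (\ref{5-4-6}) says that no Landau level $(2j+1)\mu h+V$ comes within $\epsilon_0\mu h$ of $\tau$, so $H-\tau$ is microlocally invertible there, and exactly as in Theorem~\ref{thm-5-2-3}\ref{thm-5-2-3-i} (see (\ref{5-2-13})) one gets $e(x,x,\tau)-h^{-2}\cN^\MW_2(x,\tau-V,F)=O(\mu^{-s}h^{s})$ on $\mathcal Z_{\mathrm b}$. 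Hence, up to $O(h^\infty)$, the whole left-hand side of (\ref{5-4-1}) is produced in $\mathcal Z_{\mathrm e}$, and it remains to bound that contribution by $O(1)$, respectively $O(\log h)$.

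\textbf{Reduction on the edge zone.} As in Subsection~\ref{sect-5-4-2}, the $\hslash$-Fourier transform together with the boundary microlocal normal form (the boundary analogue of Theorem~\ref{thm-2-1-2}, cf. Section~\ref{sect-2-2}) decouples the edge problem, modulo $O(\hslash^\infty)$, into a family of one-dimensional $\hslash$-pseudodifferential operators on $\sL^2(\bR_{x_2})$ with $\hslash=\mu^{-1}h$, indexed by the branch $j$ of the model operator (\ref{5-4-3}): the $j$-th reduced operator, as in (\ref{5-4-4}), has symbol $p_j(x_2,\xi_2)$ built from $\mu h\,\lambda_{*,j}(\hslash\xi_2)$ and $V(0,x_2)$ (here $*\in\{\D,\N\}$), and the densities $\cN^\MW_{\D,\bound}$, $\cN^\MW_{\N,\bound}$ of \ref{5-4-2-D}, \ref{5-4-2-N} are precisely the Weyl terms for these operators, the subtracted $\uptheta(\tau-(2j+1)\hbar)$ cancelling what has already been accounted for in the bulk. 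By Proposition~\ref{prop-5-4-3} one has $\lambda_{*,j}(\eta)\ge(2j-1)_+$, while $V$ and $\tau$ are bounded and $\mu h\gtrsim 1$; so $p_j$ can vanish for only $O(1)$ values of $j$, and for all others $p_j$ is elliptic and contributes $O(\hslash^\infty)$. By Theorems~\ref{thm-2-1-6} and~\ref{thm-2-1-12} with $d'=1$ (together with Corollary~\ref{cor-2-1-4}), the Tauberian remainder of each surviving branch is $O(\hslash^{-1+1})=O(1)$ as soon as $p_j$ is microhyperbolic on $\{p_j=\tau\}$, so everything comes down to checking microhyperbolicity or treating the exceptional points.

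\textbf{Dirichlet case.} Here Proposition~\ref{prop-5-4-4}\ref{prop-5-4-4-i} gives $\partial_\eta\lambda_{\D,j}<0$ on all of $\bR$, so $\partial_{\xi_2}p_j$ never vanishes on the energy surface: $p_j$ is $\xi$-microhyperbolic everywhere (with $\ell_x=0$), Theorem~\ref{thm-2-1-6}\ref{thm-2-1-6-ii} applies branchwise with remainder $O(1)$, and summation over the $O(1)$ active branches proves part~\ref{thm-5-4-i}. Where $\lambda'_{\D,j}\to0$, i.e. $\hslash\xi_2\to\pm\infty$, the symbol $p_j$ is itself elliptic, so no uniformity is lost.

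\textbf{Neumann case and main obstacle.} By Proposition~\ref{prop-5-4-4}\ref{prop-5-4-4-ii}--\ref{prop-5-4-4-iv}, $\lambda_{\N,j}$ has exactly one stationary point $\eta_j$, a non-degenerate minimum, so $\partial_{\xi_2}p_j$ vanishes only on the line $\{\hslash\xi_2=\eta_j\}$, non-degenerately; on that line $p_j$ is still microhyperbolic in the $x_2$-direction whenever $\partial_{x_2}V\ne0$ (use Theorem~\ref{thm-2-1-6}\ref{thm-2-1-12-iii}), so the only genuine difficulty is at the \emph{doubly critical} points, where (\ref{5-4-7-*}) supplies $\pm\partial_{x_2}^2V\ge\epsilon_0$ and hence, together with the non-degeneracy of $\lambda''_{\N,j}(\eta_j)$, a non-degenerate critical point of $p_j$ --- of definite type under $\textup{(\ref{5-4-7})}_+$ and of saddle type under $\textup{(\ref{5-4-7})}_-$, as a direct computation with Proposition~\ref{prop-5-4-4} shows. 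This local analysis at the degeneracies is the crux. Under $\textup{(\ref{5-4-7})}_+$ one brings $p_j$ to a harmonic-oscillator (Birkhoff) normal form near the critical point; the spectrum of the branch becomes semi-explicit, the classical volume $\vol\{p_j<\tau\}$ is only Lipschitz in $\tau$ at the critical value, and the count in an $O(\hslash)$-window stays $O(1)$, so the branch --- with the $\cN^\MW_{*,\bound}$ term already subtracted --- contributes $O(1)$. Under $\textup{(\ref{5-4-7})}_-$ one has a hyperbolic fixed point; $\vol\{p_j<\tau\}$ acquires a logarithmic derivative singularity at the saddle value (equivalently, trajectories dwell near the fixed point for times $\asymp|\log\hslash|$, so one must take $T\asymp|\log\hslash|$ in the Tauberian scheme, cf. Remark~\ref{rem-2-1-13}\ref{rem-2-1-13-iii}), and the count in an $O(\hslash)$-window is $O(|\log\hslash|)$; hence the branch contributes $O(|\log h|)$, which is the $O(\log h)$ bound in (\ref{5-4-1}). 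Besides this local propagation analysis --- distinguishing the merely bounded correction of an elliptic degeneracy from the single logarithm of a hyperbolic one --- the remaining tasks, namely keeping the normal-form reduction uniform down to $\hslash=\mu^{-1}h\sim h^2$ and verifying that only finitely many branches are active, are routine.
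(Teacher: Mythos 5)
Your proposal is correct and follows essentially the same route the paper sketches in Subsection~\ref{sect-5-4-2}: under (\ref{5-4-6}) the bulk lies in a spectral gap, so the whole problem reduces to the boundary-layer family (\ref{5-4-4}) of one-dimensional $\hslash$-pseudodifferential operators, and Propositions~\ref{prop-5-4-3} and~\ref{prop-5-4-4} supply exactly the uniform microhyperbolicity in the Dirichlet case and the need for \ref{5-4-7-*} at the doubly critical points in the Neumann case, with the definite/saddle dichotomy of the critical point yielding $O(1)$ versus $O(\log h)$. The paper itself defers the detailed execution to \cite{futurebook}, and your filling-in of the Tauberian and normal-form steps is consistent with that treatment.
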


\begin{remark}\label{rem-5-4-6}
\begin{enumerate}[label=(\roman*), wide, labelindent=0pt]
\item\label{rem-5-4-6-i}
If (\ref{5-4-6}) is fulfilled for all $\tau\in[\tau_1,\tau_2]$, then
the asymptotics is ``concentrated'' in the boundary layer.
\item\label{rem-5-4-6-ii}
For a more general statement when (\ref{5-4-6}) fails (i.e.~when $\tau$ is no longer in the ``inner'' spectral gap) and is replaced with the condition
\begin{equation}
|V+(2j+1)\mu h -\tau|+|\nabla V|\asymp 1
\label{5-4-8}
\end{equation}
on $\supp (\psi)$, see  Theorem~\href{http://www.math.toronto.edu/ivrii/monsterbook.pdf#theorem.15.4.18}{15.4.18} of \cite{futurebook}.
\end{enumerate}
\end{remark}

\section{Pointwise asymptotics and short loops}
\label{sect-5-5}

We are now interested in the pointwise asymptotics inside the domain. Surprisingly, it turns out that the standard Weyl formula for this purpose is better than the Magnetic Weyl formula.

\subsection{Case $d=2$}
\label{sect-5-5-1}

We start from the case $d=2$, $F=1$, $|\nabla V|\asymp 1$. One can easily see  that in classical dynamics,  short loops of the lengths $\asymp \mu^{-1} n$ with $n=1, \ldots, N$, $N\asymp \mu$ appear. We would like to understand how these loops affect the asymptotics in question.

\begin{theorem}\label{thm-5-5-1}
For the magnetic Schr\"odinger operator which satisfies the above assumptions in a domain $X\subset \bR^2$, with $B(0,1)\subset X$, the following estimates hold at a point $x\in B(0,\frac{1}{2})$

\begin{enumerate}[label=(\roman*), wide, labelindent=0pt]
\item\label{thm-5-5-1-i}
For $1\le \mu\le h^{-\frac{1}{2}}$,
\begin{equation}
|e(x,x,\tau)- h^{-2}\cN^\W_x (\tau)|\le C\mu^{-1}h^{-1}+C\mu^{\frac{1}{2}}h^{-\frac{1}{2}}+ C\mu^2 h^{-\frac{1}{2}}
\label{5-5-1}
\end{equation}
and
\begin{multline}
\R^\W_{x(r)}\Def |e(x,x,0)-
h^{-2}\bigl(\cN^\W_x (0)+\cN^\W_{x,\corr(r)} (0)\bigr)|\le\\[3pt]
C\mu^{-1}h^{-1}+ C\mu^{\frac{1}{2}}h^{-\frac{1}{2}} +
C\mu h^{-1} \bigl(\mu^2 h)^{r+\frac{1}{2}}+ \\[3pt]
C\left\{\begin{aligned}
&\Bigr( h^{-1}\bigl(h \mu^{\frac{5}{2}}\bigr)^{r+\frac{1}{2}} +
\mu ^{\frac{1}{3}} h^{-\frac{2}{3}}\Bigr)\qquad&&\text{as \ \ } \mu\le h^{-\frac{2}{5}},\\
&\mu^{\frac{5}{3}}h^{-\frac{1}{3}}\qquad&&\text{as \ \ } \mu\ge h^{-\frac{2}{5}}
\end{aligned}\right.
\label{5-5-2}
\end{multline}
where  $\cN^\W_{x,\corr(r)}$ is the $r$-term stationary phase approximation to some explicit oscillatory integral (see Section~\ref{book_new-sect-16-2} of \cite{futurebook}).

\item\label{thm-5-5-1-ii}
For $h^{-\frac{1}{3}} \le \mu\le h^{-1}$,
\begin{multline}
\R^{\W\prime\prime}_{x(r)} \Def \\
| \Bigl(e  (x,x,\tau) -h^{-2}\cN_{x,\corr(r)}-
\bar{e}_x  (x,x,\tau) +h^{-2}\bar{\cN}_{x,\corr(r)}\Bigr)|\le\allowdisplaybreaks \\
C\mu^{\frac{1}{2}}h^{-\frac{1}{2}}+
C\left\{\begin{aligned}
&\mu^{-2} h^{-2}  (\mu^2h )^{r+\frac{1}{2}} \qquad&&\text{for\ \ } \mu \le h^{-\frac{1}{2}}\\
&h^{-1} \qquad
	&&\text{for\ \ }  \mu \ge h^{-\frac{1}{2}}
\end{aligned}\right.\qquad \ \\
+ C\left\{\begin{aligned}
& h^{-1} \bigl(\mu ^{\frac{5}{2}}h \bigr)^{r+\frac{1}{2}}+ \mu^{\frac{1}{3}}  h^{-\frac{2}{3}} \qquad
	&&\text{for\ \ } \mu\le h^{-\frac{2}{5}}\\
&\mu^{\frac{5}{3}}h^{-\frac{1}{3}} \qquad
	&&\text{for\ \ }  h^{-\frac{2}{5}}\le \mu\le h^{-\frac{1}{2}}\\
&\mu^{-\frac{1}{3}}h^{-\frac{4}{3}} \qquad
	&&\text{as\ \ } \mu \ge h^{-\frac{1}{2}}
\end{aligned}\right.
\label{5-5-3}
\end{multline}
while for $r=0$,
\begin{multline}
\R^{\W\prime\prime}_{x(0)}\Def |e  (x,x,\tau) - \bar{e}_x  (x,x,\tau)|\le C\mu^{\frac{1}{2}}h^{-\frac{1}{2}}+ \\
C\left\{\begin{aligned}
& h^{-1}\mu^{\frac{1}{2}}\qquad
&&\text{for\ \ }\mu\le h^{-\frac{1}{2}},\\
&\mu ^{-\frac{1}{2}}h^{-\frac{3}{2}}
&&\text{for\ \ }\mu\ge h^{-\frac{1}{2}}
\end{aligned}\right.
\label{5-5-4}
\end{multline}
where here and in \ref{thm-5-5-1-iii}, $\bar{e}_y$ is constructed for the toy model in $y$ (with $F=F(y)$ and $V(x)=V(y)+ \langle \nabla V(y),x-y\rangle$).

\item\label{thm-5-5-1-iii}
For  $\mu \ge h^{-1}$, $\tau \le c\mu h$,
\begin{equation}
|e (x,x,\tau)-\bar{e}_x  (x,x,\tau)|\le C\mu^{\frac{1}{2}}h^{-\frac{1}{2}}.
\label{5-5-5}
\end{equation}
\end{enumerate}
\end{theorem}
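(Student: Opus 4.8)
The plan is to reduce $H$ to the microlocal canonical form of Subsection~\ref{sect-5-2-2} and then run the machinery of Chapter~\ref{sect-2} --- propagation of singularities, the successive approximation method, and Tauberian recovery --- for the resulting family of one-dimensional operators, with the bulk of the work devoted to the short loops. Near the relevant energy level $H$ is microlocally equivalent, modulo negligible terms, to $\mu^2\sum_{n\ge 0}B_n(x_1,\hslash D_1,\mu^{-2},\hslash)\cL_0^n$ with $\hslash=\mu^{-1}h$ and $\cL_0=x_2^2+\hslash^2D_2^2$, cf.~\textup{(\ref{5-2-5})}, the principal symbols of $B_0$ and $B_1$ being $\mu^{-2}V\circ\Psi$ and $F\circ\Psi$. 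Replacing $\cL_0$ by its eigenvalues $(2j+1)\hslash$ turns the problem into a family, indexed by $j\in\bZ^+$, of one-dimensional $\hslash$-pseudodifferential operators $\mathbf{a}_j(x_1,\hslash D_1)$ with principal symbol $\equiv V\circ\Psi+(2j+1)\mu h\,F\circ\Psi+\ldots$; the diagonal value $e(x,x,\tau)$ becomes a sum over $j$ of the diagonal spectral functions of the $\mathbf{a}_j$, weighted by Hermite factors $|v_j(x_2)|^2\asymp\hslash^{-1/2}=\mu^{1/2}h^{-1/2}$ (which already explains the ubiquitous $C\mu^{1/2}h^{-1/2}$ in \textup{(\ref{5-5-2})}--\textup{(\ref{5-5-4})}), and only the levels with $(2j+1)\mu h\lesssim\tau$ contribute. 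In part~\ref{thm-5-5-1-iii} the restriction $\tau\le c\mu h$ (with $F=1$) pins the band index down to $j=0$, so there the one-dimensional reduction is essentially exact.

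For each fixed $j$ I would analyze the propagation of singularities of $e^{ih^{-1}t\mathbf{a}_j}$ by Theorem~\ref{thm-2-1-2} and Corollary~\ref{cor-2-1-3}. The Hamiltonian flow of $\mathbf{a}_j^0$ is the slow drift of the magnetrons along the level curves of $(V-\tau)F^{-1}$ with speed $\asymp\mu^{-1}$; consequently the singularity of $(Q_{1x}u\,{}^tQ_{2y})(x,x,t)$ is not isolated at $t=0$ but recurs in short windows about the cyclotron return times $t_n\asymp n\mu^{-1}$, $1\le n\le N\asymp\mu$, after which (by the non-degeneracy $|\nabla V|\asymp1$) the accumulated drift $\asymp n\mu^{-2}$ exceeds the loop radius $\asymp\mu^{-1}$ and no singularity comes back for $t$ up to $T^*\asymp\mu$. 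Running the Tauberian argument of Subsection~\ref{sect-2-1-3} up to $T=T^*$ then produces the \emph{standard} (non-magnetic) Weyl term $h^{-2}\cN^\W_x(\tau)$ --- the $n=0$, Poisson-principal contribution --- together with the contributions of the short loops, and the part of the error not coming from the loops is $O(T^{*\,-1}h^{-1})=O(\mu^{-1}h^{-1})$.

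The additional precision is obtained by evaluating, via the successive approximation construction of Subsection~\ref{sect-2-1-2} and the stationary phase method, the contribution of each loop window $t\in[t_n-c\hslash^{1-\delta},t_n+c\hslash^{1-\delta}]$ separately. Summing these over $n$ and over $j$ --- equivalently, applying the Poisson summation formula that turns $\sum_j$ over Landau levels into a sum of oscillatory integrals parametrised by the drift/quasimomentum variable $\eta$ --- and keeping the first $r$ terms of the stationary phase expansion of the $n$-th integral produces exactly $h^{-2}\cN^\W_{x,\corr(r)}$; the truncation tails (the $(r+1)$-st stationary phase remainder, summed in $n$, together with the unresolved loops $n>N$) account for the $(\mu^2h)^{r+1/2}$- and $(\mu^{5/2}h)^{r+1/2}$-type terms, while the degenerate stationary points of the loop phase produce the non-standard $\mu^{1/3}h^{-2/3}$ and $\mu^{5/3}h^{-1/3}$ contributions and the threshold $\mu\asymp h^{-2/5}$. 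For larger $\mu$ the flat Weyl term is no longer the right object to subtract: one linearises $V$ at the observation point $y$ and compares with the toy-model density $\bar{e}_y$ (for which the loop sum is computable in closed form), so parts~\ref{thm-5-5-1-ii} and~\ref{thm-5-5-1-iii} are carried out against $\bar{e}_x$ rather than $\cN^\W_x$, the curvature of $V$ entering only through $\nabla V$; in part~\ref{thm-5-5-1-iii} only the $j=0$ band survives, $\mathbf{a}_0$ is all that is left, and the sole residual error is the $O(\mu^{1/2}h^{-1/2})$ from the $x_1$-microlocal analysis. Throughout, when $\mu$ is large one first rescales $x\mapsto x/\gamma$ with a $\gamma$ adapted to the regime so as to land back in the setting of Proposition~\ref{prop-2-1-14}.

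The main obstacle is precisely this short-loop bookkeeping: one must show that the $n$-th loop produces an oscillatory contribution of the advertised size with a phase stationary only at isolated, generically non-degenerate, points in $\eta$; carry out the $r$-term stationary phase expansion \emph{uniformly in $n$ and $j$}; and sum the resulting near-geometric series, all while keeping track of the rescaling and of the exact powers of $\mu$ and $h$. Pinning down these powers and the thresholds $h^{-2/5}$, $h^{-1/2}$, $h^{-1}$, $h^{-1/3}$ is where the real work lies; once the loop analysis is in place, reassembling $e(x,x,\tau)$ from the $j$-sum and invoking the Tauberian inequalities \textup{(\ref{2-1-26})}--\textup{(\ref{2-1-28})} is routine.
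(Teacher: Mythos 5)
Your proposal follows essentially the route the paper indicates: reduction to the canonical form of Subsection~\ref{sect-5-2-2}, analysis of the short loops of length $\asymp\mu^{-1}n$, $n\le N\asymp\mu$, stationary-phase evaluation of their contributions yielding $\cN^\W_{x,\corr(r)}$, and comparison with the toy-model density $\bar{e}_x$ in the strong-field regimes. The paper itself states the theorem without proof (deferring the loop bookkeeping and the exact exponents and thresholds to Chapter~16 of \cite{futurebook}), and your sketch correctly identifies that this uniform-in-$n$-and-$j$ stationary-phase bookkeeping is where the substance lies.
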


\subsection{Case $d=3$}
\label{sect-5-5-2}

For $d=3$, we cannot expect a remainder estimate better than $O(h^{-1})$. On the other hand, the purely Weyl approximation has a better chance to succeed as the loop condition now includes returning free movement along the magnetic line in addition to the returning circular movement. We formulate only one theorem out of many from  Section~\ref{book_new-sect-16-6} of \cite{futurebook}:

\begin{theorem}\label{thm-5-5-2}
Let  $d=3$. Then,

\begin{enumerate}[label=(\roman*), wide, labelindent=0pt]
\item\label{thm-5-5-2-i}
In the general case,
\begin{equation}
|e (x,x,\tau)- h^{-3}\cN_x^\W (x,x,\tau)|\le Ch^{-2}+  C\mu^{\frac{3}{2}}h^{-\frac{3}{2}}.
\label{5-5-6}
\end{equation}
\item\label{thm-5-5-2-ii}
Under non-degeneracy condition
\begin{equation}
|\nabla_{\perp \mathbf{F}} (V-\tau)/F|\asymp 1,
\label{5-5-7}
\end{equation}
where $\nabla_\perp$ is the component of the gradient perpendicular to $\mathbf{F}$, for $\mu \le h^{-\frac{1}{2}}$, we have the estimates
\begin{equation}
|e (x,x,\tau)- h^{-3}\cN_x^\W (x,x,\tau)|\le Ch^{-2}+ C\mu^{\frac{5}{2}}h^{-1}
\label{5-5-8}
\end{equation}
and
\begin{multline}
|e (x,x,\tau)- h^{-3}\cN_x^\W (x,x,\tau)-h^{-3}\cN _{x,\corr(r)}|\le\\
Ch^{-2}+ C\mu^{\frac{3}{2}}h^{-\frac{3}{2}}(\mu^2h)^{r+\frac{1}{2}}.
\label{5-5-9}
\end{multline}
Here we use stationary phase approximations again.
\end{enumerate}
\end{theorem}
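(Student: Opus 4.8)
The plan is to combine the microlocal canonical form of Subsection~\ref{sect-5-2-2} with the hyperbolic operator (Tauberian) method of Section~\ref{sect-1-2}, the whole point being a careful bookkeeping of the short loops. First we bring $H$ to the canonical form~(\ref{5-2-6}) and, replacing $\cL_0$ by its eigenvalues $(2j+1)\hslash$ with $\hslash=\mu^{-1}h$, decouple the problem into a family --- indexed by the Landau level $j$ --- of one-dimensional Schr\"odinger operators in $x_2$ whose coefficients are $\hslash$-pseudodifferential operators in $x_1$; this matches the classical picture in which each trajectory splits into a cyclotron rotation of radius and period $\asymp\mu^{-1}$, a drift of the guiding centre of speed $\asymp\mu^{-1}$, and a free motion along the magnetic line governed by a one-dimensional Hamiltonian. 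For each $j$ we construct $u_j(x,y,t)$ and study $F_{t\to h^{-1}\tau}\bigl(\bar{\chi}_T(t)u_j(x,x,t)\bigr)$ as in Section~\ref{sect-1-2}: the singularity at $t=0$ contributes, after summation over $j$, the term $h^{-3}\cN^\W_x$, and the Tauberian inversion (Corollary~\ref{cor-2-1-4} and Subsection~\ref{sect-2-1-3}) gives $e(x,x,\tau)=h^{-3}\cN^\W_x+O(h^{-2})$ together with the corrections carried by the remaining singularities.

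Those sit at the loop times $t_n\asymp n\mu^{-1}$ with $1\le n\le N\asymp\mu$ (only $|t|\lesssim1$ matters), and their cumulative effect reconstructs the oscillatory refinement encoded in the magnetic Weyl expression; since the discrepancy $\cN^\MW_3-\cN^\W_3$ is only $O\bigl((\mu h)^{3/2}\bigr)$, multiplication by $h^{-3}$ already accounts for the term $C\mu^{3/2}h^{-3/2}$ in~(\ref{5-5-6}). The new feature for $d=3$, absent when $d=2$, is that the motion along the magnetic line both restricts which trajectories actually return to $x$ --- after $n$ cyclotron periods a trajectory has moved $\asymp n\mu^{-1}|p_\parallel|$ along the line, $p_\parallel$ being the component of $\xi-\mu A(x)$ along $\mathbf{F}(x)$, so the loop closes only on a subset of the energy $2$-sphere shrinking with $n$ --- and damps the amplitude of those that do (this is exactly why $\cN^\MW_3-\cN^\W_3$ is smaller than the corresponding $O(\mu h)$ discrepancy in $d=2$). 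Running the $h$-microlocal estimates of Corollary~\ref{cor-2-1-4} on each $u_j$, summing over $j$ and over $1\le n\lesssim\mu$ and adding the background $O(h^{-2})$ yields~\ref{thm-5-5-2-i}.

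For \ref{thm-5-5-2-ii} the non-degeneracy hypothesis~(\ref{5-5-7}) is used to break the periodicity of these short loops: $|\nabla_{\perp\mathbf{F}}(V-\tau)/F|\asymp1$ makes the drift of the cyclotron circles --- and hence the along-line effective Hamiltonian --- non-degenerate, so the evolution can be followed up to a long time $T^*\asymp\mu$. On that interval the crude count above is replaced by a genuine \emph{stationary phase} evaluation of the oscillatory sum over the loop index $n$; its $r$-term truncation is precisely $\cN_{x,\corr(r)}$, with relative error $O\bigl((\mu^2h)^{r+1/2}\bigr)$, which gives~(\ref{5-5-9}), while the case $r=0$ (where $\cN_{x,\corr(0)}\equiv0$) is~(\ref{5-5-8}). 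The restriction $\mu\le h^{-1/2}$ is exactly the range $\mu^2h\le1$ in which this stationary phase expansion is legitimate; in it $\mu^{5/2}h^{-1}\le\mu^{3/2}h^{-3/2}$, so~(\ref{5-5-8}) genuinely refines~(\ref{5-5-6}).

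The main obstacle will be to run all of this uniformly in $h$ \emph{and} $\mu$ across the two disparate time scales $\asymp\mu^{-1}$ (cyclotron) and $\asymp1$, $\asymp\mu$ (along-line motion and drift): one must show that the trajectories with $|p_\parallel|\gtrsim1$, which escape along the magnetic line, contribute negligibly; control the uniformity in the Landau index $j$ of every one-dimensional estimate, including the transition between small and large $j$; and justify the stationary phase expansion, in particular near the transitional value $\mu\asymp h^{-1/2}$. For the precise form of $\cN_{x,\corr(r)}$, the relevant oscillatory integral and all details, see Section~\ref{book_new-sect-16-6} of~\cite{futurebook}.
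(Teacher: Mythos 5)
The paper offers no proof of Theorem~\ref{thm-5-5-2} beyond the citation to Section~\ref{book_new-sect-16-6} of \cite{futurebook}, and your sketch follows exactly the methodology the paper describes for this family of results: canonical form and Landau-level decoupling, the Tauberian scheme with background error $O(h^{-2})$, the short loops at times $\asymp n\mu^{-1}$ (restricted and damped by the free motion along the magnetic line) whose cumulative contribution accounts for the $\mu^{3/2}h^{-3/2}$ discrepancy between $\cN^\MW_3$ and $\cN^\W_3$, and the stationary-phase evaluation of the loop sum under \textup{(\ref{5-5-7})} with expansion parameter $\mu^2h$ --- your identification of \textup{(\ref{5-5-8})} as the $r=0$ case of \textup{(\ref{5-5-9})} is consistent with the analogous $d=2$ statement \textup{(\ref{5-5-4})}. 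The quantitative heart of the argument --- the per-loop estimates showing that the $n$-th loop contributes roughly $O(\mu^{3/2}h^{-3/2}n^{-3/2})$ so that the sum over $n\lesssim\mu$ closes, and the verification of the exponents in the stationary-phase remainders --- is flagged as an obstacle rather than carried out, which is the appropriate level of detail given that the paper itself defers entirely to \cite{futurebook}.
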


\begin{remark}\label{rem-5-5-3}
One can also consider the cases $h^{-\frac{1}{2}}\lesssim \mu \lesssim h^{-1}$ and $\mu \gtrsim h^{-1}$. But then one needs  to include the toy model expression (with constant $\mathbf{F}$ and $\nabla V$) into the approximation.
\end{remark}

\subsection{Related asymptotics}
\label{sect-5-5-3}

Apart from the pointwise asymptotics, one can consider the related asymptotics of
\begin{equation}
\int \omega (\frac{1}{2}(x+y),x-y) e(x,y,\tau) e(y,x,\tau)\,dxdy
\label{5-5-10}
\end{equation}
and estimates of
\begin{multline}
\int \omega (\frac{1}{2}(x+y),x-y)\bigl(e(x,x,\tau)-e^\W(x,x,\tau)\bigr)\times\\
\bigl(e(y,y,\tau)-e^\W(y,y,\tau)\bigr)\,dxdy.
\label{5-5-11}
\end{multline}
For all the details, see Chapter~\ref{book_new-sect-16} of \cite{futurebook}. These expressions play important role in Section~\ref{sect-7}.

\section{Magnetic Dirac operators}
\label{sect-5-6}

We discuss the magnetic Dirac operators
\begin{gather}
H = ((-ih\nabla - \mu A (x))\cdot \boldupsigma)+\upsigma_0 M +V(x)
\label{5-6-1}\\
\shortintertext{and}
H = ((-ih\nabla - \mu A (x))\cdot \boldupsigma)+V(x).
\label{5-6-2}
\end{gather}
If $d=3$, for the second operator, we can consider $2\times 2$ matrices rather than $4\times 4$ matrices.

If $V=0$ then $H^2$ equals to the Schr\"odinger-Pauli operator (plus $M^2$) and therefore the theory of magnetic Dirac and Schr\"odinger operators are closely connected. If $V=0$ and $0\ne \mathbf{F}$ is constant then the operator for $d=2$ has a pure point spectrum of infinite multiplicity consisting of
$\pm \sqrt{M^2 +2j\mu h F}$ with $j\in \bZ^+$\,\footnote{\label{foot-39} However, one of the points $\pm M$ is missing depending on whether $F_{12}\gtrless 0$ and $\upsigma_1\upsigma_2 \upsigma_3=\pm i$.} and for $d=3$, this operator has absolutely continuous spectrum $(-\infty,-M]\cup[M,\infty)$. Thus we get corresponding \emph{Landau levels\/}.

The results similar to those of Section~\ref{sect-5-2} hold (for details, exact statements and proofs, see Chapter~\ref{book_new-sect-17} of \cite{futurebook}. Further, the results of Sections~\ref{sect-5-3} and~\ref{sect-5-5} probably are not difficult to generalize, and maybe the results of Section~\ref{sect-5-4} as well under correctly posed boundary conditions.\enlargethispage{\baselineskip}

\chapter{Magnetic Schr\"odinger operator. II}
\label{sect-6}

\section{Higher dimensions}
\label{sect-6-1}

\subsection{General theory}
\label{sect-6-1-1}

We can consider a magnetic Schr\"odinger (and also Schr\"odinger-Pauli and Dirac) operators in higher dimensions. In this case, the magnetic intensity is characterized by the skew-symmetric matrix $F_{jk}=\partial_j A_k-\partial_k A_j$ rather than by a pseudo-scaler $F_{12}$ or a pseudo-vector $\mathbf{F}$. As a result, the magnetic Weyl expression becomes more complicated; as before, it is exactly $e(x,x,\tau)$ for the operator in $\bR^d$ if $F_{jk}$ and $V$ constant:
\begin{multline}
h^{-d}\cN_d^\MW (\tau)\Def  \\
(2\pi )^{-r} \mu ^rh^{-r}
\sum _{\alpha \in \bZ^{+r}}
\Bigl(\tau - \sum_j(2\alpha_j +1) f_j\mu h -V\Bigr)_+^{\frac{1}{2}(d-2r)}
f_1\cdots f_r\sqrt{g},
\label{6-1-1}
\end{multline}
where $2r=\rank (F_{jk})$ and $\pm if_j$ (with $j=1,\ldots, r$, $f_j>0$) are its eigenvalues\footnote{\label{ft-38} In the general case, the eigenvalues of $(F^j_k)=(g^{jl})(F_{lk})$ where $(g^{jk})$ is a metric.} which are not $0$; recall that $z_+^0=\uptheta(z)$.

One can see that $H$ has pure point of infinite multiplicity spectrum when $d=2r$ and $H$ has an absolutely continuous spectrum when $d>2r$. In any case, the bottom of the spectrum is $\mu h(f_1+\ldots+f_r)$.

We are interested in the asymptotics with the sharpest possible remainder estimate like the one for $d=2$ or $d=3$ in the cases $d=2r$ and $d>2r$ respectively\footnote{\label{ft-39} We assume that $(F_{jk}(x))$ has constant rank.}. Asymptotics without a remainder estimate were derived in \cite{Rai1, Rai2}.

As we try to reduce the operator to the canonical form, we immediately run into problem of \emph{resonances\/} when $f_1m_1+\ldots+f_rm_r=0$ at some point with
$m\in \bZ^d$; $|m_1|+\ldots+|m_r|$ is \emph{an order of resonance\/}. If the lowest order of resonances is $k$, then we can reduce the operator to its canonical form modulo $O(\mu^{-k})$ for $\mu h\le 1$ (when $\mu h\ge 1$, this problem is less acute).

It turns out, however, that we can deal with an incomplete canonical form (with a sufficiently small remainder term).

Another problem is that we cannot in general assume that the $f_j$ are constant (for $d=2,3$, we could achieve this by multiplying the operator by $f_1^{-1/2}$) and the microhyperbolicity condition becomes really complicated.

\subsubsection{Case $2r=d$}
\label{sect-6-1-1-1}
In this case, only the resonances of orders $2,3$ matter as we are looking for an error $O(\mu^{-1}h^{1-d})$ when $\mu h\le 1$. If the magnetic field is weak enough, we use the incomplete canonical form only to study propagation of the singularities and if the magnetic field is sufficiently strong, the omitted terms $O(\mu^{-4})$ in the canonical form are small enough to be neglected.

As a result, the indices $j=1,\ldots,r $ are broken into several groups (indices $j$ and $k$ belong to the same group if they ``participate'' in the resonance of order $2$ or $3$ after all the reductions). Then under a certain non-degeneracy assumption called $\fN$-microhyperbolicity  (see Definition~\ref{book_new-def-19-2-5} of \cite{futurebook}) in which this group partition plays a role,  we can recover the remainder estimate $O(\mu^{-1}h^{1-d})$ for $\mu h\lesssim 1$ (in which case, the principal part has magnitude $h^{-d}$) and  $O(\mu^{r-1}h^{1-d+r})$ for $\mu h\gtrsim 1$ (in which case, the principal part has magnitude $\mu^r h^{r-d}$). If we ignore the resonances of order $3$, we get a partition into smaller groups and we need a weaker non-degeneration assumption called microhyperbolicity  (see Definition~\ref{book_new-def-19-2-4} of \cite{futurebook}) but the remainder estimate would be less sharp.

In an important special case of constant $f_1,\ldots, f_r$, both these conditions are equivalent to $|\nabla V|$ disjoint from $0$ (on $\supp(\psi)$) but it could be weakened to $V$ having only non-degenerate critical points (if there  are saddles, we need to  add a logarithmic factor to the remainder estimate).

On the other hand, without any non-degeneracy assumptions, the remainder estimate can be as bad as $O(\mu h^{1-d})$ for $\mu h\lesssim 1$ and as bad as the principal part itself for $\mu h\gtrsim 1$.

For exact statements, details, proofs and generalizations,  see Chapter~\ref{book_new-sect-19} of \cite{futurebook}.

\subsubsection{Case $2r<d$}
\label{sect-6-1-1-2}
In this case, only the resonances of order $2$ matter since we are expecting a larger error than in the previous case. As a result, the indices $j=1,\ldots,r $ are broken into several groups (indices $j$ and $k$ belong to the same group if $f_j=f_k$).

Then under a certain non-degeneracy assumption called microhyperbolicity  (see Definition~\ref{book_new-def-20-1-2} of \cite{futurebook}) in which this group partition plays a role, we can  recover the remainder estimate $O(h^{1-d})$ for
$\mu h\lesssim 1$ (then the principal part is of the magnitude $h^{-d}$) and  $O(\mu^{r}h^{1-d+r})$ for $\mu h\gtrsim 1$ (then the principal part is of the  magnitude $\mu^r h^{r-d}$).

In an important special case of constant $f_1,\ldots, f_r$, this condition is equivalent to $|\nabla V|$ being disjoint from $0$ (on $\supp(\psi)$) but it could be weakened to  ``$\nabla V =0\implies \Hess V$ has a positive eigenvalue'';
if we assume only that ``$\nabla V =0\implies \Hess V$ has a non-zero eigenvalue'', but  we need to  add a logarithmic factor to the remainder estimate.

As expected, for $2r=d-1$, we can recover less sharp remainder estimates even without any non-degeneracy assumptions and for $2r\le d-2$, we do not need any non-degeneracy conditions at all. For exact statements, details, proofs and generalizations,  see Chapter~\ref{book_new-sect-20} of \cite{futurebook}.

\subsection{Case $d=4$: more results}
\label{sect-6-1-2}
This case is simpler than the general one since we have only $f_1$ and $f_2$ and resonance happens if either $f_1=f_2$ or $f_1=2f_2$ (or $f_2=2f_1$).

If we assume that the magnetic intensity matrix $(F_{jk})$ has constant rank $4$, this case is simpler than the general one ($d=2r$) and we can recover sharp remainder estimates under less restrictive conditions. For exact statements, details, proofs and generalizations,  see Chapter~\ref{book_new-sect-22} of \cite{futurebook}.\enlargethispage{\baselineskip}

On the other hand, if we consider $(F_{jk})$ of variable rank, then in the generic case, it has the  eigenvalues $\pm i f_1$ and $\pm i f_2$ and $\Sigma=\{x:\, f_1(x)=0\}$ is a $\sC^\infty$ manifold of dimension $3$, $\nabla f_1 \ne 0$ on $\Sigma$, while $f_2$ is disjoint from $0$. It is similar to a $2$-dimensional operator which we considered in Section~\ref{sect-5-3}, although with a twist: the symplectic form restricted to $\Sigma$ has rank $2$ everywhere except on a $1$-dimensional submanifold $\Lambda$ where it has rank $0$.

Our results are also similar to those of Section~\ref{sect-5-3} with rather obvious modifications but the proofs are more complicated.

For exact statements, details, proofs and generalizations,  see Chapter~\ref{book_new-sect-21} of \cite{futurebook}.

\section{Non-smooth theory}
\label{sect-6-2}

As in Chapter~\ref{sect-4}, we do not need to assume that the coefficients are very smooth. As before, we bracket the operator in question between two ``rough'' operators with the same asymptotics and with sharp remainder estimates. However, the lack of sufficient smoothness affects the reduction to the canonical form: it will be incomplete even if there are no resonances. Because of this, to get as sharp asymptotics as in the smooth case, we need to request more smoothness than in Chapter~\ref{sect-4}.

\subsubsection{Case $d=2$}
\label{sect-6-2-0-1}
For $d=2$, we require smoothness of $F_{12}$, $g^{jk}$ and $V$  marginally larger than $\sC^2$ to recover the same remainder estimate as in the smooth case, but there is a twist: unless the smoothness is $\sC^3$, a correction term needs to be included. This is due to the fact that $V(x)$ and $W(x)$ differ and a more precise formula should use $W(x)$ rather than $V(x)$. Here, $W$ is  $V$ averaged along a magnetron with center $x$.  In fact, it is possible to consider $V$ of the lesser smoothness than $\sC^2$ (but marginally better than $\sC^1$), but one gets a worse remainder estimate. For exact statements, details and proofs, see Chapter~\ref{book_new-sect-18} of \cite{futurebook} and especially Section~\ref{book_new-sect-18-5}.

\subsubsection{Case $d=3$}
\label{sect-6-2-0-2}
Results are similar to those in the smooth case. However, in this case, if we assume no non-degeneracy conditions then the exponent $\delta$ in the  estimates (\ref{5-2-9}) and (\ref{5-2-16}) depends on the smoothness and if we assume a non-degeneracy condition (\ref{5-2-11}) or (\ref{5-2-18}) then obviously $K$ depends on the smoothness. Under the non-degeneracy assumption with $K=1$, we need smoothness marginally better than $\sC^1$ but again unless the smoothness is $\sC^2$, we need to use the averaged potential $W(x)$ rather than $V(x)$.

For exact statements, details and proofs, see Chapter~\ref{book_new-sect-18} of \cite{futurebook} and especially Section~\ref{book_new-sect-18-9}.

\subsubsection{Case $d\ge 4$}
\label{sect-6-2-0-3}

Basically, the results are similar to those for $d=2$ (if $\rank (F_{jk})=d$) or for $d=3$ (if $\rank (F_{jk})<d$), but we cannot recover the sharp remainder estimate if the  smoothness of $V$ is less than $\sC^3$ or $\sC^2$ respectively because we cannot replace $V(x)$ by its average $W(x)$ in the canonical form.

For exact statements, details and proofs, see Chapters~\ref{book_new-sect-19} (if $\rank (F_{jk})=d$) and Chapters~\ref{book_new-sect-20} (if $\rank (F_{jk})<d$) of \cite{futurebook}.

\section{Global asymptotics}
\label{sect-6-3}

For magnetic Schr\"odinger and Dirac operators, one can derive results similar to those of Sections~\ref{sect-3-1} and~\ref{sect-3-2}. We describe here only some results which are very different from those already mentioned and only for the Schr\"odinger operator.

\subsection{Case $d=2r$}
\label{sect-6-3-1}

Assume that $F_{jk}=\const$, $\rank (F_{jk})=2r=d$, $\mu=h=1$ and $V$ decays at infinity. Then instead of an eigenvalue of infinite multiplicity $\lambda_{j,\infty}=(2j_1+1)f_1+\ldots+(2j_r+1)f_r$ (with $j\in \bZ^{+\,r}$), we have  a sequence of eigenvalues $\lambda_{j,n}$ tending to $\lambda_{j,\infty}$ as
$n\to \infty$ and we want to consider the asymptotics of
$\N^-_j (\eta)$ which is the number of eigenvalues in $(\lambda_{j,\infty}-\epsilon, \lambda_{j,\infty}-\eta)$ and
$\N^+_j (\eta)$ which is the number of eigenvalues in
$(\lambda_{j,\infty}+\eta, \lambda_{j,\infty}+\epsilon)$, as $\eta \to +0$.

It turns out that in contrast to the Schr\"odinger operator without a magnetic field, there are meaningful results no matter how fast $V$ decays.

\begin{theorem}\label{thm-6-3-1}
Let us consider a Schr\"odinger operator in $\bR^2$ satisfying the above conditions, $\mu=h=1$ and
\begin{equation}
|\nabla^\alpha V|\le c_\alpha \rho^2\gamma^{-|\alpha|},
\label{6-3-1}
\end{equation}
with $\rho=\langle x\rangle^m$, $\gamma=\langle x\rangle$, $m<0$. Let
\begin{phantomequation}\label{6-3-2}\end{phantomequation}
\begin{equation}
\mp V\ge -\epsilon \rho^2\implies
|\nabla V|\ge \epsilon \rho^2\gamma ^{-1}\qquad
\text{for\ \ } |x|\ge c.
\tag*{$\textup{(\ref*{6-3-2})}_{\mp }$}\label{6-3-2-*}
\end{equation}
\enlargethispage{2\baselineskip}

Then,
\begin{enumerate}[label=(\roman*), wide, labelindent=0pt]
\item\label{thm-6-3-1-i}
The asymptotics
\begin{phantomequation}\label{6-3-3}\end{phantomequation}
\begin{phantomequation}\label{6-3-4}\end{phantomequation}
\begin{gather}
\N^\mp_j (\eta )=\cN ^{\mp}  (\eta )+O(\log \eta )
\tag*{$\textup{(\ref*{6-3-3})}_{\pm}$}\label{6-3-3-*}\\
\shortintertext{hold with}
\cN ^{\mp}  (\eta )=
\frac{1}{2\pi } \int_{\{\mp V>\eta \}} F \,  dx
\tag*{$\textup{(\ref*{6-3-4})}_{\pm}$}\label{6-3-4-*}
\end{gather}
and in our conditions $\cN ^{\mp }(\eta )=O(\eta ^{1/m})$. Moreover,
$\cN ^\mp (\eta )\asymp \eta ^{1/m}$ provided that $\mp V\ge \epsilon \rho^2$ for $x\in \Gamma$, where $\Gamma $ is a non-empty open sector (cone) in $\bR^2$.

\item\label{thm-6-3-1-ii}
Furthermore, if
\begin{phantomequation}\label{6-3-5}\end{phantomequation}
\begin{equation}
\mp V\ge \epsilon \rho^2 \qquad \text{for\ \ } |x|\ge c,
\tag*{$\textup{(\ref*{6-3-5})}_{\pm}$}\label{6-3-5-*}
\end{equation}
then the remainder estimate is $O(1)$. In this case, the points $(2j+1)F \pm 0$ are not limit points of the discrete spectrum.
\end{enumerate}
\end{theorem}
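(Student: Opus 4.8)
The plan is to express $\N^\mp_j(\eta)$ as a regularized trace of spectral projectors and reduce it, scale by scale, to the local magnetic spectral asymptotics of Section~\ref{sect-5-2}. Fix $\epsilon>0$ so small that $\lambda_{j,\infty}\mp\epsilon=(2j+1)F\mp\epsilon$ is not a limit point of $\Spec(H)$ from the relevant side — possible since the decay $V\to0$ makes $\lambda_{j,\infty}$ the unique accumulation point of $\Spec(H)$ in a punctured neighbourhood of itself. Then $\N^\mp_j(\eta)=\bigl|\int_{\bR^2}\bigl(e(x,x,\lambda_{j,\infty}\mp\eta)-e(x,x,\lambda_{j,\infty}\mp\epsilon)\bigr)\,dx\bigr|$, where $e(x,y,\tau)$ is the kernel of $\uptheta(\tau-H)$. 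Each summand separately is infinite, but the contributions of the Landau bands $n<j$ are, up to negligible terms (the spectral parameter being a full gap away from them), independent of the parameter on this range and cancel in the difference; by the magnetic Weyl formula~\textup{(\ref{5-2-2})}--\textup{(\ref{5-2-3})} the integrand then equals $\tfrac1{2\pi}F\,\mathbf{1}_{\{x:\ \mp V(x)\in(\eta,\epsilon)\}}$ up to a remainder, so after integration the main term is $\tfrac1{2\pi}\int_{\{\mp V>\eta\}}F\,dx+O(1)=\cN^\mp(\eta)+O(1)$, the bounded set $\{\mp V>\epsilon\}$ carrying only $O(1)$ eigenvalues. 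This gives the principal part $\textup{(\ref{6-3-4})}_\pm$.

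For the remainder, split $\bR^2$ into a fixed ball $\{|x|\le c\}$, contributing $O(1)$, and dyadic annuli $\cA_r=\{|x|\asymp r\}$, $c\le r\le C\eta^{1/2m}$; for $|x|\gtrsim\eta^{1/2m}$ one has $|V|\le c_0\langle x\rangle^{2m}<\eta$ by~\textup{(\ref{6-3-1})}, the shifted $j$-th Landau level lies on the far side of both spectral parameters, and the contribution is negligible. On $\cA_r$ rescale $x\mapsto x/r$: since $F$ is constant the vector potential is linear, so $r^2(H-\lambda_{j,\infty})$ becomes a magnetic Schr\"odinger operator with magnetic intensity parameter $\mu_1=r^2\ge1$, semiclassical parameter $h_1=1$ and potential $V_1(y)=r^2V(ry)$, the rescaled energy lying within $r^2\eta$ of the Landau level $(2j+1)F\mu_1$, and the count contributed by $\cA_r$ is an integral over $\{|y|\asymp1\}$ of the corresponding rescaled spectral functions. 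Two regimes arise: where $|V|$ on $\cA_r$ is large enough relative to $\eta$ that the rescaled energy stays a full Landau spacing from all Landau levels, the contribution is $O(\mu_1^{-s})$ and negligible; otherwise the rescaled energy is near the $j$-th Landau level, and here the hypothesis $\textup{(\ref{6-3-2})}_\mp$ — which forces $|\nabla V|\asymp\rho^2\gamma^{-1}$ precisely where $\mp V$ is not very negative, in particular on the level set $\{\mp V=\eta\}$ — rescales exactly to the non-degeneracy~\textup{(\ref{5-2-14})}, so Theorem~\ref{thm-5-2-3} (or Theorem~\ref{thm-5-2-1} in the borderline $r\asymp1$ case) gives the sharp estimate~\textup{(\ref{5-2-15})}, namely $O(1)$ from that annulus.

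It then remains to count the annuli of the second type. In the general case~\ref{thm-6-3-1-i}, $V$ may be much smaller than $\rho^2$, so the zone where the spectral parameter is near the $j$-th Landau level can meet every dyadic scale $r\in[c,C\eta^{1/2m}]$; there are $\asymp|\log\eta|$ of these, whence the total remainder is $O(\log\eta)$, giving $\textup{(\ref{6-3-3})}_\pm$. Under the coercivity $\textup{(\ref{6-3-5})}_\mp$ one has $|V|\asymp\rho^2$ with the correct sign for $|x|\ge c$, so $\{\mp V>\eta\}$ is essentially $\{|x|\lesssim\eta^{1/2m}\}$, the near-Landau-level behaviour is confined to the single transition annulus $|x|\asymp\eta^{1/2m}$, the remainder is $O(1)$, and moreover $\{\pm V>\eta\}$ is bounded (empty for $|x|\ge c$), so $(2j+1)F\pm0$ is not a limit point of the discrete spectrum — this is~\ref{thm-6-3-1-ii}. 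Finally, $\cN^\mp(\eta)=\tfrac{F}{2\pi}|\{\mp V>\eta\}|=O(\eta^{1/m})$ follows from $|V|\le c_0\langle x\rangle^{2m}$, while $\cN^\mp(\eta)\asymp\eta^{1/m}$ when $\mp V\ge\epsilon\rho^2$ holds on a non-empty open cone.

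The main obstacle is the scale-by-scale bookkeeping of the remainder: one must verify that the Landau bands $n<j$ genuinely cancel so that the regularized trace is well defined and localized near $\{|V|\sim\eta\}$; that the local estimates of Section~\ref{sect-5-2} are uniform in $\mu_1=r^2$, in particular across the transition $|x|\asymp\eta^{1/2m}$ where one passes between Theorems~\ref{thm-5-2-1} and~\ref{thm-5-2-3}; and, above all, that the \emph{one-sided} non-degeneracy $\textup{(\ref{6-3-2})}_\mp$ is exactly the right hypothesis — it controls the level set $\{\mp V=\eta\}$ and the transitional zone while saying nothing about the ``wrong-sign'' region $\{\pm V\gtrsim\rho^2\}$, where the spectral parameter lies in a true spectral gap and the contribution is automatically negligible — so that each relevant annulus costs only $O(1)$, the total being $O(\log\eta)$ and improving to $O(1)$ precisely when $\textup{(\ref{6-3-5})}_\mp$ leaves a single such annulus.
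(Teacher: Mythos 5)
Your proposal is correct and follows essentially the same route as the paper: the paper's proof is precisely a rescaling to the strong-field results of Subsection~\ref{sect-5-2-4} (legitimate because after rescaling $\mu h=\rho^{-2}\gtrsim 1$ and $\mu^{-1}h=\gamma^{-2}\ll 1$, cf.\ Remark~\ref{rem-5-2-4}), with the remainder bounded by $\int\gamma^{-2}\,dx$ taken over $\{|V|\ge(1-\epsilon)\eta\}$ in the general case --- giving $O(\log\eta)$ --- and only over the transition shell $\{(1-\epsilon)\eta\le|V|\le(1+\epsilon)\eta\}$ under $\textup{(\ref{6-3-5})}_{\pm}$ --- giving $O(1)$. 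The one adjustment I would make concerns your normalization: dividing energies by $\rho^2$ (i.e.\ taking $\mu_{\mathrm{eff}}=\gamma/\rho$, $h_{\mathrm{eff}}=1/(\rho\gamma)$) rather than multiplying by $r^2$ makes the rescaled $V$, $\nabla V$ and hence the non-degeneracy constant in \textup{(\ref{5-2-14})} uniformly $\asymp 1$ on every annulus, which supplies exactly the uniformity in the scale that you correctly flagged as the point needing verification.
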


This theorem is proved by rescaling the results of Subsection~\ref{sect-5-2-4} which do not require  $h\ll 1$, but only
$\mu h \gtrsim 1$ and $\mu^{-1}h\ll 1$ (see Remark~\ref{rem-5-2-4}); in our case, after rescaling $\mu =1/\rho\gamma$ and $\mu =\gamma/\rho$, so that  $\mu h=1/\rho^2$ and $\mu^{-1}h=\gamma^{-2}$. Therefore,  the remainder does not exceed
$\int \gamma^{-2}\,dx$, where we integrate over $\{x:\, |V(x)|\ge (1-\epsilon) \eta\}$ in the general case and over\linebreak $\{x:\,  (1+\epsilon) \eta \ge |V(x)|\ge (1-\epsilon) \eta\}$ under the assumption \ref{6-3-5-*}.

\begin{remark}\label{rem-6-3-2}
\begin{enumerate}[label=(\roman*), wide, labelindent=0pt]
\item\label{rem-6-3-2-i}
Similar results hold in the $d$-dimensional case ($d\ge 4$) when $(F)=\const$ and $d=\rank (F)$: the remainder is $O(\eta^{(d-2)/2m})$ and the principal part
is $O(\eta^{d/2m})$.

\item\label{rem-6-3-2-ii}
One can consider the case $\rho = \exp (-\langle x\rangle ^m)$,
$\gamma= \langle x\rangle ^{1-m}$, $0<m<1$, and recover remainder estimate $O(|\log \eta|^{(d-2)/m+2})$ in the general case and
$O(|\log \eta|^{(d-2)/m+1})$ under the assumption \ref{6-3-5-*}
with $\cN^\mp(\eta)=O(|\log \eta|^{d/m})$.
\item\label{rem-6-3-2-iii}
On the other hand, the cases when $V$ decays like $\exp (-2\langle x\rangle)$ or faster,  or is compactly supported,  are out of reach of our methods but the asymptotics (without a remainder estimate) were obtained in \cite{MR, RT, RW}.
\end{enumerate}
\end{remark}

For exact statements, details, proofs and generalizations for arbitrary $\rank (F_{jk})=d=2r$, see Subsection~\ref{book_new-sect-23-4-1}  of \cite{futurebook}.

\subsection{Case $d>2r$. I}
\label{sect-6-3-2}
This case is less ``strange'' than case $d=2$. Here, we can discuss only the eigenvalue counting function  $\N^-_0(\eta)$.
\begin{theorem}\label{thm-6-3-3}
Let us consider a Schr\"odinger operator in $\bR^3$ satisfying  $\mathbf{F} =\const$ and \textup{(\ref{6-3-1})} with  $\rho=\langle x\rangle^m$,
$\gamma=\langle x\rangle$, $m\in (-1,0)$.
Then,
\begin{enumerate}[label=(\roman*), wide, labelindent=0pt]
\item\label{thm-6-3-3-i}
The asymptotics
\begin{phantomequation}\label{6-3-6}\end{phantomequation}
\begin{gather}
\N^-_0 (\eta )=\cN ^{-}  (\eta )+O(\eta^{\frac{1}{m}-\delta})
\tag*{$\textup{(\ref*{6-3-6})}_-$}\label{6-3-6-*}\\
\shortintertext{hold with}
\cN ^-  (\eta )=
\frac{1}{2\pi^2 } \int  F (-V-\eta)_+^{\frac{1}{2}}\,  dx
\label{6-3-7}
\end{gather}
and arbitrarily small $\delta>0$, and furthermore,
$\cN ^-(\eta )=O(\eta ^{\frac{3}{2m} +\frac{1}{2}})$. Moreover,
$\cN ^- (\eta )\asymp \eta ^{\frac{3}{2m}+\frac{1}{2}}$, provided
$-V\ge \epsilon \rho^2$ for $x\in \Gamma $ where $\Gamma $ is a non-empty open cone in $\bR^3$.

\item\label{thm-6-3-3-ii}
Further, under the assumption
\begin{equation}
\sum_{|\alpha|\le K}|\nabla^\alpha V|\cdot \gamma ^{|\alpha|}\ge \epsilon \rho^2
\qquad \text{for\ \ }|x|\ge c,
\label{6-3-8}
\end{equation}
the asymptotics \textup{(\ref{6-3-7})} hold with $\delta=0$.
\end{enumerate}
\end{theorem}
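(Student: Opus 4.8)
The plan is to follow the rescaling strategy already used for Theorem~\ref{thm-6-3-1}, but now in the regime $d=3>2r=2$, where the model is a constant-magnetic-field Schr\"odinger operator with an absolutely continuous spectrum $[(2j+1)F,\infty)$ along the magnetic lines. The key observation is that the eigenvalues $\lambda_{0,n}\searrow F$ correspond, in the Birman--Schwinger reduction, to a semiclassical Schr\"odinger problem: we set $\eta$ as the small parameter measuring the distance below the Landau level $F$, and $\N^-_0(\eta)$ counts eigenvalues of $H$ in $(F+\eta,\,F+\epsilon)$ reinterpreted as negative eigenvalues of the shifted operator $H-\mathbf{F}\cdot\boldupsigma$-type reduction projected onto the lowest Landau band. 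First I would introduce the scaling functions $\gamma(x)=\langle x\rangle$, $\rho_\eta(x)=\langle x\rangle^m$ restricted to the zone $\{x:\,|V(x)|\gtrsim\eta\}$, i.e. $\{|x|\lesssim \eta^{1/2m}\}$ (recall $m<0$), and on a ball $B(x,\gamma(x))$ in this zone perform $x\mapsto x/\gamma$, $\xi\mapsto\xi/\rho$, arriving at an effective magnetic Schr\"odinger operator with $\mu=\gamma/\rho=\langle x\rangle^{1-m}$ and effective semiclassical $\hbar=1/(\rho\gamma)=\langle x\rangle^{-1-m}$, so that $\mu\hbar=\rho^{-2}\gtrsim1$ (strong field) and $\mu^{-1}\hbar=\gamma^{-2}\ll1$. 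This is exactly the regime covered by Remark~\ref{rem-5-2-4} and Theorem~\ref{thm-5-2-5}, which give a remainder $O(\mu h^{-1})$ in general and $O(\mu h^{-1})$ under a nondegeneracy condition; rescaled back, the contribution of $B(x,\gamma(x))$ to the remainder is $O(\mu^{-1}h^{1-d})$-type, which after substituting the values of $\mu,\hbar$ becomes $O(\gamma(x)^{-2}\rho(x))$ in dimension $d=3$, up to the $\cN^\MW_3$ versus $\cN^\W_3$ adjustment allowed for $\mu\le h^{-1/3}$.

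Second I would sum these local contributions over a partition of unity subordinate to the scaling. In the general case one only controls each ball's contribution up to the $\delta$-loss factor inherited from estimate~(\ref{5-2-16}), so the per-ball remainder is $O(\gamma^{-2+\delta'}\rho^{1+\delta''})$ for small $\delta',\delta''$ depending on $\delta$; integrating $\int_{\{|V|\gtrsim\eta\}}\gamma^{-2}\rho\,dx$ with $\rho=\langle x\rangle^m$, $\gamma=\langle x\rangle$ over $|x|\lesssim\eta^{1/2m}$ gives $O(\eta^{1/m})$ as the leading rate, and the $\delta$-loss degrades this to $O(\eta^{1/m-\delta})$, which is statement~\ref{thm-6-3-3-i}. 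The principal term $\cN^-(\eta)$ in~(\ref{6-3-7}) is just the integrated magnetic Weyl expression~(\ref{6-1-1}) with $r=1$, $d=3$, $\mu=h=1$: the lowest Landau band contributes $(2\pi^2)^{-1}\int F(-V-\eta)_+^{1/2}\,dx$ because all higher bands $(2j+1)F$ with $j\ge1$ are above the observation point $F+\eta$ for small $\eta$ and contribute nothing to $\N^-_0$. The magnitude $\cN^-(\eta)=O(\eta^{3/2m+1/2})$ follows by the same integration (the extra $(-V-\eta)_+^{1/2}\asymp\eta^{1/2}$ factor accounts for the difference between the $\eta^{1/m}$ remainder rate and the $\eta^{3/2m+1/2}$ principal rate), and the matching lower bound $\asymp\eta^{3/2m+1/2}$ under the cone assumption $-V\ge\epsilon\rho^2$ on $\Gamma$ is immediate from positivity of the integrand on a set of the right size.

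For part~\ref{thm-6-3-3-ii}, under the nondegeneracy hypothesis~(\ref{6-3-8}) — which rescales precisely to the condition~(\ref{5-2-18}) of Theorem~\ref{thm-5-2-5} (more generally Theorem~\ref{thm-5-2-2}, since in the moderate-field subcase $\mu\le h^{-1/3}$ we may also use~(\ref{5-2-11})) — each local contribution obeys the sharp bound with $\delta=0$, so the summation yields $O(\eta^{1/m})$ without loss, establishing the $\delta=0$ asymptotics. The main obstacle, as usual in these rescaling arguments, is the \emph{uniformity} of the local remainder estimates over the whole partition: one must check that the constants in Theorems~\ref{thm-5-2-2} and~\ref{thm-5-2-5} do not deteriorate as $\mu=\langle x\rangle^{1-m}\to\infty$, i.e. that the nondegeneracy condition~(\ref{6-3-8}) really is scale-invariant at every point of the region, and that near the ``turning set'' $\{V+\eta=0\}$ (where $(-V-\eta)_+^{1/2}$ is not comparable to a constant and the effective problem is no longer elliptic in the relevant direction) the contribution is still absorbed — here one uses a secondary rescaling in $\eta-|V|$ exactly as in the proof of Theorem~\ref{thm-6-3-1}, splitting off the boundary layer $\{(1-\epsilon)\eta\le|V|\le(1+\epsilon)\eta\}$ whose total measure is $O(\eta^{1/m}\cdot\eta^{?})$ small enough. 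I would also need the coercivity/ellipticity in the complementary zone $\{|x|\gtrsim\eta^{1/2m}\}$, where $|V|\lesssim\eta$ puts the spectral parameter below the band edge $F$ and the operator $H-F$ restricted there is elliptic, so that zone contributes only $O(\eta^{\infty})$ by the standard elliptic estimate — this is what makes the sum over the truncated region legitimate. Full details are in Subsection~\ref{book_new-sect-23-4-2} of~\cite{futurebook}.
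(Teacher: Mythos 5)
Your strategy is the one the paper itself uses: Theorem~\ref{thm-6-3-3} is proved exactly like Theorem~\ref{thm-6-3-1}, by covering the classically allowed region $\{|V|\gtrsim\eta\}=\{|x|\lesssim\eta^{1/2m}\}$ with balls $B(x,\gamma(x))$, rescaling each to the strong-field results of Subsection~\ref{sect-5-2-4} with $h=1/(\rho\gamma)$, $\mu=\gamma/\rho$ (so that $\mu h=\rho^{-2}\gtrsim1$ and $\mu^{-1}h=\gamma^{-2}\ll1$), using \textup{(\ref{5-2-16})} for part~\ref{thm-6-3-3-i} and \textup{(\ref{5-2-17})}--\textup{(\ref{5-2-18})} for part~\ref{thm-6-3-3-ii}, and discarding the elliptic outer zone. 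Your identification of the principal term with the lowest Landau band of \textup{(\ref{6-1-1})} is also correct, modulo a sign slip: for $d>2r$ the discrete eigenvalues accumulate at $F$ from \emph{below}, so $\N^-_0(\eta)$ counts eigenvalues in $(F-\epsilon,F-\eta)$, not in $(F+\eta,F+\epsilon)$. Also, since $\mu h=\rho^{-2}>1$ throughout $\{|x|\ge c\}$, there is no moderate-field subcase and Theorem~\ref{thm-5-2-2} plays no role.

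The genuine defect is in the remainder bookkeeping, which as written does not yield the stated rate. The contribution of one ball $B(x,\gamma(x))$ is the (scale-invariant) right-hand side of \textup{(\ref{5-2-17})}, namely $O(\mu h^{-1})=O\bigl((\gamma/\rho)\cdot\rho\gamma\bigr)=O(\gamma^2)$; it is neither ``$O(\mu^{-1}h^{1-d})$'' nor a density $O(\gamma^{-2}\rho)$. Summing over the covering, i.e.\ integrating the density $\gamma^2\cdot\gamma^{-3}=\gamma^{-1}$, gives $\int_{\{|x|\lesssim\eta^{1/2m}\}}\gamma^{-1}\,dx\asymp\eta^{1/m}$, which is the claimed remainder; the $\delta$-loss of \textup{(\ref{5-2-16})} costs an extra factor $h^{-\delta}=(\rho\gamma)^{\delta}$ per ball and degrades this to $O(\eta^{1/m-\delta'})$, giving part~\ref{thm-6-3-3-i}. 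By contrast, the integral you actually wrote, $\int\gamma^{-2}\rho\,dx=\int_c^{\eta^{1/2m}}r^{m}\,dr\asymp\eta^{1/2+1/2m}$, is \emph{not} $\asymp\eta^{1/m}$ (it is strictly smaller), so your asserted conclusion does not follow from your computation: you would be deducing a better-than-true remainder from an incorrect local estimate. Likewise the gap between the principal part $\eta^{3/2m+1/2}$ and the remainder $\eta^{1/m}$ is a factor $\eta^{1/2m+1/2}=\rho\gamma|_{|x|\asymp\eta^{1/2m}}$, not $\eta^{1/2}$. With the per-ball estimate corrected to $O(\mu h^{-1})$ and the density to $\gamma^{-1}$, the rest of your outline (elliptic outer zone, non-degeneracy \textup{(\ref{6-3-8})} rescaling to \textup{(\ref{5-2-18})} for $\delta=0$, lower bound on $\cN^-$ from the cone condition) goes through as in the paper.
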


\begin{remark}\label{rem-6-3-4}
\begin{enumerate}[label=(\roman*), wide, labelindent=0pt]
\item\label{rem-6-3-4-i}
Similar results hold in the $d$-dimensional case when $(F_{jk})=\const$ and
$d>2r=\rank (F_{jk})$: the remainder estimate is \\
$O(\eta^{(d-2r-1)/2 + (d-1)/2m-\delta})$\,\footnote{\label{ft-40} Where $\delta=0$ if either $d\ge 2r+2$ or the assumption (\ref{6-3-8}) is fulfilled.} and $\cN^-(\eta)=O(\eta^{(d-2r)/2+d/2m})$.
\item\label{rem-6-3-4-ii}
Observe that for $m=-1$, both the principal part and the remainder estimate have magnitude $\eta^{-r}$.
\item\label{rem-6-3-4-iii}
One can also consider
$\rho=\langle  x\rangle ^{-1}|\log \langle x\rangle|^\alpha$ with $\alpha>0$.
\end{enumerate}
\end{remark}

For exact statements, details, proofs and generalizations, see Subsection~\ref{book_new-sect-24-4-1} of \cite{futurebook}.

\subsection{Case $d>2r$. II}
\label{sect-6-3-3}

We now discuss faster decaying potentials. Assume that $d=2r+1$ (otherwise there will be no interesting results). Assume for simplicity that $g^{jk}=\updelta_{jk}$ and $F_{d k}=0$.  Further, one can assume that $A_d(x)=0$; otherwise one can achieve it by a gauge transformation. Then, $A_j=A_j(x')$ with
$x'=(x_1,\ldots, x_{2r})$ and the operator is of the form
\begin{equation}
D_d^2 +V(x)+ H'_0, \qquad\text{with\ \ }
H'_0\Def \sum_{1\le j\le d-1} (D_j -A_j(x'))^2.
\label{6-3-9}
\end{equation}
For any fixed $x':|x'|\ge c$, consider the one-dimensional operator
\begin{equation}
L\Def D_t^2 +V(x';t)
\label{6-3-10}
\end{equation}
on $\bR\ni t$.
It turns out that under the assumption
\begin{equation}
|V(x';t)|\le \varepsilon t^{-2},
\label{6-3-11}
\end{equation}
with $\varepsilon \le (\frac{1}{4}-\epsilon)$, this operator has no more than one negative eigenvalue $\lambda (x')$; moreover,
it has exactly one negative eigenvalue
\begin{gather}
\lambda (x')= -\frac{1}{4} W(x')^2 + O(\varepsilon^3),
\label{6-3-12}\\
\shortintertext{provided}
W(x')\Def \int_{\bR} V(x';t) <0\qquad\text{and\ \ } -W(x')\asymp \varepsilon.
\label{6-3-13}
\end{gather}
Furthermore, in this case $\lambda (x')$ nicely depends on $x'$.

Let
\begin{equation}
|\nabla^\alpha V|\le c_\alpha \rho^2\gamma_1^{-|\alpha'|}\gamma ^{-\alpha_d},
\label{6-3-14}
\end{equation}
with $\rho=\langle x\rangle^l \langle x'\rangle ^k$,
$\gamma=\langle x\rangle$, $\gamma_1=\langle x'\rangle$ and $l\le -2$,
$m\Def 2l+2k+1<0$
and if
\begin{equation}
W(x') <0, \qquad W(x')\asymp \rho' , \qquad
\rho'\Def \langle x'\rangle ^{m},
\label{6-3-15}
\end{equation}
then we are essentially in the $(d-1)$-dimensional case of an operator
$H'\Def H'_0 +\lambda(x')$, and for $\N^-(\eta)$, we have the corresponding asymptotics of Subsubsection~\ref{sect-6-3-1}.

For exact statements, details, proofs and generalizations, see Subsections~\ref{book_new-sect-24-4-2}  of \cite{futurebook}.
For improvements for slower decaying potentials, see Subsections~\ref{book_new-sect-24-4-3}  of \cite{futurebook}.

\chapter{Applications to multiparticle quantum theory}
\label{sect-7}

\section{Problem set-up}
\label{sect-7-1}

In this Chapter, we discuss an application to Thomas-Fermi Theory.
Consider a large (heavy) atom or molecule; it is described by a \emph{Multiparticle Quantum Hamiltonian\/}
\begin{equation}
\sfH_N= \sum_{1\le n\le N} H_V(x_n)+
\sum_{1\le n< k\le N}\frac{1}{|x_n-x_k|}
\label{7-1-1}
\end{equation}
where $H$ is a \emph{one-particle quantum Hamiltonian\/}, Planck constant $\hbar=1$, electron mass $=\frac{1}{2}$, electron charge $=-1$. This operator acts on the space $\fH\Def \wedge_{1\le j\le N} \sL^2(\bR^3,\bC^2)$ of totally antisymmetric functions $\Psi(x_1,\varsigma_1;\ldots; x_N,\varsigma_N)$ because the electrons are fermions, $x_n=(x^1_n,x^2_n,x^3_n)$ is a coordinate and\linebreak
$\varsigma_n\in \{-\frac{1}{2},\frac{1}{2}\}$ is the \emph{spin\/} of $n$-th particle. We identify the $\bC^2$-valued function $\psi(x)$ on $\bR^3$ with a scalar-valued function $\psi(x,\varsigma)$.

If the electrons did not interact between themselves, but the field potential was $-W(x)$, then they would occupy the lowest eigenvalues and the ground state wave functions would be the anti-symmetrized product $\phi_1(x_1,\varsigma_1)\phi_2(x_2,\varsigma_2)\ldots \phi_N(x_N,\varsigma_N)$, where $\phi_n$ and $\lambda_n$ are the eigenfunctions and eigenvalues of $H_W$ respectively.

Then the local electron density would be
$\rho_\Psi = \sum_{1\le n\le N} |\phi_n(x)|^2$ and according to the pointwise Weyl law (if there is no magnetic field)
\begin{equation}
\rho_\Psi (x) \approx \frac{1}{3\pi^2} (W + \nu)_+ ^{\frac{3}{2}},
\label{7-1-2}
\end{equation}
where $\nu=\lambda_N$. We first assume that there is no magnetic field and therefore, $H_V=-\Delta-V(x)$.

This density would generate the potential $-|x|^{-1}* \rho_\Psi$ and we would have $W\approx V-|x|^{-1}*\rho_\Psi$.

Replacing all approximate equalities by strict ones, we arrive to the \emph{Thomas-Fermi equations\/}:
\begin{align}
&V-W^\TF=|x|^{-1}*\rho^\TF,\label{7-1-3}\\
&\rho^\TF = \frac{1}{3\pi^2} (W^\TF+\nu)_+^{\frac{3}{2}},\label{7-1-4}\\
&\int \rho^\TF\,dx=N,\label{7-1-5}
\end{align}
where $\nu\le 0$ is called the \emph{chemical potential\/} and in fact approximates $\lambda_N$.

Considering atoms and molecules, we assume that
\begin{equation}
V(x)=\sum _{1\le m\le M} \frac{Z_m}{|x-\y_m|},
\label{7-1-6}
\end{equation}
where $\y_m$ is the \emph{position\/} and $Z_m$ is the \emph{charge\/} of the $m$-th nuclei, $M$ is fixed and $Z_1\asymp Z_2\asymp \ldots \asymp Z_M \asymp N\to \infty$.

Thomas-Fermi theory has been rigorously justified (with pretty good error estimates) and we want to explain how.\enlargethispage{2\baselineskip}

\section{Reduction to one-particle problem}
\label{sect-7-2}

\subsection{Estimate from below}
\label{sect-7-2-1}

We start from the estimate from below. The \emph{ground state energy\/}\linebreak
$\E_N\Def\inf \blangle\sfH_N \Psi,\Psi\brangle$, taken over all  $\Psi\in \fH $ with $\|\Psi\|=1$, where $\blangle \cdot,\cdot\brangle $ denotes the inner product in $\fH$.  Classical mathematical physics provides a wealth of results. One of them is the \emph{electrostatic inequality\/} due to E.~H.~Lieb~\cite{lieb:selecta}:\enlargethispage{\baselineskip}
\begin{multline}
\sum_{1\le j<k\le N}
\int |x_j-x_k| ^{-1}|\Psi (x_1,\dots ,x_N)| ^2\, dx_1 \cdots dx_N\ge\\
\frac{1}{2}\D(\rho_\Psi ,\rho_\Psi )-C\int \rho_\Psi ^{\frac{4}{3}}(x)\, dx,
\label{7-2-1}
\end{multline}
with $\rho_\Psi$ defined by (\ref{7-1-2}). This inequality holds for all (not necessarily antisymmetric) functions $\Psi $ with
$\|\Psi \|_{\sL ^2(\bR ^{3N})}=1$. Therefore,
\begin{multline}
\blangle \mathsf{H}_N\Psi ,\Psi \brangle \ge
\sum_{1\le j\le N} \blangle H_{V,x_j}\Psi ,\Psi \brangle +
\frac{1}{2}\D\bigl(\rho_\Psi ,\rho_\Psi )-
C\int \rho_\Psi ^{\frac{4}{3}}(x)\,dx=\\[3pt]
\sum_{1\le j\le N}
\blangle H_{W,x_j}\Psi ,\Psi \brangle +
\frac{1}{2}\D\bigl(\rho_\Psi - \rho ,\rho_\Psi -\rho \bigr)-
\frac{1}{2}\D\bigl( \rho , \rho \bigr)-C\int \rho_\Psi ^{\frac{4}{3}}(x)\,dx
\label{7-2-2}
\end{multline}
and $H_W$ is a one-particle Schr\"odinger operator with potential
\begin{gather}
W=V-|x| ^{-1}*\rho ,
\label{7-2-3}\\
\intertext{where $\rho$ is an arbitrarily chosen real-valued non-negative function and therefore,}
(V-W,\rho_\Psi)= -\D(\rho, \rho_\Psi).
\label{7-2-4}
\end{gather}

The physical sense of the second term in $W$ is transparent: it is a potential created by the charge $-\rho$. Skipping the positive second term in the right-hand expression of (\ref{7-2-2}) and believing that the last term is not very important for the ground state function $\Psi$\,\footnote{\label{foot-40} When we derive the upper estimate for $\E$, we will get an upper estimate for this term as a bonus.}, we see that we need to estimate from below the first term. Since the first term is simply the sum of operators acting with respect to different variables, we can estimate it from below by
\begin{equation}
\blangle (H_{W,x_j}-\nu)\Psi ,\Psi \brangle+\lambda N
\label{7-2-5}
\end{equation}
with arbitrary $\nu$; therefore, it is bounded from below by
$\Tr ((H_W-\nu)_-)$, where $(H_W-\nu)_-$ denotes the negative part of the operator $(H_W-\nu)$, and hence its trace is the sum of the negative eigenvalues.

Here, the assumption that $\Psi $ is antisymmetric is crucial. Namely, for general (or symmetric--does not matter) $\Psi$, the best possible estimate is
$\lambda _1 N$ where $\lambda _1$ is the lowest eigenvalue of $H_W$ (we always assume that there are sufficiently many eigenvalues below the bottom of the essential spectrum of $H_W$) and we cannot apply semiclassical theory.

Thus we arrive to
\begin{equation}
\E_N \ge \Tr ((H_W-\nu)_-) +\nu N -
\frac{1}{2}\D\bigl( \rho , \rho \bigr) - CN^{\frac{5}{3}},
\label{7-2-6}
\end{equation}
where we used another result of E.~H.~Lieb \cite{lieb:selecta}:
$\int \rho_\Psi ^{\frac{4}{3}}(x)\,dx \le CN^{\frac{5}{3}}$ for the ground state $\Psi$.

\subsection{Estimate from above}
\label{sect-7-2-2}

Here, we simply plug in a test function $\Psi$ which is an (anti-symmetrized) product $\phi_1(x_1,\varsigma_1)\phi_2(x_2,\varsigma_2)\ldots \phi_N(x_N,\varsigma_N)$ where $\phi_n$ and $\lambda_n$ are eigenfunctions and eigenvalues of $H_W$ respectively, and we pick $W$ later. It may happen, however, that $H_W$ does not have $N$ negative eigenvalues, then we can reduce $N$ and use the inequality $\E_N\le E_{N'}$ as $N'\le N$.

Then, $\E_N$ is estimated from above by
\begin{multline}
\blangle \mathsf{H}_N\Psi ,\Psi \brangle  = \\
\sum_n (H_{W,x_j}-\lambda)\Psi ,\Psi \brangle + \lambda N -
(V-W, \rho_\Psi) + \frac{1}{2}\D(\rho_\Psi,\rho_\Psi) \\
-\frac{1}{2}\sum_n \iint |x-y|^{-1} |\psi_n(x)|^2|\psi_n(y)|^2\,dxdy
\label{7-2-7}
\end{multline}
and therefore recalling (\ref{7-2-4}), we obtain
\begin{equation}
\E_N\le
\Tr ((H_W-\lambda)_-) + \lambda N
 + \frac{1}{2}\D(\rho_\Psi-\rho,\rho_\Psi-\rho) -\frac{1}{2}\D(\rho,\rho)
\label{7-2-8}
\end{equation}
and $\rho_\Psi = \tr e_N(x,x) $ where $e_N(x,y)$ and $e(x,y,\lambda)$ are the Schwartz kernels of the projector to the $N$ lowest eigenvalues of $H_W$ and of the operator $\uptheta(\lambda-H_W)$ respectively; here $\tr$ denotes the matrix trace, and $\lambda=\lambda_N$ if $\lambda_N<0$ and $\lambda=0$ otherwise. Finally, we conclude that
\begin{multline}
\E_N\le
\Tr ((H_W-\nu)_-) + \nu N + |\lambda -\nu|\cdot |\N^-(H_W-\nu)-N|\\
 + \frac{1}{2}\D(\tr e_N (x,x)-\rho,\tr e_N(x,x)-\rho) -\frac{1}{2}\D(\rho,\rho)
\label{7-2-9}
\end{multline}
with arbitrary $\nu\le 0$.

\section{Semiclassical approximation}
\label{sect-7-3}

\subsection{Estimate from below}
\label{sect-7-3-1}

In the estimate from below (\ref{7-2-6}), we replace $\Tr ((H_W-\nu)_-)$ by its semiclassical approximation
\begin{gather}
\Tr ((H_W-\nu)_-) \approx -\int P(W+\nu)\,dx
\label{7-3-1}\\
\shortintertext{with}
P(W+\nu)\Def \frac{2}{5\pi^2} (W+\nu)_+^{\frac{5}{2}},
\label{7-3-2}
\end{gather}
and also plug in $\rho = \frac{1}{4\pi}\Delta (W-V)$; then we obtain the functional \begin{equation}
\Phi_*(W,\nu)=-\int P(W+\nu)\,dx -\frac{1}{8\pi}\|\nabla (W-V)\|^2 + \nu N;
\label{7-3-3}
\end{equation}
maximizing it, we arrive to the Thomas-Fermi equations and its maximal value is $\cE_N^\TF$, delivered by Thomas-Fermi theory. Then, we need to understand the semiclassical error. To do this, we use the properties of the Thomas-Fermi potential and rescale $x\mapsto xN^{\frac{1}{3}}$ and $\tau\mapsto N^{-\frac{4}{3}}\tau $ (so, $\nu\mapsto  N^{-\frac{4}{3}} \nu$)  with
\begin{equation}
H_W= -h^2 \nabla ^2 - W,
\label{7-3-4}
\end{equation}
where near $\y_m$, the rescaled potential is Coulomb-like: $W\sim z_m|x-y|^{-1}$ with $z_m= Z_mN^{-1}$.

Then, we can apply results Subsection~\ref{sect-3-2-6} (see (\ref{3-2-25})): for the operator (\ref{7-3-4}),
\begin{equation}
\Tr ((H_W-\nu)_-)= -h^{-3} \int P(W+\nu)\,dx + \kappa h^{-2} + O(h^{-1}),
\label{7-3-5}
\end{equation}
where in this case, the numerical value of $\kappa=2\sum_m z_m^2$ is well-known. Scaling back, we obtain $\cE ^\TF _N+ \Scott+O(N^{\frac{5}{3}})$ where the leading term is of magnitude $N^{\frac{7}{3}}$ and the Scott correction term $\Scott=2\sum_m Z_m^2$. Here, we need to assume that $|\y_m-\y_{m'}|\gtrsim 1$ after rescaling (and $|\y_m-\y_{m'}|\gtrsim N^{-\frac{1}{3}}$ before it).

Indeed, after rescaling, we get an operator which is uniformly in the framework of Subsection~\ref{sect-3-2-6} due to the following properties of the Thomas-Fermi potential:
\begin{claim}\label{7-3-6}
Before rescaling, $W^\TF=Z_m |x-\y_m|^{-1}+O(N)$ for
$|x-\y_m|\lesssim N^{-\frac{1}{3}}$ and
$W^\TF\asymp \sum_m \bigl(|x-\y_m|^{-4} +  (Z-N)_+ |x-\y_m|^{-1}\bigr)$ for $|x-\y_m|\gtrsim 1$ for all $m=1,\ldots,M$.
\end{claim}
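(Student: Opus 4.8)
\medskip
\noindent\textbf{Proof proposal.}\quad
The plan is to read both estimates off the Thomas--Fermi equations \textup{(\ref{7-1-3})}--\textup{(\ref{7-1-5})} after the standard atomic rescaling, invoking only classical Thomas--Fermi theory. First I would fix the facts that may be taken for granted: the system \textup{(\ref{7-1-3})}--\textup{(\ref{7-1-5})} has a unique solution $(\rho^\TF,W^\TF,\nu)$ with $\rho^\TF\ge0$, $W^\TF>0$ and $\nu\le0$; one necessarily has $N\le Z\Def\sum_m Z_m$, with $\nu=0$ exactly when $N=Z$; $W^\TF$ is smooth off the nuclei, and on $\{W^\TF+\nu>0\}$ it solves the autonomous equation $\Delta W^\TF=\tfrac{4}{3\pi}(W^\TF+\nu)_+^{3/2}$ (apply $-\tfrac1{4\pi}\Delta$ to \textup{(\ref{7-1-3})} and use $-\tfrac1{4\pi}\Delta|x|^{-1}=\updelta$); finally, the Thomas--Fermi scaling $W^\TF(x)=N^{4/3}\widetilde W^\TF(N^{1/3}x)$, $\rho^\TF(x)=N^2\widetilde\rho^\TF(N^{1/3}x)$, $\nu=N^{4/3}\widetilde\nu$ reduces everything to the Thomas--Fermi problem with charges $z_m=Z_m/N\asymp1$, electron number $1$ and nuclei at $N^{1/3}\y_m$ pairwise separated by $\gtrsim1$. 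Hence it is enough to prove the corresponding statements for $\widetilde W^\TF$ with \emph{all constants $O(1)$} and then undo the scaling; the near field $|x-\y_m|\lesssim N^{-1/3}$ becomes $|\widetilde x-\widetilde\y_m|\lesssim1$, and the zone $|x-\y_m|\gtrsim1$ becomes $|\widetilde x-\widetilde\y_m|\gtrsim N^{1/3}$.

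Next, for the near-field estimate I would write $W^\TF=V-|x|^{-1}*\rho^\TF$ and separate the singular term, $V=Z_m|x-\y_m|^{-1}+\sum_{m'\ne m}Z_{m'}|x-\y_{m'}|^{-1}$. In the tilde variables, on $|\widetilde x-\widetilde\y_m|\lesssim1$ the sum over $m'\ne m$ is $O(1)$ by the nuclear separation, and the Newtonian potential $|x|^{-1}*\widetilde\rho^\TF$ is $O(1)$ there as well: near $\widetilde\y_m$ one has $\widetilde\rho^\TF\asymp(z_m|\widetilde x-\widetilde\y_m|^{-1})^{3/2}\in L^1_{\mathrm{loc}}$, whose Newtonian potential is bounded (in fact H\"older continuous), while the far contribution is controlled by $\|\widetilde\rho^\TF\|_{L^1\cap L^{5/3}}=O(1)$ (finiteness of the Thomas--Fermi energy). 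Undoing the scaling then gives, for $|x-\y_m|\lesssim N^{-1/3}$, that $W^\TF-Z_m|x-\y_m|^{-1}$ is of lower order than the Coulomb singularity on this scale, which is the first half of \textup{(\ref{7-3-6})}; differentiating the same identity and applying interior elliptic estimates to the semilinear equation yields the analogous bounds for $\nabla^\alpha\bigl(W^\TF-Z_m|x-\y_m|^{-1}\bigr)$, i.e.\ exactly what is needed to place the rescaled operator into the framework of Subsection~\ref{sect-3-2-6}.

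For the far-field estimate I would work in the zone $|\widetilde x-\widetilde\y_m|\gtrsim N^{1/3}$ for all $m$, i.e.\ outside a large ball around the whole cluster of nuclei, where the molecule is seen as a single screened point charge and $W^\TF$ obeys the autonomous equation. In the neutral case $N=Z$ ($\nu=0$) this is the classical Sommerfeld regime: bracketing $W^\TF$ from above and below on spheres by dilates of the explicit radial solution $w_0(r)=C_0r^{-4}$ of $w''+\tfrac2r w'=\tfrac{4}{3\pi}w^{3/2}$ and running the maximum principle (subharmonicity of $W^\TF$ off the nuclei for the lower bound; a dilate $c\,w_0$ with $c\ge1$ as a supersolution for the upper one) gives $W^\TF\asymp|x|^{-4}$. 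In the ionized case $N<Z$ ($\nu<0$) the set $\{W^\TF+\nu>0\}$ is bounded, outside it $W^\TF$ is harmonic and matches the Coulomb field of the net charge $Z-N$, so $W^\TF=(Z-N)|x|^{-1}+O(|x|^{-2})$, while on the transition layer one again squeezes $W^\TF$ between comparison functions built from $w_0$; together these give $W^\TF\asymp(Z-N)|x|^{-1}$ in this zone. Since $M$ is fixed and $|x|\asymp|x-\y_m|$ for every $m$ here, the two cases combine into $W^\TF\asymp\sum_m\bigl(|x-\y_m|^{-4}+(Z-N)_+|x-\y_m|^{-1}\bigr)$.

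The hard part is not any single ingredient --- they are all classical --- but keeping every constant uniform in the regime $Z_m\asymp N\to\infty$, $N^{1/3}|\y_m-\y_{m'}|\gtrsim1$, so that after rescaling all estimates depend only on the ratios $z_m/z_{m'}$ and on $N^{1/3}|\y_m-\y_{m'}|$. The most delicate point is the far-field \emph{lower} bound --- that $W^\TF$ decays no faster than the Sommerfeld rate --- which needs an honest subsolution barrier near infinity; in the ionized case it is required precisely at the boundary of $\{W^\TF+\nu>0\}$, where the profile is least explicit, and in the molecular case one must also check that the near-nucleus Coulomb zones, the bulk Thomas--Fermi zone and the Sommerfeld tail patch together with constants independent of the configuration.
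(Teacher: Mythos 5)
Your strategy---read both regimes off the Thomas--Fermi equations after the $x\mapsto N^{1/3}x$, $\tau\mapsto N^{-4/3}\tau$ scaling, treat the near field by splitting $V-|x|^{-1}*\rho^\TF$ and bounding the Newtonian potential of $\rho^\TF$, and treat the far field by Sommerfeld comparison functions and the maximum principle---is the standard route to these facts. The paper itself offers no proof of (\ref{7-3-6}): it is quoted as a known property of $W^\TF$ with details deferred to \cite{futurebook}, so there is nothing to compare against except correctness, and in outline your sketch is sound. You also correctly isolate the genuinely delicate points (uniformity in the nuclear configuration, the subsolution barrier at the edge of $\{W^\TF+\nu>0\}$ in the ionized case). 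For the molecular far field when the nuclei are \emph{not} clustered in an $O(1)$ region, the cleanest way to get $W^\TF\asymp\sum_m|x-\y_m|^{-4}$ uniformly is to squeeze the molecular potential between $\max_m$ and $\sum_m$ of the atomic ones (Teller's comparison) and then apply the single-atom Sommerfeld bounds; your ``patching'' remark gestures at this but the comparison lemma is the actual tool.

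The one point where you must be more careful is the size of the near-field remainder. Your argument---and any correct argument---yields $W^\TF-Z_m|x-\y_m|^{-1}=O(N^{4/3})$ on $\{|x-\y_m|\lesssim N^{-1/3}\}$, not $O(N)$: in the rescaled variables the terms you discard (the Coulomb tails of the other nuclei and the Newtonian potential of $\widetilde\rho^{\,\TF}$) are $\asymp 1$, not $o(1)$, and undoing the scaling multiplies by $N^{4/3}$. Indeed $O(N)$ is false already for a single neutral atom, where $W^\TF(x)-Z|x|^{-1}\to-\const\cdot Z^{4/3}$ as $x\to\y_1$. The bound you actually prove is the correct one, and it is exactly what is needed to place the rescaled operator in the framework of Subsection~\ref{sect-3-2-6} (where the perturbation of the Coulomb singularity must be $O(1)$ after rescaling); the ``$O(N)$'' in the statement should be read as $O(N^{4/3})$. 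Your phrase ``of lower order than the Coulomb singularity on this scale'' obscures this: at $|x-\y_m|\asymp N^{-1/3}$ the Coulomb term is itself only $\asymp N^{4/3}$, so the remainder is of the \emph{same} order there, and one has a genuine asymptotic expansion only for $|x-\y_m|\ll N^{-1/3}$. Everything else---the $L^1\cap L^{5/3}$ control of $|x|^{-1}*\rho^\TF$, the subharmonicity of $W^\TF$ off the nuclei, the bracketing by dilates of the explicit solution $C_0 r^{-4}$, and the harmonic matching to $(Z-N)|x|^{-1}$ outside $\supp\rho^\TF$---is right.
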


In fact, the analysis of Subsection~\ref{sect-3-2-6} was mainly motivated by this problem.

\subsection{Estimate from above}
\label{sect-7-3-2}

Again, using the semiclassical approximation (\ref{7-3-1}) for $\Tr ((H_W-\nu)^-)$ and also $e_N(x,x)\approx P'(W+\nu)$ with $P'=\frac{1}{3\pi^2}(W+\nu)^{\frac{3}{2}}$ the derivative of $P(W+\nu)$, we arrive to the functional
\begin{multline}
\Phi^*(W,\nu)=-\int P(W+\nu)\,dx -\frac{1}{8\pi}\|\nabla (W-V)\|^2  + \nu N \\
+\D( P'(W+\nu) -\frac{1}{4\pi}\Delta (W-V),\
P'(W+\nu) -\frac{1}{4\pi}\Delta (W-V));
\label{7-3-7}
\end{multline}
minimizing it, we again arrive to the Thomas-Fermi equations and the minimal value is $\cE_N^\TF$, again delivered by Thomas-Fermi theory.

However, in addition to the semiclassical error for the trace, we have  other  errors from (\ref{7-2-9}):
\begin{gather}
|\lambda -\nu|\cdot |\N^-(H_W-\nu)-N|,
\label{7-3-8}\\
\D( \tr e(x,x,\nu)- P'(W+\nu) ,\, \tr e(x,x,\nu)- P'(W+\nu))
\label{7-3-9}
\shortintertext{and}
\D( \tr e_N(x,x) -\tr  e(x,x,\nu) ,\, \tr  e_N(x,x)-\tr  e(x,x,\nu)).
\label{7-3-10}
\end{gather}
The expression (\ref{7-3-9}) is the semiclassical error and after rescaling it, we can estimate it by $O(h^{-4})$ (due to the pointwise spectral asymptotics). When scaling back, we gain the factor $N^{\frac{1}{3}}$, resulting in $O(N^{\frac{5}{3}})$.

Expressions (\ref{7-3-8}) and (\ref{7-3-10}) can be also estimated by $O(N^{\frac{5}{3}})$ based on another semiclassical error
\begin{equation}
\N^-(H_W-\sigma)-\int P'(W+\sigma)\,dx = O(h^{-2}),
\label{7-3-11}
\end{equation}
(for $\sigma \le 0$) after rescaling and thus, $O(N^{\frac{2}{3}})$ in the original scale, due to the definitions of $\lambda$ and $\nu$. One needs to consider four cases depending on whether $\lambda <0$ (i.e. $N^-(H_W)\ge  N$) or $\lambda =0$  (i.e. $N^-(H_W)<  N$) and whether $\nu <0$ (i.e. $N< Z$) or $\nu =0$ (i.e. $N\ge Z$), where $Z=Z_1+\ldots+Z_M$ is the total charge of the nuclei.

\subsection{More precise estimates}
\label{sect-7-3-3}

If we want to improve the remainder estimate $O(N^{\frac{5}{3}})$, then we need to improve the semiclassical remainder estimates and also deal with $O(N^{\frac{5}{3}})$ in  Lieb's electrostatic inequality (\ref{7-2-1}).

The first task could be done under the assumption
\begin{equation}
a\Def \min_{m\ne m'} |\y_m-\y_{m'}|\gg \bar{a}\Def N^{-\frac{1}{3}},
\label{7-3-12}
\end{equation}
which is completely reasonable (see Section~\ref{sect-7-4}). In this case, in each zone $\cY_m\Def \{x:\, |x-\y_m|\le a^{1-\eta}\bar{a}^\eta  \}$, with $\eta>0$, both $\rho^\TF$ and $W^\TF$ are close to those of a single atom which are spherically symmetric. Then one can prove easily that the standard conditions to the trajectories are fulfilled and we may use the improved remainder estimates.   On the other hand, contributions of the ``outer'' zone $\cY_0\Def\{x:\, |x-\y_m|\ge a^{1-\eta}\bar{a}^\eta\ \forall m=1,\ldots, M\}$ to these remainders is smaller.

Therefore all remainder estimates acquire the factor $(h^\delta + b^{-\delta})$ with
$b= a\bar{a}^{-1}$ before scaling back, i.e.
$(N^{\frac{1}{3}\delta} + (a N^{\frac{1}{3}})^{-\delta})$ after it. However, the trace asymptotics should also include the term $-\kappa_1 h^{-1}$ before scaling back or $-\kappa_1 N^{\frac{5}{3}}$ after it; for the potential $W^\TF$, it is numerically equal to $\Schwinger= -c_1\int \rho^{\TF\,\frac{4}{3}}\,dx$ which is called the \emph{Schwinger correction term\/}.

The second task requires an improvement in Lieb's electrostatic inequality due to \cite{graf:solovej} and \cite{bach}: one can replace the last term in (\ref{7-2-2}) for the ground state energy $\Psi$ by
\begin{equation}
-\frac{1}{2}\iint |x-y|^{-1}\tr \bigl(e^\dag_N(x,y) e_N(x,y)\bigr)\, dxdy - O(N^{\frac{5}{3}-\delta}),
\label{7-3-13}
\end{equation}
where the first term coincides with the last term in (\ref{7-2-7}) (the estimates from above) and again modulo $O(N^{\frac{5}{3}-\delta})$ can be rewritten as
\begin{equation}
-\frac{1}{2}\iint |x-y|^{-1}\tr \bigl(e^\dag (x,y,\nu) e(x,y,\nu)\bigr)\, dxdy.
\label{7-3-14}
\end{equation}
So far, we have not explored such expressions but we can handle them.

For this expression, after rescaling, we can derive the asymptotics with  principal term $-\kappa_2 h^{-4}$ and with remainder estimate as good as $O(h^{-3})$, which after scaling back becomes $O(N^{\frac{4}{3}})$ (which is an overkill). Here, we use the  representation of $e(x,y,\nu)$ by an oscillatory integral modulo a term whose $\sL^2(\bR^6)$ norm does not exceed $Ch^{-2}$.

To calculate $\kappa_2$, we can consider the operator with constant potential $W$, and for this operator, we calculate
$-\frac{1}{2}\int |x-y|^{-1}\tr \bigl(e^\dag (x,y,\nu) e(x,y,\nu)\bigr)\, dy$ obtaining $-\const (W+\nu)^{2} h^{-4}$, then plug in $W=W(x)$ and integrate over $x$. For $W=W^\TF$, after scaling back, we arrive to $\Dirac=-c_2\int \rho^{\TF\,\frac{4}{3}}\,dx$
which is called the \emph{Dirac correction term\/}.

Despite having completely different origins, these correction terms differ only by numerical constants.

We arrive to the theorem:

\begin{theorem}\label{thm-7-3-1}
As $Z=Z_1+\ldots +Z_M\asymp N\to \infty$, $M$ remains bounded,
$a=\min_{m\ne m'} |\y_m-\y_{m'}|\gtrsim N^{-\frac{1}{3}}$ and
\begin{multline}
\E_N =\cE_N^\TF +\Scott +\Schwinger+\Dirac +O(R)
\\
\text{with\ \ } R=N^{\frac{5}{3}}\bigl (N^{-\delta}+(a N^{\frac{1}{3}})^{-\delta} \bigr )
\label{7-3-15}
\end{multline}
where $\delta>0$ is unspecified.
\end{theorem}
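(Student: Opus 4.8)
The plan is to assemble the bound from the two one-sided estimates developed in Sections~\ref{sect-7-2} and~\ref{sect-7-3}, sharpening every semiclassical error term under the separation hypothesis $a\gtrsim N^{-1/3}$. First I would recall the lower bound (\ref{7-2-6}) and the upper bound (\ref{7-2-9}), choosing in both cases $\rho=\rho^\TF$ and $W=W^\TF$, so that the leading semiclassical functional value is exactly $\cE_N^\TF$ by the variational characterisation of Thomas--Fermi theory. After rescaling $x\mapsto xN^{1/3}$, $\tau\mapsto N^{-4/3}\tau$ as in Subsection~\ref{sect-7-3-1}, the operator becomes $H_W=-h^2\nabla^2-W$ with $h=N^{-1/3}$ and $W$ Coulomb-like near each $\y_m$; property~(\ref{7-3-6}) guarantees we are uniformly in the framework of Subsection~\ref{sect-3-2-6}. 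The trace asymptotics (\ref{7-3-5}) then gives, on scaling back, $\cE_N^\TF+\Scott$ with $\Scott=2\sum_m Z_m^2$ and a raw remainder $O(N^{5/3})$.

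Next I would bring in the two refinements that promote $O(N^{5/3})$ to $O(R)$. The first is the improved electrostatic inequality of \cite{graf:solovej, bach}: in the lower bound the exchange error $-CN^{5/3}$ is replaced by $-\tfrac12\iint|x-y|^{-1}\tr(e^\dag_N(x,y)e_N(x,y))\,dxdy$ modulo $O(N^{5/3-\delta})$, and modulo the same error this can be rewritten with $e(x,y,\nu)$ in place of $e_N(x,y)$. Evaluating this expression after rescaling via the oscillatory-integral representation of $e(x,y,\nu)$ (the $\sL^2(\bR^6)$-error being $O(h^{-2})$) yields a principal term $-\kappa_2h^{-4}$, i.e.\ $\Dirac=-c_2\int\rho^{\TF\,4/3}\,dx$ after scaling back, with a remainder $O(h^{-3})=O(N^{4/3})$, which is negligible. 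The second refinement is that, under (\ref{7-3-12}), in each atomic zone $\cY_m=\{|x-\y_m|\le a^{1-\eta}\bar a^\eta\}$ the Thomas--Fermi data are close to the spherically symmetric single-atom profile, so the non-periodicity condition on trajectories holds and the trace remainder in~(\ref{7-3-5}) gains the factor $(h^\delta+b^{-\delta})$ with $b=a\bar a^{-1}$; simultaneously the trace expansion picks up the term $-\kappa_1h^{-1}$, which for $W=W^\TF$ equals $\Schwinger=-c_1\int\rho^{\TF\,4/3}\,dx$.

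It remains to control the auxiliary errors (\ref{7-3-8}) and (\ref{7-3-10}) in the upper bound. Both are handled by the counting-function semiclassical estimate $\N^-(H_W-\sigma)-\int P'(W+\sigma)\,dx=O(h^{-2})$ for $\sigma\le 0$, giving $O(N^{2/3})$ after scaling back; one splits into the four cases according to the signs of $\lambda=\lambda_N$ and $\nu$, using $\E_N\le E_{N'}$ for $N'\le N$ when $H_{W}$ has fewer than $N$ negative eigenvalues, and the definitions of $\lambda$ and $\nu$ to bound $|\lambda-\nu|$ and $|\N^-(H_W-\nu)-N|$. The semiclassical error (\ref{7-3-9}) is $O(h^{-4})$, i.e.\ $O(N^{5/3})$ before the zone refinement and $O(N^{5/3}(N^{-\delta}+(aN^{1/3})^{-\delta}))$ after it. Combining the lower bound $\E_N\ge\cE_N^\TF+\Scott+\Schwinger+\Dirac-O(R)$ with the matching upper bound yields (\ref{7-3-15}).

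The main obstacle, I expect, is not any single estimate but the bookkeeping that makes the separation parameter $a$ enter every remainder uniformly: one must verify that the trajectory non-periodicity and the improved propagation bounds of Subsection~\ref{sect-3-2-6} hold with constants depending only on $b=a\bar a^{-1}$ throughout the atomic zones and that the outer zone $\cY_0$ contributes strictly less, and one must check that the new quadratic expressions $\iint|x-y|^{-1}\tr(e^\dag e)$ admit the oscillatory-integral treatment with the stated $\sL^2$-remainder uniformly in the (Coulomb-singular, rescaled) potential $W^\TF$. Reconciling the three correction terms $\Scott$, $\Schwinger$, $\Dirac$ — which arise from genuinely different mechanisms (Coulomb singularity, subleading trace term, exchange energy) yet combine cleanly — is where the delicate analysis of \cite{futurebook} is really used.
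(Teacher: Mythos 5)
Your proposal follows essentially the same route as the paper: the two-sided reduction via (\ref{7-2-6}) and (\ref{7-2-9}) with Thomas--Fermi data, the rescaling to $h=N^{-1/3}$ and the trace asymptotics of Subsection~\ref{sect-3-2-6} for the Scott term, the zone decomposition under (\ref{7-3-12}) giving the factor $(h^\delta+b^{-\delta})$ and the Schwinger term, the Graf--Solovej/Bach refinement of the electrostatic inequality for the Dirac term, and the control of (\ref{7-3-8})--(\ref{7-3-10}) via (\ref{7-3-11}). This matches the paper's own argument (whose technical details are likewise deferred to \cite{futurebook}), so no further comment is needed.
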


As a byproduct of the proof, we obtain
\begin{equation}
\D(\rho_\Psi-\rho^\TF,\,\rho_\Psi-\rho^\TF)=  O(R).
\label{7-3-16}
\end{equation}

For details and proofs, see Sections~\ref{book_new-sect-25-1}--\ref{book_new-sect-25-4} of \cite{futurebook}.
 \enlargethispage{1\baselineskip}

\section{Ramifications}
\label{sect-7-4}

First, instead of the \emph{fixed nuclei model\/}, we can consider the \emph{free nuclei model\/} where we add to both $\E_N$ and $\cE_N^\TF$ the energy of nuclei-to-nuclei interaction
\begin{equation}
\sum_{m<m'} Z_mZ_{m'}|\y_m-\y_{m'}|^{-1}
\label{7-4-1}
\end{equation}
and minimize the results by the position of nuclei $(\y_1,\ldots,\y_m)$; denote the results by $\widehat{\E}_N$ and $\widehat{\cE}_N^\TF$ respectively.

Combining (\ref{7-3-15}) with the non-binding theorem in Thomas-Fermi theory\footnote{\label{ft-42} In the Thomas-Fermi theory, molecules do not exist.}, we obtain that in the free nuclei model (with $Z_1\asymp\ldots \asymp Z_M\asymp Z\asymp N$),
\begin{equation}
a=\min_{m\ne m'} |\y_m-\y_{m'}|\gtrsim N^{-\frac{5}{3}+\delta}
\label{7-4-2}
\end{equation}
 and then  (\ref{7-3-15}) and (\ref{7-3-16}) hold with $R=N^{\frac{5}{3}-\delta}$.

\medskip

Next, using  methods already developed by mathematical physicists before asymptotics (\ref{7-3-15}) and (\ref{7-3-16}) were derived, we can answer several questions with far better precision than before; for simplicity, we assume that $a\ge N^{-\frac{1}{3}+\delta}$.

\begin{enumerate}[label=(\roman*), wide, labelindent=0pt]
\item\label{sect-7-4-i}
How many extra electrons can the system bind? In other words, if $\E_N<\E_{N-1}$, what we can say about $N-Z$? According to a classical  theorem due to G.~Zhislin, the system can bind at least $Z$ electrons. Our answer: $(N-Z)_+ =O(N^{\frac{5}{7}-\delta})$, based on the fact that in the Thomas-Fermi theory, negative ions do not exist.

\item\label{sect-7-4-ii}
What we can say about the \emph{ionization energy\/} $\I_N=\E_{N-1}-\E_N$?
Our answer: $\I_N =O(N^{\frac{20}{21}-\delta})$ if
$N-Z\ge -C N^{\frac{5}{7}-\delta}$ and
$\I_N = -\nu + O((Z-N)^{\frac{17}{18}} Z^{\frac{5}{18}-\delta}$ if
$N-Z\le -C N^{\frac{5}{7}-\delta}$; if
$N\le Z$ $\nu \asymp (Z-N)^{\frac{4}{3}}$.

\item\label{sect-7-4-iii}
In the free nuclei model (with $M\ge 2$), what can we say about $N-Z>0$ if a stable configuration exists? Our answer: $Z-N\le C N^{\frac{5}{7}-\delta}$ (again based on the non-binding theorem).
\end{enumerate}

For details and proofs, see Sections~\ref{book_new-sect-25-5} and~\ref{book_new-sect-25-6} of \cite{futurebook}.

\section{Adding magnetic field}
\label{sect-7-5}

\subsection{Adding external magnetic field}
\label{sect-7-5-1}

Consider  the Schr\"odinger-Pauli operator with magnetic field
\begin{equation}
H_{A,V}= ((-ih\nabla - \mu A (x))\cdot \boldupsigma)^2 -V(x).
\label{7-5-1}
\end{equation}
Then instead of $P(w)$ defined by (\ref{7-3-2}), we need to define it according to (\ref{5-2-3}) by
\begin{equation}
P(w)= \frac{2}{\pi^2}\Bigl(\frac{1}{2} w_+^{\frac{3}{2}} B+\sum_{j=1}^{\infty} (w- 2j B)_+^{\frac{3}{2}}B\Bigr),
\label{7-5-2}
\end{equation}
where $B$ is the scalar intensity of the magnetic field. This changes both the Thomas-Fermi theory and properties of the Thomas-Fermi potential $W^\TF$ and Thomas-Fermi density $\rho^\TF$.

\subsubsection{Case $B\lesssim Z^{4/3}$}
\label{sect-7-5-1-1}

For $B\lesssim Z^{4/3}$, the main contributions to the (approximate) electronic charge $\int \rho^\TF \,dx$ and  the energy $\cE^\TF$ come from the zone
$\{x:\, d(x) \asymp Z^{-1/3}\}$ ($d(x)=\min _m |x-\y_m|$), exactly as for $B=0$.

Furthermore, $W^\TF\asymp Z_m d(x)^{-1}$ if $d(x)\lesssim Z^{-1/3}$ and (for $Z=N$) $W^\TF \asymp d(x)^{-4}$ if $  Z^{-1/3}\le  d(x)\lesssim B^{-1/4}$ but $\rho^\TF=0$ if $d(x)\ge C_0B^{-1/4}$\,\footnote{\label{ft-43} So, the radii of atoms in Thomas-Fermi theory are
$\asymp \min (B^{-1/4},(Z-N)_+^{-1/3})$.}.

Finally, as we using scaling to bring our problem to the standard one, we get that in the zone $\{x:\,d(x)\asymp Z^{-1/3}\}$, the effective semiclassical parameter is $h_{\eff} = Z^{-1/3}$ which leads to $\cE^\TF \asymp Z^{7/3}$ again exactly as for $B=0$.

As a result, assuming that $M=1$, we can recover asymptotics for the ground state energy $\E$ with the Scott  correction term  but with the remainder estimate $O(Z^{5/3}+ Z^{4/3}B^{1/3})$. For $M\ge 2$ and $N\ge Z$, our estimates are almost as good (provided $a= \min_{m\ne m'} |\y_m-\y_{m'}| \ge Z^{-1/3}$), but deteriorate when both  $(Z-N)_+$ and $B$ are large.

Moreover, for $B\ll Z$ assuming (\ref{7-3-12}), we can marginally improve these results and include the Schwinger and Dirac correction terms.

The main obstacles we need to overcome are that now $W^\TF$ is not infinitely smooth but only belongs to the class $\sC^{5/2}$ and that for $M\ge 2$, the nondegeneracy assumption ($|\nabla W|\asymp 1$ after rescaling) fails.
\enlargethispage{\baselineskip}

\subsubsection{Case $B\gtrsim Z^{4/3}$}
\label{sect-7-5-1-2}

On the other hand, for $B\gtrsim Z^{4/3}$, the the main contributions to the (approximate) electronic charge  and  the energy $\cE^\TF$ come from the zone $\{x:\, d(x)\asymp B^{-1/4}\}$ and (for $Z=N$)
$W^\TF \asymp Zd(x)^{-1}$ if  $ d(x)\lesssim B^{-1/4}$ but $W^\TF=0$ if
$d(x)\ge C_0B^{-1/4}$. In this case, $\E^\TF \asymp B^{2/5}Z^{9/5}$.

Further, as we using scaling to bring our problem to the standard one, we see that in the zone $\{x:\,d(x)\asymp B^{-1/4}\}$, the effective semiclassical parameter is $h_{\eff} = B^{1/5}Z^{-3/5}$  and therefore unless $B\ll Z^3$, the semiclassical approximation fails and the correct answer should be expressed in completely different terms \cite{LSY1}.

As a result, assuming that $M=1$ if $Z^{4/3}\le B\le Z^3$, we can recover the asymptotics for $\E$ with the Scott  correction term  but with the remainder estimate $O(B^{4/5}Z^{3/5}+ Z^{4/3}B^{1/3})$.

For $M\ge 2$ and $N\ge Z$, our estimates are almost as good (provided $a= \min_{m\ne m'} |\y_m-\y_{m'}| \ge B^{-1/4}$), but deteriorate when
 $(Z-N)_+$ is large.

Again the main obstacles we need to overcome are that now $W^\TF$ is not infinitely smooth but only belongs to $\sC^{5/2}$ and that for $M\ge 2$, the nondegeneracy assumption ($|\nabla W|\asymp 1$ after rescaling) fails.

For details, exact statement and proofs, see  Sections~\ref{book_new-sect-26-1} and~\ref{book_new-sect-26-6} of \cite{futurebook}. We also estimate the left-hand expression of (\ref{7-3-5}) and are able to obtain results similar to those mentioned in Section~\ref{sect-7-4}. For details and proofs, see Sections~\ref{book_new-sect-26-7}--\ref{book_new-sect-26-8} of \cite{futurebook}.

\subsection{Adding self-generated magnetic field}
\label{sect-7-5-2}

Let
\begin{gather}
\E(A)= \inf \Spec (\sfH_{A,V} ) +
\underbracket{\alpha^{-1} \int |\nabla \times A|^2\,dx}
\label{7-5-3}\\
\shortintertext{and}
\E ^*=\inf_{A\in \sH^1_0} \E(A),
\label{7-5-4}
\end{gather}
where $A$ is an unknown magnetic field and the underlined term is its energy. One can prove that an ``optimal'' magnetic field exists (for given parameters  $Z_1,\ldots, Z_M, \y_1,\ldots,\y_M, N$) but we do not know if it is unique\footnote{\label{ft-44} If it was unique, then for $M=1$,  the spherical symmetry would imply that $A=0$.}.

Using the same arguments as before, we can reduce this problem to the one-particle problem with $\inf \Spec (\sfH_{A,V} )$ replaced by $\Tr ((H_{A,W}+\nu)_-)$ plus some other terms.  However,  in the estimate from below, most of the terms  do not depend on $A$ and in the estimate from above, we pick up $A$.

Then after the usual rescalings, the problem is reduced to the problem of minimizing
\begin{equation}
\Tr ((H_{A,W}+\nu)_-) +
\underbracket{\frac{1}{\kappa h^2} \int |\nabla \times A|^2\,dx}
\label{7-5-5}
\end{equation}
and then the optimal magnetic potential $A$ must satisfy
\begin{multline}
\frac{2}{\kappa h^2} \Delta A_j (x)   = \Phi_j\Def\\
-\Re\tr \Bigl( \upsigma_j\Bigl( (hD -A)_x \cdot \boldupsigma  e (x,y,\tau)+
e (x,y,\tau)\,^t (hD-A)_y \cdot \boldupsigma \Bigr)  \Bigr|_{y=x}\Bigr),
\label{7-5-6}
\end{multline}
where $e(x,y,\tau)$ is the Schwartz kernel of the spectral projector
$\uptheta (- H)$ of $H=H_{A,W}$ and $\tr$ is the matrix trace. As usual, we are mainly interested in $h=Z^{-1/3}$ (and then $\kappa=\alpha Z$).

First, (\ref{7-5-6}) allows us to claim a certain smoothness of $A$. Second, the right-hand expression is something we studied in pointwise spectral asymptotics, and the Weyl expression here is $0$,   so the right-hand expression of (\ref{7-5-6}) is something that we could estimate. Surely, it is not that simple but improving our methods in the case of smooth $W$, we are able to prove that $A$ is so small that the ordinary asymptotics with remainder estimates $O(h^{-2})$ and $O(h^{-1})$ would hold in both the pointwise asymptotics and the trace asymptotics. Moreover, under standard conditions, we would be able to get the remainder estimates $o(h^{-2})$ and $o(h^{-1})$ in the eigenvalue counting and the trace asymptotics respectively.

However, in reality, the above is not exactly true since $W$ has Coulomb-like singularities $W\sim z_m|x-\y_m|^{-1}$ with $z_m\asymp 1$. If $M=1$, $z_m=1$, a singularity leads us to the Scott correction term $S(\kappa)h^{-2}$ derived in the same way as without a self-generated magnetic field. However, we do not have an explicit formula for $S(\kappa)$; we even do not know its properties  except that it is non-increasing function of $\kappa\in [0,\kappa^*)$; we even do not know if we can take $\kappa^*=\infty$. If the optimal magnetic potential $A$ was unique, then $A=0$ and $S(\kappa)=S(0)$, which corresponds to this term without a magnetic field.

Then as $M\ge 2$, the Scott correction term is
$\sum_{1\le m\le M} S(\kappa z_m)z_m^2 h^{-2}$ in the general case. However, as $M\ge2$ we need to decouple singularities as all of them are served by the same $A$ and it leads to decoupling errors depending on the internuclei distance.

For details, exact statements  and proofs, see Sections~\ref{book_new-sect-27-2}--\ref{book_new-sect-27-3} of \cite{futurebook}.

As a result, we derive the ground state asymptotics with the Scott correction term $\sum_{1\le m\le M} S(\alpha Z_m)Z_m^2$. We also estimate the left-hand expression of (\ref{7-3-5}) and are able to obtain results similar to those mentioned in Section~\ref{sect-7-4}. For details, exact statements  and proofs, see Sections~\ref{book_new-sect-27-3}--\ref{book_new-sect-27-4} of \cite{futurebook}.

\subsection{Combining external and self-generated magnetic fields}
\label{sect-7-5-3}

We can also combine a constant strong external magnetic field and a self-generated magnetic field. Results are very similar to those of Subsection~\ref{sect-7-5-1}, but this time, the Scott correction term and the decoupling errors are like in Subsection~\ref{sect-7-5-2}.
 For details, exact statements  and proofs, see Chapter~\ref{book_new-sect-28} of \cite{futurebook}.

\providecommand{\bysame}{\leavevmode\hbox to3em{\hrulefill}\thinspace}

%
%
%

\end{document}